\newcommand{\RR}[0]{\mathbb{R}}
\newcommand{\HH}[0]{\mathbb{H}}
\newcommand{\HHH}[0]{\mathcal{H}}
\newcommand{\MM}[0]{\mathcal{M}}
\newcommand{\AM}[0]{\mathcal{AM}}
\newcommand{\MCG}[0]{\mathcal{MCG}}
\newcommand{\TT}[0]{\mathcal{T}}
\newcommand{\PP}[0]{\mathcal{P}}
\newcommand{\CC}[0]{\mathcal{C}}
\newcommand{\OO}[0]{\mathcal{O}}
\newcommand{\ZZ}[0]{\mathbb{Z}}
\newcommand{\NN}[0]{\mathbb{N}}
\newcommand{\Ext}[0]{\text{Ext}}
\newcommand{\GG}[0]{\mathcal{G}}
\newcommand{\Fix}[0]{\mathrm{Fix}}
\newtheorem{theorem}{Theorem}[section]
\newtheorem{lemma} [theorem] {Lemma}
\newtheorem{proposition} [theorem] {Proposition}
\newtheorem{corollary} [theorem] {Corollary}
 \newtheorem{remark}[theorem]{Remark}
 \newtheorem{definition}[theorem]{Definition}
 \newtheorem{notation}[theorem]{Notation}
\begin{document}

\title{Elliptic actions on Teichm\"uller space}
\author{Matthew Gentry Durham}
\address{Department of Mathematics, University of Michigan, 3079 East Hall, 530 Church Street, Ann Arbor, MI 48109}
\email{durhamma(at)umich.edu}
\maketitle
\begin{abstract}

Let $S$ be an oriented surface of finite type, $\MCG(S)$ its mapping class group, and $\TT(S)$ its Teichm\"uller space with the Teichm\"uller metric.  Let $H \leq \MCG(S)$ be a finite subgroup and consider the subset of $\TT(S)$ fixed by $H$, $\Fix(H) \subset \TT(S)$.  For any $R>0$, we prove that the set of points whose $H$-orbits have diameter bounded by $R$, $\Fix_R^T(H)$, lives in a bounded neighborhood of $\Fix(H)$.  As an application, we show that the orbit of any point $X \in \TT(S)$ under the action of a finite order mapping class has a fixed coarse barycenter.  By contrast, we show that $\Fix^T_R(H)$ need not be quasiconvex with an explicit family of examples.

\end{abstract}

\section{Introduction}

Let $S$ be a surface of finite topological type, $\MCG(S) = \mathrm{Homeo}^+(S)/\mathrm{Homeo}_0(S)$ its mapping class group, and $\TT(S)$ its Teichm\"uller space, the space of isotopy classes of marked hyperbolic metrics on $S$, which we consider with both the Teichm\"uller $(\TT(S), d_T)$ and Weil-Petersson $(\TT(S), d_{WP})$ metrics.\\

The Nielsen Realization Problem asks whether a finite subgroup $H \leq \MCG(S)$ of the mapping class group of a surface $S$ can be realized as a subgroup $\widetilde{H} \leq \mathrm{Homeo}^+(S)$ which acts by isometries on some metric $\sigma \in \TT(S)$ on $S$.  Kerckhoff \cite{Ker83} proved that the problem in $\TT(S)$ always has a solution by showing that the length functions of curves are convex along Thurston earthquake paths, a result later mirrored for Weil-Petersson geodesics by Wolpert \cite{Wol87}.  There are several other solutions to this problem \cite{Gab92,CJ94,Tro96,BBFS09,HOP12}, none of which is easy.\\

Kerckhoff's main theorem in \cite{Ker83} was the following equivalent formulation:

\begin{theorem}[Theorem 4 in \cite{Ker83}]
Every finite subgroup $H \leq \MCG(S)$ fixes a point in $\TT(S)$.
\end{theorem}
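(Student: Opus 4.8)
The plan is to produce an $H$-fixed point in $\TT(S)$ by exhibiting an $H$-invariant, proper, strictly convex function on $\TT(S)$ and taking its (necessarily unique, hence $H$-invariant) minimum. Since the assertion of the theorem is, as noted above, equivalent to the existence of a fixed point for the $H$-action on $\TT(S)$, it suffices to carry this out. First I would fix a multicurve $\Gamma$ that fills $S$ (such collections exist whenever $\dim\TT(S)>0$; the low-complexity cases are trivial), and average over the finite group to form the weighted multicurve, equivalently the measured lamination, $\mu = \sum_{h\in H} h\cdot\Gamma$. By construction $h\cdot\mu=\mu$ for every $h\in H$, and $\mu$ still fills $S$ because $\Gamma$ does; in particular $i(\mu,\nu)>0$ for every nonzero measured lamination $\nu$.

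Next I would study the geodesic-length function $\ell_\mu\colon\TT(S)\to\RR_{>0}$, $Y\mapsto\sum_{h\in H}\ell_Y(h\cdot\Gamma)$. It is continuous and, crucially, proper: along any sequence leaving every compact subset of $\TT(S)$, either some simple closed curve gets pinched, in which case a summand $h\cdot\Gamma$ crossing it essentially (possible since $\mu$ fills) has length blowing up by the collar lemma; or the underlying surface stays in a compact part of moduli space while the marking diverges, in which case the infinitely many distinct translates of $\Gamma$ that appear have lengths tending to infinity (using that the stabilizer of a filling lamination is finite). Since $\TT(S)$ is homeomorphic to a cell, a continuous proper function attains its minimum on a nonempty compact set $M\subset\TT(S)$, and because $h\cdot\mu=\mu$ gives $\ell_\mu\circ h=\ell_\mu$, the set $M$ is $H$-invariant.

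It remains to see that $M$ is a single point, and this is where the quoted convexity enters. Kerckhoff's theorem that the length of a curve is convex along every Thurston earthquake path passes, by taking the finite sum, to $\ell_\mu$; and since $\mu$ fills, this convexity is in fact \emph{strict} along every earthquake that moves the basepoint: an affine stretch of $\ell_\mu$ would force each summand $\ell_{h\Gamma}$ to be affine there, whereas Kerckhoff's second-variation estimate shows the length of a curve has strictly positive second derivative along an earthquake whose support it crosses essentially, and a filling $\mu$ crosses the support of every nonzero earthquake lamination. Combined with Thurston's earthquake theorem — any two points of $\TT(S)$ are joined by a (unique) earthquake path — strict convexity forces $\ell_\mu$ to have a unique minimum, since two distinct minima would lie on an earthquake path along which $\ell_\mu$ is strictly convex. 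Hence $M=\{Y_0\}$, a single $H$-invariant point, i.e.\ an $H$-fixed point, which completes the proof. The main obstacle is exactly this strict convexity: plain convexity only shows $M$ is a closed earthquake-convex set, and finite groups can act fixed-point-freely on contractible spaces, so one genuinely needs the quantitative second-variation input. I would also mention an alternative route that sidesteps earthquakes: the Weil--Petersson completion $\overline{\TT(S)}$ is a complete $\mathrm{CAT}(0)$ space on which $\MCG(S)$ acts by isometries, so the finite orbit $H\cdot X$ has a circumcenter, which is automatically $H$-fixed; there the analogous obstacle is to show the circumcenter does not escape into the augmentation locus of noded surfaces, which one handles via Wolpert's convexity of length functions along Weil--Petersson geodesics together with the geodesic convexity of $\TT(S)$ inside $\overline{\TT(S)}$.
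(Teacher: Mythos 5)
Your argument is exactly the earthquake-convexity proof that the paper attributes to Kerckhoff \cite{Ker83} (the statement is quoted there, not reproved): symmetrize a filling multicurve over $H$, use properness of its total length function, and combine Thurston's earthquake theorem with Kerckhoff's (strict) convexity of length functions along earthquake paths to get a unique, hence $H$-fixed, minimizer. So the proposal is correct and follows essentially the same route as the paper's cited proof.
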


A number of facts follow immediately from Kerckhoff's theorem.  Let $X \in \Fix(H) \subset \TT(S)$ be fixed by $H$.  The quotient $X/H = \OO$ is a hyperbolic 2-orbifold and any hyperbolic structure on $\OO$ lifts to $S$, giving an embedding $i:\TT(\OO) \hookrightarrow \TT(S)$ that is an isometry onto its image in the Teichm\"uller metric.  Since $\Fix(H) = i(\TT(\OO))$, $\Fix(H) \subset \TT(S)$ is a convex submanifold of $(\TT(S), d_T)$.\\

In this paper, we investigate the structure of the set of points in $\TT(S)$ which are moved a bounded Teichm\"uller distance $R>0$ by the action of $H$, the $R$-\emph{almost fixed points}:
$$\Fix^T_R(H) = \{X \in \TT(S)| \mathrm{diam}_T(H \cdot X) < R\}$$

These almost fixed point sets can be viewed as the level sets of the diameter map, $\mathrm{diam}_T: \TT(S) \rightarrow \RR$, given by $X \mapsto \mathrm{diam}_T(X)$.  From this perspective, $\Fix(H) = \mathrm{diam}_T^{-1}(0)$ and $\Fix_R^T(H) = \mathrm{diam}_T^{-1}([0,R))$.\\

In a negatively curved space, the level sets of the diameter map would be convex regular neighborhoods of the set of fixed points.  However, Masur \cite{Mas75} showed that the Teichm\"uller metric is not negatively curved and Minsky \cite{Min96} later showed that this assumption fails profoundly: in the thin parts of $\TT(S)$, the Teichm\"uller metric is quasiisometric to a sup metric on a product space (See Theorem \ref{r:product} below).\\

The results we obtain in this paper contrast the topological constraints coming from covering theory and the geometric flexibility coming from these product regions.  The Main Theorem \ref{r:main} of this paper proves that almost fixed points are uniformly close to fixed points:

\begin{theorem}[Almost fixed points are close to fixed points]\label{r:thm1}
For any $R>0$, there is a constant $R'$ depending only on $R$ and $S$ such that the following holds.  Let $H \leq \MCG(S)$ be a finite subgroup and $\Fix(H)\subset \TT(S)$ its fixed point set.  Then
\[\Fix^T_R(H) \subset \mathcal{N}_{R'}^T(\Fix(H))\]
where $\mathcal{N}_{R'}^T(\Fix(H))$ is the $R'$-neighborhood of $\Fix(H)$.
\end{theorem}

In a $\mathrm{CAT}(0)$ space, a \emph{barycenter} for a bounded set $E$ with radius $R$ is the unique point $b\in E$ around which a ball of radius $R$ contains $E$, $E \subset B_R(b)$.  A \emph{coarse barycenter} for a set $E$ is any point $x \in E$ invariant under the symmetries of $E$ such that $E \subset B_{K\cdot \textrm{diam}(E)+C}(x)$, where $K,C>0$ are uniform constants and $\mathrm{diam}(E)$ is the diameter of $E$.  Note that a coarse barycenter is a barycenter when $K=\frac{1}{2}$ and $C=0$.\\

Using work of Tao \cite{Tao13}, we also prove that orbits of finite order elements of $\MCG(S)$ have coarse barycenters in $(\TT(S),d_T)$:

\begin{theorem} [Coarse barycenters for $(\TT(S), d_T)$] \label{r:thm2}
There are $K, C>0$ such that for any $\sigma \in \TT(S)$ and any finite order $f \in \MCG(S)$, there is a fixed point $X \in Fix(\langle f \rangle)$ such that 
\[d_T(\sigma, X) < K \cdot d_{\TT(S)}(\sigma, f\cdot \sigma) + C\] 
\end{theorem}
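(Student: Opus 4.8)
Write $D := d_T(\sigma, f\cdot\sigma)$ (this is the quantity denoted $d_{\TT(S)}(\sigma, f\cdot\sigma)$ in the statement; $d_{\TT(S)}$ being the Teichm\"uller metric). The plan is to reduce the assertion to a linear upper bound for $\delta := d_T\bigl(\sigma, \Fix(\langle f\rangle)\bigr)$ in terms of $D$, and to extract that bound from Tao's linearly bounded conjugator theorem \cite{Tao13}. First I would dispatch two routine reductions. Since $f$ acts by isometries of $(\TT(S), d_T)$ and the order of a finite-order element of $\MCG(S)$ is bounded by a constant $N_0 = N_0(S)$ (Riemann--Hurwitz applied to the quotient orbifold $S/\langle f\rangle$), we have $\mathrm{diam}_T\bigl(\langle f\rangle\cdot\sigma\bigr) \le (N_0-1)D$, so $\sigma \in \Fix^T_{(N_0-1)D}(\langle f\rangle)$. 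And $\Fix(\langle f\rangle)$, being the fixed set of a finite group of isometries, is closed, so $\delta$ is realized at some $X \in \Fix(\langle f\rangle)$; moreover $fX = X$, so once $d_T(\sigma, X) \le KD + C$ is established, every point of the orbit lies within $KD+C$ of $X$ and $X$ is the required coarse barycenter.

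The heart of the argument is to turn Theorem~\ref{r:thm1}'s non-linear containment into a linear one in the cyclic case, using \cite{Tao13}: I would aim for constants $c_1, c_2$ depending only on $S$ with
\[\Fix^T_r(\langle f\rangle) \subset \mathcal{N}^T_{c_1 r + c_2}\bigl(\Fix(\langle f\rangle)\bigr)\qquad\text{for all }r>0.\]
The mechanism is Tao's theorem that conjugate mapping classes are conjugate by one of comparable size: from a bound on how far $f$ displaces $\sigma$ one extracts a mapping class $\psi$, of size linear in that bound, conjugating $f$ to a finite-order element fixing a point at uniformly bounded distance from $\sigma$ (Kerckhoff's theorem guaranteeing such fixed points exist); applying $\psi^{-1}$ then carries that point into $\Fix(\langle f\rangle)$ while displacing $\sigma$ only linearly. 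Granting the display, applying it with $r = (N_0-1)D$ gives $\delta \le c_1(N_0-1)D + c_2$, and the theorem follows with $K = c_1(N_0-1)$ and $C = c_2$ (strictness being arranged by enlarging $C$). If Tao's estimates are only directly available for large displacements, I would handle $D$ below a threshold $R_0 = R_0(S)$ via Theorem~\ref{r:thm1} alone, which then bounds $\delta$ by $R'\bigl((N_0-1)R_0, S\bigr)$, a pure additive constant, and invoke Tao only for $D > R_0$.

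The main obstacle will be exactly this core step: passing from ``conjugate by a short mapping class'' to ``close in the Teichm\"uller metric''. The word metric on $\MCG(S)$ and $d_T$ on $\TT(S)$ are not quasi-isometric, and a finite-order $f$ can displace $\sigma$ a large $d_T$-distance essentially only when $\sigma$ sits deep in a thin part of $\TT(S)$, where by Minsky's product-region theorem (Theorem~\ref{r:product}) $d_T$ degenerates to a sup-metric on a product and $f$ may permute the product factors and twist heavily about the short curves. The linear bound must hold uniformly over all such configurations, and controlling the $d_T$-displacement of the conjugator in these product regions is precisely what Tao's machinery of subsurface projections and active intervals is designed to do; interfacing it with the covering-theoretic rigidity of $\Fix(\langle f\rangle) = i(\TT(\OO))$, and patching the thick and thin regimes without the multiplicative constants compounding, is the delicate part. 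The remaining ingredients — finiteness of $N_0(S)$ and closedness of $\Fix(\langle f\rangle)$ — are standard.
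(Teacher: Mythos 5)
Your reduction is on track as far as it goes: noting $\mathrm{diam}_T(\langle f\rangle\cdot\sigma)\le (N_0-1)D$ with $N_0 = N_0(S)$, and observing that any fixed point $X$ with $d_T(\sigma,X)$ bounded is automatically a coarse barycenter, is essentially the framing the paper uses. But the core step — establishing the linear containment $\Fix^T_r(\langle f\rangle)\subset\mathcal{N}^T_{c_1r+c_2}(\Fix(\langle f\rangle))$ — is where the proposal has a genuine gap, and you have in fact flagged it yourself without closing it.

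The problem is precisely the one you name: Tao's results live in $\MM(S)$ (word-metric side), and a bound on $d_{\MM(S)}$ does \emph{not} bound $d_T$, because $\MM(S)$ misses the length data — two points of $\TT(S)$ can have close shortest markings and yet be arbitrarily far in $d_T$ by twisting deep in a horoball. Your sketch of the mechanism (``extract a conjugator $\psi$ of size linear in the bound, carry a Kerckhoff fixed point into $\Fix(\langle f\rangle)$'') produces, at best, a mapping class of bounded $\MM(S)$-size, not a controlled $d_T$-displacement. You acknowledge this and defer to ``Tao's machinery of subsurface projections and active intervals'' to fix it, but no argument is given, and in fact the linearly bounded conjugator statement you cite is not the right tool: the paper instead uses Tao's underlying technical result \cite[Theorem~4.0.2]{Tao13}, which produces a marking $\mu_0$ whose $\langle f\rangle$-orbit has diameter bounded by a constant depending \emph{only on $S$} (not on $D$), with $d_{\MM(S)}(\mu,\mu_0)\prec d_{\MM(S)}(\mu,f\mu)$.

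The paper's actual route is structurally different and avoids ever proving the linear containment directly. It first analyzes the $f$-symmetric short curves $\Lambda_{\epsilon_0,\mathrm{sym}}(\sigma)$ and builds a preliminary point $X'$ whose shortest augmented marking matches $\tilde\mu_\sigma$ away from those curves and uses horoball barycenters on them. Then it applies Tao's barycenter theorem to the \emph{underlying marking} $\mu_{X'}$, producing an almost-fixed marking $\mu_{X''}$, and re-attaches the length data to obtain an augmented marking $\tilde\mu_{X''}$; one then checks, subsurface by subsurface (including horoballs), that $d_T(X'',\sigma)\prec D$. The final, crucial step is to apply the Main Theorem~\ref{r:main} to $X''$. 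This works only because $X''$ is $R$-almost-fixed for $R=R(S)$ depending on $S$ \emph{alone}, so Theorem~\ref{r:main} contributes a pure additive constant. This is exactly the reason your reduction — applying Theorem~\ref{r:main} to $\sigma$ itself with almost-fixed constant $(N_0-1)D$ — cannot close the gap: the paper's $R'(R,S)$ is not linear in $R$ (see Remark~\ref{r:indep}), so $R'((N_0-1)D,S)$ need not be $\prec D$. The trick is to first travel a distance $\prec D$ to an almost-fixed point with a \emph{uniform} almost-fixed constant, and only then invoke Theorem~\ref{r:main}.
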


Both of Theorems \ref{r:thm1} and \ref{r:thm2} depend crucially on the fact that $\Fix(H)$ comes from a topological covering map, namely that the subsurfaces involved in the geometric considerations in $\TT(S)$ are all lifts of suborbifolds of $\OO$.\\

We say that a subset $Z \subset X$ of a metric space is \emph{quasiconvex} if there is an $L>0$ such that whenever $x,y \in Z$ and $\mathcal{G}_{x,y}$ is a geodesic between them, then $\mathcal{G}_{x,y} \subset \mathcal{N}_L(Z)$.\\

Recall that $\Fix(H) \subset \TT(S)$ is convex in both the Teichm\"uller and Weil-Petersson metrics.  In contrast with Theorems \ref{r:thm1} and \ref{r:thm2}, the following theorem shows that relaxing the condition of being fixed to being almost-fixed dramatically changes convexity properties:

\begin{theorem}[Nonquasiconvexity of $\Fix^T_R(H)$]\label{r:thm3}
There exist a constant $R>0$, a surface $S$, and a finite subgroup $H \leq \MCG(S)$ such that $\Fix^T_R(H)$ is not quasiconvex. 
\end{theorem}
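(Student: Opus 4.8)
The plan is to exploit precisely the tension highlighted in the introduction: Teichm\"uller geodesics, when they pass through thin parts, travel along a sup-metric product region (Minsky's product regions theorem, Theorem \ref{r:product}), and in such a product region a geodesic between two points can "wait" in one coordinate while moving in another. I would engineer $S$ and $H$ so that $\Fix(H)$ is a low-dimensional stratum inside such a product region, choose two almost-fixed points $X_0, X_1 \in \Fix^T_R(H)$ that lie close to $\Fix(H)$ but whose connecting geodesic is forced to detour through a part of the product region where the relevant curve lengths on the orbit points are badly mismatched, making the orbit diameter of the midpoint grow without bound.

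Concretely, I would take $S$ to admit a hyperelliptic-type involution $\tau$ (or a $\ZZ/n$ symmetry), so $H = \langle \tau \rangle$ and $\OO = S/\tau$; then $\Fix(H) = i(\TT(\OO))$ sits in $\TT(S)$ as a convex submanifold whose "missing" directions correspond to curves $\gamma$ on $S$ that are \emph{not} $\tau$-invariant, i.e. $\tau(\gamma) = \gamma' \neq \gamma$. The key mechanism: if a point $X$ has $\ell_X(\gamma)$ small but $\ell_X(\gamma')$ not small, then $X$ is far from $\Fix(H)$ and also has large orbit diameter, since $\ell_{\tau \cdot X}(\gamma') = \ell_X(\gamma)$ is small while $\ell_X(\gamma')$ is not — and the Teichm\"uller distance between configurations with such mismatched short curves is large. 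Now choose $X_0, X_1$ both with $\ell(\gamma)$ and $\ell(\gamma')$ of comparable (moderately small) size, so both lie in $\Fix^T_R(H)$ for a suitable uniform $R$, but with the \emph{other} product coordinates (twisting around $\gamma$, $\gamma'$, and the structure on the thick part) arranged so that every geodesic from $X_0$ to $X_1$ must, in the product region $\TT_{\text{thin}} \cong \prod \HH_{\gamma_i} \times \TT(\text{thick})$ with the sup metric, pass through a point $Y$ where $\ell_Y(\gamma)$ has become very small while $\ell_Y(\gamma')$ has stayed moderate (the sup metric lets the $\gamma'$-factor stall while the $\gamma$-factor plunges deep into $\HH$). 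Such a $Y$ has orbit diameter $\mathrm{diam}_T(H \cdot Y) \to \infty$, so $Y \notin \mathcal{N}_L(\Fix^T_R(H))$ for any fixed $L$, by Theorem \ref{r:thm1} applied with a larger radius — this shows no uniform $L$ works, i.e. nonquasiconvexity (iterating the construction up a tower of surfaces, or scaling within a single well-chosen $S$, to make the detour arbitrarily large).

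The steps in order: (1) fix the symmetry and identify the curves $\gamma, \tau(\gamma)$ with $\tau(\gamma) \neq \gamma$ and the induced product region; (2) use Minsky's product regions theorem to get the sup-metric model and the explicit coordinates ($\mathbb{H}$-factors for annuli, thick-part factor), and record how Teichm\"uller distance compares curve-length/twisting data; (3) write down explicit endpoints $X_0, X_1$ with controlled (small) orbit diameter — here I would compute $\mathrm{diam}_T(H\cdot X_i)$ directly from the coordinate data and the sup metric, getting a bound $R$ depending only on $S$; (4) show every geodesic $\GG_{X_0,X_1}$ contains a point with one of $\gamma,\tau(\gamma)$ short and the other not, using that in a sup-metric product the "diagonal" is not the only geodesic but \emph{every} geodesic projects to a geodesic (hence monotone, hence forced detour) in each factor once the endpoints' $\mathbb{H}$-coordinates are chosen to force a deep excursion; (5) lower-bound the orbit diameter of that point and conclude it escapes every fixed neighborhood of $\Fix^T_R(H)$.

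The main obstacle I expect is Step 4: controlling \emph{all} geodesics rather than a single convenient one. The sup metric on a product is highly non-uniquely-geodesic, so I must argue that the \emph{particular} geometric feature I need — a point on the geodesic with mismatched $\gamma$ vs $\tau(\gamma)$ lengths — is unavoidable. The way around this is to pick the endpoints' coordinates so that the \emph{maximum} coordinate along any geodesic is realized (for a definite sub-interval) by the $\HH_\gamma$-factor alone while the $\HH_{\tau(\gamma)}$-coordinate is pinned near its endpoint values, which are moderate; monotonicity of each coordinate projection of a sup-geodesic then forces the mismatch. A secondary technical point is passing from Minsky's quasi-isometry to honest statements about $\Fix^T_R(H)$ and honest geodesics, which I would handle by absorbing the quasi-isometry constants into $R$ and $L$ and letting the detour depth, which I control, dominate them.
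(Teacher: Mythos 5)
Your overall strategy is the right one in spirit (it is essentially the phenomenon the paper exploits): put the endpoints in a thin product region over an $H$-orbit of curves/subsurfaces that are swapped by the symmetry, make the endpoints almost fixed, and show the connecting geodesic is forced through a midpoint that is badly asymmetric under $H$, hence of large orbit diameter and far from $\Fix^T_R(H)$. But there is a genuine gap at your Step (4), and it is exactly the step you flagged. First, if $X_0,X_1\in\Fix^T_R(H)$, then all the coarse data you are allowed to prescribe is forced to be coarsely $H$-symmetric: $d_{\gamma}(X_0,X_1)$ and $d_{\tau(\gamma)}(X_0,X_1)$ (and likewise the length/twist coordinates in the $\HH_{\gamma}$- and $\HH_{\tau(\gamma)}$-factors, and the projections to swapped subsurfaces) can differ only by an amount bounded in terms of $R$, by the distance formula applied to $d_T(X_i,\tau\cdot X_i)\le R$. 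Consequently, in the sup-metric model there always \emph{exists} a geodesic between your endpoints that treats the two swapped factors symmetrically (the ``diagonal'' resolution), so no choice of endpoint coordinates can force \emph{every} model geodesic through an asymmetric point; monotonicity of the factor projections does not rescue this, because monotone progress in both factors simultaneously is perfectly consistent with the endpoint data. Second, the object you must control is the unique Teichm\"uller geodesic, which under Minsky's map is only a quasigeodesic in a sup-metric product, and product regions are not hyperbolic, so quasigeodesics there need not track any model geodesic you argue about; the timing of the excursions (which factor the real geodesic traverses first) is determined by the quadratic differential, which is invisible at the level of coarse endpoint coordinates. So the asymmetric midpoint cannot be extracted from the model alone.

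The paper closes precisely this gap by constructing the geodesic analytically rather than coarsely: following Rafi's example \cite{Raf14}[Theorem 7.3], one builds a quadratic differential on a finite cover $S\to S_0$ by gluing slit flat tori at different scales into the lifts $Y_1,Y_2$ of a punctured torus $Y\subset S_0$ swapped by the deck involution $h$, so that (by Rafi's active-interval and fellow-traveling theorems) the resulting Teichm\"uller geodesic $\GG:[0,2d]\to\TT(S)$ does all of its $Y_1$-progress on $[0,d]$ and all of its $Y_2$-progress on $[d,2d]$, while the lifts of the separating curve stay short and the endpoints stay within bounded distance of genuine fixed points. The asymmetry at the midpoint is then certified in the curve complexes of the swapped subsurfaces, $d_{\CC(Y_1)}(\GG(0),\GG(d))\asymp d$ versus $d_{\CC(Y_2)}(\GG(0),\GG(d))\asymp 1$, giving $d_T(\GG(d),h\cdot\GG(d))\asymp d$ by the distance formula. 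If you want to salvage your version (mismatched lengths of the swapped curves $\gamma,\tau(\gamma)$ at the midpoint rather than mismatched subsurface projections), you would need the same kind of quadratic-differential-level construction to separate the two annular excursions in time; nothing short of that pins down what the unique geodesic actually does inside the product region.
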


The counterexamples built in Theorem \ref{r:thm3} are based on work of Rafi \cite{Raf14}.  See the discussion after the proof of Theorem \ref{r:thm3} (Theorem \ref{r:non qc} below) for how nonquasiconvexity of $\Fix_R^T(H)$ is a more general phenomenon.\\

Many of the tools and ideas in this paper are motivated by ideas from geometric group theory and the theory surrounding the study of $\MCG(S)$.  Quasiconvexity is a central notion in the theory of Gromov hyperbolic groups and is well-suited to this strong notion of negative curvature.  Given the product structure on the thin parts, quasiconvexity, and thus convexity, in the Teichm\"uller metric are sensitive properties.  The only known convex subsets of $(\TT(S), d_T)$ are its (unique) geodesics, special isometrically embedded copies of $\HH^2$ called Teichm\"uller disks, and the fixed point sets which are at the center of this paper.  As for quasiconvex subsets, the only known additional examples are bounded diameter subsets \cite{LR11}, the aforementioned product regions themselves, orbits of convex cocompact subgroups of $\MCG(S)$ \cite{FM02}, and certain subsets of metrics on pleated surfaces which fill the convex hull of a hyperbolic 3-manifold homeomorphic to $S \times \RR$ \cite{Min93}.  Theorem \ref{r:thm3} (and its generalizations) suggest that it may be difficult to naturally enlarge $\Fix(H)$ to an $H$-invariant quasiconvex subset of $\TT(S)$.\\

We now give a brief sketch of the proof of the Main Theorem \ref{r:main}, whose proof is contained in Section \ref{r:afp section}.\\

Let $R>0$ and suppose $\sigma \in \Fix^T_R(H)$.  Since $d_{WP} \prec d_T$ \cite{Lin74}, there is some $R'>0$ depending only on $R$ and $S$ such that $\sigma \in \Fix^{WP}_{R'}(H)$.  The Weil-Petersson completion of $\TT(S)$, denoted $\widebar{\TT(S)}$ is a complete CAT$(0)$ space to which the action of $H$ extends.  By a basic lemma from CAT$(0)$ geometry \cite[Proposition II.2.7]{BH99}, there exists a barycenter $\widebar{X} \in \widebar{\Fix(H)} \subset \widebar{\TT(S)}$ of $H \cdot \sigma$ in the completion of $\Fix(H)$, with $d_{WP}(\sigma, \widebar{X}) \leq \mathrm{diam}_{WP}(H\cdot \sigma)$.  Using basic properties of the completion, one can find another fixed point $X' \in \Fix(H) \subset \TT(S)$ close to $\widebar{X}$ in $d_{WP}$.  Using the theorems of Brock, Masur-Minsky, Wolpert, Rafi, and the author, the fact that $X'$ has bounded Weil-Petersson distance to $\sigma$ is used to produce another fixed point $X'' \in \Fix(H)$ whose Teichm\"uller distance is coarsely determined only by Dehn (half-)twists around a uniformly bounded number of curves.  Work of Tao \cite{Tao13} implies that these curves must be $H$-symmetric (see Subsection \ref{r:orb Teich} for a definition) and the amount of twisting to be done around the orbit of the different curves is coarsely equal.  Using the hierarchy machinery developed in \cite{MM00} and the quasiisometry model for $(\TT(S), d_T)$ built in \cite{Dur13}, this fact can then be used to construct a sequence of fixed points which, one by one, reduces the distance to $\sigma$ by performing the twisting on each orbit simultaneously.  The result is a fixed point $X \in Fix(H)$ whose distance to $\sigma$ is coarsely determined only by $S$ and $R$.

\subsection{Acknowledgements}

This paper was part of the author's doctoral thesis at the University of Illinois at Chicago.  The author would like to thank his advisor, Daniel Groves, for his guidance, patience, and unwavering support.  He would also like to thank David Dumas, Hao Liang, Howard Masur for interesting conversations and their encouragement.  Special thanks to Dick Canary for carefully reading this paper.

\section{Preliminaries}

For the Teichm\"uller metric, see the books of Hubbard \cite{Hub} and Papadopoulos \cite{Pap07}; see also the survey of Masur \cite{Mas10}.  For the Weil-Petersson metric, see Ahlfors \cite{Ahl61}; see also the survey of Wolpert \cite{Wol07}. 

\subsection{Conventions and notation}

Throughout this paper, let $S = S_{g,n}$ denote an oriented surface of finite complexity, $\xi(S) = 3g-3 + n>0$, with genus $g$ and $n$ punctures.\\

Our methods and calculations are frequently coarse and we introduce some notation for ease of the exposition.  Given two quantities $A,B$, we write $A \prec B$ if there are constants $K,C>0$ depending only on the topology of $S$ such that $A \leq K\cdot B + C$.  If $A \prec B$ and $B \prec A$, then we write $A \asymp B$.\\

Similarly, given a constant $R>0$, we write $A \prec_R B$ if there are constants $K'$ and $C'$ depending only on $R$ and the topology of $S$ such that $A \leq K' \cdot B + C'$, and the same for $A \succ_R B$ and $A \asymp_R B$.\\

If we have $X,Y,$ and $Z$ such that $X \asymp_R Y$ and $Y \asymp_R Z$ (or $\prec_R$, $\asymp_R$), then we also have $X \asymp_R Z$, where the constants are worse for the latter coarse inequality.  As long as we only make such estimates a uniformly bounded number of times depending only on $R$ and $S$, the associated constants will still be uniform in $R$ and $S$.\\

When we write $A= A(B,C)>0$, we mean that $A$ is a positive constant depending only on the objects $B$ and $C$.

\subsection{The thin part and Minsky's product regions}

In \cite{Mas75}, Masur proved that $(\TT(S),d_T)$ is not negatively curved by exhibiting distinct geodesic rays with a common basepoint which remain a bounded distance apart for all time.  In \cite{Min96}[Theorem 6.1], Minsky expanded on Masur's insight, proving that the thin regions of $(\TT(S), d_T)$, where at least one curve is sufficiently short, are quasiisometric to product spaces with a sup metric.  Minsky's Product Regions (Theorem \ref{r:product} below) is arguably the deepest statement about the geometry of the Teichm\"uller metric and much of our coarse geometric approach hinges on it.\\

Let $\gamma = \gamma_1, \dots, \gamma_n$ be a simplex in $\CC(S)$, and let $Thin_{\epsilon}(S, \gamma) = \{\sigma \in \TT(S) \big| l_{\sigma}(\gamma_i) \leq \epsilon\}$, where $l_{\sigma}(\gamma_i)$ is the length of $\gamma_i$ in $\sigma$, for each $i$.  Let 
\[\TT_{\gamma} = \TT(S\setminus \gamma) \times \prod_{\gamma_i \in \gamma} \HH_{\gamma_i}\]
be endowed with the sup metric, where $S\setminus \gamma$ is a disjoint union of punctured surfaces and each $\HH_{\gamma_i}$ is a \emph{horodisk}, that is, a copy of the upper half-plane endowed with the hyperbolic metric.

\begin{theorem}[Product regions; Theorem 6.1 in \cite{Min96}] \label{r:product}
There is an $\epsilon>0$ sufficiently small so that the Fenchel-Nielsen coordinates on $\TT(S)$ give rise to a natural homeomorphism $\Pi: \TT(S) \rightarrow \TT_{\gamma}$, whose restriction to $Thin_{\epsilon}(S, \gamma)$ distorts distances by a bounded additive amount.
\end{theorem}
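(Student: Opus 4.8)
The plan is to run everything through Kerckhoff's formula for the Teichm\"uller metric,
$$d_T(X,Y) = \tfrac12 \log \sup_{\alpha} \frac{\mathrm{Ext}_X(\alpha)}{\mathrm{Ext}_Y(\alpha)},$$
where $\mathrm{Ext}_X(\alpha)$ is the extremal length of the simple closed curve $\alpha$ in $X$ and the supremum may be taken over all essential simple closed curves (equivalently, over measured laminations). The homeomorphism $\Pi$ is just the reorganized Fenchel--Nielsen chart of a pants decomposition refining $\gamma$: to each $\gamma_i$ one records its length $\ell_i$ and its twist, packaging them as a point $z_i \in \HH_{\gamma_i}$ with $\mathrm{Im}\,z_i \asymp 1/\ell_i$ and real part the twisting, normalized so that the Dehn twist about $\gamma_i$ acts by the unit translation $z_i \mapsto z_i+1$; the remaining coordinates assemble into a point of $\TT(S\setminus\gamma)$. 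That $\Pi$ is a homeomorphism onto $\TT_\gamma$ is classical, so the whole content is the $O(1)$ comparison of metrics on $Thin_\epsilon(S,\gamma)$.

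The heart of the matter is a uniform extremal-length estimate. For $X \in Thin_\epsilon(S,\gamma)$ and any essential $\alpha$, I would establish
$$\mathrm{Ext}_X(\alpha) \asymp \max\Big(\mathrm{Ext}_{X_0}(\alpha),\ \max_i\, i(\alpha,\gamma_i)^2\,\nu_i(X,\alpha)\Big),$$
where $X_0 \in \TT(S\setminus\gamma)$ is the $\TT(S\setminus\gamma)$-coordinate of $X$, $i(\cdot,\cdot)$ is geometric intersection number, and $\nu_i(X,\alpha) \asymp \cosh d_{\HH_{\gamma_i}}(z_i(X), z_i(\alpha))$ measures the cost of crossing the thin collar of $\gamma_i$; here $z_i(\alpha) \in \HH_{\gamma_i}$ is an $O(1)$-height point encoding how $\alpha$ twists around $\gamma_i$. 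The upper bound on $\mathrm{Ext}_X(\alpha)$ comes from testing the conformal structure against an explicit metric, built by pasting a flat cylinder of the appropriate modulus into each collar of $\gamma_i$ onto a fixed metric on the thick part; the lower bound comes from the reciprocity inequality $\mathrm{Ext}_X(\alpha) \geq i(\alpha,\gamma_i)^2/\mathrm{Ext}_X(\gamma_i)$, a twisting-sensitive refinement of it, and restriction to the thick part. The point on which the theorem turns is that all the additive errors in these comparisons are governed by the moduli of the collars, which by the Collar Lemma tend to $\infty$ as $\epsilon \to 0$; choosing $\epsilon$ small enough makes the errors absorbable uniformly, simultaneously over all $\alpha$.

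Given the estimate, the lower bound $d_T(X,Y) \succ \max\big(d_{\TT(S\setminus\gamma)}(X_0,Y_0),\ \max_i d_{\HH_{\gamma_i}}(z_i(X), z_i(Y))\big)$ follows by feeding suitable test curves into Kerckhoff's formula: a curve supported in $S\setminus\gamma$ realizing the first factor, and, for each $i$, curves crossing $\gamma_i$ that twist by increasing amounts, for which $\tfrac12\log\big(\nu_i(X,\alpha)/\nu_i(Y,\alpha)\big)$ limits onto $d_{\HH^2}(z_i(X), z_i(Y))$ by the distance formula $\cosh d(z,w) = 1 + |z-w|^2/(2\,\mathrm{Im}\,z\,\mathrm{Im}\,w)$ together with $\sup_w\big(d(z,w) - d(w',w)\big) = d(z,w')$. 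The matching upper bound is produced by an explicit quasiconformal map from $X$ to $Y$ whose dilatation realizes the sup metric: perform a Teichm\"uller map on the thick part $S\setminus\gamma$, then independently adjust the modulus and twist of each collar by an affine stretch-and-shear, and check that the dilatation of the composite is $\asymp$ the maximum of the factor distances.

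I expect the main obstacle to be exactly this uniform extremal-length estimate, and within it the twisting term $\nu_i$. Getting the dependence on the relative twisting of $\alpha$ around $\gamma_i$ correct, and recognizing it as precisely the horizontal displacement in $\HH_{\gamma_i}$, forces a careful choice of reference marking and the observation that different natural choices move every $z_i$ only a bounded hyperbolic distance; one must also handle laminations crossing several of the $\gamma_i$ at once, where the various collar contributions interact and only their maximum survives.
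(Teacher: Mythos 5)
This statement is not proved in the paper at all: it is quoted verbatim as Theorem 6.1 of Minsky's paper \cite{Min96}, so the only meaningful comparison is with Minsky's original argument. Your sketch is essentially that argument: Minsky's theorem is deduced from a uniform extremal-length estimate on $Thin_{\epsilon}(S,\gamma)$ of exactly the form you write (thick-part contribution versus annular contributions $i(\alpha,\gamma_i)^2$ weighted by a twisting/modulus term read off in $\HH_{\gamma_i}$), combined with Kerckhoff's formula $d_T=\frac12\log\sup_\alpha \mathrm{Ext}_X(\alpha)/\mathrm{Ext}_Y(\alpha)$; the only cosmetic difference is that you propose an explicit stretch-and-shear quasiconformal map for the upper bound, whereas the comparison can also be closed purely through the extremal-length estimate and Kerckhoff's formula in both directions. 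Your outline is correct, and the point you flag as the main difficulty (the uniform twisting-sensitive extremal-length estimate) is indeed where all the work lies in \cite{Min96}.
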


\subsection{Curve complexes} \label{r:curve}

The heart of the combinatorial approach to studying $\TT(S)$ is the \emph{complex of curves} of $S$, denoted $\CC(S)$, a simplicial complex whose vertices are isotopy classes of simple closed curves on $S$ and adjacency is determined by disjointness.  In the case that $S$ is a once-punctured torus or a four-holed sphere, adjacency is determined by minimal intersection.  In the case that $Y_{\alpha}$ is an annulus in $S$ with core $\alpha$, $\CC(Y_{\alpha}) = \CC(\alpha)$ is the simplicial complex with vertices consisting of paths between the two boundary components of the metric compactification of $\widetilde{Y}_{\alpha}$, the cover of $S$ corresponding to $Y_{\alpha}$, up to homotopy relative to fixing the endpoints on the boundary; two paths are connected by an edge if they have disjoint interiors.\\

We are only interested in the 1-skeleton of $\CC(S)$ with the path metric.  With this metric, we have the foundational theorem of Masur-Minsky \cite{MM99}:

\begin{theorem} \label{r:hyp}
$\CC(S)$ is a Gromov hyperbolic space.
\end{theorem}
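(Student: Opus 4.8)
The plan is to produce an explicit family of paths in $\CC(S)$ connecting any two vertices and then to verify a general combinatorial criterion guaranteeing that the mere existence of such a family forces Gromov hyperbolicity. Concretely, I would first establish (once, by hand, from the definition of $\delta$-thin triangles) the following criterion: if $X$ is a connected graph with the path metric and for each pair of vertices $x,y$ there is a connected subgraph $g_{xy}\ni x,y$ such that the $g_{xy}$ are uniformly coarsely-Lipschitz parametrized and for every triple $x,y,z$ the path $g_{xy}$ lies in a uniformly bounded neighborhood of $g_{xz}\cup g_{zy}$, then $X$ is Gromov hyperbolic and the $g_{xy}$ are uniformly close to geodesics. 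The whole problem is thereby reduced to constructing the $g_{xy}$ and checking these two properties.

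For the construction I would use Teichm\"uller geodesics. Given $\alpha,\beta\in\CC(S)$, choose $X_\alpha,X_\beta\in\TT(S)$ on which $\alpha$, respectively $\beta$, has length at most the Bers constant, let $\GG$ be the Teichm\"uller geodesic from $X_\alpha$ to $X_\beta$, and set $g_{\alpha\beta}=\Upsilon(\GG)$, where $\Upsilon\colon \TT(S)\to\CC(S)$ sends a hyperbolic structure to (a choice of) one of its shortest curves. One checks that $\Upsilon$ is coarsely well-defined and coarsely Lipschitz: this is elementary hyperbolic geometry, since the Collar Lemma bounds how many disjoint curves can be simultaneously short and a bound on $d_T$ bounds the ratio of lengths, so short curves at nearby points have bounded intersection and hence bounded distance in $\CC(S)$. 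This gives the coarse-Lipschitz property of the $g_{\alpha\beta}$, and (after noting $\TT(S)$ is connected and $\Upsilon$ is coarsely onto) connectedness of $\CC(S)$; combined with the existence of a pseudo-Anosov $\phi$ whose invariant foliations are filling, so that no curve stays short along its Teichm\"uller axis and $d_{\CC(S)}(\alpha,\phi^n\alpha)\to\infty$, it also yields that $\CC(S)$ has infinite diameter (without which hyperbolicity would be vacuous).

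The heart of the argument, and the step I expect to be the main obstacle, is the thin-triangle (equivalently, contraction) estimate for the systolic paths. This is where genuine input from Teichm\"uller geometry is required. The idea is to show that nearest-point projection in $\CC(S)$ onto a set of the form $\Upsilon(\GG)$ is coarsely Lipschitz and contracting, using the divergence properties of Teichm\"uller geodesics, the geometry of quadratic differentials, and the product structure of the thin parts (Theorem \ref{r:product}): when a curve $\gamma$ becomes short along a Teichm\"uller geodesic, the geodesic spends a long and controlled amount of time in the corresponding thin region where the $\HH_\gamma$-coordinate dominates, which constrains how other geodesics can interact with it after projection to $\CC(S)$. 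Assembling these estimates into a single uniform bound on the neighborhood in the triangle condition is the technically demanding part; this is precisely the content of the original proof of Masur--Minsky \cite{MM99}.

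Alternatively, and avoiding Teichm\"uller theory entirely, one could run a surgery-theoretic argument: pass to the arc-and-curve complex, build \emph{unicorn paths} between two arcs by the standard surgery construction, prove directly and combinatorially that triangles of unicorn paths are uniformly slim, apply the criterion above, and transfer hyperbolicity back to $\CC(S)$ using the Lipschitz coarse equivalence between the arc complex and the curve complex (treating the closed case via a separate puncturing argument). This route yields a hyperbolicity constant independent of $S$, but for the present paper the Teichm\"uller-geodesic proof is the one that meshes with the surrounding machinery.
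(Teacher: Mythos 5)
The paper does not prove this statement at all: Theorem \ref{r:hyp} is quoted as the foundational theorem of Masur--Minsky \cite{MM99}, so there is no in-paper argument to compare yours against. Judged on its own terms, your proposal is an accurate roadmap of the two standard proofs --- the original one via the systole map $\Upsilon\colon \TT(S)\to\CC(S)$, Teichm\"uller geodesics, and a guessing-geodesics/contraction criterion, and the later combinatorial one via unicorn paths in the arc-and-curve complex --- but it is not itself a proof. The peripheral steps you actually sketch (coarse well-definedness and coarse Lipschitzness of $\Upsilon$ via the Collar Lemma and length-ratio bounds, connectivity, and infinite diameter via a pseudo-Anosov with filling foliations) are correct and genuinely elementary.

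The gap is exactly where you locate it: the uniform contraction/thin-triangle estimate for the sets $\Upsilon(\GG)$. Nothing in your write-up supplies it; appealing to the product regions (Theorem \ref{r:product}) and to ``divergence properties of Teichm\"uller geodesics'' names the relevant tools but produces no bound, and you concede that assembling the estimate is precisely the content of \cite{MM99}. So as a blind proof the central step is missing: the argument reduces Theorem \ref{r:hyp} to the main technical theorem of \cite{MM99} rather than establishing it, and the same caveat applies to the unicorn-path alternative, where the slimness of unicorn triangles --- the whole content of that proof --- is only asserted. For the purposes of this paper, citing \cite{MM99} as the author does is the appropriate move; if you wanted a self-contained argument, the shortest route is the unicorn-path proof, where triangle slimness is a short surgery induction, followed by the puncturing and Lipschitz-transfer step needed to handle closed surfaces and return from the arc-and-curve complex to $\CC(S)$.
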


See \cite{MM99}, \cite{MM00}, and Schleimer's notes \cite{Schleim} for basics on curve complexes.

\subsection{Pants, markings, and subsurface projections} \label{r:mark}

This subsection briefly introduces two of the fundamental players in the geometric-combinatorial approach of Masur-Minsky, Brock, Rafi, and the author, namely the curve complexes discussed in Subsection \ref{r:curve} and marking complexes.  See \cite{MM00}, \cite{Br03}, \cite{Raf07}, and \cite{Dur13} for the original treatments.\\

A \emph{pair of pants} on $S$ is a maximal simplex in $\CC(S)$, whose complement in $S$ is a disjoint collection of three-holed spheres.  The \emph{pants complex}, denoted $\PP(S)$, is a simplicial complex whose vertices are pairs of pants and two pairs of pants $P_1, P_2$ are connected by an edge if there are two curves $\alpha \in P_1, \beta \in P_2$ such that $P_1\setminus \alpha = P_2 \setminus \beta$, with $\alpha$ intersecting $\beta$ minimally.\\

We frequently use the following insight of Brock \cite{Br03}:
\begin{theorem}
The pants complex $\PP(S)$ is $\MCG(S)$-equivariantly quasiisometric to Teichm\"uller space with the Weil-Petersson metric, $(\TT(S),d_{WP})$.
\end{theorem}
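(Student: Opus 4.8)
The plan is to produce an explicit $\MCG(S)$-equivariant coarse map $Q\colon(\TT(S),d_{WP})\to\PP(S)$ together with a coarse inverse $\sigma$, and to verify that both are coarsely Lipschitz and that $Q$ is coarsely surjective. For $X\in\TT(S)$, let $Q(X)$ be a \emph{Bers pants decomposition}: a pair of pants all of whose curves have $X$-length at most the Bers constant $L=L(S)$. Such a decomposition exists by Bers' theorem, and $Q$ is coarsely well defined because any two $L$-short pants decompositions at the same point have geometric intersection number bounded in terms of $S$ alone (by the collar lemma), while pants decompositions of bounded intersection number lie a bounded $\PP(S)$-distance apart (a standard fact). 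Equivariance is immediate from naturality, and $Q$ is coarsely surjective since any $P\in\PP(S)$ is a Bers decomposition at any point whose $P$-curves have length less than $L$. For the coarse inverse, fix a small $\epsilon_0>0$ and let $\sigma(P)$ be the point whose Fenchel-Nielsen coordinates relative to $P$ assign length $\epsilon_0$ and zero twist to every curve of $P$. Then $Q\circ\sigma$ is uniformly close to $\id_{\PP(S)}$; and since the complement of the maximal simplex $P=Q(X)$ is a union of pairs of pants, both $X$ and $\sigma(P)$ can be pinched to the single point of the corresponding boundary stratum of the Weil-Petersson completion at cost $\asymp\sqrt{L}$, respectively $\asymp\sqrt{\epsilon_0}$, by Wolpert's asymptotic expansion of $g_{WP}$ near the strata \cite{Wol07}, so $d_{WP}(X,\sigma Q(X))\prec 1$.

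It then remains to check that $Q$ and $\sigma$ are coarsely Lipschitz. For $Q$, subdivide a WP-geodesic from $X$ to $Y$ into $\asymp d_{WP}(X,Y)+1$ sub-arcs of WP-length at most $1$, so it suffices to show that $d_{WP}(X,Y)\le 1$ implies $d_{\PP}(Q(X),Q(Y))\prec 1$. The input here is Wolpert's bound on the WP-gradient of $\ell_\alpha^{1/2}$ on the thin part (equivalently $\|\mathrm{grad}_{WP}\,\ell_\alpha\|\prec\sqrt{\ell_\alpha}$) together with the fact that on the thick part all curve lengths are pinched between uniform constants: either way, a curve that is $L$-short at one endpoint of such a sub-arc remains $L'$-short at the other, for a uniform $L'$. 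Hence $Q(X)$ and $Q(Y)$ are each collections of $L'$-short curves at $Y$, so they have bounded intersection number and lie $\prec 1$ apart in $\PP(S)$. (On the thin part one may instead invoke the Weil-Petersson analogue of Minsky's product regions, Theorem \ref{r:product}, in which a Bers decomposition records the short curves together with a Bers decomposition of the complement, in order to keep the estimate uniform.)

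For $\sigma$, chaining along a $\PP(S)$-geodesic reduces the task to bounding $d_{WP}(\sigma(P),\sigma(P'))$ when $P$ and $P'$ differ by a single elementary move. Then $P$ and $P'$ share every curve but one, and the differing curves $\alpha\in P$ and $\beta\in P'$, with $i(\alpha,\beta)$ minimal, lie inside a single complexity-one subsurface $W\subset S$ --- a four-holed sphere or a one-holed torus --- whose boundary curves have length $\epsilon_0$ in both $\sigma(P)$ and $\sigma(P')$. Pinching these boundary curves lands one in a boundary stratum of the completion which, up to the contribution of the complementary pairs of pants, is $\TT(W)$ with its own Weil-Petersson metric; there the problem becomes bounding the distance between the point with $\ell_\alpha=\epsilon_0$ and the point with $\ell_\beta=\epsilon_0$. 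This is settled by an explicit computation with Wolpert's expansion: for instance, pinch $\alpha$ along a path of WP-length $\asymp\sqrt{\epsilon_0}$ to its cusp stratum, traverse a bounded amount inside $\overline{\TT(W)}$, and open up $\beta$ symmetrically. Combining everything: $\sigma$ is coarsely Lipschitz, and with $\sigma Q\approx\id$, $Q\sigma\approx\id$, coarse surjectivity of $Q$, and the coarse Lipschitz bound on $Q$, this exhibits $Q$ as the desired $\MCG(S)$-equivariant quasiisometry --- the matching lower bound $d_{WP}(X,Y)\prec d_{\PP}(Q(X),Q(Y))+1$ following by inserting $\sigma Q(X)$ and $\sigma Q(Y)$ and using that $\sigma$ is coarsely Lipschitz.

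I expect the main obstacle to be the elementary-move estimate of the previous paragraph: it is the one step that genuinely needs the fine near-boundary asymptotics of the Weil-Petersson metric rather than soft arguments, and it is exactly where the combinatorics of $\PP(S)$ --- that elementary moves are governed by the Farey graphs of complexity-one subsurfaces --- enters. A secondary, pervasive subtlety is that Bers decompositions are highly non-unique on the thin part, so every estimate must be made with the ``bounded intersection $\Rightarrow$ bounded pants distance'' slack built in, and one must verify that the finitely many coarse inequalities compose with constants depending only on $S$.
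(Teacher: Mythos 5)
This theorem is not proved in the paper at all: it is Brock's theorem, imported from \cite{Br03}, so there is no in-paper argument to compare with. The only ingredient the paper does reproduce is Proposition \ref{r:VP}, the uniformly bounded Weil--Petersson diameter of $V_L(P)$, and that is exactly your ``pinch to the stratum'' step: a full pants decomposition pinches to a zero-dimensional stratum, and Wolpert's estimate $d\le\sqrt{2\pi l}$ (Theorem \ref{r:Wol est}, \cite{Wol05}) bounds the distance of every point of $V_L(P)$ to that single cusp point. Your sketch follows the same overall strategy as Brock's original proof --- the Bers map $Q$, model points $\sigma(P)$, bounded Weil--Petersson cost of an elementary move, unit subdivision of WP geodesics for the coarse Lipschitz property of $Q$, and ``bounded intersection implies bounded pants distance'' --- and I consider it sound as a sketch, with two caveats. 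First, the genuinely analytic step is ``a curve that is $L$-short at one end of a unit WP segment is $L'$-short at the other'': the bound you quote, $\|\mathrm{grad}_{WP}\,\ell_\alpha\|\prec\sqrt{\ell_\alpha}$, is the near-stratum asymptotic, and you need Wolpert's gradient bounds uniformly on the whole region $\{\ell_\alpha\le L'\}$ together with a continuity/ODE argument showing some admissible $L'$ exists; your parenthetical that on the thick part ``all curve lengths are pinched between uniform constants'' is not literally true (lengths are only bounded below there), though the statement you actually need is the standard Wolpert lemma. Second, the elementary-move estimate you single out as the main obstacle can be handled softly, with no near-boundary asymptotics: with a geometrically natural zero-twist convention $\sigma$ is $\MCG(S)$-equivariant, there are only finitely many $\MCG(S)$-orbits of pairs of pants decompositions differing by an elementary move, and $d_{WP}$ is $\MCG(S)$-invariant, so the cost per move is uniformly bounded; the same finiteness-of-orbits argument (plus connectivity of $\PP(S)$) is also the cleanest justification for the ``bounded intersection number implies bounded distance in $\PP(S)$'' fact you invoke repeatedly.
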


In \cite{MM00}, Masur-Minsky introduce a quasiisometry model for $\MCG(S)$ called the \emph{marking complex}, denoted $\MM(S)$.  A \emph{marking} $\mu \in \MM(S)$ on $S$ is collection of \emph{transverse pairs} $(\alpha, t_{\alpha})$ where the $\alpha$ form a pants decomposition called the \emph{base of} $\mu$, which we denote by $\mathrm{base}(\mu)$, and each $t_{\alpha}$ is a simplex in the annular complex $\CC(\alpha)$ (see \cite{MM00}[Section 2.4]), called the set of \emph{transversals}.  In addition, we assume that our markings are \emph{clean}, which means that the only base curve that each transversal intersects is its paired base curve.\\

Two such (clean) markings are connected by an edge in $\MM(S)$ if they differ by a Dehn twist or half twist around a base curve, called a \emph{twist move}, or a \emph{flip move}, in which a base curve and its transverse curve have switched roles $(\alpha, t_{\alpha}) \mapsto (t_{\alpha}, \alpha)$ (along with some other technical, coarsely inconsequential changes to guarantee the resulting marking is clean).  See \cite{Dur13}[Subsection 2.3] for a discussion.\\

We need the following result from \cite{MM00}:

\begin{theorem}
The marking complex $\MM(S)$ with the graph metric is quasiisometric to $\MCG(S)$ with any word metric.
\end{theorem}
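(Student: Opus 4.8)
The plan is to apply the \v{S}varc--Milnor lemma to the natural action of $\MCG(S)$ on $\MM(S)$. The skeleton of the argument is: $\MM(S)$ is a connected, locally finite graph, hence with the graph metric a proper geodesic (in particular length) space; $\MCG(S)$ acts on it by simplicial isometries; the action is cocompact; and the stabilizer of a marking is finite, so the action is properly discontinuous. The \v{S}varc--Milnor lemma then yields that $\MCG(S)$ is finitely generated and that for any fixed $\mu_0 \in \MM(S)$ the orbit map $g \mapsto g \cdot \mu_0$ is a quasiisometry $\MCG(S) \to \MM(S)$. Since all word metrics on a finitely generated group (with respect to finite generating sets) are bi-Lipschitz, hence quasiisometric, to one another, this gives the stated conclusion.

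First I would record the structural facts about the action. \emph{Isometric action:} a mapping class carries a clean marking to a clean marking, since it sends pants decompositions to pants decompositions and transversals to transversals while preserving geometric intersection numbers (so cleanness is preserved), and it sends twist and flip moves to twist and flip moves; hence it preserves adjacency in $\MM(S)$. \emph{Cocompactness:} by the change-of-coordinates principle $\MCG(S)$ acts transitively on pants decompositions of $S$, so it is enough to check that, with a pants decomposition $P = \{\alpha_1, \dots, \alpha_m\}$ ($m = \xi(S)$) fixed, the markings with $\mathrm{base}(\mu) = P$ fall into finitely many orbits under $\mathrm{Stab}(P)$; this holds because $\mathrm{Stab}(P)$ contains the free abelian group generated by the Dehn twists $T_{\alpha_i}$, each $T_{\alpha_i}$ translates the transversal $t_{\alpha_i}$ in the annular complex $\CC(\alpha_i)$, and $\CC(\alpha_i)$ is quasiisometric to $\ZZ$ with $T_{\alpha_i}$ acting coboundedly. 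Thus there are finitely many $\MCG(S)$-orbits of markings, and the $\MCG(S)$-orbit of a bounded set containing one representative of each covers $\MM(S)$. \emph{Finite stabilizers:} if $g$ fixes $\mu$ then $g$ fixes $\mathrm{base}(\mu) = P$ and each transversal; the stabilizer of $P$ surjects onto a finite group (permuting the $\alpha_i$ and the complementary pants, with finite residual data) whose kernel is the twist group $\ZZ^{m}$, and no nontrivial element of $\ZZ^{m}$ fixes all transversals, since a power $T_{\alpha_i}^{n}$ with $n \neq 0$ moves $t_{\alpha_i}$; hence $\mathrm{Stab}(\mu)$ injects into the finite quotient and is finite.

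The part that actually requires work is verifying that $\MM(S)$ is a connected, locally finite graph. \emph{Connectivity} I would reduce to connectivity of the pants graph $\PP(S)$ (Hatcher--Thurston): a flip move on $(\alpha_i, t_{\alpha_i})$ realizes an elementary move of $\PP(S)$ at the level of bases, so twist and flip moves carry the base of one marking to the base of another; once the bases agree, two transversal systems over a common base differ by a sequence of twist moves. \emph{Local finiteness} rests on two points: the annular complex $\CC(\alpha_i)$ is locally finite (each arc admits only finitely many arcs of disjoint interior), so each base curve contributes finitely many twist neighbors; and the flip move at a transverse pair, followed by the bounded ``cleaning'' procedure that restores cleanness, has only finitely many outcomes. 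I expect this last point --- confirming that the cleaning procedure terminates in a finite, uniformly bounded set of clean markings, so that $\MM(S)$ is genuinely locally finite --- to be the main technical obstacle; once $\MM(S)$ is known to be a connected locally finite graph, the remainder is the standard Milnor--\v{S}varc template.
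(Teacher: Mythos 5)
The paper quotes this theorem from \cite{MM00} without proof, and the \v{S}varc--Milnor argument you give is exactly how it is established there; your decomposition into isometric action, connectivity, local finiteness, cocompactness, and finite stabilizers is the right structure, and each of those claims is correct.

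One step needs repair. The annular complex $\CC(\alpha)$ is \emph{not} locally finite. Its vertices are arcs in the compactified annular cover taken up to homotopy \emph{fixing the endpoints}, and since those endpoints range over the two boundary circles, a fixed arc has uncountably many arcs with disjoint interior from it; the statement that $\CC(\alpha)$ is quasiisometric to $\ZZ$ is a coarse metric fact and says nothing about vertex degrees. The local finiteness of $\MM(S)$ is nevertheless true, but for a different reason than you give: a twist move is by \emph{definition} a single positive or negative Dehn (half)twist $T_\alpha^{\pm 1}$ or $T_\alpha^{\pm 1/2}$ about a base curve, so each base curve contributes only finitely many twist neighbors regardless of the combinatorics of $\CC(\alpha)$; together with the uniformly bounded ambiguity of the flip-and-clean procedure (which you correctly identified as the genuine technical point), this is what gives local finiteness of $\MM(S)$. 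With that fix your argument is sound and coincides with the standard one.
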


We are often interested in comparing two curves, pairs of pants, or markings in a curve complex, pants complex, or marking complex of some subsurface.  We do so via \emph{subsurface projections}, an essential concept in all that follows.\\

Let $\alpha \in \CC(S)$ be any simplex and let $Y \subset S$ be any subsurface that is not a pair of pants.  The \emph{subsurface projection} of $\alpha$ to $Y$, denoted $\pi_Y(\alpha)\subset \CC(Y)$, is obtained by completing the arcs in $\alpha \cap Y$ along the boundary of a regular neighborhood of $\alpha \cap Y$ and $\partial Y$ to curves in $Y$.  In the case that $Y = Y_{\gamma}$ is an annulus with core $\gamma$, then we let $\pi_{Y_{\gamma}}(\alpha) = \pi_{\gamma}(\alpha)$ be the set of lifts of $\gamma$ to the annular cover $\widetilde{Y}_{\gamma}$ of $S$ which connect the two boundaries of the compactification of $\widetilde{Y}_{\gamma}$.  We remark that in both cases $\pi_Y(\alpha) \subset \CC(Y)$ is a simplex, unless $\alpha \cap Y = \emptyset$ and then $\pi_{Y}(\alpha) = \emptyset$.\\

In the case of a pants decomposition or a marking $\mu \in \MM(S)$, we set $\pi_Y(\mu) = \pi_Y(\mathrm{base}(\mu))$.  If $Y = Y_{\alpha}$ is an annulus with core $\alpha \in \mathrm{base}(\mu)$ and transversal $t_{\alpha}$, then $\pi_{\alpha}(\mu) = t_{\alpha}$.  See \cite{MM00}[Section 2] for more details.\\

When measuring the distance between the projection of two curves or markings to a subsurface, we typically write $d_Y(\pi_Y(\mu_1), \pi_Y(\mu_2)) = d_Y(\mu_1,\mu_2)$.

\subsection{The augmented marking complex} \label{r:AMS section}

In \cite{Dur13}, we built an augmentation of the marking complex by using $\MM(S)$ as the thick part of $\TT(S)$ and then adding Groves-Manning combinatorial horoballs \cite{GM08} along Dehn (half-)twist lines to mimic the thin product regions of $\TT(S)$ from Theorem \ref{r:product}.  Our main theorem was:

\begin{theorem} \label{r:aug qi}
The augmented marking complex, $\AM(S)$, is $\MCG(S)$-equivariantly quasiisometric to $\TT(S)$ in the Teichm\"uller metric.
\end{theorem}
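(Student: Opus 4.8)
The plan is to construct an explicit coarse map $\Phi : \AM(S) \to \TT(S)$ and to verify the defining properties of a quasi-isometry one at a time, extracting the metric estimates from Rafi's combinatorial formula for the Teichm\"uller metric \cite{Raf07} on one side and from the Masur--Minsky distance formula \cite{MM00} together with Groves--Manning's geometry of combinatorial horoballs \cite{GM08} on the other. First I would build $\Phi$. On the copy of the marking complex $\MM(S)$ sitting inside $\AM(S)$, send a clean marking $\mu$ to any point $X_\mu$ in a fixed thick part of $\TT(S)$ at which $\mu$ is realized as a \emph{short marking}, meaning that every base curve has length $\asymp 1$ and every transversal twists a bounded amount; such an $X_\mu$ is coarsely well defined and the assignment is $\MCG(S)$-equivariant by naturality. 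On the Groves--Manning horoball glued along the twist line of a base curve $\alpha$, send a level-$k$ vertex to a point with $\ell_\alpha \asymp e^{-k}$, using its position along the twist line to record the twisting coordinate; more systematically, inside a thin region $Thin_{\epsilon}(S,\gamma)$ I would use Minsky's homeomorphism $\Pi$ of Theorem \ref{r:product} to place the $\TT(S\setminus\gamma)$-coordinate at the short-marking point of the induced sub-marking and to read the horodisk coordinates off the horoball levels.

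Next I would show $\Phi$ is coarsely Lipschitz by checking each type of edge of $\AM(S)$. Twist and flip moves in $\MM(S)$ move the short-marking point a bounded Teichm\"uller distance by standard estimates; a vertical horoball edge changes $\ell_\alpha$ by a bounded multiplicative factor, hence the point by bounded $d_T$ via Theorem \ref{r:product}; and a level-$k$ horizontal edge moves one $\asymp 2^k$ steps along the twist line while $\ell_\alpha \asymp e^{-k}$, a bounded horocyclic displacement in the corresponding horodisk factor. Coarse surjectivity proceeds in the same spirit: given $\sigma \in \TT(S)$, let $\gamma$ be the multicurve of $\epsilon$-short curves at $\sigma$, apply Theorem \ref{r:product} to write $\Pi(\sigma) = (\sigma_0, (z_i)_i)$, and take a short marking for $\sigma_0$ together with horoball coordinates matching the $z_i$.

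The substantive step is that $\Phi$ is a quasi-isometric embedding, which I would establish by matching distance formulas. On the combinatorial side, combining the Masur--Minsky distance formula for $\MM(S)$ with the logarithmic distance estimates in Groves--Manning horoballs should show that $d_{\AM}(\mu,\nu)$ is coarsely the threshold sum $\sum_{Y \text{ non-annular}} [d_Y(\mu,\nu)]_k$, plus terms $\asymp \log(1 + d_\alpha(\mu,\nu))$ over the annuli $\alpha$, plus the vertical horoball depths $\asymp \log(1/\ell_\alpha)$. Rafi's combinatorial formula for $d_T$ \cite{Raf07} has exactly this shape: passing from the marking-complex metric to the Teichm\"uller metric precisely replaces the linear annular terms of Masur--Minsky by logarithmic ones and incorporates the horodisk directions of the short curves, and these match term by term since a Groves--Manning horoball over $\ZZ$ is quasi-isometric to a horoball in $\HH^2$. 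Because the number of simultaneously short curves is bounded by $\xi(S)$, the sum over the twist-line horoballs is coarsely the maximum appearing in the sup metric of Minsky's product regions, so the product structure is reproduced correctly.

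The main obstacle I anticipate is proving the distance formula for $\AM(S)$ itself --- the analogue of the Masur--Minsky formula for the horoball-augmented complex --- and then exhibiting, for each pair of markings, a path in $\AM(S)$ whose length realizes Rafi's estimate up to uniform constants. Both of these require the hierarchy machinery of \cite{MM00}: the lower bound on $d_{\AM}$ comes from projecting an efficient path to all subsurfaces, while the matching upper bound comes from resolving a hierarchy into an efficient combinatorial path, now threading through the combinatorial horoballs wherever a curve becomes short. Assembling these with the term-by-term comparison above yields the desired $\MCG(S)$-equivariant quasi-isometry from $\AM(S)$ to $(\TT(S), d_T)$.
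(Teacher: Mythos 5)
This theorem is stated here as a recollection of the main result of \cite{Dur13}; the present paper does not reprove it, but it records the quasi-isometries $G$ and $F$ in Subsection~\ref{r:interplay} and the resulting distance formula \eqref{r:distances 3} in Theorem~\ref{r:distances}. Your outline matches the proof strategy reflected there: build the map via Fenchel--Nielsen coordinates (thick markings sent to Bers pants points, horoball level $k$ sent to $l_\alpha \asymp \epsilon e^{-k}$ with twisting read off the combinatorial twist line), verify coarse Lipschitz and coarse surjectivity edge-by-edge, and establish the quasi-isometric embedding by proving an $\AM(S)$ distance formula via augmented hierarchy paths and matching it against Rafi's formula. One small expository point: the $\AM(S)$ distance formula is most cleanly stated as $\sum_{Y}[d_Y]_K + \sum_{\alpha}[d_{\HHH_\alpha}]_K$ as in \eqref{r:distances 3}, where the combinatorial-horoball metric $d_{\HHH_\alpha}$ already packages together the logarithmic annular twisting and the vertical depth contributions, rather than as the three separate sums you wrote; the comparison to Rafi's formula then goes through Lemma~\ref{r:horoball qi} identifying $\HHH(\ZZ)$ with a horodisk.
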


One of the key constructions we need is that of a Groves-Manning combinatorial horoball.  We define them in the simple case over $\ZZ$, as that is all we need for our purposes.\\

The \emph{combinatorial horoball over $\ZZ$}, $\mathcal{H}(\ZZ)$, is the 1-complex with vertices $\mathcal{H}(\ZZ) = \ZZ \times (\{0\} \cup \NN)$ and edges as follows:
\begin{itemize}
\item If $x,y \in \ZZ$ and $m\in \{0\} \cup \NN$ such that $0<|x-y| \leq e^m$, then $(x,m)$ and $(y,m)$ are connected by an edge in $\mathcal{H}(\ZZ)$.
\item If $x \in \ZZ$ and $m\in \{0\} \cup \NN$, then $(x,m)$ is connected to $(x,m+1)$ by an edge.
\end{itemize}

A fact we need from \cite{Dur13} is that combinatorial horoballs are quasiisometric to horodisks:

\begin{lemma}\label{r:horoball qi}
The combinatorial horoball over $\ZZ$, $\HHH(\ZZ)$, is quasiisometric to the horodisk $\HH^2_{\geq 1}$.
\end{lemma}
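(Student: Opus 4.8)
The plan is to exhibit an explicit quasiisometry. The natural candidate is the map $\Phi \colon \HHH(\ZZ) \to \HH^2_{\geq 1}$ sending the vertex $(x,m)$ to $x + i\,e^m$ (interpolating over edges, or just working with the $0$-skeleton, which carries the whole metric up to bounded error). Since $e^m \geq e^0 = 1$, the image genuinely lies in the horodisk $\HH^2_{\geq 1} = \{z \in \HH^2 : \operatorname{Im} z \geq 1\}$. I would then verify the two conditions defining a quasiisometry — that $\Phi$ coarsely preserves distances and that it is coarsely onto — each of which rests on having a usable formula for the two metrics involved.

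First I would record those two distance formulas. On the hyperbolic side, $\HH^2_{\geq 1}$ is a horoball in $\HH^2$, hence convex, so its intrinsic path metric is exactly the restriction of $d_{\HH^2}$; and $d_{\HH^2}$ is controlled by the classical identity $\cosh d_{\HH^2}(p,q) = 1 + |p-q|^2/(2\,\operatorname{Im} p\,\operatorname{Im} q)$, which after applying $\operatorname{arccosh}$ and absorbing bounded errors becomes $d_{\HH^2}(p,q) \asymp \log\!\big(1 + |p-q|/\sqrt{\operatorname{Im} p\,\operatorname{Im} q}\,\big)$ with absolute coarseness constants. On the combinatorial side, I would invoke (or quickly re-derive via the obvious ``ascend, make a bounded number of horizontal steps, descend'' strategy, which one checks is coarsely optimal) the Groves--Manning estimate \cite{GM08}: for $x,y \in \ZZ$ and $m,n \in \{0\}\cup\NN$,
\[ d_{\HHH(\ZZ)}\big((x,m),(y,n)\big) \;\asymp\; (M - m) + (M - n), \qquad M := \max\!\big(m,\ n,\ \log(1+|x-y|)\big). \]

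The core of the argument is then to feed $\Phi$ into the hyperbolic formula and match it against this estimate. Taking imaginary parts $e^m, e^n$, one finds $|e^m - e^n|/\sqrt{e^m e^n} = 2\sinh\tfrac{|m-n|}{2}$, so the vertical contribution $\log(1 + |e^m-e^n|/e^{(m+n)/2})$ is $\asymp |m-n|$, while the horizontal contribution $\log(1 + |x-y|/e^{(m+n)/2})$ is $\asymp \max\!\big(0,\ \log(1+|x-y|) - \tfrac{m+n}{2}\big)$; adding them gives $d_{\HH^2}(\Phi(x,m),\Phi(y,n)) \asymp |m-n| + \max\!\big(0,\ \log(1+|x-y|) - \tfrac{m+n}{2}\big)$. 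I would then compare this with the combinatorial formula in the two regimes $\log(1+|x-y|) \leq \max(m,n)$ and $\log(1+|x-y|) \geq \max(m,n)$: in the first, both quantities are coarsely $|m-n|$ (the horizontal term there is at most $\tfrac12|m-n|$ and so is absorbed); in the second, both are coarsely $\log(1+|x-y|) - \min(m,n)$, the only discrepancy being a harmless factor of at most two coming from the descent being counted once in $d_{\HH^2}$ and twice in $d_{\HHH(\ZZ)}$. Coarse surjectivity is routine: given $x + iy$ with $y \geq 1$, round $x$ to the nearest integer $n$ and set $m = \lfloor \log y\rfloor$, so that $y/e^m \in [1,e)$ and $|x-n| \leq \tfrac12$, which makes $|p-q|^2/(y e^m)$ — hence $d_{\HH^2}(x+iy, \Phi(n,m))$ — uniformly bounded.

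I expect the only real care to be needed in that regime-by-regime comparison: the hyperbolic formula naturally produces the threshold $\tfrac{m+n}{2}$ whereas the combinatorial one produces $\max(m,n)$, and one must see that the gap between these two thresholds is always swallowed by the $|m-n|$ term, and likewise that the factor-two mismatch in the deep regime is consistent with (rather than an obstruction to) a quasiisometry. Everything else — the two distance formulas, the surjectivity estimate, and assembling the pieces via ``coarsely bi-Lipschitz plus coarsely onto implies quasiisometry'' — is standard.
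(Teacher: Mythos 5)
Your argument is correct and complete: the explicit map $(x,m)\mapsto x+i\,e^m$, the Groves--Manning distance estimate for $\HHH(\ZZ)$, the convexity of the horoball (so intrinsic equals restricted metric), and the regime-by-regime comparison all check out, with the $\tfrac{m+n}{2}$ versus $\max(m,n)$ threshold gap and the factor-two mismatch indeed absorbed into the quasiisometry constants. The paper itself gives no proof, quoting the lemma from \cite{Dur13}, and your proof is essentially the same standard argument used there (exponential vertical embedding plus the distance formula of \cite{GM08}), so there is nothing further to add.
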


These combinatorial horoballs play the part in $\AM(S)$ of the horodisks appearing in Minsky's Product Regions Theorem \ref{r:product}.\\

We now recall the definition of $\AM(S)$.\\

An \emph{augmented marking} $\tilde{\mu} \in \AM(S)$ is a marking $\mu$ with a collection of nonnegative integers called the \emph{length data} $D_{\alpha}(\tilde{\mu}) \in \mathbb{N}\cup \{0\}$, one for each $\alpha \in \mathrm{base}(\mu)$.  Two augmented markings $\tilde{\mu}_1, \tilde{\mu}_2 \in \AM(S)$, with underlying markings $\mu_1, \mu_2 \in \MM(S)$, are connected by an edge in $\AM(S)$ if they differ by one of the following types of elementary moves, which extend the elementary moves in $\MM(S)$:

\begin{itemize}
\item \emph{Flip moves}: If  $\mu_1, \mu_2 \in \MM(S)$ differ by a flip move on a transverse pairing $(\alpha, t_{\alpha}) \mapsto (t_{\alpha}, \alpha)$, and if $\tilde{\mu}_1, \tilde{\mu}_2$ have the same base curves and length data, with $D_{\alpha}(\tilde{\mu}_1) = D_{\alpha}(\tilde{\mu}_2) = 0$ for each $\alpha \in \mathrm{base}(\tilde{\mu}_1) = \mathrm{base}(\tilde{\mu}_2)$.
\item \emph{Twist moves}: If $\alpha \in \mathrm{base}(\mu_1) = \mathrm{base}(\mu_2)$, $D_{\alpha}(\tilde{\mu}_1) = D_{\alpha}(\tilde{\mu}_2) = k > 0$, and $\tilde{\mu}_1= T^m_{\alpha} \tilde{\mu}_2$ with $0 < m < e^k$, where $T_{\alpha}$ denotes the positive Dehn (half)twist around $\alpha$.
\item \emph{Vertical moves}: If $\mu_1 = \mu_2$ and there is an $\alpha \in \mathrm{base}(\mu_1) = \mathrm{base}(\mu_2)$ such that $D_{\alpha}(\tilde{\mu}_1) = D_{\alpha}(\tilde{\mu}_2) \pm 1$ and $D_{\beta}(\tilde{\mu}_1) = D_{\beta}(\tilde{\mu}_2)$ for all $\beta \in \mathrm{base}(\mu_1) \setminus \alpha = \mathrm{base}(\mu_2) \setminus \alpha$.
\end{itemize}

It should be clear from the definitions that a metrically distorted copy of $\MM(S)$ sits bijectively at the base of $\AM(S)$.\\

These $D_{\alpha}$ coordinates can be used to give a coarse measurement of the length of a curve in any augmented marking, regardless of whether the curve is in its base.   We emphasize that this measurement records whether a curve is short in $\tilde{\mu}$ and, if so, coarsely how short it is.  Given an augmented marking $\tilde{\mu} \in \AM(S)$ and a curve $\alpha \in \CC(S)$.  We define

\begin{equation*} D_{\alpha}(\tilde{\mu}) = \left\{
\begin{array}{lr}
D_{\alpha} & \text{if } \alpha \in \textrm{base}(\tilde{\mu})\\
0 & \text{otherwise}
\end{array}
\right.
\end{equation*}

Since our above definition of $D_{\alpha}$ coincides with the length coordinate for any $\tilde{\mu} \in \AM(S)$ with $\alpha \in \mathrm{base}(\tilde{\mu})$, we use the same notation for both going forward.  For any $\tilde{\mu} \in \AM(S)$, we note that $D_{\alpha}(\tilde{\mu})=0$ for all but finitely many $\alpha \in \CC(S)$.  We also note that these coarse lengths coordinates, as with Fenchel-Nielsen length coordinates, behave nicely with respect to the action of $\MCG(S)$.  In particular, if $\phi \in \MCG(S)$, then
\[D_{\alpha}(\phi \cdot \tilde{\mu}) = D_{\phi \cdot \alpha}(\tilde{\mu})\]

Combinatorial horoballs are the $\AM(S)$-analogues of annular curve complexes, so we also want to compare augmented markings on combinatorial horoballs.  Doing so requires some technical care, as annular curve complexes are only quasiisometric to $\ZZ$.  We recall some notation from [Subsection 4.2, \cite{Dur13}].\\

For each $\alpha \in \CC(S)$, choose an arc $\beta_{\alpha} \in \CC(\alpha)$.  For any other $\gamma \in \CC(\alpha)$, let $\gamma \cdot \beta_{\alpha}$ denote the algebraic intersection number.  The map $\phi_{\beta_{\alpha}}: \CC(\alpha) \rightarrow \ZZ$ given by $\phi_{\beta_{\alpha}}(\gamma) = \gamma \cdot \beta_{\alpha}$ is a $(1,2)$-quasiisometry independent of the choice of $\beta_{\alpha}$ which records the twisting of $\gamma$ around $\alpha$ relative to $\beta_{\alpha}$.  The idea is that any two arcs in $\CC(\alpha)$ differ by some number of twists around $\beta_{\alpha}$ up to a small bounded additive error.\\

Let $\widehat{\HHH}_{\alpha} = \HHH(\ZZ)$ be the combinatorial horoball over $\ZZ$.  We can now define a projection map $\pi_{\widehat{\HHH}_{\alpha}}:\AM(S) \rightarrow \widehat{\HHH}_{\alpha}$.  For any $\tilde{\mu} \in \AM(S)$,
\begin{equation*} \pi_{\widehat{\HHH}_{\alpha}}(\tilde{\mu}) = \left\{
\begin{array}{lr} \left(\phi_{\beta_{\alpha}}(t_{\alpha}), D_{\alpha}\right) & \text{if } (\alpha, t_{\alpha}, D_{\alpha}) \in \tilde{\mu}\\
\left(\phi_{\beta_{\alpha}}(\pi_{\alpha}(\tilde{\mu})),0\right) & \text{otherwise}
\end{array}
\right.
\end{equation*}

We note that any error coming from a choice of $\beta_{\alpha} \in \CC(\alpha)$ is uniformly bounded.\\
   
We also need to understand how to project an augmented marking to an augmented marking on a subsurface.  First, we recall the definition of the projection of a marking to the marking complex of a subsurface.\\

For $\mu \in \MM(S)$ and subsurface $Y \subset S$, we build the \emph{projection of $\mu$ to a marking on $\MM(Y)$}, $\pi_{\MM(Y)}(\mu)$, inductively as follows.  Choose a curve $\alpha_1 \in \pi_Y(\mu)$ and build a pants decomposition on $Y$ by choosing $\alpha_i \in \pi_{Y \setminus \bigcup_{j=1}^{i-1} \alpha_j} (\mu)$.  Using this pants decomposition as its base, build a marking on $Y$ by choosing transverse pairs $(\alpha_i, \pi_{\alpha_i}(\mu))$.  Define $\pi_{\MM(Y)}(\mu) \subset \MM(Y)$ to be the collection of all markings resulting from varying the choices of the $\alpha_i$.\\

By \cite{MM00}[Lemma 2.4] and  \cite{Ber03}[Lemma 6.1], the freedom in this process builds a bounded diameter subset of $\MM(Y)$.  We remark however that if $\partial Y \subset \mathrm{base}(\mu)$, then $\pi_{\MM(Y)}(\mu)$ is a unique point in $\MM(Y)$, since every curve in $\mathrm{base}(\mu)$ either projects to itself in $\CC(Y)$ or has an empty projection.\\

Our definition above of projection to a horoball $\pi_{\widehat{\HHH}_{\alpha}}$ enables us to compare length and twisting components for $\alpha$ for different augmented markings, but if $\alpha \in \mathrm{base}(\tilde{\mu})$, then the transversal data for $\alpha$ is lost.  In order to build a new augmented marking, we need to maintain the transversal data.  We emphasize that the following projection is not properly a map to any graph, just a way of arranging data we need.

\begin{definition}[Marked horoball projection] \label{r:m hor proj}
Let $\alpha \in \CC(S)$ and $\tilde{\mu} \in \AM(S)$.  The \emph{marked projection} of $\tilde{\mu}$ to $\HHH_{\alpha}$, denoted $\widehat{\pi}_{\alpha}(\tilde{\mu})$, is defined by
\begin{equation*}
\widehat{\pi}_{\alpha}(\tilde{\mu}) = \left\{ 
\begin{array}{rl}
(\alpha, t_{\alpha}, D_{\alpha}) & \text{  if  } (\alpha, t_{\alpha}, D_{\alpha}) \in \tilde{\mu}\\
(\alpha, \pi_{\alpha}(\tilde{\mu}), 0) & \text{  if  } \alpha \notin \text{base}(\tilde{\mu})
\end{array}
\right.
\end{equation*} 
\end{definition}

We now define the projection of an augmented marking to an augmented marking on a subsurface.  For any augmented marking $\tilde{\mu} \in \AM(S)$ and nonannular subsurface $Y \subset S$, we define the \emph{projection of $\tilde{\mu}$ to $\AM(Y)$} by setting $\pi_{\MM(Y)}(\mu)$ to be the base marking of $\pi_{\AM(Y)}(\tilde{\mu})$ and, for each $\alpha \in \mathrm{base}(\pi_{\MM(Y)}(\mu))$, setting $D_{\alpha}(\pi_{\AM(Y)}(\tilde{\mu}))$ equal to $D_{\alpha}(\tilde{\mu})$ if $\alpha \subset Y$ and 0 otherwise.  In the case that $Y \subset S$ is an annulus with core curve $\beta$, then $\pi_{\AM(Y)}(\tilde{\mu}) = \widehat{\pi}_{\HHH_{\beta}}(\tilde{\mu})$.

\subsection{Distance formulae for marking complexes}

Distances in $\PP(S)$, $\MM(S)$, and $\AM(S)$ coarsely equal the distances in the objects they quasiisometrically model.  The Masur-Minsky hierarchy machinery \cite{MM00} provides coarse distance estimates for the former two.  At the center of the distance estimate for $\AM(S)$ is Rafi's combinatorial model for the Teichm\"uller metric \cite{Raf07}[Theorem 6.1],  which is an adaptation of the machinery in \cite{MM00} to the setting of $(\TT(S),d_T)$.\\

\begin{theorem}[Rafi's formula; Theorem 6.1 of \cite{Raf07}] \label{r:Rafi} Let $\epsilon>0$ be as in Theorem \ref{r:product}. Let $\sigma_1, \sigma_2 \in \TT(S)$, define $\Lambda$ to be the set of curves short in both $\sigma_1$ and $\sigma_2$, and define $\Lambda_i$ to be the set of curves in $\sigma_i$ and not in $\Lambda$.  Let $\mu_i$ be the shortest marking for $\sigma_i$.  Then
\[d_{\TT}(\sigma_1, \sigma_2) \asymp \sum_Y \left[d_Y(\mu_1,\mu_2)\right]_k + \sum_{\substack{\alpha \notin \Lambda}} \log \left[d_{\alpha}(\mu_1,\mu_2)\right]_k + \max_{\substack{\alpha\in \Lambda}} d_{\HH_{\alpha}}(\sigma_1, \sigma_2) + \max_{\substack{\alpha \in \Lambda_i\\ i=1,2}} \log \frac{1}{l_{\sigma_i}(\alpha)}\]
\end{theorem}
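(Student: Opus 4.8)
The plan is to reduce the estimate to an analysis of a Teichm\"uller geodesic $g$ from $\sigma_1$ to $\sigma_2$ via its thick--thin decomposition, following the framework of \cite{Raf07}. First I would use the fact that $g$ breaks into boundedly many maximal sub-segments --- the number bounded in terms of $\xi(S)$ alone --- each of which either stays in the thick part $\TT_{\epsilon}(S) = \{\sigma \in \TT(S) : l_{\sigma}(\alpha) > \epsilon \text{ for all curves } \alpha\}$ or lies inside a single thin region $Thin_{\epsilon}(S,\gamma)$. The structural input that makes this useful is the dictionary between combinatorial data and the geometry of $g$: a curve $\alpha$ becomes $\epsilon$-short somewhere along $g$ exactly when $d_{\alpha}(\mu_1,\mu_2)$ is large or $d_Y(\mu_1,\mu_2)$ is large for some $Y$ with $\alpha \subset \partial Y$. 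This guarantees that the curves and subsurfaces appearing in the four terms of the formula are precisely the ones that can contribute.

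Second, on each thick sub-segment I would use the quasiisometry between the thick part of $(\TT(S),d_T)$ and the marking complex $\MM(S)$ --- which follows from \cite{MM00} together with the fact that $\MCG(S)$ acts properly and cocompactly on $\TT_{\epsilon}(S)$ --- combined with the Masur--Minsky distance formula $d_{\MM(S)}(\mu,\mu') \asymp \sum_Z [d_Z(\mu,\mu')]_k$, the sum ranging over all essential subsurfaces $Z$, annuli included. The subsurfaces that contribute here are those whose active interval along $g$ sits inside a thick segment; these produce the term $\sum_Y [d_Y(\mu_1,\mu_2)]_k$ and also the annular terms $\log[d_{\alpha}(\mu_1,\mu_2)]_k$ for curves $\alpha$ that stay of bounded length --- the logarithm appearing because a Dehn twist about a curve of bounded length costs only logarithmically much in $d_T$, in contrast to the word metric on $\MCG(S)$.

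Third, inside a thin region $Thin_{\epsilon}(S,\gamma)$ I would apply Minsky's Product Regions Theorem \ref{r:product}: up to a uniform additive error the metric there is the sup of $d_T$ on $\TT(S\setminus\gamma)$ and the hyperbolic metrics on the horodisks $\HH_{\gamma_i}$. Motion in the $\TT(S\setminus\gamma)$ factor is handled by induction on complexity, feeding back into the first two steps on a lower-complexity surface. Motion in a horodisk $\HH_{\gamma_i}$ is where the dichotomy between $\Lambda$ and the $\Lambda_i$ appears. If $\alpha = \gamma_i$ is short at both endpoints, so $\alpha \in \Lambda$, then the horodisk distance between the images of $\sigma_1$ and $\sigma_2$ is exactly $d_{\HH_{\alpha}}(\sigma_1,\sigma_2)$, which can be \emph{linear} in the twisting and is invisible to the short markings $\mu_1,\mu_2$; this is why such $\alpha$ are excluded from the logarithmic sum. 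If $\alpha$ is short only at $\sigma_i$, so $\alpha \in \Lambda_i$, then $g$ must descend into the corresponding cusp of $\TT(S)$ to depth $\asymp \log\frac{1}{l_{\sigma_i}(\alpha)}$ and climb back, contributing that term, while the horizontal part of the excursion is genuinely logarithmic and is recovered by $\log[d_{\alpha}(\mu_1,\mu_2)]_k$. Taking the supremum over the boundedly many thin regions produces the two max terms.

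Finally I would assemble the two inequalities. The substantive direction is the upper bound on $d_T$, which follows by concatenating the model paths built segment by segment. The lower bound on $d_T$ --- that each term on the right is coarsely dominated by $d_T(\sigma_1,\sigma_2)$ --- is comparatively soft, coming from the coarse Lipschitz behaviour of subsurface projections along Teichm\"uller geodesics (the only failure of which, exponential twisting when a curve gets short, is exactly accounted for by the horodisk term), the bilipschitz control on horodisk factors inside a product region from Theorem \ref{r:product}, and the observation that a point $\sigma$ in which a curve $\alpha$ is short sits at depth $\asymp \log\frac{1}{l_{\sigma}(\alpha)}$ in the corresponding product region. The main obstacle, and the technical heart, is the thick--thin bookkeeping: one must show that each subsurface carrying a large projection is resolved in exactly one segment, with no double counting and no omission, control the transition points where $g$ enters and leaves a thin region, and verify that the finitely many additive errors --- from Theorem \ref{r:product}, from the $\MM(S)$ distance formula, and from the subsurface induction --- aggregate to a single additive constant depending only on $S$. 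This is exactly the content of Rafi's active-interval analysis in \cite{Raf07}.
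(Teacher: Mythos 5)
The paper does not prove this statement at all---it is imported verbatim from \cite{Raf07} (with \cite{Min96} and \cite{MM00} as inputs)---so the only meaningful comparison is with Rafi's original argument, and your sketch follows it essentially step for step: thick--thin decomposition of the Teichm\"uller geodesic, the characterization of short curves via large subsurface and annular coefficients, Minsky's product regions in the thin parts, the Masur--Minsky distance formula on the thick segments, and the active-interval bookkeeping as the technical core. Your outline is consistent with that proof; the pieces you defer (the active-interval analysis, the no-double-counting argument, and the aggregation of additive errors) are precisely what \cite{Raf07} supplies, so as a blind reconstruction of the cited proof it is accurate, though of course not self-contained.
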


 The following theorem compiles the work of Masur-Minsky \cite{MM00}, Brock \cite{Br03}, Rafi \cite{Raf07}, and the author \cite{Dur13}, in coarse distance estimates for the marking complexes in terms of subsurface projections.  As one can build the $\AM(S)$ from $\MM(S)$ and $\MM(S)$ from $\PP(S)$ by adding additional layers of data, the distance formulae increase in complexity to account for the additional information.\\

\begin{theorem}[Masur-Minsky, Brock, Rafi, D.]\label{r:distances}

There is a $K>0$ such that the following holds.  For any $X_1, X_2 \in \TT(S)$, let $\tilde{\mu}_1, \tilde{\mu}_2 \in \AM(S)$ be their shortest augmented markings,  $\mu_1, \mu_2 \in \MM(S)$ be the unique underlying markings and $P_1, P_2 \in \PP(S)$ be the unique underyling pants decompositions.\\

In \cite[Theorem 6.12]{MM00}, Masur and Minsky develop a coarse distance formula for $\MM(S)$:

\begin{equation}
d_{\MM(S)}(\mu_1, \mu_2) \asymp \sum_{Y \subset S} \left[d_Y(\mu_1, \mu_2)\right]_K + \sum_{\alpha} [d_{\alpha}(\mu_1, \mu_2)]_K\label{r:distances 1}
\end{equation}

An application of  \cite[Theorem 1.1]{Br03} gives:

\begin{equation}
d_{WP}(X_1, X_2) \asymp \sum_{Y \subset S} \left[d_Y(P_1, P_2)\right]_K \label{r:distances 2}
\end{equation}

In \cite{Dur13}, we reformulated \cite[Theorem 6.1]{Raf07} as:

\begin{equation}
d_{T}(X_1, X_2) \asymp \sum_{Y \subset S} \left[d_Y(\tilde{\mu}_1, \tilde{\mu}_2)\right]_K  + \sum_{\alpha}[d_{\HHH_{\alpha}}(\tilde{\mu}_1, \tilde{\mu}_2)]_K\label{r:distances 3}
\end{equation}

In the above, the $Y \subset S$ are nonannular.
\end{theorem}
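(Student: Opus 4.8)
The plan is to assemble \eqref{r:distances 1}, \eqref{r:distances 2}, and \eqref{r:distances 3} from the cited works, the only substantive step being the reformulation of Rafi's formula needed for \eqref{r:distances 3}. Throughout, one fixes a threshold $K$ larger than the Masur--Minsky thresholds and larger than the cutoff in Theorem \ref{r:Rafi}, and repeatedly uses that enlarging a threshold, applying a quasiisometry, or passing between the shortest (augmented) marking and its underlying marking or pants decomposition each changes the relevant quantities by a bounded multiplicative and additive amount; since these operations are applied a bounded number of times, the constants stay uniform.

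Formula \eqref{r:distances 1} is the Masur--Minsky distance formula \cite[Theorem 6.12]{MM00} for $\MM(S)$, merely rewritten so that the annular subsurfaces $Y_{\alpha}$ are collected into the separate sum $\sum_{\alpha}[d_{\alpha}(\mu_1,\mu_2)]_K$; nothing beyond invoking \cite{MM00} is required. For \eqref{r:distances 2} I would combine Brock's quasiisometry $\PP(S) \sim (\TT(S), d_{WP})$ \cite[Theorem 1.1]{Br03} with the pants--graph distance formula, which is the adaptation of the \cite{MM00} hierarchy machinery to $\PP(S)$: a pants decomposition carries no transversal data, so every curve of $P_i$ projects into an annulus $Y_{\alpha}$ either to itself or to the empty set, the annular terms are uniformly bounded, and only the nonannular sum survives. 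Composing with Brock's quasiisometry and noting $d_Y(P_1,P_2) \asymp d_Y(\mu_1,\mu_2)$ for nonannular $Y$ (immediate, since $\mathrm{base}(\mu_i)$ refines $P_i$ up to bounded error) yields \eqref{r:distances 2}.

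The real content is \eqref{r:distances 3}, which is the reformulation of Rafi's formula carried out in \cite{Dur13}; equivalently, it is the combination of Theorem \ref{r:aug qi} with the distance formula for $\AM(S)$. I would recall the argument as follows. Apply Theorem \ref{r:Rafi} with the $X_i$; Rafi's shortest markings agree up to bounded distance in $\MM(S)$ with the underlying markings $\mu_i$ of the $\tilde{\mu}_i$, so after enlarging $K$ the nonannular terms $\sum_{Y}[d_Y]_k$ of Theorem \ref{r:Rafi} match $\sum_{Y}[d_Y(\tilde{\mu}_1,\tilde{\mu}_2)]_K$, since by construction the projection of an augmented marking to a nonannular subsurface is the projection of its underlying marking. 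It remains to show
\[
\sum_{\alpha \notin \Lambda}\log[d_{\alpha}(\mu_1,\mu_2)]_k \;+\; \max_{\alpha \in \Lambda} d_{\HH_{\alpha}}(X_1,X_2) \;+\; \max_{\substack{\alpha \in \Lambda_i\\ i=1,2}}\log\frac{1}{l_{X_i}(\alpha)} \;\asymp\; \sum_{\alpha}\bigl[d_{\HHH_{\alpha}}(\tilde{\mu}_1,\tilde{\mu}_2)\bigr]_K .
\]
For this one records the dictionary $D_{\alpha}(\tilde{\mu}_i) \asymp \log\bigl(1/l_{X_i}(\alpha)\bigr)$ when $\alpha$ is short in $X_i$ and $D_{\alpha}(\tilde{\mu}_i)=0$ otherwise (this is how the length data is defined in \cite{Dur13}), and then uses Lemma \ref{r:horoball qi} together with the explicit ``ascend--traverse--descend'' distance formula in the Groves--Manning horoball $\HHH(\ZZ)$: writing $x_i = \phi_{\beta_{\alpha}}(\pi_{\alpha}(\tilde{\mu}_i))$ for the twisting coordinates, one finds that $d_{\HHH_{\alpha}}(\tilde{\mu}_1,\tilde{\mu}_2)$ is coarsely $\log[\,|x_1-x_2|\,]_k$ when $\alpha$ is short in neither $X_i$ (the base of the horoball, matching the twisting term), and coarsely $\max\{D_{\alpha}(\tilde{\mu}_1), D_{\alpha}(\tilde{\mu}_2), \log|x_1-x_2|\}$ otherwise (matching, via Theorem \ref{r:product}, the combination of the horodisk term and the extra-short term). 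Summing over $\alpha$ and matching term by term according to whether $\alpha$ lies in $\Lambda$, in some $\Lambda_i$, or in neither, recovers the left-hand side up to uniform constants.

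The main obstacle is precisely this last bookkeeping: one must check that a single horoball term $[d_{\HHH_{\alpha}}(\tilde{\mu}_1,\tilde{\mu}_2)]_K$ simultaneously and correctly captures the three distinct thin-part contributions of Rafi's formula --- the logarithmic twisting term $\log[d_{\alpha}]_k$ (the horoball's base), the horodisk term $d_{\HH_{\alpha}}$ (the horoball's interior), and the extra-short term $\log(1/l_{X_i}(\alpha))$ (vertical depth) --- and, in particular, that the thresholds at which these quantities switch on are compatible with the cutoff $K$ and with Minsky's constant $\epsilon$ of Theorem \ref{r:product}, and that the discrepancy between the $\sum$ over twisting terms and the $\max$ over horodisk and short terms in Theorem \ref{r:Rafi} is absorbed by the geometry of $\HHH(\ZZ)$. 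This compatibility is exactly what is verified in \cite{Dur13}, so in the present paper \eqref{r:distances 3} may simply be quoted from there.
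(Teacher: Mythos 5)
Your proposal matches the paper's treatment: Theorem \ref{r:distances} is not proved in the paper at all, but is a compilation quoted from \cite{MM00}, \cite{Br03}, \cite{Raf07}, and \cite{Dur13}, and your assembly-by-citation (with the reformulation of Rafi's formula deferred to \cite{Dur13}) is exactly that. One caveat about your sketched dictionary: the claim that $d_{\HHH_{\alpha}}(\tilde{\mu}_1,\tilde{\mu}_2)$ is coarsely $\max\{D_{\alpha}(\tilde{\mu}_1),D_{\alpha}(\tilde{\mu}_2),\log|x_1-x_2|\}$ whenever $\alpha$ is short in some $X_i$ is wrong in the case $\alpha\in\Lambda$ (short in both): if $D_{\alpha}(\tilde{\mu}_1)=D_{\alpha}(\tilde{\mu}_2)$ is large and $x_1=x_2$, the horoball distance is essentially zero while your maximum is large; the correct matching there is simply $d_{\HHH_{\alpha}}(\tilde{\mu}_1,\tilde{\mu}_2)\asymp d_{\HH_{\alpha}}(X_1,X_2)$ via Lemma \ref{r:horoball qi} and Theorem \ref{r:product}, with your max-formula valid only when $\alpha$ lies in some $\Lambda_i$ (and the sum-versus-max discrepancy absorbed by the bound $|\Lambda|\leq\xi(S)$ rather than by horoball geometry). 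Since you, like the paper, ultimately quote \eqref{r:distances 3} from \cite{Dur13}, this slip does not affect the argument.
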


As the subsurface projections $\pi_Y$ are defined in terms of the projections of the bases of markings (i.e., pants decompositions) to $\CC(Y)$, it follows that the sum appearing in \eqref{r:distances 2} is precisely a proper subsum of \eqref{r:distances 3}.  It follows that Weil-Petersson distance is (coarsely) shorter than Teichm\"uller distance, $d_{WP} \prec d_T$. 

\begin{remark}[$d_{WP} \prec d_T$] \label{r:wp<t}
It is a theorem of Linch \cite{Lin74} that one only needs a multiplicative constant.
\end{remark}

\begin{remark} [Bounded $d_{WP}$ implies a bounded number of annular large links] \label{r:t v wp}
A key observation we use in the proof of the Main Theorem \ref{r:main} is that points that are a bounded $d_{WP}$ distance apart can only have a uniformly bounded number of large projections to horoballs between their respective shortest augmented markings.  This is because a bound on projections to nonannular subsurfaces places a bound on the number of flip moves and thus a bound on the number of base curves which can appear along any augmented hierarchy path.  See Lemma \ref{r:unif bound} below for more details.
\end{remark}

\subsection{Coarse representatives of points in $\TT(S)$} \label{r:interplay}

We frequently pass back and forth between a point in $\TT(S)$ and its coarse representatives in both $\PP(S)$ and $\AM(S)$.  To aid the clarity of the exposition, we recall the definitions of the quasiisometries between $\PP(S)$ and $(\TT(S),d_{WP})$ \cite{Br03} and $\AM(S)$ and $(\TT(S), d_T)$ \cite{Dur13}.\\

We begin with Brock's theorem by recalling a theorem of Bers:

\begin{theorem}[Bers]\label{r:bers}
There is a constant $L>0$ depending only on the topology of $S$, such that for any point $X \in \TT(S)$, there is a $P_X \in \PP(S)$ with $l_X(\alpha) < L$ for each $\alpha \in P_X$.
\end{theorem}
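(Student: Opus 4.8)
This is the classical theorem of Bers on the existence of a uniformly short pants decomposition of a hyperbolic surface (the \emph{Bers constant}), so the plan is to reproduce the standard proof, which is an induction on the complexity $\xi(S)$. Since the inductive step naturally cuts $X$ along a simple closed curve and must then be applied to surfaces with geodesic boundary, I would prove the following slightly stronger statement: for every $B \geq 0$ there is an $L = L(g,n,B) > 0$ such that every finite-area hyperbolic structure $X$ on $S_{g,n}$ with geodesic boundary --- a cusp being regarded as a boundary component of length $0$ --- all of whose boundary components have length at most $B$, admits a pants decomposition all of whose curves have length less than $L$. Theorem \ref{r:bers} is the case $B = 0$, for which $L$ depends only on the topology of $S$; and the base of the induction, $\xi(S) \leq 0$, is vacuous, since then $S$ is a pair of pants (or simpler) and the pants decomposition is empty.

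The crux of the inductive step, when $\xi(S) \geq 1$, is to produce a single essential, non-peripheral, \emph{simple} closed geodesic $\gamma$ in the interior of $X$ whose length is bounded in terms of $g$, $n$, and $B$. Granting this, $\gamma$ extends to a pants decomposition of $S$, and cutting $X$ along $\gamma$ produces one or two hyperbolic surfaces of strictly smaller complexity with geodesic boundary of length at most $\max\{B, l_X(\gamma)\}$; applying the inductive hypothesis to these pieces and adjoining $\gamma$ gives the required pants decomposition of $X$. Tracking constants through the at most $\xi(S)$ stages of the recursion --- which begins at $B = 0$, so every boundary curve ever created has length controlled purely by the topology --- then yields a final constant depending only on $(g,n)$.

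To find $\gamma$ I would distinguish two cases. If $X$ has a closed geodesic in its interior of length less than the Margulis constant $\epsilon_0$ of $\HH^2$, then the shortest such geodesic already serves: it is automatically simple and non-peripheral. If not, then by the Margulis Lemma the $\epsilon_0$-thin part of $X$ consists only of cusp neighborhoods and collars of short boundary geodesics; removing suitable embedded horoball neighborhoods of the cusps together with the maximal embedded half-collars of the boundary geodesics leaves a connected compact core $X_0$, on which the injectivity radius of $X$ is bounded below by a universal $\rho_0 > 0$, with $\mathrm{Area}(X_0) \leq 2\pi|\chi(S)|$. A packing argument --- the embedded $(\rho_0/2)$-balls about a maximal $\rho_0$-net in $X_0$ are disjoint and have total area at most $\mathrm{Area}(X_0)$ --- bounds $\mathrm{diam}(X_0)$ in terms of $(g,n)$; since a hyperbolic surface of bounded diameter and bounded area has injectivity radius bounded above at every point, $X$ then carries an essential non-peripheral closed geodesic of bounded length, and the shortest essential non-peripheral closed geodesic is simple.

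The step I expect to be the main obstacle is this thick case: one must make the lower bound on the injectivity radius of $X_0$, hence the diameter bound, genuinely uniform --- correctly absorbing into the removed half-collars all the thinness coming from (possibly very short) boundary geodesics --- and then upgrade a short loop in the bounded, bounded-area core to an essential \emph{non-peripheral} simple closed geodesic, which is where one uses that $S$ has positive complexity. (Alternatively, one could simply cite the standard references for the Bers constant.)
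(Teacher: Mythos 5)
The paper does not prove this statement at all: it is quoted as a classical theorem of Bers (the existence of the ``Bers constant''), and the surrounding text simply cites it as input to Brock's quasiisometry $\PP(S)\to(\TT(S),d_{WP})$. Your proposal reconstructs the standard textbook proof (Bers; see also Buser's treatment), and its outline is correct: strengthen the statement to hyperbolic structures with geodesic boundary of length at most $B$, induct on $\xi(S)$, and at each stage produce one essential non-peripheral simple closed geodesic of controlled length, either as a sub-Margulis geodesic (automatically simple, and non-peripheral since a geodesic freely homotopic to a boundary component coincides with it, while cusp-parallel curves have no geodesic representative) or, in the thick case, via the Gauss--Bonnet area bound, a packing bound on the diameter of the core, and the resulting upper bound on injectivity radius. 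The two points you flag are indeed where the remaining work lies: making the thick-part injectivity radius bound uniform in the presence of short boundary geodesics, and upgrading a short essential loop to a non-peripheral \emph{simple} one; both are standard but not free. Note also that your crude scheme, in which each cut curve's length feeds into the boundary bound $B$ of the next stage, makes the constants compound over at most $\xi(S)$ steps --- this is fine for the existence of some $L=L(g,n)$ as stated here, though it yields a much worse constant than Bers' or Buser's sharper arguments. Given that the paper treats this as a citation, either your self-contained induction or simply invoking the standard references is an acceptable route.
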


For any $X \in \TT(S)$, any $P_X \in \PP(S)$ as in Theorem \ref{r:bers} is called a \emph{Bers pants decomposition}.\\

For any $P \in \PP(S)$, define

\[V_L(P) = \left\{X \in \TT(S) | \mathrm{max}_{\alpha \in P} \left\{l_X(\alpha)\right\} < L \right\}\]

Using the convexity of the length functions $l_X$ along Weil-Petersson geodesics \cite{Wol87} and the augmented Teichm\"uller space, $\widebar{\TT(S)}$, in \cite[Proposition 2.2]{Br03}, Brock proves that $V_L(P)$ is convex and has uniformly bounded diameter independent of $P$, a fact we later prove for the orbifold setting in Proposition \ref{r:VP orb} below:

\begin{proposition}[Proposition 2.2 in \cite{Br03}]\label{r:VP}
There is a $D>0$ depending only on $S$ such that for $L>0$ as above and any $P \in \PP(S)$
\[\mathrm{diam}_{WP}(V_L(P)) < D\]
\end{proposition}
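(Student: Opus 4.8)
The plan is to find, for each pants decomposition $P$, a single point to which every point of $V_L(P)$ is uniformly close in $d_{WP}$. Since $P$ is a pants decomposition, $S\setminus P$ is a disjoint union of thrice-punctured spheres, each with a one-point Teichm\"uller space; consequently, any sequence $X_n\in\TT(S)$ with $\ell_{X_n}(\alpha)\to 0$ for every $\alpha\in P$ converges in the Weil--Petersson completion $\widebar{\TT(S)}$ to a single point $\widehat{X}_P$, the maximally-pinched stratum determined by $P$, independently of the twisting. It therefore suffices to find $D'=D'(S)$ with $d_{WP}(X,\widehat{X}_P)<D'$ for every $P$ and every $X\in V_L(P)$; then $\mathrm{diam}_{WP}(V_L(P))\le 2D'=:D$, and since $L$ is a function of $S$ alone (Theorem \ref{r:bers}) so is $D$. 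Uniformity in $P$ is in any case automatic, since $\MCG(S)$ acts on $\PP(S)$ with finitely many orbits and on $(\TT(S),d_{WP})$ by isometries, and $V_L(g\cdot P)=g\cdot V_L(P)$.

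To estimate $d_{WP}(X,\widehat{X}_P)$, fix an enumeration $\alpha_1,\dots,\alpha_{\xi(S)}$ of $P$ and, for small $\delta>0$, let $X_\delta\in\TT(S)$ be the point obtained from $X$ by decreasing the Fenchel--Nielsen length coordinate $\ell_{\alpha_i}$ to $\delta$ for each $i$ while keeping all other Fenchel--Nielsen coordinates (the twists along the $\alpha_i$) fixed. A path from $X$ to $X_\delta$ is built by concatenating $\xi(S)$ segments, the $i$-th moving only $\ell_{\alpha_i}$; along the $i$-th segment the remaining length coordinates are held fixed below $L$, so the whole path stays in $\overline{V_L(P)}$. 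The Weil--Petersson length of the $i$-th segment is $\int_\delta^{\ell_{\alpha_i}(X)}\|\partial_{\ell_{\alpha_i}}\|_{WP}\,d\ell\le\int_0^{L}\|\partial_{\ell_{\alpha_i}}\|_{WP}\,d\ell$, and by Wolpert's asymptotic expansion of the Weil--Petersson metric near a boundary stratum (see \cite{Wol87}, \cite{Wol07}) one has $\|\partial_{\ell_{\alpha_i}}\|_{WP}\asymp\ell^{-1/2}$ as $\ell\to 0$ while this norm stays bounded for $\ell$ in a compact subinterval of $(0,L]$; hence each segment has length $\prec\sqrt{L}$. Summing, $d_{WP}(X,X_\delta)\prec\xi(S)\sqrt{L}\prec_L 1$ uniformly in $\delta$. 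Since $X_\delta\to\widehat{X}_P$ in $\widebar{\TT(S)}$ and $d_{WP}(X,\cdot)$ is continuous there, letting $\delta\to 0$ gives $d_{WP}(X,\widehat{X}_P)\prec_L 1$, as required.

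The main obstacle is precisely the metric estimate above: Wolpert's expansion is a leading-order asymptotic near $\ell=0$, so care is needed to make the bound on $\int_0^L\|\partial_{\ell_\alpha}\|_{WP}\,d\ell$ genuinely \emph{uniform} --- uniform over the full range $\ell\in(0,L]$ rather than just $\ell$ small, and uniform over starting configurations in which curves of $P$ other than the one being pinched are themselves short. This is handled by combining the leading term of Wolpert's estimate (which depends, to leading order, only on $\ell_\alpha$ itself) with the observation that on the relatively compact range $\ell_\alpha\in[\epsilon_0,L]$ the pinching vector field $\partial_{\ell_\alpha}$ has Weil--Petersson norm bounded in terms of $S$; alternatively, one invokes Wolpert's global bound $d_{WP}(X,\mathcal{S}_\alpha)\prec\sqrt{\ell_X(\alpha)}$ directly and never integrates. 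Either way, the only non-topological input is Wolpert's control of the Weil--Petersson metric near $\partial\widebar{\TT(S)}$; the rest rests on the purely combinatorial fact that pinching an entire pants decomposition collapses to a single point.
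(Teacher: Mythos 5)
Your proof is correct, but it takes a different route from the source the paper cites. The paper states Proposition~\ref{r:VP} by reference to Brock (\cite[Proposition 2.2]{Br03}) and does not reprove it; Brock's own argument is a soft compactness argument: by Masur's work the closure $\overline{V_L(P)}$ is compact in the augmented Teichm\"uller space, the extended Weil--Petersson metric is complete there, so $\overline{V_L(P)}$ has finite diameter, and uniformity over $P$ follows from the fact that $\MCG(S)$ acts on $\PP(S)$ with finitely many orbits and on $(\overline{\TT(S)},d_{\widebar{WP}})$ by isometries. You reach the same conclusion by a more quantitative route --- identifying the maximally noded stratum $\TT(S,P)=\{\widehat{X}_P\}$ as a single point and bounding $d_{\widebar{WP}}(X,\widehat{X}_P)$ via Wolpert's asymptotics, either by integrating $\|\partial_{\ell_\alpha}\|_{WP}\asymp\ell^{-1/2}$ along a Fenchel--Nielsen path or by invoking the $\sqrt{\ell}$ distance-to-stratum estimate directly. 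Interestingly, your second alternative is precisely the strategy this paper uses when it proves the orbifold generalization, Proposition~\ref{r:VP orb}: there the stratum $\TT(S,P)$ is no longer a point, so one projects to the unique $H$-fixed point $X_P$ of the stratum and applies Wolpert's Theorem~\ref{r:Wol est} to bound the distance. Your explicit approach is arguably stronger than Brock's in that it gives an effective bound $D\prec\sqrt{L}$ rather than an unspecified constant from compactness; what Brock's approach buys is that it sidesteps entirely the uniformity concern you flag about Wolpert's metric expansion (that the $O(l^2)$ error is a priori only asymptotic and that the norm of $\partial_{\ell_{\alpha_i}}$ must be controlled uniformly as the other coordinates degenerate). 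You handle that concern adequately, so both proofs stand.
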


Define a map $\phi: \PP(S) \rightarrow \TT(S)$ by $\phi(P) = X_L(P)$, where $X_L(P) \in V_L(P)$.  The content of \cite[Theorem 1.1]{Br03} is that this map is a quasiisometry.  The difficulty of the proof is showing that the reverse identification is coarsely independent of the choice of $P$.\\

Let $\tilde{\mu} \in \AM(S)$ be any augmented marking.  Recall that $\mathrm{base}(\tilde{\mu}) \in \PP(S)$.  For any $P \in \PP(S)$, there are infinitely many augmented markings $\tilde{\mu} \in \AM(S)$ for which $\mathrm{base}(\tilde{\mu}) = P$.  Indeed, for each curve $\alpha \in P$, there is a horoball's worth of choices one could make for a transversal, $t_{\alpha}$, and length coordinate, $D_{\alpha}$.  Thus it follows from the distance formula Theorem \ref{r:distances} that $\mathrm{diam}_{T}(V_L(P)) = \infty$.  In particular, the identification $\tilde{\mu} \mapsto V_L(\mathrm{base}(\tilde{\mu}))$ is far from a quasiisometry in the Teichm\"uller metric.\\

We now briefly recall the definitions of the quasiisometries $G: \AM(S) \rightarrow \TT(S)$ and $F: \TT(S) \rightarrow \AM(S)$ from \cite[Subsection 3.3]{Dur13}.\\

The map $G: \AM(S) \rightarrow \TT(S)$ is defined in terms of Fenchel-Nielsen coordinates.  For any $\tilde{\mu} \in \AM(S)$ with $\tilde{\mu} = (\mu, D_{\alpha_1}, \dots, D_{\alpha_n})$ with the $D_{\alpha_i}$ as in the definition from Subsection \ref{r:AMS section}, let $\mathrm{base}(\tilde{\mu}) = \{\alpha_1, \dots, \alpha_n\}$ be the pants decomposition for the coordinates of $G(\tilde{\mu})$.\\

Let $\epsilon>0$ be as in Theorem \ref{r:product}.  The length coordinates $l_{\alpha_i}$ are given by any choice of $\frac{\epsilon}{e^{D_{\alpha_i}+1}} \leq l_{\alpha_i} \leq \frac{\epsilon}{e^{D_{\alpha_i}+2}}$.  For each $i$, we can use our choice of length data and the transversal to the $\alpha_i$, $t_{\alpha_i}$, to define a twisting coordinate, $\tau_{\alpha_i}(t_{\alpha_i})$, to be the unique twisting number which takes certain geodesic arcs on the pairs of pants to the geodesic representative of the transversal.\\

We remark that $G(\tilde{\mu}) \in V_L(\mathrm{base}(\tilde{\mu}))$, but the choices involved in constructing $G(\tilde{\mu})$ determine a set of uniformly bounded diameter in $(\TT(S), d_T)$.\\

The map $F:\TT(S) \rightarrow \AM(S)$ comes from building particular augmented markings from specially chosen Fenchel-Nielsen coordinates.\\

Let $\alpha \in \CC(S)$ and take any $\sigma \in \TT(S)$.  We can assign a coarse length $d_{\alpha}: \TT(S) \rightarrow \ZZ_{\geq 0}$ to $\alpha$ via $\sigma$ by

\[d_{\alpha}(\sigma) = \left\{ \begin{array}{lr}
\max \left\{k \Big| \frac{\epsilon}{e^{k+1}} < \Ext_{\sigma}(\alpha) < \frac{\epsilon}{e^k}\right\} & \text{if } \Ext_{\sigma}(\alpha) < \epsilon\\
0 & \text{if }\Ext_{\sigma}(\alpha) \geq \epsilon
\end{array} \right. \]
 where $Ext_{\sigma}(\alpha)$ is the extremal length of $\alpha$ in $\sigma$.\\
 
Let $\mu_{\sigma}$ be the shortest marking for $\sigma$, that is, $\mathrm{base}(\mu_{\sigma})= \{\alpha_1, \dots, \alpha_n\}$ is the collection of shortest curves on $S$ in $\sigma$ and the transversals to the $\alpha_i$ are chosen to be as short as possible.  We note that $\mathrm{base}(\tilde{\mu}_{\sigma}) \in \PP(S)$ is by definition a Bers pants decomposition for $\sigma$.\\

Define $F: \TT(S) \rightarrow \AM(S)$ by $F(\sigma) = (\mu_{\sigma}, d_{\alpha_1}(\sigma), \dots, d_{\alpha_n}(\sigma))$.  We call $F(\sigma)$ a \emph{shortest augmented marking} for $\sigma$ and denote it by $\tilde{\mu}_{\sigma}$.\\

Thus the function of the $d_{\alpha_i}$ is to assign length coordinates to the augmented marking $\tilde{\mu}_{\sigma}$; that is, for each $i$, $D_{\alpha_i}(F(\sigma)) = d_{\alpha_i}(\sigma)$.\\

\begin{remark}[Short curves are base curves] \label{r:short base}
Let $\epsilon>0$ be as in Theorem \ref{r:product} and suppose $X \in \TT(S)$ is such that $l_{X}(\alpha) < \epsilon$ for some $\alpha \in \CC(S)$.  It follows from the constuction that $\alpha \in \mathrm{base}(\tilde{\mu}_{X})$, where $\tilde{\mu}_{X} = F(X)$ is a shortest augmented marking for $X$.  That is, short curves are base curves.
\end{remark}

\begin{remark}[Coarse naturality of $F$]\label{r:coarsely natural}
It is clear from the construction that $F$ is coarsely natural with respect to the action of $\MCG(S)$.  More precisely, there is an $M_1>0$ depending only on $S$ such that if $h \in \MCG(S)$ and $X \in \TT(S)$, then $d_{\AM(S)}(h\cdot \tilde{\mu}_{X}, \tilde{\mu}_{h\cdot X}) < M_1$.  This $M_1$ is precisely the diameter of the set of possible choices for $F(X) \in \AM(S)$.
\end{remark}

\subsection{Hierarchies and augmented hierarchy paths} \label{r:hier sub}

In this subsection, we collect two technical lemmata regarding the Masur-Minsky hierarchy machinery.  For the foundational material on hierarchies, see \cite{MM00}; for good technical overviews, see \cite{Min03}, \cite{Ber03}, and \cite{Tao13}; for the construction of augmented hierarchy paths, see \cite{Dur13}.\\

A \emph{hierarchy} $H$ is a collection of geodesics in various curve complexes $g_Y \subset \CC(Y)$, where the $Y \subseteq S$ are subsurfaces.  Attached to any hierarchy is a pair of markings $\mu_1, \mu_2 \in \MM(S)$ and a \emph{base geodesic} $g_H \subset \CC(S)$ whose endpoints are $\mathrm{base}(\tilde{\mu}_1), \mathrm{base}(\tilde{\mu}_2) \subset \CC(S)$.  In \cite{MM00}, Masur-Minsky show how to piece together the geodesics $g_Y \in H$ into paths called \emph{hierarchy paths}, which are uniform quasigeodesics in $\MM(S)$ between $\mu_1$ and $\mu_2$.  In the case where there the markings come with length data, that is $\tilde{\mu}_1, \tilde{\mu}_2 \in \AM(S)$ are augmented markings, we showed in \cite{Dur13} how to build \emph{augmented hierarchy paths} between $\tilde{\mu}_1$ and $\tilde{\mu}_2$, which are also uniform quasigeodesics in $\AM(S)$.\\

As one progresses along an augmented hierarchy path, one makes progress along the geodesics which comprise the underlying hierarchy.  We need to understand which subsurfaces support geodesics in a hierarchy and in what order an augmented hierarchy path traverses these geodesics.\\

There are several technical difficulties to resolving these problems: a given hierarchy may be resolved into any number of augmented hierarchy paths, which need not fellow travel without strong assumptions on $\tilde{\mu_1}$ and $\tilde{\mu_2}$; it is not possible to determine all subsurfaces which support geodesics in a hierarchy, for those depend on, among other things, the choice of base geodesic $g_H \in H$; if two disjoint subsurfaces $Y, Z \subset S$ support geodesics in $H$, then it is possible that two different augmented hierarchy paths based on $H$ can traverse $Y$ and $Z$ in different orders.\\

While there is no easy solution to these issues, there are some useful coarse statements we can make.  The first tells us that subsurface projections coarsely determine in which subsurfaces an augmented hierarchy path spends most of its time:

\begin{lemma}\cite[Lemma 6.2]{MM00} \label{r:large link condition} 
Let $\tilde{\mu}_1, \tilde{\mu}_2 \in \AM(S)$, let $Y \subset S$ a subsurface, and let $K$ be as in Theorem \ref{r:distances}.  If $d_Y(\tilde{\mu}_1, \tilde{\mu}_2)>K$, then $Y$ supports a geodesic $g_Y \in H$ for any hierarchy $H$ between $\tilde{\mu}_1$ and $\tilde{\mu}_2$.
\end{lemma}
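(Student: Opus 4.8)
This is the \emph{Large Links Lemma} of Masur--Minsky, and the natural route is an induction on subsurface complexity fueled by the Bounded Geodesic Image Theorem (\cite[Theorem 3.1]{MM00}): there is a universal $M>0$ such that if $g$ is a geodesic in $\CC(Z)$ every vertex of which has nonempty projection to a subsurface $Y\subsetneq Z$, then $\mathrm{diam}_{\CC(Y)}(\pi_Y(g))\le M$. I would first choose $K$ larger than $M$ plus the (uniform) diameter of $\pi_Y$ of a marking together with the projection of its base simplex in $\CC(Y)$, plus a small additive error, and then argue by contradiction: suppose $d_Y(\tilde\mu_1,\tilde\mu_2)>K$ but no geodesic of $H$ is supported on $Y$. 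Recall from the excerpt that the base marking of an augmented marking determines its nonannular projections and that a hierarchy $H$ between $\tilde\mu_1,\tilde\mu_2$ has a base geodesic $g_H\subset\CC(S)$ with endpoints $\mathrm{base}(\tilde\mu_1),\mathrm{base}(\tilde\mu_2)$; in particular $\pi_Y(\tilde\mu_i)=\pi_Y(\mathrm{base}(\tilde\mu_i))$, so it suffices to control $\pi_Y$ along geodesics in $H$.

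The descent works as follows. If $\partial Y$ is transverse to $g_H$ --- i.e. every vertex of $g_H$ meets $Y$ essentially, including (up to the usual handling of the endpoint simplices) the two ends --- then Bounded Geodesic Image forces $\mathrm{diam}_Y(\pi_Y(g_H))\le M$, and since the ends of $g_H$ are $\mathrm{base}(\tilde\mu_1)$ and $\mathrm{base}(\tilde\mu_2)$, this yields $d_Y(\tilde\mu_1,\tilde\mu_2)\le K$, a contradiction. Hence some vertex $v$ of $g_H$ is disjoint from, or a boundary curve of, $Y$, so $Y$ lies in a component domain $W$ of $(S,v)$. By the completeness of $H$, $W$ supports a geodesic $g_W\in H$ (in the annular case $Y=Y_\alpha$, completeness likewise attaches an annular geodesic on $Y_\alpha$ once $\alpha$ appears in a geodesic of $H$). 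If $W=Y$ we are done; otherwise $\xi(W)<\xi(S)$ and we rerun the dichotomy with $g_W$ and $W$ in place of $g_H$ and $S$. Since ambient complexity strictly drops at each step, the recursion terminates after at most $\xi(S)$ stages, and at termination we either reach a contradiction via Bounded Geodesic Image or exhibit a geodesic of $H$ supported on $Y$ itself. The latter is excluded by hypothesis, so $d_Y(\tilde\mu_1,\tilde\mu_2)\le K$ --- contradiction. The constant $K$ depends only on $S$ because the recursion depth is bounded by $\xi(S)$ and the error introduced at each level is universal.

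I expect the main obstacle to be bookkeeping rather than a single hard idea: one must run the transversality dichotomy uniformly across the irregular base cases (the $4$-holed sphere and $1$-holed torus, where adjacency in $\CC$ is via intersection, and annular domains, where $\pi_Y$ records twisting), make precise at each level the comparison between $\pi_Y(\tilde\mu_i)$ and the projection of the corresponding endpoint simplex of the geodesic being traversed, and invoke exactly the right completeness clause in the definition of a hierarchy to guarantee that every component domain met along the descent actually carries a geodesic of $H$. All of this is the content of \cite[Section 6]{MM00}; in the augmented setting one only needs the additional remark that the underlying non-augmented hierarchy of an augmented hierarchy path of \cite{Dur13} is an honest Masur--Minsky hierarchy between $\mu_1$ and $\mu_2$, and that for nonannular $Y$ one has $d_Y(\tilde\mu_i,\cdot)=d_Y(\mu_i,\cdot)$, so the statement reduces to the classical one.
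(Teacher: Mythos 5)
The paper gives no independent proof of this lemma: it is quoted directly from \cite[Lemma 6.2]{MM00}, relying only on the implicit observation that subsurface projections of augmented markings coincide with those of their underlying markings, so that a hierarchy between $\tilde{\mu}_1,\tilde{\mu}_2$ is just a Masur--Minsky hierarchy between $\mu_1,\mu_2$. Your proposal makes exactly that reduction (and it holds for annular $Y$ as well, since annular projections of augmented markings are also inherited from the underlying markings) and then sketches the standard Masur--Minsky descent via completeness and the Bounded Geodesic Image Theorem, which is precisely the content of the cited result; this is correct and essentially the same approach as the paper.
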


Following \cite{MM00}, we call such subsurfaces with large projections \emph{large links}.  Lemma \ref{r:active interval} below gives a coarse description of the subsegments of an augmented hieararchy path as it passes through a large link and says as much as possible about how such subsegments for two different subsurfaces overlap.  We first need some notions, namely \emph{time-order} and \emph{active segment}, the latter of which is related to that of an \emph{active interval} for a subsurface along a Teichm\"uller geodesic \cite{Raf14}.\\

We say a two subsurfaces $X$ and $Y$ \emph{interlock}, and write $X \pitchfork Y$, if $X\cap Y \neq \emptyset$ and neither is properly contained in the other.\\

Let $\Gamma \subset \AM(S)$ be an augmented hierarchy path based on a hierarchy $H$ between $\tilde{\mu}_1,  \tilde{\mu}_2 \in \AM(S)$ and let $Y \subset S$ be any subsurface.  Suppose that $Y, Z \subset S$ are both large links for $\tilde{\mu}_1, \tilde{\mu}_2$.  In \cite{MM00}, Masur-Minsky define a technical notion called \emph{time-order}, which is a partial order on subsurfaces of $S$ which support geodesics in $H$.  Roughly speaking, $Y$ is time-ordered before $Z$ in $H$, written $Y \prec_t Z$, if $\Gamma$ moves through $Y$ before $Z$; importantly, if $Y \pitchfork Z$, then \cite[Lemma 4.18]{MM00} implies that either $Y \prec_t Z$ or $Z \prec_t Y$. \\

For each $\alpha \in \partial Y$, let $\Gamma_{\alpha}\subset \Gamma$ be the (possibly empty) segment of $\Gamma$ where $\alpha$ is in the base of each augmented marking in $\Gamma_{\alpha}$, which is connected by Lemma 5.6 in \cite{Min03}.  The \emph{active segment} of $Y$ along $\Gamma$ is $\Gamma_Y = \bigcap_{\alpha \in \partial Y} \Gamma_{\alpha}$, the smallest segment of $\Gamma$ each of whose augmented markings contains $\partial Y$ in its base.\\

The following lemma follows easily from work in \cite[Sections 4 and 5]{MM00} and \cite[Subsections 4.2 and 4.3]{Dur13}:

\begin{lemma}[Active segments and time order] \label{r:active interval}
Let $\Gamma$ be as above.  Suppose that $X \subset S$ has nonempty active segment.  There is an $M_2>0$ depending only on $S$ such that the following hold:
\begin{enumerate}

\item For any $\tilde{\eta}_1, \tilde{\eta}_2 \in \Gamma$ preceding and following $I_X$, respectively, we have
\[d_X(\tilde{\mu}_1, \tilde{\eta}_1), d_X(\tilde{\mu}_2, \tilde{\eta}_2)< M_2\]

\item If $Y\subset S$ is any subsurface which interlocks $X$ and $Y \prec_t X$, then
\[\mathrm{diam}_X(\Gamma_Y),d_X(\tilde{\mu}_1,\Gamma_Y), \mathrm{diam}_Y(\Gamma_X),d_Y(\tilde{\mu}_2, \Gamma_X)<M_2\] \label{r:active interval 2}

\item Moreover, if $\partial Y \cap \partial X \neq \emptyset$, then $\Gamma_Y$ and $\Gamma_X$ are disjoint.

\end{enumerate}

\end{lemma}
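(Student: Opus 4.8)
The plan is to read all three items off the Masur--Minsky hierarchy calculus \cite[Sections 4 and 5]{MM00} together with its augmented refinement \cite[Subsections 4.2 and 4.3]{Dur13}, producing a single constant $M_2 = M_2(S)$ that works for all of them. Two structural facts carry the argument. The first is the \emph{coarse monotonicity} of subsurface projections along an augmented hierarchy path: if a domain $W \subseteq S$ supports a geodesic $g_W$ in the hierarchy $H$ underlying $\Gamma$, then, as $\tilde\eta$ runs along $\Gamma$, the projection $\pi_W(\tilde\eta)$ stays within a uniform constant of the initial vertex $I(g_W)$ --- hence of $\pi_W(\tilde\mu_1)$ --- until the active segment $\Gamma_W$ is entered, progresses coarsely monotonically along $g_W$ across $\Gamma_W$, and stays within a uniform constant of $\pi_W(\tilde\mu_2)$ afterward. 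For nonannular $W$ this is \cite[Sections 4--5]{MM00}; for an annular $W$ with core $\beta$ one runs the same argument inside the combinatorial horoball $\widehat{\HHH}_\beta$ using the projection $\pi_{\widehat{\HHH}_\beta}$ of \cite[Subsection 4.2]{Dur13}, the only new point being that a change of reference arc $\beta_\beta \in \CC(\beta)$ costs at most a bounded additive error. The second fact is the behavior of \emph{time-order}: by \cite[Lemma 4.18]{MM00} any two interlocking domains supporting geodesics in $H$ are comparable under $\prec_t$, and $Y \prec_t X$ is detected in the resolution by the positional estimates $d_X(\tilde\mu_1,\partial Y) \prec 1$ and $d_Y(\tilde\mu_2,\partial X) \prec 1$. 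We may assume throughout that $X$, and the $Y$ in items (2)--(3), are large links for $\tilde\mu_1,\tilde\mu_2$ (Lemma \ref{r:large link condition}), since otherwise the asserted inequalities follow at once from Theorem \ref{r:distances}.

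Item (1) is then exactly the monotonicity statement: $\tilde\eta_1$ precedes the active segment $\Gamma_X$ (the $I_X$ of the statement), so $d_X(\tilde\mu_1,\tilde\eta_1)\prec 1$, and $\tilde\eta_2$ follows it, so $d_X(\tilde\mu_2,\tilde\eta_2)\prec 1$; absorb the implied constants into $M_2$.

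For item (2), the two diameter bounds come essentially for free. Since $X\pitchfork Y$, the multicurve $\partial Y$ cannot be isotoped off $X$, so $\pi_X(\partial Y)\subset\CC(X)$ is a nonempty simplex of uniformly bounded diameter; and for each $\tilde\eta\in\Gamma_Y$ the base of $\tilde\eta$ contains $\partial Y$, so $\pi_X(\tilde\eta)=\pi_X(\mathrm{base}(\tilde\eta))\supseteq\pi_X(\partial Y)$. Because the projection of any pants decomposition to $X$ also has uniformly bounded diameter, every $\tilde\eta\in\Gamma_Y$ satisfies $d_X(\partial Y,\tilde\eta)\prec 1$; hence $\mathrm{diam}_X(\Gamma_Y)\prec 1$, and $d_X(\tilde\mu_1,\tilde\eta)\le d_X(\tilde\mu_1,\partial Y)+d_X(\partial Y,\tilde\eta)\prec 1$ using $Y\prec_t X$, so $d_X(\tilde\mu_1,\Gamma_Y)\prec 1$. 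The symmetric argument with $\partial X$ in the base throughout $\Gamma_X$, together with $d_Y(\tilde\mu_2,\partial X)\prec 1$, gives $\mathrm{diam}_Y(\Gamma_X)\prec 1$ and $d_Y(\tilde\mu_2,\Gamma_X)\prec 1$. Enlarging $M_2$ to dominate all the implied constants yields (2).

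For item (3) there are two cases. If some curve of $\partial Y$ has essential geometric intersection with some curve of $\partial X$, then these two curves cannot lie in a common base, and since $\Gamma_Y\subseteq\Gamma_{\gamma}$ for each $\gamma\in\partial Y$ and $\Gamma_X\subseteq\Gamma_{\delta}$ for each $\delta\in\partial X$, we get $\Gamma_Y\cap\Gamma_X=\emptyset$ immediately. If instead $\partial X$ and $\partial Y$ share a curve $\alpha$ but form a simplex, then by the estimates above any $\tilde\eta\in\Gamma_X\cap\Gamma_Y$ would have $\pi_X(\tilde\eta)$ close to $\pi_X(\partial Y)$ --- hence, by $Y\prec_t X$ and monotonicity, close to $I(g_X)$, forcing $\tilde\eta$ into a bounded subsegment at the start of $\Gamma_X$ --- and symmetrically $\pi_Y(\tilde\eta)$ close to $T(g_Y)$, forcing $\tilde\eta$ into a bounded subsegment at the end of $\Gamma_Y$; thus $\Gamma_X\cap\Gamma_Y$ has uniformly bounded diameter. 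Upgrading ``bounded'' to ``empty'' is the one genuinely delicate point: here one uses $\Gamma_X,\Gamma_Y\subseteq\Gamma_\alpha$ and the analysis of component domains, footprints, and time-order in \cite[Section 4]{MM00} to see that the portions of the resolution with $\partial Y$ in the base and with $\partial X$ in the base must be disjoint subsegments when $Y\prec_t X$ and $\partial X\cap\partial Y\neq\emptyset$. I expect this last step to be the main obstacle in the lemma; the remainder is bookkeeping with Theorem \ref{r:distances}, the monotonicity input, and the definition of the active segment.
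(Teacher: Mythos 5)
Your proposal is correct and takes essentially the same route as the paper: the paper offers no independent argument for this lemma, stating only that it ``follows easily'' from Sections 4 and 5 of \cite{MM00} and Subsections 4.2 and 4.3 of \cite{Dur13}, and your write-up is precisely an unpacking of those citations --- coarse monotonicity of $\pi_W$ along resolutions for item (1), the order-versus-projection estimates ($Y \prec_t X$ giving $d_X(\tilde{\mu}_1,\partial Y) \prec 1$ and $d_Y(\tilde{\mu}_2,\partial X) \prec 1$) together with the bounded diameter of simplex projections for item (2), and the fact that two essentially intersecting curves cannot lie in a common base for item (3). One small remark: in item (3) the hypothesis $\partial Y \cap \partial X \neq \emptyset$ is intended as essential geometric intersection of the boundary multicurves (this is the only case used later, where $X$ and $Y$ are annuli with interlocking cores), so your first case already disposes of it and the ``delicate'' shared-curve case you defer to the footprint analysis is not actually needed.
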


\section{Coarse product regions in $\AM(S)$}\label{r:coarse proj section}

In this section, we analyze subgraphs of $\AM(S)$ which coarsely behave like the Minsky's product regions.  We follow and build on work of Behrstock-Minsky \cite{BM08} for $\MM(S)$.  The main goal of this section is Proposition \ref{r:phi add}, which is crucial for the distance estimates at the end of the proof of the Main Theorem \ref{r:main}.  A reader familiar with the Masur-Minsky machinery can skip this section, referring back to it during the later proofs as needed.\\

In Section 2 of \cite{BM08}, Behrstock-Minsky derive a distance estimate for two points of $\MM(S)$ or $\PP(S)$ whose base markings have curves in common.  We need an analogous statement for $\AM(S)$, which gives a coarse distance estimate for two points in the same Minsky product region (Theorem \ref{r:product}).  We also need to understand how to project to these regions.\\

Let $\Delta \subset \CC(S)$ be a simplex and consider the subset $Q(\Delta) = \{\tilde{\mu}\in \AM(S) | \Delta \subset \mathrm{base}(\tilde{\mu})\}$.  Let $\sigma(\Delta) \subset S \setminus \Delta$ be the collection of complementary subsurfaces which are not pairs of pants.  Subsurface projections give a map

\[\Phi: Q(\Delta) \rightarrow \prod_{Y \in \sigma(\Delta)} \AM(Y)\]

The following is the $\AM(S)$-analogue of [Lemma 2.1, \cite{BM08}] and it appears in \cite{EMR13} without proof, for it follows quickly from the distance formula in Theorem \ref{r:distances}:

\begin{lemma}\label{r:prod qi}
The map $\Phi$ is a $Stab_{\MCG(S)}(\Delta)$-equivariant quasiisometry.
\end{lemma}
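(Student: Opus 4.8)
The plan is to deduce Lemma \ref{r:prod qi} directly from the distance formula \eqref{r:distances 3} of Theorem \ref{r:distances}, exactly as in the $\MM(S)$-case of \cite[Lemma 2.1]{BM08}. First I would observe that for $\tilde\mu \in Q(\Delta)$, every curve of $\Delta$ lies in $\mathrm{base}(\tilde\mu)$, so for each subsurface $W \subset S$ that interlocks some component of $\Delta$ we have $\pi_W(\tilde\mu) = \pi_W(\partial W)$ up to bounded error (any curve of $\Delta$ crossing $W$ forces $d_W(\tilde\mu_1,\tilde\mu_2)$ to be uniformly bounded), and similarly the horoball projection $\pi_{\HHH_\alpha}(\tilde\mu)$ for $\alpha$ a component of $\Delta$ contributes a bounded amount since the length data $D_\alpha$ is controlled only if $\alpha$ is short, which we can absorb—actually the cleaner statement is: the only subsurfaces and annuli contributing more than $K$ to $d_T(G(\tilde\mu_1),G(\tilde\mu_2))$ for $\tilde\mu_1,\tilde\mu_2 \in Q(\Delta)$ are those properly contained in (or equal to) some $Y \in \sigma(\Delta)$, together with annuli whose cores lie in such a $Y$ or in $\Delta$ itself. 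The annuli on $\Delta$ require a small remark: a point of $Q(\Delta)$ still carries transversal and length data on each $\alpha \in \Delta$, so $d_{\HHH_\alpha}(\tilde\mu_1,\tilde\mu_2)$ can be large; but this is precisely recorded by the $\AM(Y)$-coordinate when $\alpha = \partial Y$ for the relevant $Y \in \sigma(\Delta)$, i.e. by the annular factor of $\pi_{\AM(Y)}$, so it is captured on the product side.

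Next I would make the two coarse inequalities explicit. For the upper bound on $d_{\AM(S)}(\tilde\mu_1,\tilde\mu_2)$, I partition the sum in \eqref{r:distances 3}: terms indexed by $W \subseteq Y$ for some $Y \in \sigma(\Delta)$ (including annuli) and terms indexed by everything else. The "everything else" bucket consists of subsurfaces and annuli that interlock or equal a component of $\Delta$; for these, $\Delta \subset \mathrm{base}(\tilde\mu_i)$ forces the projection distance to be $\prec 1$, so those terms vanish after the $[\cdot]_K$ thresholding (for $K$ large enough). What remains is $\sum_{Y \in \sigma(\Delta)} \sum_{W \subseteq Y}[d_W(\tilde\mu_1,\tilde\mu_2)]_K + \sum_{Y}\sum_{\alpha \subset Y}[d_{\HHH_\alpha}(\cdot)]_K + \sum_{\alpha \in \Delta}[d_{\HHH_\alpha}(\cdot)]_K$, and since subsurface and annular projections are (coarsely) compatible with the projection $\pi_{\AM(Y)}$—a projection $\pi_W(\tilde\mu_i)$ for $W \subseteq Y$ agrees up to bounded error with $\pi_W(\pi_{\AM(Y)}(\tilde\mu_i))$, and likewise for annuli including the boundary annuli $\alpha = \partial Y$—each inner sum is coarsely $d_{\AM(Y)}(\pi_{\AM(Y)}(\tilde\mu_1),\pi_{\AM(Y)}(\tilde\mu_2))$ by applying \eqref{r:distances 3} inside $Y$. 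Summing over $Y \in \sigma(\Delta)$ and comparing with the sup metric on $\prod \AM(Y)$ gives $d_{\AM(S)}(\tilde\mu_1,\tilde\mu_2) \prec \sum_Y d_{\AM(Y)}(\Phi(\tilde\mu_1),\Phi(\tilde\mu_2)) \prec \xi(S)\cdot \max_Y d_{\AM(Y)}(\cdots)$, i.e. $\prec d_{\prod}(\Phi(\tilde\mu_1),\Phi(\tilde\mu_2))$. For the lower bound, run the same identification in reverse: each $d_{\AM(Y)}(\Phi(\tilde\mu_1),\Phi(\tilde\mu_2))$ is coarsely a subsum of \eqref{r:distances 3} for $\tilde\mu_1,\tilde\mu_2$ in $S$, and these subsums (over distinct $Y \in \sigma(\Delta)$) are over disjoint index sets, so the max over $Y$ is $\prec d_{\AM(S)}(\tilde\mu_1,\tilde\mu_2)$; hence $d_{\prod}(\Phi(\tilde\mu_1),\Phi(\tilde\mu_2)) \prec d_{\AM(S)}(\tilde\mu_1,\tilde\mu_2)$. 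Together these give the quasiisometric embedding estimate.

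To finish I would check the remaining two conditions for a quasiisometry. Coarse surjectivity of $\Phi$: given a tuple $(\tilde\nu_Y)_{Y \in \sigma(\Delta)} \in \prod \AM(Y)$, assemble an augmented marking on $S$ whose base contains $\Delta$, restricts on each $Y$ to (the base of) $\tilde\nu_Y$, and carries the corresponding transversal and length data, including length/twist data on each $\alpha \in \Delta$ read off from the annular factors of the $\tilde\nu_Y$ with $\alpha \subset \partial Y$ (with an arbitrary bounded choice where $\alpha$ touches two factors, or is interior to none—here $\alpha \in \Delta$ with both sides glued, one picks any consistent value); this lies in $Q(\Delta)$ and projects back to the given tuple up to bounded error by the coarse compatibility of projections used above. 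Equivariance under $Stab_{\MCG(S)}(\Delta)$: any $g$ fixing $\Delta$ setwise permutes $\sigma(\Delta)$ and acts on the corresponding $\AM(Y)$, and by coarse naturality of subsurface projection ($\pi_{gW}(g\tilde\mu) = g\pi_W(\tilde\mu)$, cf. the $D_\alpha$ equivariance noted in Subsection \ref{r:AMS section}) the diagram commutes up to bounded error, so $\Phi$ is $Stab_{\MCG(S)}(\Delta)$-equivariant.

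The main obstacle I anticipate is bookkeeping the annular contributions on $\Delta$ itself and at the boundaries $\partial Y$: one must be careful that the horoball projections $\pi_{\HHH_\alpha}$ for $\alpha \in \Delta$ are neither double-counted (when $\alpha$ bounds two subsurfaces in $\sigma(\Delta)$) nor dropped, and that the length-data coordinates $D_\alpha$ which feature in $\pi_{\AM(Y)}$ (Definition of $\pi_{\AM(Y)}$ in Subsection \ref{r:AMS section}: $D_\alpha$ is recorded iff $\alpha \subset Y$, else $0$) are reconciled with the $D_\alpha$ in the ambient $\tilde\mu$. Since the definition of $\pi_{\AM(Y)}$ was set up precisely so that boundary annuli are handled by the marked horoball projection $\widehat\pi_{\HHH_\beta}$, this is a matter of matching conventions rather than a genuine difficulty; everything else is a routine application of the distance formula, which is why \cite{EMR13} states it without proof.
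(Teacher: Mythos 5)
You should first note that the paper itself gives no proof of this lemma: it states that it ``appears in \cite{EMR13} without proof, for it follows quickly from the distance formula,'' and refers to the $\MM(S)$-analogue \cite[Lemma 2.1]{BM08}. So your blind proposal is supplying an argument where the paper has deliberately left one out, and your high-level template --- partition the distance formula in \eqref{r:distances 3} into terms interlocking $\Delta$ (which vanish after thresholding) and terms subordinate to components of $\sigma(\Delta)$, then reassemble each bucket via the distance formula inside $Y$, then check coarse surjectivity and $\mathrm{Stab}_{\MCG(S)}(\Delta)$-equivariance --- is exactly the argument the paper is gesturing at.

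However, there is a recurring and genuine confusion in your treatment of the annular data over $\Delta$ itself, and it affects every step of your argument. You repeatedly assert that for $\alpha \in \Delta$ the horoball distance $d_{\HHH_\alpha}(\tilde\mu_1,\tilde\mu_2)$ is ``recorded by the $\AM(Y)$-coordinate when $\alpha = \partial Y$'' or by ``the annular factor of $\pi_{\AM(Y)}$'' for a nonannular $Y \in \sigma(\Delta)$. This is false. The projection $\pi_{\AM(Y)}$ for a nonannular $Y$ is built from $\pi_{\MM(Y)}$ followed by the $D_\beta$-assignment for $\beta \subset Y$, and the marking $\pi_{\MM(Y)}(\mu)$ is supported on curves in $\CC(Y)$; boundary curves $\alpha \subset \partial Y$ never appear as base curves or transversals of a marking on $Y$, so neither the twisting nor the length coordinate of $\alpha$ survives projection to $\AM(Y)$. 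Consequently, if the product target were only $\prod_{Y\,\text{nonannular}}\AM(Y)$, then two augmented markings in $Q(\Delta)$ that differ only by a large multitwist or large length change on $\Delta$ would have the same image under $\Phi$ while being far apart in $\AM(S)$, and $\Phi$ would not be a quasiisometric embedding. Your fallback in the surjectivity step --- ``one picks any consistent value'' for the $D_\alpha$ and transversals on $\Delta$ --- actually exhibits precisely this failure: the fiber of your proposed section has infinite diameter.

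The resolution, which the paper is implicitly using, is that $\sigma(\Delta)$ contains the annular neighborhoods of the curves of $\Delta$ as components alongside the nonannular complementary subsurfaces, so the product target is $\prod_{\alpha\in\Delta}\HHH_\alpha \times \prod_{Y\,\text{nonannular}}\AM(Y)$, and the $\alpha$-factors are handled directly by the marked horoball projection $\widehat\pi_{\HHH_\alpha}$ of Definition \ref{r:m hor proj}. You can see that this is the intended convention from the proof of Lemma \ref{r:phi lipschitz}, where the sum over $Y \subset \sigma(\Delta)$ is explicitly split as $\sum_{\alpha\in\Delta}[\,d_{\HHH_\alpha}\,]_K + \sum_{Y\subset\sigma(\Delta)\setminus\Delta}[\,d_Y\,]_K$, and from the chain of quasiisometries in Corollary \ref{r:coarse prod}, where the annular factors $\prod_{\alpha\in\Delta}\HHH_\alpha$ appear explicitly. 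Your final remark (``boundary annuli are handled by the marked horoball projection $\widehat\pi_{\HHH_\beta}$'') shows you sensed this, but $\widehat\pi_{\HHH_\beta}$ is $\pi_{\AM(Y)}$ for \emph{annular} $Y$; it is not a subroutine of $\pi_{\AM(Y)}$ for nonannular $Y$. Once you put the $\HHH_\alpha$ factors directly into the product, the bucket $\sum_{\alpha\in\Delta}[\,d_{\HHH_\alpha}\,]_K$ in your partition of \eqref{r:distances 3} is carried verbatim to the product side, the surjectivity step has the transversal and length data on each $\alpha\in\Delta$ supplied directly by the $\HHH_\alpha$-component of the tuple rather than by an ``arbitrary bounded choice,'' and the rest of your argument --- the nonannular aggregation, the vanishing of interlocking terms, and the $\mathrm{Stab}_{\MCG(S)}(\Delta)$-equivariance via coarse naturality of subsurface and horoball projections --- goes through as you wrote it.
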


There are a couple of immediate corollaries.  First, we have a coarse distance estimate for $Q(\Delta)$:

\begin{corollary}\label{r:distance prod}
For $\tilde{\mu}_1, \tilde{\mu}_2 \in Q(\Delta)$, we have that $d_Y(\tilde{\mu}_1, \tilde{\mu}_2) \asymp 1$ for any $Y \pitchfork \Delta$ and thus

\[d_{\AM(S)}(\tilde{\mu}_1,\tilde{\mu}_2) \asymp \sum_{Y \subset \sigma(\Delta) } \left[d_Y(\tilde{\mu}_1, \tilde{\mu}_2)\right]_K\]

In particular, $Q(\Delta)$ is quasiconvex with constants only depending on $S$.
\end{corollary}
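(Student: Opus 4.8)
The plan is to read both assertions off the distance formula \eqref{r:distances 3} (for the displayed formula one may alternatively invoke the quasiisometry $\Phi$ of Lemma \ref{r:prod qi} directly); the underlying point is that interlocking with $\Delta$ forces the corresponding subsurface coefficients between $\tilde{\mu}_1$ and $\tilde{\mu}_2$ to be uniformly bounded. So first I would prove that $d_W(\tilde{\mu}_1,\tilde{\mu}_2)$ is bounded above by a constant $C_0=C_0(S)$ for every nonannular subsurface or horoball $W$ with $\pi_W(\Delta)\neq\emptyset$; since $W\pitchfork\Delta$ implies $\pi_W(\Delta)\neq\emptyset$, this contains the first assertion (in the paper's sense, $d_Y(\tilde{\mu}_1,\tilde{\mu}_2)\asymp 1$ just means this coefficient is uniformly bounded). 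Indeed, some $\delta\in\Delta$ then has $\pi_W(\delta)\neq\emptyset$, and since $\delta\in\mathrm{base}(\tilde{\mu}_i)$ while $\pi_W$ of a pants decomposition has uniformly bounded diameter in $\CC(W)$ \cite{MM00}, each of $\pi_W(\tilde{\mu}_1),\pi_W(\tilde{\mu}_2)$ lies within bounded distance of $\pi_W(\delta)$. When $W=\widehat{\HHH}_\alpha$ with $\alpha\pitchfork\Delta$ one uses in addition that $\alpha\notin\mathrm{base}(\tilde{\mu}_i)$ (it crosses $\delta$), so both depth coordinates of $\pi_{\widehat{\HHH}_\alpha}(\tilde{\mu}_i)$ vanish and the horoball distance of two depth-zero points at bounded horizontal distance is again bounded; the case $W=S$ is immediate from $\Delta\subset\mathrm{base}(\tilde{\mu}_i)$.

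For the displayed formula I would take the threshold constant $K\geq C_0$ in \eqref{r:distances 3} (it holds for all sufficiently large $K$), so that every term indexed by a $W$ with $\pi_W(\Delta)\neq\emptyset$ — in particular every $W\pitchfork\Delta$, every $W$ containing $\Delta$, and $W=S$ — vanishes. The surviving indices $W$ are precisely those with $\pi_W(\Delta)=\emptyset$, i.e. the $W$ supported in some element of $\sigma(\Delta)$, where I take $\sigma(\Delta)$ to include the annuli $Y_\delta$ around the curves $\delta\in\Delta$ (with $\AM(Y_\delta)=\widehat{\HHH}_\delta$), as in Lemma \ref{r:prod qi}; since distinct elements of $\sigma(\Delta)$ can be realized disjointly, each such $W$ is supported in a unique $Y\in\sigma(\Delta)$. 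Using that $\pi_W\circ\pi_{\AM(Y)}$ coarsely agrees with $\pi_W$ for $W$ supported in $Y$, and that the length data is preserved by $\pi_{\AM(Y)}$, I would regroup $\sum_W[d_W(\tilde{\mu}_1,\tilde{\mu}_2)]_K=\sum_{Y\in\sigma(\Delta)}\sum_{W\subseteq Y}[d_W(\tilde{\mu}_1,\tilde{\mu}_2)]_K$ and apply \eqref{r:distances 3} inside each $\AM(Y)$ to identify the inner sum with $d_{\AM(Y)}(\pi_{\AM(Y)}\tilde{\mu}_1,\pi_{\AM(Y)}\tilde{\mu}_2)=:d_Y(\tilde{\mu}_1,\tilde{\mu}_2)$; since $|\sigma(\Delta)|$ is bounded in terms of $S$, re-thresholding the outer sum costs only uniform constants, giving $d_{\AM(S)}(\tilde{\mu}_1,\tilde{\mu}_2)\asymp\sum_{Y\in\sigma(\Delta)}[d_Y(\tilde{\mu}_1,\tilde{\mu}_2)]_K$.

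For quasiconvexity I would use augmented hierarchy paths. Given $\tilde{\mu}_1,\tilde{\mu}_2\in Q(\Delta)$, let $\gamma\subset\AM(S)$ be an augmented hierarchy path between them — a uniform quasigeodesic by \cite{Dur13} — and fix $\tilde{\nu}\in\gamma$. Using coarse surjectivity of $\Phi$ from Lemma \ref{r:prod qi}, or concretely by taking the base of $\tilde{\nu}'$ to be $\Delta$ together with the bases of $\pi_{\MM(Y)}(\tilde{\nu})$ over $Y\in\sigma(\Delta)$ and copying the corresponding transversal and length data, choose $\tilde{\nu}'\in Q(\Delta)$ whose projection to $\AM(Y)$ coarsely agrees with that of $\tilde{\nu}$ for every $Y\in\sigma(\Delta)$. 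I then bound $d_{\AM(S)}(\tilde{\nu},\tilde{\nu}')$ via \eqref{r:distances 3}: for $W$ supported in some $Y\in\sigma(\Delta)$ the coefficient $d_W(\tilde{\nu},\tilde{\nu}')$ is uniformly bounded by the choice of $\tilde{\nu}'$; for $W$ with $\pi_W(\Delta)\neq\emptyset$, the first step gives $d_W(\tilde{\mu}_1,\tilde{\mu}_2)\leq C_0$, so $W$ is not a large link for $\{\tilde{\mu}_1,\tilde{\mu}_2\}$ (Lemma \ref{r:large link condition}) and $\gamma$ has uniformly bounded projection diameter to $\CC(W)$, whence $\pi_W(\tilde{\nu})$ stays within bounded distance of $\pi_W(\tilde{\mu}_1)$, which coarsely agrees with $\pi_W(\Delta)$ and hence with $\pi_W(\tilde{\nu}')$. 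Every coefficient being uniformly bounded and only boundedly many being nonzero, $d_{\AM(S)}(\tilde{\nu},\tilde{\nu}')\leq L(S)$, so $\gamma\subset\mathcal{N}_L(Q(\Delta))$.

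The first two paragraphs are routine manipulations of the distance formula; I expect the last step to be where the care is needed. The two delicate points are the justification that a subsurface which is not a large link for $\{\tilde{\mu}_1,\tilde{\mu}_2\}$ really does have uniformly bounded projection diameter along every augmented hierarchy path between them — this follows from Lemma \ref{r:large link condition} together with the active-segment control of Lemma \ref{r:active interval} — and checking that the number of nonzero coefficients and all intermediate multiplicative and additive constants depend only on $S$.
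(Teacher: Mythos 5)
Your proposal is correct and essentially fills in what the paper treats as an ``immediate'' consequence of Lemma~\ref{r:prod qi} (the paper supplies no written proof of this corollary). Your first two paragraphs are exactly the content one needs to deduce the distance formula from the quasiisometry $\Phi$: curves of $\Delta$ sit in both bases, so every subsurface meeting $\Delta$ has a uniformly bounded contribution, and raising the threshold $K$ makes those contributions vanish. Your third paragraph gives the standard Behrstock--Minsky-style argument for quasiconvexity via a near-point projection of hierarchy-path points into $Q(\Delta)$.

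Two small places where the wording should be tightened. First, your appeal to Lemma~\ref{r:active interval} for the claim that a non-large-link $W$ has bounded projection diameter along $\gamma$ is a little indirect: that lemma as stated controls domains \emph{with} a nonempty active segment. The fact you actually need is the complementary one --- if $W$ fails the large-link threshold then it supports no geodesic in the hierarchy, and the unparametrized-quasigeodesic property of $\pi_W\circ\gamma$ then forces $\mathrm{diam}_W(\gamma)$ to be uniformly bounded. This is standard and true, but the reader should not be left thinking Lemma~\ref{r:active interval} literally covers the empty-active-segment case. Second, the quasiconvexity you establish is with respect to augmented hierarchy paths (uniform quasigeodesics in $\AM(S)$), not directly with respect to arbitrary $\AM(S)$-geodesics, and $\AM(S)$ is not Gromov hyperbolic, so these are not a priori interchangeable. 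This is an ambiguity in the paper's own statement as much as in your proof --- the Masur--Minsky literature typically means quasiconvexity relative to hierarchy paths in exactly this kind of statement --- but it is worth flagging that the displayed definition of quasiconvexity in the paper speaks of geodesics. With those two caveats understood, your argument is sound and follows the same route the paper implicitly has in mind.
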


Second, we have a coarse characterization of Minsky's product regions Theorem \ref{r:product}, which is well-known to the experts:

\begin{corollary} \label{r:coarse prod}
Let $\epsilon>0$ be as in Theorem \ref{r:product}.  Let $\Delta \subset \CC(S)$ be a simplex and let $X_1, X_2 \in Thin_{\epsilon, S}(\Delta)$, with $\tilde{\mu}_{X_1}, \tilde{\mu}_{X_2} \in \AM(S)$ their shortest augmented markings.  Then $\tilde{\mu}_{X_1}, \tilde{\mu}_{X_2} \in Q(\Delta)$ and there is a string of $\MCG(S)$-equivariant quasiisometries

\[Thin_{\epsilon, S}(\Delta) \overset{+}{\asymp} \prod_{\alpha \in \Delta} \HH_{\alpha} \times \TT(S\setminus \Delta) \asymp \prod_{\alpha \in \Delta} \HHH_{\alpha} \times \AM(S\setminus \Delta) \asymp Q(\Delta)\]

where $\AM(S\setminus \Delta) = \prod_{Y \subset \sigma(\Delta)} \AM(Y)$ for $Y$ nonannular.
\end{corollary}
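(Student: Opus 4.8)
The plan is to prove the two assertions of the Corollary separately: the membership $\tilde\mu_{X_i}\in Q(\Delta)$ follows directly from Remark \ref{r:short base}, and the chain of quasiisometries is obtained by composing three arrows, each of which is (essentially) a result already recorded above.

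For the membership: by hypothesis $X_i\in Thin_{\epsilon,S}(\Delta)$, so $l_{X_i}(\alpha)\leq\epsilon$ for every $\alpha\in\Delta$; since extremal and hyperbolic length are comparable for short curves (one may shrink $\epsilon$ slightly to match the constant in Remark \ref{r:short base}), that remark gives $\alpha\in\mathrm{base}(\tilde\mu_{X_i})$ for each such $\alpha$. Hence $\Delta\subset\mathrm{base}(\tilde\mu_{X_i})$, which is exactly the defining condition for $\tilde\mu_{X_i}\in Q(\Delta)$.

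For the chain, I would read off each arrow. The first, $Thin_{\epsilon,S}(\Delta)\overset{+}{\asymp}\prod_{\alpha\in\Delta}\HH_\alpha\times\TT(S\setminus\Delta)$, is Minsky's Product Regions Theorem \ref{r:product}: the restriction of the Fenchel--Nielsen homeomorphism $\Pi$ to $Thin_{\epsilon,S}(\Delta)$ is an additive quasiisometry onto its image in $\TT_\Delta=\TT(S\setminus\Delta)\times\prod_{\alpha\in\Delta}\HH_\alpha$, the subset of each $\HH_\alpha$ on which $\alpha$ is $\epsilon$-short being itself isometric to a horodisk; since $\Pi$ is built from Fenchel--Nielsen coordinates it is natural under $Stab_{\MCG(S)}(\Delta)$. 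The second, $\prod_{\alpha\in\Delta}\HH_\alpha\times\TT(S\setminus\Delta)\asymp\prod_{\alpha\in\Delta}\HHH_\alpha\times\AM(S\setminus\Delta)$, is a product of factorwise quasiisometries in the sup metric: Lemma \ref{r:horoball qi} identifies each horodisk $\HH_\alpha$ with the combinatorial horoball $\HHH_\alpha$, and Theorem \ref{r:aug qi}, applied componentwise, identifies $\TT(Y)$ with $\AM(Y)$ for each nonannular complementary piece $Y$ (pants components contribute a point to both sides and are ignored); a product of quasiisometries is a quasiisometry in the sup metric with constants depending only on $S$, and the identification is equivariant for the mapping class groups of the pieces. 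The third, $\prod_{\alpha\in\Delta}\HHH_\alpha\times\AM(S\setminus\Delta)\asymp Q(\Delta)$, is Lemma \ref{r:prod qi}: its target $\prod_{Y\in\sigma(\Delta)}\AM(Y)$ splits into the annular factors $\prod_{\alpha\in\Delta}\HHH_\alpha$ — recall $\AM(Y)=\HHH_\beta$ when $Y$ is the annulus with core $\beta$, and for $\tilde\mu\in Q(\Delta)$ the marked projection $\widehat\pi_{\HHH_\alpha}(\tilde\mu)$ keeps both transversal and length data since $\alpha\in\mathrm{base}(\tilde\mu)$ — together with the nonannular factors $\AM(S\setminus\Delta)$, and $\Phi$ is the asserted $Stab_{\MCG(S)}(\Delta)$-equivariant quasiisometry. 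Composing, an additive quasiisometry followed by two quasiisometries is again a quasiisometry, and equivariance under $Stab_{\MCG(S)}(\Delta)$ is preserved at each step; this is the precise sense in which the maps are $\MCG(S)$-equivariant, since $g\in\MCG(S)$ carries $Thin_{\epsilon,S}(\Delta)$, $Q(\Delta)$, and the product to their $g\cdot\Delta$ analogues compatibly.

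The substantive content, and the step I expect to require the most care, lies inside the third arrow, i.e. inside Lemma \ref{r:prod qi}: one must check that any subsurface $Y$ interlocking a curve of $\Delta$ contributes a uniformly bounded amount to $d_{\AM(S)}$ between two points of $Q(\Delta)$, which is the Behrstock-type estimate packaged in Corollary \ref{r:distance prod}, so that the distance formula \eqref{r:distances 3} collapses onto the factors of the product. The only remaining issue is organizational — confirming that the index set $\sigma(\Delta)$ used by $\Phi$ matches the set of horodisk and Teichm\"uller factors produced by Theorem \ref{r:product}, so that the three arrows genuinely compose — and everything else is bookkeeping with the sup metric and checking that all constants depend only on the topology of $S$.
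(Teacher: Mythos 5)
Your proposal is correct and takes essentially the same approach as the paper, which cites Minsky's Theorem \ref{r:product}, Lemma \ref{r:horoball qi} together with Theorem \ref{r:aug qi}, and Lemma \ref{r:prod qi} for the three arrows in turn; your handling of the membership $\tilde\mu_{X_i}\in Q(\Delta)$ via Remark \ref{r:short base} is a sensible explicit treatment of a point the paper leaves implicit.
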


The first quasiisometry is that of Minsky's Theorem \ref{r:product}.  The second quasiisometry comes from applying Lemma \ref{r:horoball qi} and Theorem \ref{r:aug qi} to the appropriate components, in the latter case by choosing a shortest augmented marking on each nonhorodisk component.  The third quasiisometry is from Lemma \ref{r:prod qi}.  We remark that, up to quasiisometry, the metric on a product is unimportant.\\

In [Lemma 2.2, \cite{BM08}], Behrstock-Minsky give a coarse estimate from any marking $\MM(S)$ to $Q(\Delta)$.  The following is the analogue for $\AM(S)$ whose proof we omit for it is essentially the same.

\begin{lemma}{Distance to $Q(\Delta)$}\label{r:distance to q}
Let $\tilde{\mu}\in\AM(S)$ and $\Delta \subset \CC(S)$ and simplex.  Then we have \[d_{\AM(S)}(\tilde{\mu}, Q(\Delta)) \asymp \sum_{Y \pitchfork \Delta} [[d_Y(\tilde{\mu}, \Delta)]]_K\]
where if $Y$ is an annulus with core curve $\alpha$, then $d_Y = d_{\HHH_{\alpha}}$.
\end{lemma}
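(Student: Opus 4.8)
The plan is to mirror the proof of [Lemma 2.2, \cite{BM08}], transferring it to the augmented setting via the distance formula in Theorem \ref{r:distances} and the coarse product description of $Q(\Delta)$ from Corollary \ref{r:distance prod}. Write $A(\tilde\mu) = \sum_{Y \pitchfork \Delta} [[d_Y(\tilde\mu, \Delta)]]_K$ for the right-hand side. The two directions are handled separately.

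\emph{Upper bound.} I would build an explicit augmented marking $\tilde\nu \in Q(\Delta)$ with $d_{\AM(S)}(\tilde\mu, \tilde\nu) \prec A(\tilde\mu)$. Take $\tilde\nu$ to have $\Delta$ in its base, to agree with $\tilde\mu$ on all the projection data to subsurfaces $Y \in \sigma(\Delta)$ (using Lemma \ref{r:prod qi} to realize such a point), and to carry whatever length and twisting data $\tilde\mu$ assigns to curves in $\Delta$. Then apply the distance formula \eqref{r:distances 3} to $d_{\AM(S)}(\tilde\mu,\tilde\nu)$: the terms from subsurfaces $Y$ disjoint from $\Delta$ or nested in a component of $S \setminus \Delta$ vanish by construction; the annular terms $d_{\HHH_\alpha}$ for $\alpha \in \Delta$ vanish because we copied the horoball coordinates; and by Behrstock's inequality / the bounded-geodesic-image arguments underlying \cite{MM00}, the only surviving terms are those $Y$ with $Y \pitchfork \Delta$, each of which is $\asymp d_Y(\tilde\mu,\Delta)$. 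This gives $d_{\AM(S)}(\tilde\mu, Q(\Delta)) \prec A(\tilde\mu)$.

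\emph{Lower bound.} Here I would take any $\tilde\nu \in Q(\Delta)$ and show $A(\tilde\mu) \prec d_{\AM(S)}(\tilde\mu,\tilde\nu)$. For each $Y \pitchfork \Delta$, since $\Delta \subset \mathrm{base}(\tilde\nu)$, the projection $\pi_Y(\tilde\nu)$ coarsely equals $\pi_Y(\Delta)$ (a component of $\partial Y$ meets $Y$ cutting it, and the base curves from $\Delta$ dominate the projection up to bounded error — this is the standard observation that $\pi_Y(\tilde\nu) \asymp \pi_Y(\Delta)$ whenever $\Delta$ fills a curve crossing $Y$, with the remaining cases giving $d_Y(\tilde\nu,\Delta) \asymp 1$). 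Hence $d_Y(\tilde\mu,\Delta) \asymp d_Y(\tilde\mu,\tilde\nu)$ for all such $Y$, and by the distance formula $\sum_{Y \pitchfork \Delta}[[d_Y(\tilde\mu,\tilde\nu)]]_K$ is a subsum of the full formula for $d_{\AM(S)}(\tilde\mu,\tilde\nu)$. Taking the infimum over $\tilde\nu \in Q(\Delta)$ yields $A(\tilde\mu) \prec d_{\AM(S)}(\tilde\mu, Q(\Delta))$.

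\emph{Main obstacle.} The delicate point is the uniform comparison $\pi_Y(\tilde\nu) \asymp \pi_Y(\Delta)$ (equivalently $d_Y(\tilde\mu,\Delta) \asymp d_Y(\tilde\mu,\tilde\nu)$) for $Y \pitchfork \Delta$: one must check that the extra transversal and length data in $\tilde\nu$ contribute only bounded error to the $\CC(Y)$-projection, and separately handle the annular case $Y = Y_\alpha$ with $\alpha \pitchfork \Delta$, where $d_{\HHH_\alpha}$ replaces $d_Y$ and one needs that $Q(\Delta)$ imposes no constraint on the $\HHH_\alpha$-coordinate beyond a bounded set — this is exactly where the threshold in $[[\cdot]]_K$ and the coarse product structure of Corollary \ref{r:distance prod} are used. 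Since these comparisons are invoked only a bounded number of times (once per relevant subsurface type), the constants remain uniform in $S$. The rest is a routine application of the distance formula, so I would state it as ``the proof is essentially the same as [Lemma 2.2, \cite{BM08}]'' after indicating these two points.
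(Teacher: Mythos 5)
Your proposal is correct and follows essentially the same route as the paper, which omits the proof precisely because it is ``essentially the same'' as \cite[Lemma 2.2]{BM08}: your upper-bound construction is in effect the projection $\phi_{\Delta}(\tilde{\mu})$ defined later in the section, and your lower bound is the standard observation that any $\tilde{\nu} \in Q(\Delta)$ has $d_Y(\tilde{\nu},\Delta) \asymp 1$ for every $Y \pitchfork \Delta$ (including the annular/horoball case), so the interlocking terms form a subsum of the distance formula. The points you flag as delicate are exactly the ones that need checking in the augmented setting, and your treatment of them is fine.
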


In the proof of the Main Theorem \ref{r:main}, we need to understand how to project any $\tilde{\mu}\in \AM(S)$ to a coarse nearest point in $Q(\Delta)$.  This involves projecting $\tilde{\mu}$ to $\HHH_{\alpha}$ for each $\alpha \in \Delta$ and then completing those projections to an augmented marking by projecting $\tilde{\mu}$ to $\AM(S\setminus \Delta)$.\\

Before we proceed, we need to show that $\pi_{\HHH_{\alpha}}$ and $\pi_{\AM(Y)}$ are Lipschitz.  Since both of these are entirely built out of subsurface projections, Lemmas \ref{r:lip for horo proj} and \ref{r:lip for mark proj} are easy consequences of the following result from \cite{MM00}:

\begin{lemma}[Lipschitz projection; Lemma 2.4 in \cite{MM00}] \label{r:lipschitz}
Let $Z \subset Y \subset S$ be subsurfaces.  For any simplex $\rho \in \CC(Y)$, if $\pi_Z(\rho) \neq \emptyset$, then $\text{diam}_Z(\rho) \leq 3$.  If $Z$ is an annulus, then the bound is 1.
\end{lemma}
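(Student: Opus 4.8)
The plan is to unwind the definition of $\pi_Z$ and reduce the statement to the single observation that the curves making up a simplex have pairwise essentially-disjoint traces inside $Z$, so that they can be surgered into $\CC(Z)$ simultaneously. Write the components of the simplex as $\rho = \gamma_1 \cup \cdots \cup \gamma_k$ and recall that $\pi_Z(\rho)$ is the collection of essential, non-peripheral components of $\partial N$, where $N$ is a regular neighborhood of $(\rho \cap Z) \cup \partial Z$ — and, when $Z = Z_\beta$ is an annulus, the collection of arcs among the lifts of $\rho$ to the annular cover $\widetilde{Z}_\beta$ that join the two boundary circles of its compactification. Fixing two elements $c, c' \in \pi_Z(\rho)$, the goal is to bound $d_{\CC(Z)}(c,c')$ by a constant depending only on the topological type of $S$.

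First I would dispose of the non-annular case. Since $Z$ is non-annular (and, being a subsurface to which we project, not a pair of pants), $Y$ cannot be one of the exceptional surfaces $S_{1,1}$ or $S_{0,4}$ — these have no essential proper subsurface besides annuli and pairs of pants — so the components $\gamma_i$ of $\rho$ are genuinely pairwise disjoint. Hence $\rho \cap Z$ is a pairwise disjoint union of essential arcs and simple closed curves in $Z$, $N = N((\rho \cap Z) \cup \partial Z)$ is an embedded subsurface of $Z$, $\partial N$ is an embedded multicurve, and $\pi_Z(\rho) \subseteq \partial N$ is therefore itself a simplex in $\CC(Z)$, so $\mathrm{diam}_{\CC(Z)}(\pi_Z(\rho)) \leq 1$ (in particular $\leq 3$). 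The slack to $3$ is what one needs under the variant convention in which the arcs of $\rho \cap Z$ are surgered one at a time: distinct arcs remain pairwise disjoint, so the resulting curves meet only inside the fixed collar $N(\partial Z)$, with a uniformly bounded number of intersections, and $d_{\CC(Z)}(c,c') \leq 2\log_2 i(c,c') + 2$ together with a more careful count recovers the constant $3$; I will not optimize this, as only boundedness is used later.

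For the annular case $Z = Z_\beta$, the preimage of the embedded $1$-submanifold $\rho$ in the annular cover $\widetilde{Z}_\beta$ is again an embedded $1$-submanifold, so in particular the sub-collection of its arcs joining the two boundary circles of the compactification is pairwise disjoint; since disjoint such arcs are adjacent in $\CC(\beta)$, this gives $\mathrm{diam}_{\CC(\beta)}(\pi_\beta(\rho)) \leq 1$. When $Y$ is one of the exceptional surfaces and $\rho$ consists of curves meeting minimally with $i(\gamma,\gamma') \leq 2$, one instead bounds the number of intersections between their lifts in $\widetilde{Z}_\beta$ by a function of $i(\gamma,\gamma')$ and finishes with the analogous small-distance estimate in the annular complex, absorbing any excess into an absolute constant.

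The step I expect to take the most care is precisely this boundary bookkeeping: under the one-arc-at-a-time convention, curves produced from distinct arcs of $\rho \cap Z$ need not be disjoint since they all traverse the fixed collar $N(\partial Z)$, and in $S_{1,1}$ and $S_{0,4}$ one must also handle minimally-intersecting simplices. Both phenomena are elementary and affect only the value of the constant, not the existence of a bound; and since every use of this lemma in the paper — in particular Lemmas \ref{r:lip for horo proj} and \ref{r:lip for mark proj}, and through them the Lipschitz property of the projection to $Q(\Delta)$ — needs only that the diameter of $\pi_Z(\rho)$ is uniformly bounded, the non-optimized constant $3$ is more than sufficient.
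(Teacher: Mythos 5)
The paper does not prove this lemma: it is quoted verbatim from Masur--Minsky \cite[Lemma 2.4]{MM00}, so there is no ``paper's own proof'' to compare against. Your sketch follows the standard argument behind that lemma, and the shape of it is right: for non-annular $Z$ one observes that $\rho\cap Z$ is a pairwise disjoint arc-and-curve system and that the resulting surgered curves pairwise intersect a uniformly bounded number of times, while for annular $Z$ the disjoint lifts of $\rho$ to the annular cover give a simplex in $\CC(\beta)$ directly.

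Two small points worth tightening. First, the implication ``$Z$ non-annular and not a pair of pants $\Rightarrow$ $Y\notin\{S_{1,1},S_{0,4}\}$'' only holds for $Z\subsetneq Y$; the case $Z=Y$ exceptional has $\rho$ consisting of intersecting curves, but is trivial since $\pi_Z$ is the identity and $\mathrm{diam}_Z(\rho)\le 1$ because $\rho$ is a simplex. You should say that separately rather than folding it into the claim about proper subsurfaces. Second, you explicitly decline to recover the stated constants $3$ and $1$, which is fine for every use the paper makes of the lemma (only uniform boundedness is used in Lemmas \ref{r:lip for horo proj} and \ref{r:lip for mark proj} and downstream), but it does mean the argument as written proves a weaker statement than the one on the page. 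In particular, for the exceptional-$Y$-with-annular-$Z$ case you propose to ``absorb any excess into an absolute constant,'' which concedes a bound larger than $1$; if you want to match the stated constant you would need to check the bookkeeping there more carefully (or note that \cite{MM00} handles this case separately). None of this is a mathematical error, just imprecision you have already flagged.
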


\begin{lemma}[Horoball projections are Lipschitz]\label{r:lip for horo proj}
For any nonannular subsurface $Y \subset S$ and $\alpha \in \CC(Y)$, if $\tilde{\mu}_1, \tilde{\mu}_2 \in \AM(Y)$ have $d_{\AM(Y)}(\tilde{\mu}_1, \tilde{\mu}_2) =1$, then $d_{\HHH_{\alpha}}(\tilde{\mu}_1, \tilde{\mu}_2) \asymp 1$.  

\end{lemma}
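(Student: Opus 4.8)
The plan is to use that $d_{\AM(Y)}(\tilde\mu_1,\tilde\mu_2)=1$ means the two augmented markings differ by a single elementary move of $\AM(Y)$ — a flip move, a twist move, or a vertical move — and to verify case by case that the images $\pi_{\HHH_\alpha}(\tilde\mu_1),\pi_{\HHH_\alpha}(\tilde\mu_2)\in\HHH(\ZZ)$ lie at uniformly bounded distance. Recall that $\pi_{\HHH_\alpha}(\tilde\mu)$ records two coordinates: a \emph{horizontal} coordinate, namely $\phi_{\beta_\alpha}$ applied to the transversal $t_\alpha$ when $\alpha\in\mathrm{base}(\tilde\mu)$ and to the annular projection $\pi_\alpha(\mathrm{base}(\tilde\mu))$ otherwise, which measures twisting around $\alpha$ relative to the fixed arc $\beta_\alpha$; and a \emph{vertical} coordinate, namely the length datum $D_\alpha(\tilde\mu)$. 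Two facts will be used repeatedly: (i) $\phi_{\beta_\alpha}\colon\CC(\alpha)\to\ZZ$ is a $(1,2)$-quasiisometry, so $\phi_{\beta_\alpha}(T_\alpha^m\delta)$ differs from $\phi_{\beta_\alpha}(\delta)$ by $m$ up to a bounded error, and $\pi_\alpha$-nearby simplices have $\phi_{\beta_\alpha}$-nearby images; and (ii) in the combinatorial horoball $\HHH(\ZZ)$ the distance between $(x,k)$ and $(y,k)$ is bounded by a constant whenever $|x-y|\le e^k+O(1)$ — one ascends a bounded number of levels, moves horizontally, and descends — and in general it grows only logarithmically in $|x-y|$.

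The vertical-move case and the twist-move-not-on-$\alpha$ case are immediate: the projection is unchanged unless the move is a vertical move on $\alpha$, which changes only the vertical coordinate, by $\pm1$. Indeed a vertical move fixes the underlying marking, so it affects $\pi_{\HHH_\alpha}$ only if it changes $D_\alpha$, which forces $\alpha\in\mathrm{base}$ and changes the datum by exactly $\pm1$ while fixing $t_\alpha$; a twist move around a base curve $\gamma\neq\alpha$ alters only the transversal of $\gamma$, hence fixes the base, the datum $D_\alpha$, and the transversal $t_\alpha$ (or, if $\alpha\notin\mathrm{base}$, fixes $\pi_\alpha(\mathrm{base})$ because the base is unchanged).

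The twist-move-on-$\alpha$ case is where the construction of the horoball is used essentially. Here $\alpha\in\mathrm{base}(\tilde\mu_i)$ with common datum $D_\alpha=k>0$ and $\tilde\mu_1=T_\alpha^m\tilde\mu_2$ with $0<m<e^k$. The vertical coordinate equals $k$ for both markings, and by (i) the horizontal coordinates differ by $m$ up to a bounded error, hence by at most $e^k+O(1)$; by (ii) the two projections are therefore at uniformly bounded distance in $\HHH_\alpha$. This is exactly the statement that a twist of size less than $e^k$ at length-level $k$ is realized by a horizontal edge of $\HHH(\ZZ)$.

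For the flip-move case, observe first that by definition a flip move in $\AM(Y)$ occurs only between augmented markings all of whose length data vanish, so $D_\alpha(\tilde\mu_1)=D_\alpha(\tilde\mu_2)=0$ and both projections lie on the level-$0$ line of $\HHH_\alpha$. It remains to bound the difference of the horizontal coordinates. Since the underlying markings satisfy $d_{\MM(Y)}(\mu_1,\mu_2)=1$, their images under the annular projection $\pi_\alpha$ are within bounded distance in $\CC(\alpha)$ by the coarse-Lipschitz property of subsurface projection — this follows from Lemma \ref{r:lipschitz} together with the description of which curves change under a flip move (when $\alpha$ is itself the flipped base curve one compares the transversal $t_\alpha$ with $\pi_\alpha$ of the new base, which contains $t_\alpha$). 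Composing with the $(1,2)$-quasiisometry $\phi_{\beta_\alpha}$ bounds the difference of horizontal coordinates, and since level $0$ of $\HHH(\ZZ)$ is an isometric copy of $\ZZ$ we may simply walk along it to conclude $d_{\HHH_\alpha}(\tilde\mu_1,\tilde\mu_2)\asymp1$. Combining the four cases proves the lemma. The only delicate points are the last two: in the twist-on-$\alpha$ case one must match the horoball threshold $e^k$ against the constraint $m<e^k$, and in the flip case — where level $0$ of the horoball offers no vertical shortcut — one genuinely needs the coarse-Lipschitz bound on the annular projection; the remaining cases are bookkeeping.
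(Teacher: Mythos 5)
Your argument is correct. The paper never writes out a proof of Lemma \ref{r:lip for horo proj}: it is dispatched in one line as an ``easy consequence'' of the Lipschitz projection Lemma \ref{r:lipschitz}, on the grounds that $\pi_{\widehat{\HHH}_{\alpha}}$ is assembled from subsurface projections and length data. Your case-by-case check over the elementary moves of $\AM(Y)$ is precisely the verification that remark points at, and it is carried out correctly: vertical moves and twist or flip moves not involving $\alpha$ move the projection boundedly for bookkeeping reasons, the flip case reduces to Lemma \ref{r:lipschitz} together with the cleanliness convention comparing $t_{\alpha}$ with $\pi_{\alpha}$ of the new base, and the twist-on-$\alpha$ case is settled by matching the constraint $0<m<e^k$ against the horizontal edges of $\HHH(\ZZ)$ at level $k$. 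Your closing observation is also apt: that last case genuinely uses the exponential edge structure of the combinatorial horoball and not merely Lemma \ref{r:lipschitz}, so your write-up supplies a detail that the paper's one-line justification glosses over, while remaining the same argument in spirit.
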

\begin{lemma}[Marking projections are Lipschitz]\label{r:lip for mark proj}
Let $Z \subset Y \subset S$ be subsurfaces.  For any $\tilde{\mu}_1, \tilde{\mu}_2 \in \AM(Y)$ with $d_{\AM(S)}(\tilde{\mu}_1, \tilde{\mu}_2 )=1$, we have $d_{\AM(Z)}\left(\pi_{\AM(Z)}(\tilde{\mu}_1),\pi_{\AM(Z)}(\tilde{\mu}_1)\right) \asymp 1$.
\end{lemma}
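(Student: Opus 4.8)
The plan is to deduce this from the distance formula \eqref{r:distances 3} for $\AM(Z)$ together with the coarse Lipschitz property of subsurface projections, Lemma \ref{r:lipschitz}; the companion Lemma \ref{r:lip for horo proj} is handled in exactly the same way. (If $Z$ is an annulus with core $\beta$ then $\pi_{\AM(Z)} = \widehat{\pi}_{\HHH_\beta}$ and the claim is a restatement of Lemma \ref{r:lip for horo proj}, so assume $Z$ is nonannular.) Write $\nu_i = \pi_{\AM(Z)}(\tilde{\mu}_i)$. The underlying point is that $\pi_{\AM(Z)}$ is assembled, component by component, from the subsurface projections $\pi_W$ for $W \subseteq Z$, the annular projections $\pi_\beta$ for $\beta \subset Z$, and the length coordinates $D_\beta$ for $\beta \subset Z$; each of these pieces is coarsely $1$-Lipschitz, and the distance formula promotes this to a Lipschitz bound on $\pi_{\AM(Z)}$.

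First I would apply the distance formula \eqref{r:distances 3} upstairs. Since $d_{\AM(S)}(\tilde{\mu}_1,\tilde{\mu}_2) = 1$, the two augmented markings differ by a single elementary move, so the formula forces $\sum_W [d_W(\tilde{\mu}_1,\tilde{\mu}_2)]_K + \sum_\beta [d_{\HHH_\beta}(\tilde{\mu}_1,\tilde{\mu}_2)]_K \asymp 1$; hence there are at most uniformly boundedly many subsurfaces $W$ and annuli $\beta$ on which the relevant projection distance exceeds $K$, and on each of those it is itself $\prec 1$, while on all others it is at most $K$. Second, I would record the composition-of-projections estimate: for every nonannular $W \subseteq Z$ and every annulus $\beta \subset Z$ one has $d_W(\nu_i,\tilde{\mu}_i) \prec 1$ and $d_{\HHH_\beta}(\nu_i,\tilde{\mu}_i) \prec 1$. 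Indeed, the base of $\nu_i$ is a pants decomposition of $Z$ built by iterated subsurface projection of $\mathrm{base}(\mu_i)$ (Subsection \ref{r:AMS section}), one of whose curves lies in $\pi_Z(\mathrm{base}(\mu_i))$ and whose curves are pairwise disjoint, so Lemma \ref{r:lipschitz} bounds the relevant $\CC(W)$- and $\CC(\beta)$-distances between $\nu_i$ and $\tilde{\mu}_i$; equivalently, $\pi_{\MM(Z)}(\mu_i)$ has uniformly bounded diameter in $\MM(Z)$ by \cite[Lemma 2.4]{MM00} and \cite[Lemma 6.1]{Ber03}, and $d_W, d_\beta$ are dominated by $d_{\MM(Z)}$. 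For the horoball terms one additionally uses that $D_\beta(\nu_i) = D_\beta(\tilde{\mu}_i)$ exactly, which is immediate from the definition of $\pi_{\AM(Z)}$ since $\beta \subset Z$.

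Combining these two ingredients via the triangle inequality in each $\CC(W)$ and each $\HHH_\beta$ with $W, \beta \subseteq Z$ gives $d_W(\nu_1,\nu_2) \prec d_W(\tilde{\mu}_1,\tilde{\mu}_2) + 1 \prec 1$ and likewise $d_{\HHH_\beta}(\nu_1,\nu_2) \prec 1$; and since the subsurfaces of, and annuli in, $Z$ form a subcollection of those in $S$, only boundedly many of these distances can exceed $K$. Feeding this into the distance formula \eqref{r:distances 3} for $\AM(Z)$ yields $d_{\AM(Z)}(\nu_1,\nu_2) \asymp \sum_{W \subseteq Z}[d_W(\nu_1,\nu_2)]_K + \sum_{\beta \subset Z}[d_{\HHH_\beta}(\nu_1,\nu_2)]_K \prec 1$, which is the assertion. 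I expect the only genuine obstacle to be the bookkeeping in this last step: one must check that passing from $\tilde{\mu}_i$ to $\nu_i$ inflates each individual projection distance only by a uniform additive constant and manufactures no new unbounded family of large links inside $Z$ — but the first ingredient already caps the total number and size of large links for $\tilde{\mu}_1,\tilde{\mu}_2$, and the extra error introduced by $\pi_{\AM(Z)}$ is precisely the uniform constant of Lemma \ref{r:lipschitz}, so both concerns are controlled.
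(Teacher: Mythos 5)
Your argument is correct and takes essentially the same route as the paper's: invoke the distance formula in $\AM(S)$ to control projections upstairs, observe that $\pi_{\AM(Z)}$ moves each subsurface projection and each horoball coordinate (in particular the length data $D_\beta$ for $\beta\subset Z$) by a uniform amount, then run the distance formula in $\AM(Z)$. The paper's write-up just front-loads the key case — a short base curve $\alpha\subset Z$ with $D_\alpha(\tilde{\mu}_1)>0$ must persist in $\mathrm{base}(\tilde{\mu}_2)$ after a single elementary move and its length/transversal data descends intact to both projections — which you handle implicitly in your second ingredient.
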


\begin{proof}
The result follows easily from the distance formula in Theorem \ref{r:distances} and Lemmas \ref{r:lipschitz} and \ref{r:lip for horo proj} after the following observation.\\

For any $\alpha \in \mathrm{base}(\tilde{\mu}_1)$ with $D_{\alpha}(\tilde{\mu}_1)>0$, it follows that $\alpha \in \mathrm{base}(\tilde{\mu}_2)$ and $d_{\HHH_{\alpha}}\left(\tilde{\mu}_1,\tilde{\mu}_2\right)\leq 1$.  If in addition $\alpha \in \CC(Z)$, it follows that $\alpha \in \mathrm{base}(\pi_{\AM(Z)}(\tilde{\mu}_1))\cap \mathrm{base}(\pi_{\AM(Z)}(\tilde{\mu}_2)) $.  Thus the transversal and length data of $\alpha$ in $\tilde{\mu}_1$ and $\tilde{\mu}_2$ also descend to $\pi_{\AM(Z)}(\tilde{\mu}_1)$ and $\pi_{\AM(Z)}(\tilde{\mu}_2)$, and $d_{\HHH_{\alpha}}\left(\pi_{\AM(Z)}(\tilde{\mu}_1),\pi_{\AM(Z)}(\tilde{\mu}_2)\right)\asymp 1$.  Thus $\pi_{\AM(Z)}(\tilde{\mu}_1)$ has a short curve if and only if $\pi_{\AM(Z)}(\tilde{\mu}_2)$ has that same short curve.\\

As all other parts of $\pi_{\AM(Z)}(\tilde{\mu}_1)$ and $\pi_{\AM(Z)}(\tilde{\mu}_2)$ are built from horoball and subsurface projections, the conclusion of the lemma follows from Theorem \ref{r:distances} above.

\end{proof}

We can now define the coarse closest point projection to $Q(\Delta)$.  

\begin{definition}[Coarse closest point projection to $Q(\Delta)$]
For any $\tilde{\mu}\in\AM(S)$ and any simplex $\Delta \subset \CC(S)$, define $\phi_{\Delta}: \AM(S) \rightarrow Q(\Delta)$ by
\[\phi_{\alpha}(\tilde{\mu}) = \left(\left(\widehat{\pi}_{\alpha}(\tilde{\mu})\right)_{\alpha \in \Delta}, \pi_{\AM(S\setminus \Delta)}(\tilde{\mu})\right)\]
\end{definition}

It follows immediately from the definition that $d_{\HHH_{\alpha}}(\tilde{\mu},\phi_{\Delta}(\tilde{\mu})) \asymp 1$ for any $\alpha \in \Delta$.\\

We now prove a number of properties of $\phi_{\Delta}$, culminating in Proposition \ref{r:phi add}, which we need for the proof of the Main Theorem \ref{r:main}.  The first lemma states that, for any $\tilde{\mu}\in\AM(S)$, the choices involved in building  $\phi_{\Delta}(\tilde{\mu})$ result in a uniformly bounded set:

\begin{lemma}\label{r:phi bdd}
For any simplex $\Delta \subset \CC(S)$ and $\tilde{\mu} \in \AM(S)$, we have
\[\mathrm{diam}_{\AM(S)}(\phi_{\Delta}(\tilde{\mu})) \asymp 1\]
\end{lemma}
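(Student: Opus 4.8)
The plan is to push everything through the quasi-isometry $\Phi\colon Q(\Delta)\to\prod_{Y\in\sigma(\Delta)}\AM(Y)$ of Lemma \ref{r:prod qi}. First I would observe that by construction $\phi_\Delta(\tilde\mu)\subset Q(\Delta)$: every marking in it has $\Delta$ in its base. Hence $\mathrm{diam}_{\AM(S)}(\phi_\Delta(\tilde\mu))\asymp\mathrm{diam}\big(\Phi(\phi_\Delta(\tilde\mu))\big)$, where the right-hand diameter is measured in the (coarsely irrelevant) product metric. Since a product of boundedly many factors has diameter coarsely equal to the maximum of the factor-diameters, it suffices to show that for each $Y\in\sigma(\Delta)$ the set of $Y$-coordinates of elements of $\phi_\Delta(\tilde\mu)$ has uniformly bounded diameter in $\AM(Y)$. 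Because $\phi_\Delta$ and $\Phi$ are built from the same subsurface and horoball projections, this set of $Y$-coordinates is coarsely $\pi_{\AM(Y)}(\tilde\mu)$ when $Y$ is nonannular, and $\widehat\pi_\alpha(\tilde\mu)$ when $Y=\HHH_\alpha$ with $\alpha\in\Delta$.

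Next I would bound each factor. For an annular factor $\HHH_\alpha$ with $\alpha\in\Delta$: if $\alpha\in\mathrm{base}(\tilde\mu)$ then $\widehat\pi_\alpha(\tilde\mu)=(\alpha,t_\alpha,D_\alpha)$ is a single point, and otherwise $\widehat\pi_\alpha(\tilde\mu)=(\alpha,\pi_\alpha(\tilde\mu),0)$, where $\pi_\alpha(\tilde\mu)$ is a simplex of diameter at most $1$ in $\CC(\alpha)$ by Lemma \ref{r:lipschitz}, so choosing a transversal from within it moves the base-level point of $\HHH_\alpha$ by a bounded amount; either way the annular coordinate is a bounded-diameter subset of $\HHH_\alpha$. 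For a nonannular $Y\in\sigma(\Delta)$: the underlying marking of $\pi_{\AM(Y)}(\tilde\mu)$ is $\pi_{\MM(Y)}(\mu)$, a uniformly bounded-diameter subset of $\MM(Y)$ by \cite[Lemma 2.4]{MM00} and \cite[Lemma 6.1]{Ber03}, and the transversal and length data are read off from $\tilde\mu$ via subsurface and horoball projections, which are Lipschitz by Lemmas \ref{r:lipschitz}, \ref{r:lip for horo proj} and \ref{r:lip for mark proj}; so by the distance formula of Theorem \ref{r:distances} applied in $\AM(Y)$, the set $\pi_{\AM(Y)}(\tilde\mu)$ has bounded diameter in $\AM(Y)$. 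Combining, $\Phi(\phi_\Delta(\tilde\mu))$ lies in a product of boundedly many uniformly bounded sets, hence has bounded diameter, and pulling back through $\Phi$ gives $\mathrm{diam}_{\AM(S)}(\phi_\Delta(\tilde\mu))\asymp 1$.

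The step I expect to be the main obstacle is showing that the length data assigned while building $\pi_{\AM(Y)}(\tilde\mu)$ is coarsely independent of the choices made, i.e. that which curves of $Y$ are short, and coarsely how short, does not depend on the representative in $\pi_{\MM(Y)}(\mu)$ produced. The key observation is that any curve $\alpha$ with $D_\alpha(\tilde\mu)>0$ is a base curve of $\tilde\mu$, hence disjoint from every curve of $\mathrm{base}(\mu)$; and since all curves used in the inductive construction of $\pi_{\MM(Y)}(\mu)$ are obtained by completing arcs of $\mathrm{base}(\mu)$ along the boundaries of successively smaller subsurfaces, each such curve is disjoint from $\alpha$. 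Therefore the resulting pants decomposition of $Y$, being a maximal collection of curves disjoint from $\alpha$, must itself contain $\alpha$; its transversal $\pi_\alpha(\tilde\mu)$ is coarsely well-defined (diameter at most $1$ in $\CC(\alpha)$), and its length datum $D_\alpha(\tilde\mu)$ is copied verbatim. Thus every representative in $\pi_{\AM(Y)}(\tilde\mu)$ has exactly the same short curves carrying coarsely the same data, so the distance estimate in the previous paragraph goes through. This is the $\AM$-version of the observation already used in the proof of Lemma \ref{r:lip for mark proj}.
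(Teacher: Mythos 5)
Your proposal is correct and follows essentially the same route as the paper, whose proof is a one-line assertion that the component projections $\widehat{\pi}_{\alpha}$ and $\pi_{\AM(Y)}$ each produce uniformly bounded sets. You simply make this explicit: bounding each factor (annular via Lemma \ref{r:lipschitz}, nonannular via the boundedness of $\pi_{\MM(Y)}$ and the persistence of length data for short base curves, as in the proof of Lemma \ref{r:lip for mark proj}) and assembling the bounds through the product description of $Q(\Delta)$ from Lemma \ref{r:prod qi}.
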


\begin{proof}
This follows from the facts that $\widehat{\pi}_{\alpha}$ and $\pi_{\AM(Y)}$ are uniformly bounded for any $\alpha \in \CC(S)$ and subsurface $Y \subset S$.
\end{proof}

The following lemma proves that $\phi_{\Delta}$ is a coarse closest point projection to $Q(\Delta)$.  More precisely, the lemma shows that $\phi_{\Delta}(\tilde{\mu})$ records the combinatorial data of any augmented marking $\tilde{\mu}$ relative to the complementary components of $S \setminus \Delta$.  In particular, any augmented hierarchy path from $\tilde{\mu}$ to its projection $\phi_{\Delta}(\tilde{\mu})$ moves mainly through subsurfaces which interlock $\Delta$:

\begin{lemma}\label{r:cpj}
For any $\tilde{\mu}\in\AM(S)$ and simplex $\Delta \subset \CC(S)$, we have $d_Y(\tilde{\mu}, \phi_{\Delta}(\tilde{\mu})) \asymp 1$ for any $Y \subset \sigma(\Delta)$.  In particular, 
\[d_{\AM(S)}(\tilde{\mu}, \phi_{\Delta}(\tilde{\mu})) \asymp d_{\AM(S)}(\tilde{\mu},Q(\Delta))\]
\end{lemma}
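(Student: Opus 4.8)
The plan is to prove the two assertions in sequence, using Lemma~\ref{r:distance to q} for the distance to $Q(\Delta)$ and the Lipschitz lemmas just established for the distance to $\phi_{\Delta}(\tilde\mu)$. The key point is that both distances are governed by the same collection of subsurface projections, namely those $Y$ which interlock $\Delta$.

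First I would establish the claim $d_Y(\tilde\mu,\phi_\Delta(\tilde\mu))\asymp 1$ for every $Y\subset\sigma(\Delta)$. Fix such a $Y$. Since every $\alpha\in\Delta$ is disjoint from $Y$ (being a core curve of a component of $S\setminus\Delta$ complementary to, or boundary of, $Y$), the horoball coordinates $\widehat\pi_\alpha(\tilde\mu)$ contribute nothing to $\pi_Y$. By the very definition of $\phi_\Delta$, the $\AM(S\setminus\Delta)$-component of $\phi_\Delta(\tilde\mu)$ is exactly $\pi_{\AM(S\setminus\Delta)}(\tilde\mu)$, so its projection to $\AM(Y)$ is $\pi_{\AM(Y)}(\tilde\mu)$ up to the bounded ambiguity in the projection maps (Lemma~\ref{r:lip for mark proj}, or rather the bounded-diameter statement behind it). Now $d_Y(\tilde\mu,\phi_\Delta(\tilde\mu))$ is computed from $\pi_Y$ of the two augmented markings, and since the $Y$-relevant data of $\phi_\Delta(\tilde\mu)$ is precisely (a bounded-diameter set containing) the $Y$-relevant data of $\tilde\mu$, these agree up to a constant depending only on $S$. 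Concretely, any curve of $\mathrm{base}(\tilde\mu)$ lying in $Y$ descends to $\mathrm{base}(\pi_{\AM(Y)}(\tilde\mu))$ and hence to $\mathrm{base}$ of the $Y$-component of $\phi_\Delta(\tilde\mu)$, carrying its transversal along; and the subsurface projection $\pi_Y(\tilde\mu)=\pi_Y(\mathrm{base}(\tilde\mu))$ only sees curves of $\mathrm{base}(\tilde\mu)$ that cross $Y$, all of which are recorded in $\pi_{\AM(Y)}(\tilde\mu)$. This gives the first assertion.

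Next I would deduce the distance estimate. By Lemma~\ref{r:distance to q}, $d_{\AM(S)}(\tilde\mu,Q(\Delta))\asymp\sum_{Y\pitchfork\Delta}[[d_Y(\tilde\mu,\Delta)]]_K$. Since $\phi_\Delta(\tilde\mu)\in Q(\Delta)$, trivially $d_{\AM(S)}(\tilde\mu,Q(\Delta))\prec d_{\AM(S)}(\tilde\mu,\phi_\Delta(\tilde\mu))$. For the reverse inequality I would apply the distance formula in Theorem~\ref{r:distances} to the pair $\tilde\mu,\phi_\Delta(\tilde\mu)$: the sum runs over all nonannular $Y\subset S$ together with all annuli. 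For $Y\subset\sigma(\Delta)$ the term is $\asymp 1$ by the first assertion, so contributes nothing to the coarse sum once thresholded by $K$; for each $\alpha\in\Delta$, the term $d_{\HHH_\alpha}(\tilde\mu,\phi_\Delta(\tilde\mu))\asymp 1$ by the remark immediately following the definition of $\phi_\Delta$; so the only surviving terms are those for subsurfaces $Y$ that interlock $\Delta$, and for those $d_Y(\tilde\mu,\phi_\Delta(\tilde\mu))\asymp d_Y(\tilde\mu,\Delta)$ — here one uses that $\phi_\Delta(\tilde\mu)\in Q(\Delta)$ contains $\Delta$ in its base together with the coarse Lipschitz/bounded-diameter properties (Lemma~\ref{r:lipschitz}, Corollary~\ref{r:distance prod}) to replace the $Q(\Delta)$-endpoint by $\Delta$ itself. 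Matching this against Lemma~\ref{r:distance to q} yields $d_{\AM(S)}(\tilde\mu,\phi_\Delta(\tilde\mu))\asymp d_{\AM(S)}(\tilde\mu,Q(\Delta))$.

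The main obstacle I expect is the bookkeeping in the reverse inequality: one must check carefully that for $Y\pitchfork\Delta$ the quantity $d_Y(\tilde\mu,\phi_\Delta(\tilde\mu))$ really is coarsely $d_Y(\tilde\mu,\Delta)$ — i.e. that replacing the endpoint $\phi_\Delta(\tilde\mu)$ by the simplex $\Delta$ changes the projection to $Y$ by at most a bounded amount — and that no large $Y$-term is lost because of the bounded ambiguity inherent in $\pi_{\AM(Y)}$, $\widehat\pi_\alpha$, and the shortest-marking construction. This is where the Lipschitz lemmas \ref{r:lip for horo proj} and \ref{r:lip for mark proj}, the bounded-diameter statement of Lemma~\ref{r:phi bdd}, and the quasiconvexity of $Q(\Delta)$ from Corollary~\ref{r:distance prod} are all used together; each individual estimate is routine, but one must be sure the constants depend only on $S$ and that only boundedly many such estimates are chained.
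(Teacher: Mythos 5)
Your proposal is correct and follows essentially the same route as the paper's proof: bound the projections to subsurfaces of $\sigma(\Delta)$ and to horoballs over curves of $\Delta$ directly from the definition of $\phi_\Delta$ (via $\pi_{\AM(S\setminus\Delta)}$ and $\widehat\pi_\alpha$), then feed this into the distance formula of Theorem~\ref{r:distances} so that only $Y\pitchfork\Delta$ terms survive, and match against Lemma~\ref{r:distance to q}. The only difference is that you explicitly flag, and sketch how to verify, the replacement $d_Y(\tilde\mu,\phi_\Delta(\tilde\mu))\asymp d_Y(\tilde\mu,\Delta)$ for $Y\pitchfork\Delta$ (which follows since $\Delta\subset\mathrm{base}(\phi_\Delta(\tilde\mu))$ together with Lemma~\ref{r:lipschitz}), a step the paper leaves implicit — this is a reasonable thing to be careful about, but it is not a genuine gap in either version.
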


\begin{proof}
For any $\alpha \in \Delta$, $d_{\HHH_{\alpha}}(\tilde{\mu},\phi_{\Delta}(\tilde{\mu}))$ is bounded by definition of $\widehat{\pi}$.  Similarly, for any nonannular subsurface $Y \subset \sigma(\Delta)$, $d_Y(\tilde{\mu},\phi_{\Delta}(\tilde{\mu}))$ is also bounded by definition of $\pi_{\AM(S \setminus \Delta)}$.  Thus all projections to subsurfaces disjoint from $\Delta$ are bounded and it follows from Theorem \ref{r:distances} and Lemma \ref{r:distance to q} that
\[d_{\AM(S)}(\tilde{\mu},\phi_{\Delta}(\tilde{\mu})) \asymp \sum_{Y\subset S} \left[d_Y(\tilde{\mu},\phi_{\Delta}(\tilde{\mu}))\right]_K = \sum_{Y\pitchfork \Delta} \left[d_Y(\tilde{\mu},\phi_{\Delta}(\tilde{\mu}))\right]_K \asymp d_{\AM(S)}(\tilde{\mu},Q(\Delta))\]
\end{proof}

The next lemma proves that $\phi_{\Delta}$ is Lipschitz:

\begin{lemma}\label{r:phi lipschitz}
For any simplex $\Delta\subset \CC(S)$ and any $\tilde{\mu}_1, \tilde{\mu}_2 \in\AM(S)$ with $d_{\AM(S)}(\tilde{\mu}_1,\tilde{\mu}_2) = 1$, we have $d_{\AM(S)}(\phi_{\Delta}(\tilde{\mu}_1),\phi_{\Delta}(\tilde{\mu}_2)) \asymp 1$.
\end{lemma}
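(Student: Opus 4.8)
The plan is to reduce the statement to the Lipschitz behavior of the building blocks of $\phi_\Delta$, namely the marked horoball projections $\widehat\pi_\alpha$ for $\alpha\in\Delta$ and the marking projections $\pi_{\AM(Y)}$ for $Y\subset\sigma(\Delta)$, and then to invoke the distance formula from Theorem~\ref{r:distances} on the target $\AM(S)$. By Lemma~\ref{r:prod qi}, $Q(\Delta)$ is quasiisometric via $\Phi$ to $\prod_{Y\in\sigma(\Delta)}\AM(Y)$; since $\widehat\pi_\alpha(\tilde\mu)$ on an annular factor $\HHH_\alpha$ and $\pi_{\AM(Y)}(\tilde\mu)$ on each nonannular factor together assemble (up to the bounded ambiguity of Lemma~\ref{r:phi bdd}) into $\Phi(\phi_\Delta(\tilde\mu))$, it suffices to bound $d_{\AM(Y)}\!\big(\pi_{\AM(Y)}(\tilde\mu_1),\pi_{\AM(Y)}(\tilde\mu_2)\big)$ and $d_{\HHH_\alpha}\!\big(\widehat\pi_\alpha(\tilde\mu_1),\widehat\pi_\alpha(\tilde\mu_2)\big)$ coarsely by~$1$, using $d_{\AM(S)}(\tilde\mu_1,\tilde\mu_2)=1$.

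First I would handle the annular factors. For $\alpha\in\Delta$, the projection $\widehat\pi_\alpha$ records the transversal-and-length data of $\alpha$ when $\alpha\in\mathrm{base}(\tilde\mu)$ and the pair $(\pi_\alpha(\tilde\mu),0)$ otherwise; in either case the twisting coordinate is controlled by $\phi_{\beta_\alpha}(\pi_\alpha(\cdot))$, which is a subsurface projection, so Lemma~\ref{r:lipschitz} (the $\asymp 1$ version of Lemma~2.4 of~\cite{MM00}) gives $d_{\HHH_\alpha}$-boundedness exactly as in Lemma~\ref{r:lip for horo proj}. The one subtlety is a change in whether $\alpha$ lies in the base between $\tilde\mu_1$ and $\tilde\mu_2$, but an elementary move of $\AM(S)$ that removes $\alpha$ from a base (a flip move) forces $D_\alpha=0$ on both sides, so the length coordinate is unchanged and the transversal changes only by a bounded amount; a vertical or twist move does not change the base at all. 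So the annular components are fine.

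Next I would handle the nonannular factors $Y\subset\sigma(\Delta)$: this is essentially Lemma~\ref{r:lip for mark proj} applied with the ambient subsurface equal to $S$ and $Z=Y$, which already states that $d_{\AM(S)}(\tilde\mu_1,\tilde\mu_2)=1$ implies $d_{\AM(Y)}\!\big(\pi_{\AM(Y)}(\tilde\mu_1),\pi_{\AM(Y)}(\tilde\mu_2)\big)\asymp 1$. The key point carried over from that proof is that a short base curve of $\tilde\mu_i$ lying in $Y$ descends, with its transversal and length data, to a short base curve of $\pi_{\AM(Y)}(\tilde\mu_i)$, so ``shortness'' is preserved symmetrically; the rest of $\pi_{\AM(Y)}$ is built from ordinary subsurface and horoball projections, hence Lipschitz by Lemmas~\ref{r:lipschitz} and~\ref{r:lip for horo proj}. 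Assembling: for every subsurface $W\subset S$ that could contribute to the distance formula in Theorem~\ref{r:distances} applied to $\phi_\Delta(\tilde\mu_1)$ and $\phi_\Delta(\tilde\mu_2)$, the projections either live inside one of the product factors just bounded, or $W\pitchfork\Delta$ in which case both $\phi_\Delta(\tilde\mu_i)$ lie in $Q(\Delta)$ and Corollary~\ref{r:distance prod} gives $d_W\asymp 1$ automatically. Summing the uniformly bounded number of terms that can exceed the threshold $K$ (this number is bounded in terms of $S$ since the ambient distance is $1$), Theorem~\ref{r:distances} yields $d_{\AM(S)}(\phi_\Delta(\tilde\mu_1),\phi_\Delta(\tilde\mu_2))\asymp 1$.

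\textbf{Main obstacle.} The one place demanding genuine care is the bookkeeping when a curve moves in or out of the base under the single elementary move connecting $\tilde\mu_1$ to $\tilde\mu_2$: I must check that $\widehat\pi_\alpha$ and $\pi_{\AM(Y)}$ do not ``jump'' discontinuously when a base curve of $\Delta$ or of some $Y$ appears or disappears, and that the clean-marking adjustments accompanying a flip move change the projected data by only a bounded amount. Once one observes, as in the proof of Lemma~\ref{r:lip for mark proj}, that a curve with $D_\alpha>0$ is forced to remain in the base across any elementary move, this reduces to the Lipschitz property of plain subsurface projection, so the obstacle is more a matter of careful case analysis than of any new idea.
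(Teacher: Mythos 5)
Your proposal is correct and follows essentially the same route as the paper's proof: both reduce the claim, via Corollary~\ref{r:distance prod}, to projections onto the factors $\HHH_\alpha$ ($\alpha\in\Delta$) and the nonannular $Y\subset\sigma(\Delta)$, then bound those terms using the Lipschitz properties of $\widehat\pi_\alpha$ and $\pi_{\AM(Y)}$ (Lemmas~\ref{r:lipschitz} and~\ref{r:lip for mark proj}). The paper's actual proof is more terse and does not spell out the base-curve case analysis you flag as the main obstacle, but that analysis is implicitly subsumed in Lemma~\ref{r:lip for mark proj}, so your extra care is reasonable rather than a different strategy.
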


\begin{proof}
Let $\tilde{\mu}_1, \tilde{\mu}_2 \in \AM(S)$ be such that $d_{\AM(S)}(\tilde{\mu}_1,\tilde{\mu}_2) = 1$.  Then
\begin{align*}
d_{\AM(S)}(\phi_{\Delta}(\tilde{\mu}_1)\phi_{\Delta}(\tilde{\mu}_2)) &\asymp \sum_{Y \subset \sigma(\Delta)}\left[d_Y(\phi_{\Delta}(\tilde{\mu}_1),\phi_{\Delta}(\tilde{\mu}_2))\right]_K\\
&= \sum_{\alpha \in \Delta} \left[d_{\HHH_{\alpha}}(\phi_{\Delta}(\tilde{\mu}_1),\phi_{\Delta}(\tilde{\mu}_2))\right]_K + \sum_{Y \subset \left(\sigma(\Delta) \setminus \Delta\right)} \left[d_Y(\phi_{\Delta}(\tilde{\mu}_1),\phi_{\Delta}(\tilde{\mu}_2))\right]_K\\
&\asymp \sum_{\alpha \in \Delta} \left[d_{\HHH_{\alpha}}(\tilde{\mu}_1,\tilde{\mu}_2)\right]_K + d_{\AM(S\setminus \Delta)}(\tilde{\mu}_1,\tilde{\mu}_2)\\
&\asymp 1
\end{align*}
\end{proof}

Finally, the following proposition proves that the composition of closest point projections to disjoint collections of curves coarsely commute.

\begin{proposition}\label{r:phi add}
For any pair of noninterlocking simplices $\Delta_1, \Delta_2 \subset \CC(S)$ and any $\tilde{\mu}\in\AM(S)$, we have
\[d_{\AM(S)}(\phi_{\Delta_1\cup \Delta_2}(\phi_{\Delta_1}(\tilde{\mu})), \phi_{\Delta_1 \cup \Delta_2}(\tilde{\mu}))\asymp 1\] 
\end{proposition}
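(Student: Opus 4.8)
The plan is to verify the coarse equality subsurface-by-subsurface using the distance formula in Theorem \ref{r:distances}, exactly as in the proofs of Lemmas \ref{r:cpj} and \ref{r:phi lipschitz}. Since $\phi_{\Delta_1 \cup \Delta_2}(\phi_{\Delta_1}(\tilde{\mu}))$ and $\phi_{\Delta_1 \cup \Delta_2}(\tilde{\mu})$ both lie in $Q(\Delta_1 \cup \Delta_2)$, which is quasiconvex with uniform constants by Corollary \ref{r:distance prod}, it suffices to show that $d_Y(\phi_{\Delta_1 \cup \Delta_2}(\phi_{\Delta_1}(\tilde{\mu})), \phi_{\Delta_1 \cup \Delta_2}(\tilde{\mu})) \asymp 1$ for every $Y \in \sigma(\Delta_1 \cup \Delta_2)$; equivalently, for every $\alpha \in \Delta_1 \cup \Delta_2$ we need $d_{\HHH_\alpha}$ of the two points to be coarsely $1$, and for every nonannular $Y \subset S \setminus (\Delta_1 \cup \Delta_2)$ we need $d_Y$ to be coarsely $1$. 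The hypothesis that $\Delta_1$ and $\Delta_2$ do not interlock is what makes all the relevant complementary subsurfaces of $\Delta_1 \cup \Delta_2$ compatible with $\Delta_1$: each $Y \in \sigma(\Delta_1 \cup \Delta_2)$ is disjoint from $\Delta_1$ (since $Y$ is disjoint from $\Delta_1 \cup \Delta_2 \supset \Delta_1$), so $Y$ does not interlock $\Delta_1$.

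First I would handle the curves. For $\alpha \in \Delta_2 \setminus \Delta_1$, the marked horoball projection $\widehat{\pi}_\alpha$ is, by Definition \ref{r:m hor proj}, built entirely from $\pi_\alpha(\tilde{\mu})$ (or, if $\alpha \in \mathrm{base}(\tilde{\mu})$, from $(\alpha, t_\alpha, D_\alpha)$); since $\alpha$ is disjoint from $\Delta_1$, the closest-point projection $\phi_{\Delta_1}$ changes neither $\pi_\alpha$ nor the base-triple data for $\alpha$ up to bounded error — this is precisely the content of Lemma \ref{r:cpj} applied with the subsurface an annulus with core $\alpha$, giving $d_{\HHH_\alpha}(\tilde{\mu}, \phi_{\Delta_1}(\tilde{\mu})) \asymp 1$. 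Composing with $\phi_{\Delta_1 \cup \Delta_2}$ again only records the $\HHH_\alpha$-data, so $d_{\HHH_\alpha}(\phi_{\Delta_1 \cup \Delta_2}(\phi_{\Delta_1}(\tilde{\mu})), \phi_{\Delta_1 \cup \Delta_2}(\tilde{\mu})) \asymp 1$. For $\alpha \in \Delta_1$ the point is even simpler: $\phi_{\Delta_1}(\tilde{\mu})$ already has $\alpha$ in its base with the same transversal/length data as $\widehat{\pi}_\alpha(\tilde{\mu})$ (up to bounded error), and this survives into $\phi_{\Delta_1 \cup \Delta_2}$.

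Next I would handle the nonannular $Y \subset S \setminus (\Delta_1 \cup \Delta_2)$. Here $\phi_{\Delta_1 \cup \Delta_2}$ records $\pi_{\AM(Y)}$ of its argument, so I must compare $\pi_{\AM(Y)}(\phi_{\Delta_1}(\tilde{\mu}))$ with $\pi_{\AM(Y)}(\tilde{\mu})$. Since $Y$ does not interlock $\Delta_1$, Lemma \ref{r:cpj} gives $d_Y(\tilde{\mu}, \phi_{\Delta_1}(\tilde{\mu})) \asymp 1$ — but I need this for $\pi_{\AM(Y)}$, i.e. for the whole augmented marking on $Y$ (its full curve-complex projection, its transversals, and its length data), not merely for $\pi_Y$ into $\CC(Y)$. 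This is exactly the issue addressed in the proof of Lemma \ref{r:lip for mark proj}: a short curve inside $Y$ is a short base curve (Remark \ref{r:short base}), it lies in the base of both $\pi_{\AM(Y)}(\tilde{\mu})$ and $\pi_{\AM(Y)}(\phi_{\Delta_1}(\tilde{\mu}))$ with coarsely the same $\HHH$-data because $\phi_{\Delta_1}$ preserves $D_\beta$ and $\widehat{\pi}_\beta$ for curves $\beta$ disjoint from $\Delta_1$, and everything else in $\pi_{\AM(Y)}$ is built from subsurface and horoball projections which are bounded by Lemmas \ref{r:lipschitz}, \ref{r:lip for horo proj}, \ref{r:lip for mark proj}. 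Feeding $d_Y(\tilde{\mu},\phi_{\Delta_1}(\tilde{\mu})) \asymp 1$ for all subsurfaces $Y$ of $Y$ (which all fail to interlock $\Delta_1$) through the distance formula for $\AM(Y)$ yields $d_{\AM(Y)}(\pi_{\AM(Y)}(\tilde{\mu}), \pi_{\AM(Y)}(\phi_{\Delta_1}(\tilde{\mu}))) \asymp 1$, hence in particular $d_Z$ is coarsely $1$ for every subsurface $Z \subseteq Y$, so the $Y$-contribution (and all its sub-contributions) to the distance formula comparing $\phi_{\Delta_1 \cup \Delta_2}(\phi_{\Delta_1}(\tilde{\mu}))$ and $\phi_{\Delta_1 \cup \Delta_2}(\tilde{\mu})$ is bounded.

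The main obstacle is the bookkeeping in the nonannular case: one must be careful that ``disjoint from $\Delta_1 \cup \Delta_2$'' genuinely forces ``does not interlock $\Delta_1$'' (this is where non-interlocking of $\Delta_1, \Delta_2$ is used, to guarantee $\sigma(\Delta_1 \cup \Delta_2)$ behaves well), and that passing from the $\CC(Y)$-level statement of Lemma \ref{r:cpj} to the $\AM(Y)$-level statement really does only invoke the already-established Lipschitz properties of horoball and marking projections rather than anything new — in other words, that no length or twisting data of a curve interior to some $Y \in \sigma(\Delta_1\cup\Delta_2)$ gets disturbed by the intermediate projection $\phi_{\Delta_1}$. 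Once these compatibility checks are in place, assembling the pieces via Theorem \ref{r:distances} and the quasiconvexity of $Q(\Delta_1 \cup \Delta_2)$ is routine.
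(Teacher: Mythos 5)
Your proof is correct and follows essentially the same route as the paper's: place both points in $Q(\Delta_1\cup\Delta_2)$, invoke Corollary \ref{r:distance prod}/Lemma \ref{r:distance to q} to reduce to projections to $\sigma(\Delta_1\cup\Delta_2)$, verify the horoball terms for curves in $\Delta_1\cup\Delta_2$ from the definitions of $\widehat{\pi}_\alpha$, and handle the nonannular components via Lemma \ref{r:lip for mark proj}. The only cosmetic difference is that you organize the annular case by $\alpha\in\Delta_1$ versus $\alpha\in\Delta_2\setminus\Delta_1$ while the paper cases on whether $\alpha\in\mathrm{base}(\tilde{\mu})$, but the underlying use of non-interlocking (so that curves of $\Delta_2$ are disjoint from $\Delta_1$ and $\phi_{\Delta_1}$ leaves their data coarsely untouched) is identical.
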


\begin{proof}
First of all, note that since $\Delta_1$ and $\Delta_2$ do not interlock, equivalently $\mathrm{diam}_{\CC(S)}(\Delta_1 \cup \Delta_2) \leq 1$, it follows from the definitions that $\phi_{\Delta_1 \cup \Delta_2}(\phi_{\Delta_1}(\tilde{\mu})) \in Q(\Delta_1 \cup \Delta_2)$.\\

By definition we have
\[\phi_{\Delta_1 \cup \Delta_2}(\phi_{\Delta_1}(\tilde{\mu})) = \left(\widehat{\pi}_{\beta}\left(\phi_{\Delta_1}(\tilde{\mu})\right)_{\beta \in \Delta_2}, \pi_{\AM(S\setminus \Delta_2)}\left(\big(\widehat{\pi}_{\alpha}(\tilde{\mu})\big)_{\alpha \in \Delta_1 \triangle \Delta_2},\pi_{\AM(S\setminus \Delta_1)}(\tilde{\mu})\right)\right)\]
where $\Delta_1 \triangle \Delta_2 = \Delta_1 \setminus (\Delta_1 \cap \Delta_2)$ is the symmetric difference and  
\[\phi_{\Delta_1 \cup \Delta_2}(\tilde{\mu}) = \left(\big(\widehat{\pi}_{\alpha}(\tilde{\mu})\big)_{\alpha \in \Delta_1},\big(\widehat{\pi}_{\beta}(\tilde{\mu})\big)_{\beta \in \Delta_2}, \pi_{\AM(S\setminus (\Delta_1 \cup \Delta_2)}(\tilde{\mu})\right)\]

Since $\phi_{\Delta_1 \cup \Delta_2}(\phi_{\Delta_1}(\tilde{\mu})), \phi_{\Delta_1 \cup \Delta_2}(\tilde{\mu}) \in Q(\Delta_1 \cup \Delta_2)$, Lemma \ref{r:distance to q} implies that

\[d_{\AM(S)}\left(\phi_{\Delta_1 \cup \Delta_2}(\phi_{\Delta_1}(\tilde{\mu})), \phi_{\Delta_1 \cup \Delta_2}(\tilde{\mu})\right)\asymp \sum_{Y \subset \sigma(\Delta_1 \cup \Delta_2)}\left[d_Y\left(\phi_{\Delta_1 \cup \Delta_2}(\phi_{\Delta_1}(\tilde{\mu})), \phi_{\Delta_1 \cup \Delta_2}(\tilde{\mu})\right)\right]_K\]
Thus we need only to compare projections to the components of $\sigma(\Delta_1 \cup \Delta_2)$.\\

By definition of $\pi_{\AM(Y)}$, if any $\alpha \in \Delta_1$ or $\beta \in \Delta_2$ lies in $\mathrm{base}(\tilde{\mu})$, then the transversal and length data of such a curve descends to both $\phi_{\Delta_1 \cup \Delta_2}(\phi_{\Delta_1}(\tilde{\mu}))$ and $\phi_{\Delta_1 \cup \Delta_2}(\tilde{\mu})$.  On the other hand, if $\alpha \in \Delta_1$ is not in $\mathrm{base}(\tilde{\mu})$, then the length data of $\alpha$ in $\phi_{\Delta_1}(\tilde{\mu})$ is $(\alpha, \pi_{\alpha}(\tilde{\mu}),0)$.   Since $\Delta_1$ and $\Delta_2$ do not interlock, $\alpha \in \mathrm{base}(\phi_{\Delta_1 \cup \Delta_2}\left(\phi_{\Delta_1}(\tilde{\mu}))\right)$ and, by definition of $\widehat{\pi}_{\alpha}$ and $\pi_{\AM(S \setminus \Delta_2)}$, its transversal data is the same as the transversal data of $\alpha$ in $\phi_{\Delta_2}(\tilde{\mu})$, namely $\pi_{\alpha}(\tilde{\mu})$.  It follows in both cases that the distance between the projections of $\phi_{\Delta_1 \cup \Delta_2}(\phi_{\Delta_1}(\tilde{\mu}))$ and $\phi_{\Delta_1 \cup \Delta_2}(\tilde{\mu})$ to any horoball over a curve in $\Delta_1 \cup \Delta_2$ is uniformly bounded.\\

It remains to show that $d_{\AM(S\setminus (\Delta_1 \cup \Delta_2))}\left(\phi_{\Delta_1 \cup \Delta_2}(\phi_{\Delta_1}(\tilde{\mu})),\phi_{\Delta_1 \cup \Delta_2}(\tilde{\mu})\right) \asymp 1$.  This follows from the definition and the fact that marking projections are Lipschitz, Lemma \ref{r:lip for mark proj}.

\end{proof}

\section{Fixed and almost-fixed points} \label{r:fix and afix}

In this section, we collect some of the basic properties of the naturally defined subsets of Teichm\"uller space coming from finite orbifold coverings which are at the center of this paper.  We also describe coarse analogues in the combinatorial setting of $\AM(S)$ and adapt some related work of Tao \cite{Tao13}.

\subsection{Orbifold Teichm\"uller spaces} \label{r:orb Teich}

For the rest of the paper, fix a finite subgroup $H \leq MCG(S)$.  We note that there is a bound on the order of any such finite subgroup $H\leq \MCG(S)$ and the number of its conjugacy classes depending only on $S$ (see \cite{FM12}[Section 7.1]).  As such, it suffices to consider a single such $H$.\\

Fix also a hyperbolic 2-orbifold $\mathcal{O}$ coming from a covering $\pi: X\rightarrow \OO$ with deck transformation group $H$, where $X \in T(S)$ is fixed by $H$, the existence of which is guaranteed by the Nielsen Realization Theorem (see Figure \ref{r:orb example} for an example of such a covering). Recall that $\mathcal{O}$ is essentially a smooth manifold with a finite number of singular neighborhoods.  Because we are assuming that $S$ is oriented and that $H$ preserves that orientation, all such singular neighborhoods are quotients of discs by finite rotations which come from $H$.  As $H$ preserves the metric on $X$ , the hyperbolic metric on $X$ descends to $\OO$ and we may consider its Teichm\"uller space, $\TT(\OO)$.  See \cite{FM02}[Section 7] for a formal definition of $\TT(\OO)$.\\

In this subsection, we explain how to put Fenchel-Nielsen coordinates on $\TT(\OO)$ and how to lift coordinates to $\TT(S)$.\\

\begin{figure}\label{r:orb example}
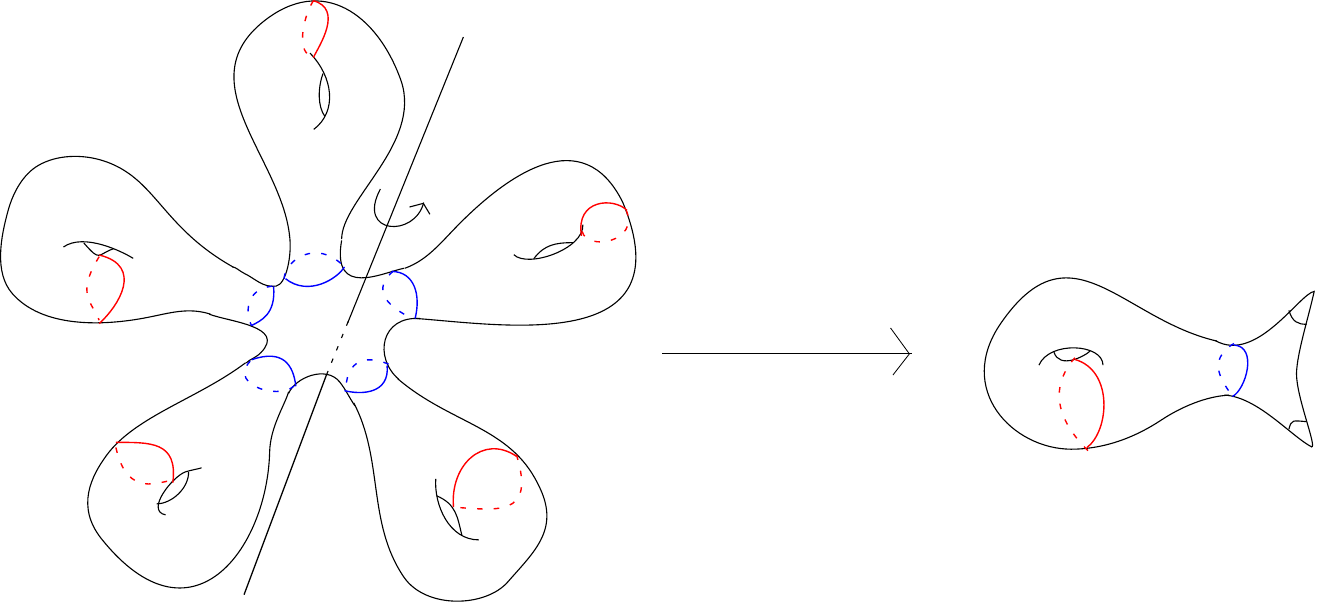
\caption{A simple example of an orbifold cover obtained by quotienting a surface by a finite order mapping class.  A torus with two cone points is obtained as a quotient of a genus five surface rotated around an axis which passes through the center of the surface.  The two cone points come from the two punctures, which are fixed by the rotation.  The \textcolor{red}{red} and \textcolor{blue}{blue} curves are \emph{symmetric} under the rotation and descend to a multicurve on the quotient orbifold.}
\label{fig:g5s3}
\end{figure}

Let $\Delta_i$ be a disjoint collection of small disks around each cone point of $\OO$.  In what follows, we only consider essential, nonperipheral simple closed curves on $\OO\setminus \coprod \Delta_i$.  In particular, we define the \emph{orbifold curve graph of $\OO$}, $\CC(\OO)$, to be the graph whose vertices are homotopy classes of simple closed curves on $\OO$ up to homotopies that do not pass through the $\Delta_i$ and whose edges are given by disjointness.    We note that this is the same condition we impose on curves when $S$ has marked points or punctures.  These assumptions guarantee that any curve $\alpha \in \CC(\OO)$ lifts uniquely to a simplex, $\pi^{-1}(\alpha) \subset \CC(S)$ which is invariant under the action of $H$; we call the lift of any such curve $H$-\emph{symmetric}.  The covering map $\pi:S \rightarrow \OO$ induces a covering relation $\Pi:\CC(\OO)\rightarrow \CC(S)$ given by $\Pi(\beta) = \pi^{-1}(\beta)$.  In \cite[Theorem 8.1]{RS09}, Rafi-Schleimer show that $\Pi$ is a quasiisometric embedding.\\

It is well-known that $\TT(\mathcal{O})$ can be isometrically embedded $i: \TT(\OO) \hookrightarrow \TT(S)$ into $\TT(S)$ with the Teichm\"uller metric (see \cite{RS09} for a brief explanation) as a convex smooth submanifold and that $i(T(\mathcal{O})) = \Fix(H) \subset \TT(S)$ is the fixed set of the action of $H$ on $\TT(S)$.\\

Consider a maximal simplex $A \subset \CC(\OO)$.  The complement $\OO\setminus A$ is a collection of thrice-punctured spheres and spheres with one, two, or three cone points with two, one, or no punctures, respectively (the latter being the degenerate case when $\OO$ is itself a tricornered pillow), which we call an \emph{orbipants decomposition}.  We define the orbipants graph of $\OO$, $\PP(\OO)$, in the same way as $\PP(S)$.  As with a genuine pair of pants, fixing the lengths of the boundary curves in a pair of orbipants uniquely determines a hyperbolic metric thereon, where the order of any cone point plays a fixed role, similar to that of fixing the length of a boundary curve.  By fixing curve lengths and twisting factors when regluing along the curves in $A$, one arrives at Fenchel-Nielsen coordinates for any point $X \in \TT(\OO)$, $(l_{\alpha}(X), t_{\alpha}(X))_{\alpha \in A}$, in nearly the same manner as when $\OO$ is a genuine surface.  We now describe how to induce Fenchel-Nielsen coordinates on $\Fix(H)$ from those on $\TT(\OO)$.\\

The simplex $A \subset \CC(\OO)$ lifts to a simplex $\Pi(A) \subset \CC(S)$.  In order to obtain a pants decomposition on $S$, complete $\Pi(A)$ to a maximal simplex $P \subset \CC(S)$, where $\Pi(A) \subset P$.  The following lemma follows almost immediately from the fact that $i: \TT(\OO)\rightarrow \TT(S)$ is an embedding:

\begin{lemma}[Lifted coordinates] \label{r:lifted coordinates}
Let $X \in \TT(\OO)$ and consider its image $i(X) \in \Fix(H) \subset \TT(S)$.  For any maximal simplex $A \subset \CC(\OO)$ and completion of its lift $\Pi(A) \subset P \subset \CC(S)$ to a maximal simplex, the following hold:
\begin{enumerate}
\item For each $\gamma \in P$, the coordinate pair $(l_{\gamma}(i(X)), t_{\gamma}(i(X)))$ is uniquely determined by the coordinates $(l_{\alpha}(X), t_{\alpha}(X))_{\alpha \in A}$. \label{r:lifted coords 1}
\item For each $\alpha \in A$, there is a number $N_{\alpha} = N_{\alpha}(S)$ such that $l_{i(X)}(\beta) = N_{\alpha} \cdot l_{X}(\alpha)$ for each lift $\beta \in \Pi(\alpha)$. \label{r:lifted coords 2}
\end{enumerate}
Moreover, the number $N_{\alpha}$ is uniformly bounded by a constant depending only on $S$
\end{lemma}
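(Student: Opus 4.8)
The plan is to exploit the fact that $i:\TT(\OO)\hookrightarrow\TT(S)$ is an isometric embedding whose image is exactly $\Fix(H)$, so that a marked hyperbolic structure on $\OO$ is the same data as an $H$-invariant marked hyperbolic structure on $S$, and then trace through what this says about Fenchel--Nielsen coordinates. First I would fix a point $X\in\TT(\OO)$ with underlying orbifold metric, and pass to its lift $i(X)$, which is the $H$-invariant hyperbolic metric on $S$ pulled back along $\pi:S\to\OO$. Part \eqref{r:lifted coords 2} is the more geometric statement: for $\alpha\in A$, each component $\beta\in\Pi(\alpha)$ of the preimage is a simple closed geodesic in $i(X)$ which maps to the geodesic $\alpha$ in $X$ by a covering map of some degree $N_{\alpha,\beta}$, hence $l_{i(X)}(\beta)=N_{\alpha,\beta}\cdot l_X(\alpha)$. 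The degree $N_{\alpha,\beta}$ is purely topological — it is the index of the stabilizer of $\beta$ in $H$ acting on the fiber, equivalently the order to which $\alpha$ ``unwraps'' in the cover — and in particular does not depend on $X$; moreover the covering $\pi$ is fixed once and for all, so $N_{\alpha,\beta}\le |H|$, which depends only on $S$ since (as noted in Subsection \ref{r:orb Teich}) there are only finitely many conjugacy classes of finite $H\le\MCG(S)$ with uniformly bounded order. Since $\Pi(\alpha)$ is a single $H$-orbit, all the lifts $\beta\in\Pi(\alpha)$ have the same degree, so I can write $N_\alpha:=N_{\alpha,\beta}$ unambiguously; this gives the ``moreover'' clause as well.

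For part \eqref{r:lifted coords 1}, the idea is that the coordinates $(l_\alpha(X),t_\alpha(X))_{\alpha\in A}$ determine $X\in\TT(\OO)$ (Fenchel--Nielsen coordinates for the orbifold), hence determine $i(X)\in\Fix(H)\subset\TT(S)$, hence determine all of its coordinates $(l_\gamma(i(X)),t_\gamma(i(X)))_{\gamma\in P}$; I should then argue this dependence is genuinely pointwise/explicit and not merely ``they determine the same point.'' The length coordinates split into two types: for $\gamma\in\Pi(A)$, say $\gamma\in\Pi(\alpha)$, part \eqref{r:lifted coords 2} already gives $l_\gamma(i(X))=N_\alpha l_\alpha(X)$ explicitly; for the remaining $\gamma\in P\setminus\Pi(A)$, such a $\gamma$ lies in a complementary suborbifold $\OO\setminus A$ — a sphere with punctures and cone points — and its length in $i(X)$ is determined by the lengths of $\partial(\OO\setminus A)$ (which are among the $l_\alpha(X)$) together with the fixed cone-point orders, since the hyperbolic structure on an orbipants is rigid once the boundary lengths are fixed. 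For the twist coordinates $t_\gamma(i(X))$, I would note that the gluing construction assembling the orbipants decomposition of $\OO$ lifts equivariantly to an assembly of $S$ along $P$, so the twist along any $\gamma\in\Pi(\alpha)$ is $t_\alpha(X)$ up to a fixed normalization, and twists along curves interior to the lifted orbipants are identically determined by the (rigid) geometry there.

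The main obstacle I anticipate is bookkeeping rather than conceptual depth: one must choose the completion $\Pi(A)\subset P$ and the Fenchel--Nielsen conventions on both $\TT(\OO)$ and $\TT(S)$ compatibly, and then check that the curves of $P\setminus\Pi(A)$ — which live inside the lifts of the orbipants of $\OO$ — really do have lengths and twists pinned down by the orbifold data together with the (fixed) conformal/combinatorial type of the cover. The genuinely delicate point is making precise that a hyperbolic structure on a sphere with cone points and punctures is rigid given its boundary lengths (the cone angles being fixed integers $2\pi/n$ built into $\OO$), so that no extra moduli appear in the lifted picture; once that is granted, everything reduces to the observation that $i$ is an embedding with image $\Fix(H)$ and to the elementary covering-space computation of the degrees $N_\alpha$. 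I would present part \eqref{r:lifted coords 2} first as a self-contained covering-map length computation, then deduce part \eqref{r:lifted coords 1} and the uniform bound on $N_\alpha$.
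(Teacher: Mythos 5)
Your proposal is correct and follows the same approach as the paper: part~(\ref{r:lifted coords 1}) is reduced to the injectivity of $i$, part~(\ref{r:lifted coords 2}) to the elementary covering-space degree computation (using that $\pi$ is a local isometry away from the cone points), and the uniform bound $N_\alpha\le |H|$ is exactly the paper's. You flesh out details the paper leaves implicit, such as transitivity of $H$ on the components of $\Pi(\alpha)$ and rigidity of orbipants given boundary lengths (the latter is stated in the surrounding discussion of Subsection~\ref{r:orb Teich}), but the core argument is identical.
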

\begin{proof}
(\ref{r:lifted coords 1}) follows from the fact that $i:\TT(\OO) \rightarrow \TT(S)$ is an injection.  (\ref{r:lifted coords 2}) follows from basic covering theory and the fact that $\pi:S \rightarrow \OO$ is a local isometry away from preimages of the cone points.  The constant $N_{\alpha}$ is bounded in terms of $S$ because $|H|$ is and $N_{\alpha} \leq |H|$.
\end{proof}

\begin{notation}[Convention for curves and metrics on $\OO$, and their lifts]
From now on, we adopt a bar notation, $\widebar{\alpha} \in \CC(\OO)$, for curves on $\OO$ and denote their lifts by $\alpha = \Pi(\widebar{\alpha}) \subset \CC(S)$.  Similarly, $\widebar{X} \in \TT(\OO)$ lifts uniquely to $X \in \Fix(H) \subset \TT(S)$.
\end{notation}

We remark that Bers' Theorem \ref{r:bers} holds in the setting of $\TT(\OO)$:

\begin{corollary}\label{r:bers orb}
There is a constant $L'>0$ depending only on $\OO$ so that for any $X \in \TT(\OO)$, there exists $\widebar{P}_X \in \PP(\OO)$ of $\OO$ with $l_{X}(\widebar{\alpha})<L$ for each $\widebar{\alpha} \in \widebar{P}$.
\end{corollary}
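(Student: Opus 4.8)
The plan is to prove Corollary~\ref{r:bers orb} by carrying out the standard proof of Bers' Theorem~\ref{r:bers} directly on the orbifold $\OO$, rather than transporting a Bers decomposition through the covering $\pi\colon S\to\OO$. Since $S$ determines $\OO = S/H$ up to finitely many possibilities (there are finitely many conjugacy classes of finite $H\le\MCG(S)$, of bounded order), a constant depending only on $\OO$ is the same as one depending only on $S$, so it is harmless that the argument produces a bound in terms of the orbifold type of $\OO$. The proof is an induction on the orbifold complexity of $\OO$ (the number of curves in an orbipants decomposition), whose only geometric input is a uniform upper bound on the length of a well-chosen cutting curve.

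First I would establish the orbifold analogue of the classical fact underlying Bers' theorem: there is a function $f$ of the orbifold type alone such that any complete finite-area hyperbolic $2$-orbifold $Y$ with geodesic boundary of total length at most $\ell$, which is not itself a disjoint union of orbipants, contains an essential, non-peripheral simple closed geodesic $\widebar{\alpha}$ (a vertex of $\CC(Y)$, in particular not homotopic into a cone disk) with $l_Y(\widebar{\alpha})\le f(\ell, \chi^{orb}(Y))$, where $\chi^{orb}$ is the orbifold Euler characteristic. This follows from Gauss--Bonnet --- the area of $Y$ equals $-2\pi\chi^{orb}(Y)$, a topological constant --- together with an embedded-ball packing estimate: if every such curve were long, then around a point chosen away from the cone points and away from the thin collars of the boundary one could embed a hyperbolic disk of radius exceeding what the fixed area allows. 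The cone points are handled by the orbifold collar lemma, which gives each cone point a standard embedded cone neighborhood of definite size; one simply avoids these neighborhoods, as one avoids the collars of short boundary curves. Granting this, the corollary follows by the usual greedy procedure applied to $\widebar{X}\in\TT(\OO)$: set $Y_0=\OO$ (empty boundary), and while $Y_k$ is not a disjoint union of orbipants, cut along a geodesic $\widebar{\alpha}_{k+1}$ as above; the total boundary length of $Y_{k+1}$ is at most $\ell_k + 2f(\ell_k,\cdot)$ and its orbifold complexity strictly drops. After a bounded number of steps (bounded by the complexity of $\OO$, hence by $S$) one reaches orbipants, and the geodesic representatives of the $\widebar{\alpha}_j$ form an orbipants decomposition $\widebar{P}_X\in\PP(\OO)$ all of whose curves have $\widebar{X}$-length bounded by a constant $L'=L'(\OO)$.

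The hard part will be the uniformity of the length bound in the presence of cone points and of short boundary curves: one must check that the embedded-ball argument can always be run inside $Y_k$ after excising the standard cone neighborhoods and the thin collars, that the curve produced is genuinely a non-peripheral vertex of $\CC(Y_k)$ (not boundary-parallel and not encircling a single cone point), and that it remains one after cutting. These are the same verifications as in the surface case, with the orbifold collar lemma replacing the ordinary one, and I expect them to be routine but to require some bookkeeping. An alternative route uses the covering directly: lift $\widebar{X}$ to $X\in\Fix(H)\subset\TT(S)$, apply Theorem~\ref{r:bers} to get a Bers decomposition $P$ of $S$ with all curves of $X$-length $<L$, and extract an $H$-invariant sub-multicurve of $H\cdot P$ whose image under $\pi$ is an orbipants decomposition of $\OO$; by Lemma~\ref{r:lifted coordinates}(2) that image would have $\widebar{X}$-length at most $L$. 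However, producing an $H$-invariant multicurve large enough to complete an orbipants decomposition from the possibly intersecting family $H\cdot P$ is more delicate than the direct argument, so I would present the direct proof and relegate the covering argument to a remark.
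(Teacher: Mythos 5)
The paper offers no proof of Corollary \ref{r:bers orb} at all: it is recorded as a remark that Bers' Theorem \ref{r:bers} persists in the orbifold setting, so there is nothing to match your argument against line by line; your direct proof is the natural reading of that remark and is essentially correct. The greedy induction on orbifold complexity, whose only geometric input is a length bound for a shortest essential nonperipheral curve in a non-orbipants piece with bounded total boundary length, is the classical Bers argument, and its extension to hyperbolic cone-surfaces is in fact available in the literature (the Bers-type partition theorem of Dryden--Parlier for cone angles at most $\pi$, which covers all angles $2\pi/n$ with $n\ge 2$), so your key lemma could be quoted rather than reproved. One caution about your sketch: the thin region about a cone point of order $n$ is \emph{not} of ``definite size''---as $n\to\infty$ it limits on a cusp neighborhood, so its radius grows with $n$, just as collars widen as a boundary geodesic shortens; what is uniform is the \emph{area} of each thin piece at a fixed Margulis-type scale, and that is what keeps the packing argument alive: since $\mathrm{area}(\OO)=-2\pi\chi^{\mathrm{orb}}(\OO)$ is bounded below (the turnover case being excluded here), the cone-thin parts, cusp neighborhoods and boundary collars cannot exhaust $Y_k$, so a thick basepoint exists, and any loop obstructing the embedded ball there is neither cone-parallel nor boundary-parallel, hence a genuine vertex of $\CC(Y_k)$. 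These are exactly the verifications you flagged, so I would not call this a gap, only a point where ``definite size'' should be replaced by ``uniformly bounded area.'' Your assessment of the alternative covering route is also sound: the $H$-orbit of a Bers decomposition of the lift need not be a multicurve, and Lemma \ref{r:lifted coordinates}(\ref{r:lifted coords 2}) only converts lengths of $H$-symmetric curves, so extracting an orbipants decomposition from $H\cdot P$ genuinely requires more work; presenting the direct argument and relegating the covering remark is the right choice.
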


Finally, we compile the various notions of a short curve into a single shortness constant:

\begin{definition}[Shortness defined] \label{r:epsilon}
In what follows, fix $\epsilon_0>0$ to be sufficiently small so 
\begin{itemize}
\item Minsky's Product Regions Theorem \ref{r:product} holds,
\item If $\widebar{X} \in \TT(\OO)$ has $l_{\widebar{X}}(\widebar{\alpha})< \epsilon_0$ for some $\widebar{\alpha} \in \CC(\OO)$ and $\widebar{X}$ lifts to $X \in \Fix(H)$, then $l_{X}(\beta) < \epsilon$ for each $\beta \in \Pi(\alpha)$, where $\epsilon>0$ is as in Theorem \ref{r:product},
\item If $L$ is Bers' constant from Theorem \ref{r:bers}, then $\epsilon_0 < L\cdot N_H$, where $|H| <N_H$ depends only on $S$, and if $l_{X}(\gamma) < \epsilon_0$ for some $\gamma \in \CC(S)$, then $l_{X}(\delta) > L$, for any $\delta \in \CC(S)$ with $i(\delta, \gamma) \geq 1$.
\end{itemize}
\end{definition}

Note that such an $\epsilon_0$ depends only on the topology of $S$ by Lemma \ref{r:lifted coordinates} and the Collar Lemma.  When we say that a curve $\alpha$ is \emph{short} for some $\sigma \in \TT(S)$, we mean that $l_{\sigma}(\alpha) < \epsilon_0$.  It follows from Remark \ref{r:short base} that if $l_{\widebar{X}}(\widebar{\alpha}) \leq \epsilon_0$, then $\alpha \in \mathrm{base}(\tilde{\mu}_{X})$, where $\tilde{\mu}_X$ is a shortest augmented marking for $X$.

\subsection{Almost-fixed points, symmetric large links, and Tao's Lemma}

Recall that for any finite $H \leq \MCG(S)$, $\Fix(H) \subset \TT(S)$ is a totally geodesic submanifold, but less is understood if we relax the condition of being fixed by $H$ to being almost-fixed by $H$, that is, having a bounded $H$-orbit.  Our main theorem shows that these almost-fixed points are uniformly close to $\Fix(H)$.  In order to find a fixed point near an almost-fixed point, we need to understand how efficient paths between almost-fixed points and fixed points move through $\TT(S)$.  Using $\AM(S)$, we reduce this to understanding the large links which appear along augmented hierarchy paths between almost-fixed augmented markings and certain almost-fixed augmented markings coming from fixed points in $\TT(S)$.  Using work of Tao \cite{Tao13}, we show that, for a sufficiently large threshold, these large links are $H$-symmetric.  The purpose of this subsection is to make this precise.\\

In \cite{Tao13}, Tao shows that there is an exponential-time algorithm to solve the conjugacy problem for $\MCG(S)$.  The bulk of the work in \cite{Tao13} is proving a number of technical results about hierarchies in the setting of the action of a finite order element of $\MCG(S)$ on $\MM(S)$.  Our first step is an easy extension of some of her results to finite order subgroups acting on $\AM(S)$.\\

Let $H \leq \MCG(S)$ be a finite order subgroup.  For any $R>0$, we define the set of \emph{$R$-almost-fixed points} of $H$ in $\MM(S)$ to be
\[\Fix^{\MM}_R(H) = \{\mu \in \MM(S) | \mathrm{diam}_Y(H\cdot \mu) \leq R, \forall Y \subset S\}\]

For any $R>0$, we define the set of $R$-almost-fixed points of $H$ in $\TT(S)$ in the Teichm\"uller metric to be

\[\Fix^T_R(H) = \{\sigma \in \TT(S) | \mathrm{diam}_T(H \cdot \sigma) \leq R\}\]

Throughout the paper, we work with the coarse version of $\Fix^T_R(H)$, namely $\widetilde{\Fix_R}(H) \subset \AM(S)$ which we define as
\[\widetilde{\Fix_R}(H) = \{\tilde{\mu} \in \AM(S)| \mathrm{diam}_{\AM(S)}(H \cdot \mu) \leq R\}\]

For $\tilde{\mu} \in\widetilde{\Fix_R}(H)$, it follows from Theorem \ref{r:Rafi} that  $d_Y(\mu, h\cdot \mu) \leq K_R$, for each $h \in H, Y \subset S,$ where $K_R$ depends on $R$ and $S$.\\

For the rest of the subsection, fix an arbitrary augmented marking $\widetilde{X} \in \AM(S)$ and an arbitrary almost-fixed augmented marking $\tilde{\mu} \in \widetilde{\Fix}_R(H)$.

\begin{definition} [$H$-symmetric]
We say a subsurface $Y \subset S$ is \emph{symmetric under the action of H} or simply \emph{$H$-symmetric} if each component of $H \cdot Y$ is either $Y$ or disjoint from $Y$.
\end{definition}

Recall from Lemma \ref{r:large link condition} that we call a subsurface $Y \subset S$ a $K$-large link for two augmented markings $\tilde{\mu}_1, \tilde{\mu}_2 \in \AM(S)$ if $d_Y(\tilde{\mu}_1, \tilde{\mu}_2 ) > K$.\\

The following lemma tells us that there is a large link constant $\widetilde{K}$, which depends on $\mathrm{diam}_T(H \cdot \widetilde{X})$, such that any $\widetilde{K}$-large link is $H$-symmetric.  It is an easy adaptation of \cite[Lemma 3.3.4]{Tao13} to our purposes.  We give a proof of the adaptation starting from the basis of her lemma, which is the following lemma in which $H = \langle f\rangle$ for a finite order $f \in \MCG(S)$ and $\MM(S)$ replaces $\AM(S)$.

\begin{lemma} [Symmetric large links; Tao's lemma] \label{r:sym links} 
Let $K>0$ be fixed as above.  There is a $\widehat{K}=\widehat{K}(\widetilde{X},R,S)>0$ such that the following hold:
\begin{enumerate}
\item If $\tilde{\mu} \in \widetilde{\Fix}_R(H)$ and $Y \subset S$ satisfies $d_Y(\tilde{\mu}, \widetilde{X}) >\widehat{K}$, then the orbit $H\cdot Y$ is disjoint, $d_Z(\tilde{\mu},\widetilde{X})>\widehat{K}$ for each component $Z \subset H\cdot Y$, and none of the components of $H \cdot Y$ is time-ordered with respect to any other. \label{r:sym links 1}
\item For any horoball $\HHH_{\alpha}$, if $d_{\HHH_{\alpha}}(\tilde{\mu}, \widetilde{X})>\widehat{K}$, then $\alpha$ is $H$-symmetric. \label{r:sym links 2}
\end{enumerate}
\end{lemma}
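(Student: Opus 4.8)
The plan is to deduce the lemma from Tao's original result \cite[Lemma 3.3.4]{Tao13} by first reducing from $\AM(S)$ to $\MM(S)$, then from a general finite $H$ to a cyclic group, and finally dealing with the horoball statement separately via the length coordinates. For part \eqref{r:sym links 1}, begin by passing to the underlying markings: since $\tilde\mu \in \widetilde{\Fix}_R(H)$ and subsurface projections of augmented markings to \emph{nonannular} $Y$ only see the base markings, a large projection $d_Y(\tilde\mu,\widetilde X)$ forces a large projection $d_Y(\mu, \mu_X)$ of the underlying markings, where $\mu_X$ is the marking underlying $\widetilde X$. Tao's lemma, applied to each generator $f$ of $H$ (or rather, by running her argument for the cyclic group $\langle f\rangle$ and taking $\widehat K$ to be the max of the resulting thresholds over the uniformly bounded collection of such $f$, which is legitimate since $|H|$ and the number of conjugacy classes of finite subgroups are bounded by $S$), gives a threshold above which the orbit $H\cdot Y$ is disjoint, each component $Z$ also has large projection, and no two components are time-ordered. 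The point is that the time-order statement and the disjointness statement are purely about the hierarchy between $\mu$ and $\mu_X$, which by Lemma \ref{r:large link condition} is controlled by the same large-link subsurfaces; so Tao's conclusions transfer verbatim once we are above her threshold.

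For part \eqref{r:sym links 2}, the horoball statement, I would argue as follows. If $d_{\HHH_\alpha}(\tilde\mu,\widetilde X) > \widehat K$ is large, then by the definition of $\pi_{\widehat\HHH_\alpha}$ either $\alpha$ is short in one of the two augmented markings (so $\alpha\in\mathrm{base}$, with large $D_\alpha$ coordinate) or the twisting coordinates differ by a lot; in either case, taking $\widehat K$ large enough, the annular curve complex distance $d_\alpha(\mu,\mu_X)$ is large, so $\alpha$ is a large link in the ordinary sense. Now apply part \eqref{r:sym links 1} with $Y = Y_\alpha$ the annulus: its orbit $H\cdot Y_\alpha$ is disjoint, which is exactly the statement that $\alpha$ is $H$-symmetric. (One must separately note that if $\alpha$ is short in $\widetilde X$, then since $\widetilde X$ need not be $H$-invariant this argument still runs because the hypothesis $d_{\HHH_\alpha}(\tilde\mu,\widetilde X)$ large is symmetric in the two markings and we only used it to extract a large ordinary annular projection.)

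The main obstacle I anticipate is the bookkeeping of constants and the precise compatibility of Tao's time-order conclusions with the \emph{augmented} hierarchy paths of \cite{Dur13} rather than the ordinary hierarchy paths of \cite{MM00}. Tao works with $\MM(S)$ and hierarchy paths there; the time-order relation $\prec_t$ and the active-segment structure (Lemma \ref{r:active interval}) were set up in the augmented setting in \cite{Dur13}, but one needs to check that a large-link subsurface in the augmented hierarchy between $\tilde\mu$ and $\widetilde X$ is also a large link in the ordinary hierarchy between their underlying markings, and that time-order is preserved under this correspondence. This should follow because, for nonannular $Y$, augmented and ordinary subsurface projections agree (they only depend on pants decompositions), so the base geodesic and the nonannular geodesics of the two hierarchies coincide up to bounded error; the only new feature of the augmented hierarchy is the vertical motion inside horoballs, which does not introduce new nonannular large links or reorder the existing ones. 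I would state this compatibility as the content of the appeal to ``\cite[Sections 4 and 5]{MM00} and \cite[Subsections 4.2 and 4.3]{Dur13}'' that already underlies Lemma \ref{r:active interval}, and otherwise the proof is a direct translation of Tao's argument.
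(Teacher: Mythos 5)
Your reduction of part (2) to an ordinary annular large link does not work, and this is the one place where the lemma genuinely needs a new argument rather than a translation of Tao. You claim that if $d_{\HHH_{\alpha}}(\tilde{\mu},\widetilde{X})>\widehat{K}$ then, after enlarging $\widehat{K}$, the annular distance $d_{\alpha}(\mu,\mu_X)$ of the underlying markings is large. That implication is false: in the combinatorial horoball the distance between $(x,0)$ and $(x,n)$ is $n$, so $d_{\HHH_{\alpha}}$ can be made arbitrarily large purely by a discrepancy in the length coordinates $|D_{\alpha}(\tilde{\mu})-D_{\alpha}(\widetilde{X})|$ while the relative twisting, and hence $d_{\alpha}(\mu,\mu_X)$, stays uniformly bounded. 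In that regime $\alpha$ is not an ordinary large link, Tao's Lemma 3.3.4 says nothing about it, and your parenthetical remark does not address this case. The paper handles it by a separate, elementary argument using almost-fixedness of the \emph{length} data: if the twisting part $d_{\alpha}(\tilde{\mu},\widetilde{X})$ is below Tao's threshold, then for $\widehat{K}$ large the $D_{\alpha}$-coordinates must differ by more than $2R$; since $\tilde{\mu}\in\widetilde{\Fix}_R(H)$ and $D_{h\cdot\alpha}(h\cdot\tilde{\mu})=D_{\alpha}(\tilde{\mu})$, the whole orbit $H\cdot\alpha$ is then forced to have positive length coordinate in a single augmented marking (in $\tilde{\mu}$, or else in $\widetilde{X}$, using that only finitely many curves are short in $\widetilde{X}$ and that $\widehat{K}$ is allowed to depend on $\widetilde{X}$), hence lies in one base pants decomposition, hence consists of pairwise disjoint curves, i.e.\ $\alpha$ is $H$-symmetric. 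Without some argument of this kind your proof of part (2) has a hole exactly where the augmented structure differs from $\MM(S)$.

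A smaller point on part (1): applying Tao's cyclic lemma to each $\langle f\rangle$, $f\in H$, and taking the maximal threshold does give that each $Z\subset H\cdot Y$ is a large link, and pairwise disjointness of $f\cdot Y$ and $g\cdot Y$ follows by equivariance (it is equivalent to disjointness of $Y$ and $(f^{-1}g)\cdot Y$, which is the cyclic case for $\langle f^{-1}g\rangle$). But the time-order statement does not ``transfer verbatim'': time-order is defined relative to the fixed hierarchy between $\tilde{\mu}$ and $\widetilde{X}$ and is not equivariant, and for $H$ noncyclic the translates $f\cdot Y$ and $g\cdot Y$ need not lie in a common cyclic orbit of $Y$. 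The paper closes this by observing that $g\cdot Y$ is itself a $\widehat{K}$-large link and $f\cdot Y=(fg^{-1})\cdot(g\cdot Y)$, so Tao's lemma applied to $\langle fg^{-1}\rangle$ \emph{based at $g\cdot Y$} rules out $f\cdot Y\prec_t g\cdot Y$. Your reduction to underlying markings for nonannular projections, and the compatibility of large links and time-order between augmented and ordinary hierarchies, is fine and is consistent with how the paper uses Lemma \ref{r:active interval}.
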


\begin{proof}
\cite[Lemma 3.3.4]{Tao13} implies (\ref{r:sym links 1}) for each $f \in H$ and any $Y \subset S$ which is not horoball.  We first extend the result to all of $H$.  It suffices to show that none of the components of $H \cdot Y$ is time-ordered with respect to any other.  Suppose that $Y \subset S$ is a subsurface such that for some $f,g \in H$ we have $f \cdot Y \prec_t g \cdot Y$.  Since $f \cdot Y$ is contained in the orbit of $g \cdot Y$ under the action of $f \cdot g^{-1} \in H$, \cite[Lemma 3.3.4]{Tao13} implies that $f \cdot Y$ and $g \cdot Y$ cannot be time-ordered, which is a contradiction.\\

Let $\HHH_{\alpha}$ be a $\widebar{K}$-large link, where $\widebar{K}>K$ and $K$ is the constant from \cite[Lemma 3.3.4]{Tao13} which depends on $\widetilde{X}$.  If $d_{\alpha}(\tilde{\mu}, \widetilde{X})> K$, then $\alpha$ is $H$-symmetric and we are done.  Otherwise, it must be the case that the $\alpha$-length coordinates of $\tilde{\mu}$ and $\widetilde{X}$ are bounded away from each other, that is $|D_{\alpha}(\tilde{\mu}) - D_{\alpha}(\widetilde{X})|>2R$, for $\widebar{K}$ sufficiently large and $R$ the almost-fixed constant for $\tilde{\mu}$.  If $D_{h\cdot \alpha}(\widetilde{X})=0$ for some $h \in H$, then $D_{h\cdot \alpha}(\tilde{\mu})>R$, and thus $D_{g\cdot \alpha}(\tilde{\mu})>0$ for each $g \in H$ because $\tilde{\mu} \in \widetilde{\Fix}_R(H)$, proving that $\alpha$ is $H$-symmetric.  Similarly, if $D_{h\cdot \alpha}(\widetilde{X})>0$ for each $h \in H$, then we must also have that $\alpha$ is $H$-symmetric.  This completes the proof of (\ref{r:sym links 2}).
\end{proof}

Thus $\widehat{K}$-large links between any augmented marking and an almost-fixed augmented marking partition into $H$-invariant \emph{symmetric families}.

\begin{remark}[Bad domains]\label{r:bad constant}
For the remainder of the paper, fix $\widehat{K}$ as in Lemma \ref{r:sym links}.  In \cite{Tao13}, subsurfaces in $\mathcal{L}_{\widehat{K}}(\widetilde{X},\tilde{\mu})$ were called \emph{bad domains}, though we do not use this terminology here.

\end{remark}

\begin{remark}[Dependence of $\widehat{K}$]
The dependence of $\widehat{K}$ on $\mathrm{diam}_T(\widetilde{X})$ in Lemma \ref{r:sym links} means that $\widehat{K}$ depends only on $R$ and $S$ when $\widetilde{X} \in \Fix_R^T(H)$.  In particular, the constant $R'$ in the Main Theorem \ref{r:main} below is independent of the choice of $R$-almost-fixed point.  Similarly, the constants in the coarse barycenter Theorem \ref{r:bary teich} are independent of the choice of $X \in \TT(S)$.
\end{remark}

While the hierarchical time-ordering is generally not preserved by the action of $\MCG(S)$, the following lemma gives an important exception:

\begin{lemma}\label{r:sym fam time order}
Let $\widetilde{X} \in \AM(S)$, $\tilde{\mu} \in \widetilde{\Fix}_R(H)$.  Suppose $Y, Z \in \mathcal{L}_{\widehat{K}}(\widetilde{X}, \tilde{\mu})$ are $\widehat{K}$-large links with distinct symmetric families and that $Y \pitchfork Z$.  If $Y \prec_t Z$ and $g \cdot Y \pitchfork Z$ for some $g \in H$, then $g \cdot Y \prec_t Z$.  
\end{lemma}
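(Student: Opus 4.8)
The plan is to leverage the structure of $H$-symmetric large links and the way the symmetric family of $Y$ interacts with the time-order on the hierarchy, reducing the claim to a statement about a single surface $Z$ and the orbit $H\cdot Y$. First I would recall from Lemma~\ref{r:sym links} that since $Y$ and $Z$ have distinct symmetric families, the orbit $H\cdot Y$ consists of pairwise disjoint, pairwise non-time-ordered subsurfaces, each of which is a $\widehat{K}$-large link for $(\widetilde{X},\tilde{\mu})$; in particular both $Y$ and $g\cdot Y$ support geodesics in any hierarchy $H$ between $\widetilde{X}$ and $\tilde{\mu}$ by Lemma~\ref{r:large link condition}. The hypothesis $g\cdot Y \pitchfork Z$ together with $Y\pitchfork Z$ means that, by \cite[Lemma 4.18]{MM00}, each of the pairs $(Y,Z)$, $(g\cdot Y,Z)$ is time-ordered one way or the other, so it suffices to rule out $Z\prec_t g\cdot Y$.

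The key step is to exploit the action of $g$ itself. Since $g\in H$ and $\tilde\mu\in\widetilde{\Fix}_R(H)$, applying $g$ moves $\tilde\mu$ a uniformly bounded amount in every subsurface projection; the point $\widetilde X$, however, is arbitrary, so $g$ need not coarsely fix the hierarchy between $\widetilde X$ and $\tilde\mu$. The way around this is to transport the time-ordering through the active-segment description of Lemma~\ref{r:active interval}: for interlocking large links $A\prec_t B$, the active segment $\Gamma_A$ precedes $\Gamma_B$ along an augmented hierarchy path $\Gamma$, and moreover $d_B(\widetilde X,\Gamma_A)$ and $\mathrm{diam}_A(\Gamma_B)$ are both bounded by $M_2$. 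So I would argue as follows. Suppose for contradiction $Z\prec_t g\cdot Y$. Combined with the hypothesis $Y\prec_t Z$ and the transitivity of time-order on subsurfaces supporting geodesics in a hierarchy (\cite[Section 4]{MM00}), we would get $Y\prec_t g\cdot Y$. But $Y$ and $g\cdot Y$ lie in the same symmetric family $H\cdot Y$, and part~(\ref{r:sym links 1}) of Lemma~\ref{r:sym links} says that no two components of $H\cdot Y$ are time-ordered with respect to one another — a contradiction. Hence $g\cdot Y\prec_t Z$, as claimed.

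The main obstacle I anticipate is making the transitivity step rigorous: time-order in the sense of Masur–Minsky is only a partial order on the subsurfaces that actually support geodesics in a \emph{fixed} hierarchy $H$, and is defined via the footprints of these geodesics on one another's boundary curves. So before invoking transitivity I must check that $Y$, $g\cdot Y$, and $Z$ all support geodesics in one common hierarchy (guaranteed by Lemma~\ref{r:large link condition} since all three are $\widehat K$-large links, taking $\widehat K\ge K$), and then verify that whenever $A\pitchfork B$, $B\pitchfork C$, $A\prec_t B$, $B\prec_t C$, and $A\pitchfork C$, one indeed has $A\prec_t C$ — this is the content of the time-order being a genuine partial order on interlocking domains in a hierarchy, which follows from \cite[Lemmas 4.18 and 4.19]{MM00}. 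The interlocking hypotheses are exactly what we are handed: $Y\pitchfork Z$ by assumption, $g\cdot Y\pitchfork Z$ by assumption, and $Y\pitchfork g\cdot Y$ would be needed — but here instead $Y$ and $g\cdot Y$ are \emph{disjoint} (being distinct components of the disjoint orbit $H\cdot Y$), so the transitivity conclusion $Y\prec_t g\cdot Y$ doesn't follow from interlocking transitivity directly. The correct move is therefore the reverse one: rather than deriving a contradictory time-ordering between $Y$ and $g\cdot Y$, I should push the assumed orderings $Y\prec_t Z$ and (for contradiction) $Z\prec_t g\cdot Y$ forward under $g^{-1}$ or use the active-segment estimates of Lemma~\ref{r:active interval}\eqref{r:active interval 2} to locate $\Gamma_Z$ strictly between $\Gamma_Y$ and $\Gamma_{g\cdot Y}$ along $\Gamma$, and then observe that $g$ maps $\Gamma_Y$ past $\Gamma_{g\cdot Y}$'s region while coarsely fixing $\tilde\mu$, forcing $g\cdot Y$ and $Y$ to be time-ordered after all — contradicting Lemma~\ref{r:sym links}\eqref{r:sym links 1}. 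Tracking which hierarchy and which hierarchy path all of these domains simultaneously live on, and confirming that the $M_2$-errors from Lemma~\ref{r:active interval} do not swamp the separation guaranteed by the $\widehat K$-largeness, is the delicate part of the argument.
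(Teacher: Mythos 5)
Your opening argument is exactly the paper's proof: since $g\cdot Y\pitchfork Z$, \cite[Lemma 4.18]{MM00} forces $g\cdot Y$ and $Z$ to be time-ordered, and if $Z\prec_t g\cdot Y$ then $Y\prec_t Z\prec_t g\cdot Y$ gives $Y\prec_t g\cdot Y$ by transitivity of $\prec_t$, contradicting part (1) of Lemma \ref{r:sym links}. Had you stopped there, the proposal would simply coincide with the paper.

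The objection that then makes you abandon this argument is based on a misconception. In the Masur--Minsky framework the paper uses, $\prec_t$ is defined via footprints on comparison geodesics of a fixed hierarchy and is a strict partial order on the domains supporting geodesics in that hierarchy; transitivity does not require the outer pair to interlock. Overlap ($\pitchfork$) is only what guarantees \emph{comparability} (the dichotomy of \cite[Lemma 4.18]{MM00}), not what transitivity needs. Since $Y$, $g\cdot Y$, and $Z$ are all $\widehat K$-large links, Lemma \ref{r:large link condition} puts geodesics supported on all three in any hierarchy between $\widetilde X$ and $\tilde\mu$, so the transitivity applies verbatim. Note also that disjoint domains can perfectly well be time-ordered in this sense -- that is precisely why part (1) of Lemma \ref{r:sym links} (Tao's Lemma 3.3.4) is a substantive statement and why concluding $Y\prec_t g\cdot Y$ for the disjoint pair $Y$, $g\cdot Y$ is a genuine contradiction rather than a vacuous one. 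You are implicitly replacing $\prec_t$ with the Behrstock-inequality-style order that is only defined on pairwise-overlapping domains; the paper never does this. As a result, your substitute argument via active segments and the action of $g$ is both unnecessary and, as written, not a proof: you explicitly leave the decisive step (locating $\Gamma_Z$ between $\Gamma_Y$ and $\Gamma_{g\cdot Y}$, controlling the $M_2$-errors, and extracting a time-ordering of $Y$ and $g\cdot Y$ from the action of $g$) unverified, and carrying it out would require exactly the footprint/time-order bookkeeping you set out to avoid. In short: reinstate your first paragraph and delete the retraction.
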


\begin{proof}
Since $g \cdot Y \pitchfork Z$, \cite[Lemma 4.18]{MM00} implies that either $g\cdot Y \prec_t Z$ or $Z \prec_t g\cdot Y$.  In the latter case, transitivity of $\prec_t$ implies $Y \prec_t Z \prec_t g\cdot Y$, a contradiction of Lemma \ref{r:sym links}.
\end{proof}

\begin{remark}
Recall that an $H$-symmetric subsurface $Z$ may have $h \cdot Z = Z$ for each $h \in H$.  If $Y \pitchfork Z$, it is possible that $h\cdot Y \pitchfork Z$ for all $h \in H$.  In this case, Lemmas \ref{r:active interval} and \ref{r:sym fam time order} tell us that the active segment of $Z$ along any augmented hierarchy path either comes entirely before or entirely after the active segments of each subsurface in $H\cdot Y$.
\end{remark}

Another immediate consequence of the finite order of $H$ is that subsurface projections within a symmetric family are all coarsely equal, with constants depending on $\widehat{K}$:

\begin{lemma}[Subsurface projections for symmetric families] \label{r:sym sub proj}
Let $\widetilde{X} \in \AM(S)$ and $\tilde{\mu} \in \widetilde{\Fix}_R(H)$.  If $Y \in \mathcal{L}_{\widehat{K}}(\widetilde{X},\tilde{\mu})$, then for all $h,g \in H$ 
\[d_{h\cdot Y} (\widetilde{X}, \tilde{\mu}) \asymp_{\widehat{K}} d_{g \cdot Y}(\widetilde{X}, \tilde{\mu})\]
where $d_Y = d_{\HHH_{\alpha}}$ if $Y$ is an annulus with core curve $\alpha$.
\end{lemma}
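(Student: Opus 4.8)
The plan is to exploit the fact that $H$ acts by isometries on $\AM(S)$ and that $\tilde{\mu}$ is almost-fixed, so that the subsurface projections $d_{h\cdot Y}(\widetilde{X},\tilde{\mu})$ can be rewritten in terms of projections to a single surface in the orbit after translating by elements of $H$. Concretely, for $h,g \in H$, I would use the naturality of subsurface projections under the mapping class group action, namely $d_{\phi\cdot W}(\tilde{\mu}_1,\tilde{\mu}_2) = d_W(\phi^{-1}\cdot\tilde{\mu}_1, \phi^{-1}\cdot\tilde{\mu}_2)$, which holds equally for horoballs $d_{\HHH_\alpha}$ via the $D_\alpha$-coordinate equivariance $D_\alpha(\phi\cdot\tilde{\mu}) = D_{\phi^{-1}\cdot\alpha}(\tilde{\mu})$ and the corresponding twisting-coordinate equivariance. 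Writing $k = g^{-1}h \in H$, this reduces the claim to comparing $d_Y(\widetilde{X},\tilde{\mu})$ with $d_Y(k^{-1}\cdot\widetilde{X}, k^{-1}\cdot\tilde{\mu})$.

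The first key step is then to control $d_Y(\tilde{\mu}, k^{-1}\cdot\tilde{\mu})$. Since $\tilde{\mu}\in\widetilde{\Fix}_R(H)$, we have $\mathrm{diam}_{\AM(S)}(H\cdot\mu)\le R$, hence by the distance formula (Theorem \ref{r:distances}) and the observation recorded after the definition of $\widetilde{\Fix_R}(H)$, $d_W(\tilde{\mu}, k^{-1}\cdot\tilde{\mu})\le K_R$ for every subsurface $W$, including $W = Y$ and including horoballs. Applying the triangle inequality for the (coarse) pseudo-metric $d_Y$ on the projected images then gives
\[
d_Y(k^{-1}\cdot\widetilde{X}, k^{-1}\cdot\tilde{\mu}) \le d_Y(k^{-1}\cdot\widetilde{X}, \tilde{\mu}) + d_Y(\tilde{\mu}, k^{-1}\cdot\tilde{\mu}) \le d_Y(k^{-1}\cdot\widetilde{X}, \tilde{\mu}) + K_R + C,
\]
where $C$ absorbs the bounded error in the projection. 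So it remains to compare $d_Y(k^{-1}\cdot\widetilde{X},\tilde{\mu})$ with $d_Y(\widetilde{X},\tilde{\mu})$, i.e. to control the effect of translating only $\widetilde{X}$ by $k^{-1}$.

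The second key step, which I expect to be the main obstacle, is precisely this last comparison: there is no a priori bound on $d_Y(\widetilde{X}, k^{-1}\cdot\widetilde{X})$ since $\widetilde{X}$ is an arbitrary augmented marking, not almost-fixed. Here is where the hypothesis $Y\in\mathcal{L}_{\widehat{K}}(\widetilde{X},\tilde{\mu})$ must be used, together with Lemma \ref{r:sym links}: since $d_Y(\widetilde{X},\tilde{\mu})>\widehat{K}$, the orbit $H\cdot Y$ is disjoint and $d_Z(\widetilde{X},\tilde{\mu})>\widehat{K}$ for every $Z\subset H\cdot Y$. In particular $h\cdot Y$ and $g\cdot Y$ are disjoint or equal, so the translated surface $k^{-1}\cdot Y$ is among the large links, and the relevant projections $\pi_Y(k^{-1}\cdot\widetilde{X})$ and $\pi_{k^{-1}\cdot Y}(\widetilde{X})$ are related by the equivariance above. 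The point is that, after using equivariance to move everything back to the surface $Y$, the only terms that enter are $d_Y(\widetilde{X},\tilde{\mu})$, $d_{k^{-1}\cdot Y}(\widetilde{X},\tilde{\mu})$, and the uniformly bounded terms $d_{(\cdot)}(\tilde{\mu}, k^{-1}\cdot\tilde{\mu})\le K_R$; running this symmetrically in $h$ and $g$ yields $d_{h\cdot Y}(\widetilde{X},\tilde{\mu})\asymp_{\widehat{K}} d_{g\cdot Y}(\widetilde{X},\tilde{\mu})$ with additive constant $\prec K_R$ and multiplicative constant $1$.

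Finally, I would note that the horoball case is handled identically: the $D_\alpha$-coordinate and twisting-coordinate equivariance of the marked horoball projection $\widehat{\pi}_\alpha$ make $d_{\HHH_{h\cdot\alpha}}$ and $d_{\HHH_{\alpha}}$ interchangeable under translation by $H$, and the almost-fixed bound controls $d_{\HHH_\alpha}(\tilde{\mu},k^{-1}\cdot\tilde{\mu})$ just as in the nonannular case (using part (\ref{r:sym links 2}) of Lemma \ref{r:sym links} to ensure $\alpha$ is $H$-symmetric, so that all curves in the orbit have comparable length data in $\tilde{\mu}$). The only real subtlety to write carefully is bookkeeping the coarse constants so that the final $\asymp_{\widehat{K}}$ genuinely depends only on $\widehat{K}$ (hence on $R$ and $S$), which follows since we apply the triangle inequality a number of times bounded by $|H|$, itself bounded in terms of $S$.
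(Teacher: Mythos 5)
Your overall skeleton is the natural one — move $h\cdot Y$ back to $Y$ by $\MCG(S)$-equivariance of projections, then compare $d_Y(k^{-1}\widetilde{X},k^{-1}\tilde{\mu})$ with $d_Y(\widetilde{X},\tilde{\mu})$ via the triangle inequality — and your bound $d_Y(\tilde{\mu},k^{-1}\tilde{\mu})\leq K_R$ coming from $\tilde{\mu}\in\widetilde{\Fix}_R(H)$ is correct. The gap is in the other half. You correctly observe that there is no a priori bound on $d_Y(\widetilde{X},k^{-1}\widetilde{X})$ for arbitrary $\widetilde{X}$, but your attempted workaround does not close it. Invoking $Y\in\mathcal{L}_{\widehat{K}}(\widetilde{X},\tilde{\mu})$ and Lemma \ref{r:sym links} only tells you that all translates $h\cdot Y$ are \emph{again} $\widehat{K}$-large links and that the orbit $H\cdot Y$ is a disjoint family with no internal time-ordering; these are lower bounds on projections and a combinatorial disjointness statement, and neither yields the needed \emph{upper} bound on $d_Y(\widetilde{X},k^{-1}\widetilde{X})$. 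When you write ``the only terms that enter are $d_Y(\widetilde{X},\tilde{\mu})$, $d_{k^{-1}\cdot Y}(\widetilde{X},\tilde{\mu})$, and the uniformly bounded terms,'' that rearrangement is circular: the first two terms are exactly the quantities you are trying to compare, so the estimate degenerates.

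The ingredient you are missing is recorded in the remark immediately after Lemma \ref{r:sym links}: the constant $\widehat{K}$ depends not only on $R$ and $S$ but on $\mathrm{diam}_T(H\cdot\widetilde{X})$, i.e.\ on the $H$-orbit diameter of $\widetilde{X}$ itself. That dependence is exactly what makes the lemma's conclusion an $\asymp_{\widehat{K}}$ rather than an $\asymp_R$. With it, the bound you need is immediate: subsurface and horoball projections are Lipschitz in $d_{\AM(S)}$, so
\[
d_Y(\widetilde{X},k^{-1}\widetilde{X}) \prec d_{\AM(S)}(\widetilde{X},k^{-1}\widetilde{X}) \leq \mathrm{diam}_{\AM(S)}(H\cdot\widetilde{X}) \prec \widehat{K},
\]
and combined with $d_Y(\tilde{\mu},k^{-1}\tilde{\mu})\leq K_R \prec \widehat{K}$ the triangle inequality gives $\bigl|d_{h\cdot Y}(\widetilde{X},\tilde{\mu})-d_{g\cdot Y}(\widetilde{X},\tilde{\mu})\bigr|\prec\widehat{K}$. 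Your claimed additive constant $\prec K_R$ is therefore also an overstatement — the additive error genuinely depends on $\widehat{K}$ through $\mathrm{diam}_{\AM(S)}(H\cdot\widetilde{X})$, not merely on $R$; this is why the lemma is phrased with $\asymp_{\widehat{K}}$. Finally, the large-link hypothesis $Y\in\mathcal{L}_{\widehat{K}}(\widetilde{X},\tilde{\mu})$ is not needed to bound $d_Y(\widetilde{X},k^{-1}\widetilde{X})$ at all; its role is only to guarantee via Lemma \ref{r:sym links} that all $h\cdot Y$ are meaningful (large-link) domains in the first place, and, in the annular case, that $\alpha$ is $H$-symmetric so the horoball projections over the orbit curves are all defined.
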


\subsection{Adjusting lengths of short curves for fixed points}

In this subsection, we prove that adjusting the lengths of short curves in a fixed point only results in a bounded change in the Weil-Petersson metric and does not introduce any other short curves, an observation which is crucial for the proof of Proposition \ref{r:reduce short curves} below.  We obtain this as a consequence of a version of Brock's Proposition \ref{r:VP} for our setting.  We do not have all the tools of Masur and Wolpert's work in the setting of $\TT(\OO)$, so we must use the symmetry of the covering action.\\

Before introducing Proposition \ref{r:VP orb} below, we recall some facts about $\TT(S)$ in $d_{WP}$.  In the Weil-Petersson metric, $\TT(S)$ is an incomplete CAT(0) space \cite{Wol87} and its completion, the augmented Teichm\"uller space $\widebar{\TT(S)}$, is obtained as a union of Teichm\"uller spaces of noded surfaces \cite{Mas76}, where disjoint collections of simple closed curves on $S$ have been pinched down to points.  This layers $\widebar{\TT(S)}$ into strata, with the combinatorics of the adjacency of the strata determined by $\CC(S)$.  Importantly, each stratum is WP-geodesically convex \cite{Wol86}.  The incompleteness of $\TT(S)$ in $d_{WP}$ comes from the fact that there are Weil-Petersson geodesic rays which converge to metrics on noded surfaces in finite time.  See \cite{MW02} and \cite{Br05} for more details.\\

We now recall a theorem of Wolpert \cite{Wol05}.  Let $\alpha_1, \dots, \alpha_k \in \CC(S)$ be a collection of disjoint curves.  Let $X \in \TT(S)$ and consider the length sum 

\[l = l_{X}(\alpha_1) + \cdots + l_{X}(\alpha_k)\]

\begin{theorem}[Corollary 21, \cite{Wol05}]\label{r:Wol est}
For any $X \in \TT(S)$, the minimal distance from $X$ to a surface, $Z$, noded along $\alpha_1, \dots, \alpha_k$ is
\[d_{\widebar{WP}}(X,Z) = \sqrt{2 \pi l} + O(l^2)\] 
  
\end{theorem}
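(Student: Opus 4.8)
The plan is to derive the estimate from Wolpert's asymptotic expansion of the Weil--Petersson metric near a boundary stratum of $\widebar{\TT(S)}$, matched against an explicit path on the one hand and a barrier argument on the other. Let $Z$ lie in the stratum $\mathcal{S}_{\alpha_1,\dots,\alpha_k}$ of surfaces noded along $\alpha_1,\dots,\alpha_k$. Choose Fenchel--Nielsen coordinates $(l_{\alpha_i}, \tau_{\alpha_i})_{i=1}^k$ for the pinching curves, completed by coordinates on the complementary pieces. Wolpert's work on geodesic-length functions gives, in this region, $\|\mathrm{grad}\, l_{\alpha_i}\|_{WP}^2 = \tfrac{2}{\pi}\, l_{\alpha_i} + O(l_{\alpha_i}^2)$, that the gradients of $l_{\alpha_i}$ and $l_{\alpha_j}$ are WP-orthogonal up to error $O(l^2)$ for $i \neq j$, that they are nearly orthogonal to the twist directions and to the directions tangent to the stratum, and that the twist vector fields have WP-length $O(l_{\alpha_i})$. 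Since the $\alpha_i$ are disjoint these contributions decouple to the order we need, so near $Z$ the metric is approximately a Riemannian product of $k$ model cusps and the stratum. A convenient way to package this is to set $F = \sqrt{2\pi\,(l_{\alpha_1} + \cdots + l_{\alpha_k})}$; a short computation from the above yields $\|\mathrm{grad}\, F\|_{WP}^2 = 1 + O(l)$ throughout the region where all $l_{\alpha_i}$ are sufficiently small.

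For the lower bound I would argue that any rectifiable path $c$ from $X$ into the closure of $Z$ has $F \to 0$ along $c$, so its WP-length is at least $F(X)/\sup_c \|\mathrm{grad}\, F\|_{WP}$. One must also check that a near-minimizing path can be taken to stay inside the collar region where the expansion is valid, i.e.\ that the $l_{\alpha_i}$ do not first grow large; this follows from the convexity of geodesic-length functions along WP geodesics \cite{Wol87} together with the stratified CAT(0) structure of $\widebar{\TT(S)}$. This gives $d_{\widebar{WP}}(X,Z) \geq \sqrt{2\pi l}\,(1 - O(l)) = \sqrt{2\pi l} + O(l^2)$ when $l$ is small.

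For the upper bound I would exhibit the path obtained by flowing along $-\mathrm{grad}\, F$ --- equivalently, decreasing each $l_{\alpha_i}$ to $0$ while holding the twist parameters and the complementary coordinates fixed, reparametrized so that $l_{\alpha_i}^{1/2}$ decreases linearly. Its WP-length equals the total drop of $F$ times the average of $\|\mathrm{grad}\, F\|_{WP}$ along it, which by the expansion is $F(X)\,(1 + O(l)) = \sqrt{2\pi l} + O(l^2)$. Combining the two bounds yields the stated asymptotic, and the error terms are uniform because Wolpert's expansion is.

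The main obstacle is controlling the error at the claimed precision $O(l^2)$: the leading-order form of the WP metric only gives $\sqrt{2\pi l}\,(1+O(\sqrt{l}))$, so one genuinely needs Wolpert's refined expansion including the first correction to $\|\mathrm{grad}\, l_{\alpha_i}\|_{WP}^2$ and the off-diagonal and twist terms, and one must verify that both the extremal path and every competitor remain inside the collar region where that expansion holds --- which is exactly where the convexity of geodesic-length functions and the structure of $\widebar{\TT(S)}$ enter the argument.
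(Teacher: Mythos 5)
This statement is not proved in the paper at all: it is quoted verbatim as Corollary 21 of Wolpert \cite{Wol05}, so there is no internal argument to compare against, and the only fair comparison is with Wolpert's own proof. Your sketch reproduces that argument in outline — geodesic-length gradient expansions, the distance-like function $F=\sqrt{2\pi\ell}$, a length-decreasing path for the upper bound, and a gradient bound plus convexity of $\ell_{\alpha_i}$ along the WP geodesic (which exists to the closed stratum since it is convex in the complete CAT(0) space $\widebar{\TT(S)}$, and convexity with value $0$ at the far end keeps every $\ell_{\alpha_i}$ below its value at $X$) for the lower bound — so the structure is right.

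The genuine gap is in the error bookkeeping, which you flag but do not resolve. From the inputs you state, namely $\|\mathrm{grad}\,\ell_{\alpha_i}\|_{WP}^2=\tfrac{2}{\pi}\ell_{\alpha_i}+O(\ell_{\alpha_i}^2)$ and $O(\ell^2)$ off-diagonal pairings, you only get $\|\mathrm{grad}\,F\|_{WP}=1+O(\ell)$, and then your final line $F(X)(1+O(\ell))=\sqrt{2\pi\ell}+O(\ell^2)$ is an arithmetic slip: $\sqrt{\ell}\cdot O(\ell)=O(\ell^{3/2})$, not $O(\ell^2)$. To reach the stated precision you need Wolpert's sharper expansion, which has cubic rather than quadratic errors, $\langle\mathrm{grad}\,\ell_{\alpha_i},\mathrm{grad}\,\ell_{\alpha_j}\rangle_{WP}=\tfrac{2}{\pi}\ell_{\alpha_i}\delta_{ij}+O\big((\ell_{\alpha_i}\ell_{\alpha_j})^{3/2}\big)$; with that input $\|\mathrm{grad}\sqrt{2\pi\ell}\,\|_{WP}=1+O(\ell^2)$ and both bounds give $\sqrt{2\pi\ell}+O(\ell^{5/2})$, which implies the $O(\ell^2)$ form quoted here. (For what this paper actually uses — only the coarse consequence $d_{\widebar{WP}}(X,\TT(S,A))\asymp_L 1$ for $X\in V_L(A)$ in Proposition \ref{r:VP orb} — even your weaker $O(\ell^{3/2})$ version would suffice.)
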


Let $A \in \CC(S)$ be any simplex and recall the following definition from Subsection \ref{r:interplay} 

\[V_L({A}) = \{{X} \in \TT(S)| l_{{X}}(\alpha) < L, \forall {\alpha} \in {A}\}\]
where $L$ is the Bers constant from Theorem \ref{r:bers}.\\

For any simplex $A \subset \CC(S)$, let $\TT(S, A) \subset \widebar{\TT(S)}$ be the stratum of marked noded surfaces which are noded along $A$.  Recall that each point in $\TT(S,A)$ is defined by choice of a point in $\TT(Y)$ for each nonpants component $Y \subset S\setminus A$.  Since length functions are convex along Weil-Petersson geodesics \cite{Wol87}, each stratum $\TT(S,A)$ is convex in $d_{WP}$.  We also note that it follows from Wolpert's Theorem \ref{r:Wol est} that $d_{\widebar{WP}}(X, \TT(S,A)) \asymp_L 1$ for any $X \in V_L(A)$.\\

\begin{proposition}\label{r:VP orb}
Let $\widebar{P} \subset \CC(\OO)$ be any orbipants decomposition of $\OO$.  For any $\delta>0$, $V_{\delta}(\widebar{P}) \subset \TT(\OO)$ satisfies
\[\mathrm{diam}_{WP}(V_{\delta}(\widebar{P})) \asymp_{\delta} 1\]

\end{proposition}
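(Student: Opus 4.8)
The plan is to transport Brock's bounded-diameter result (Proposition~\ref{r:VP}) up to $\TT(S)$ via the isometric embedding $i:\TT(\OO)\hookrightarrow\Fix(H)\subset\TT(S)$ and then exploit the $H$-symmetry of the covering. First I would lift the orbipants decomposition: given $\widebar P\subset\CC(\OO)$, its lift $\Pi(\widebar P)\subset\CC(S)$ is an $H$-symmetric multicurve, which I complete to a pants decomposition $P\subset\CC(S)$ with $\Pi(\widebar P)\subset P$. By Lemma~\ref{r:lifted coordinates}(\ref{r:lifted coords 2}), for $\widebar X\in V_\delta(\widebar P)$ with lift $X=i(\widebar X)$, every $\beta\in\Pi(\widebar\alpha)$ has $l_X(\beta)=N_{\widebar\alpha}\,l_{\widebar X}(\widebar\alpha)<N_H\delta$, where $N_H$ depends only on $S$; hence $i(V_\delta(\widebar P))\subset V_{N_H\delta}(\Pi(\widebar P))$, where the latter $V$-set is taken with respect to the partial pants decomposition $\Pi(\widebar P)$ inside $\TT(S)$.

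Next I would control this larger $V$-set in $\TT(S)$ using the Weil--Petersson completion. The point is that $V_{N_H\delta}(\Pi(\widebar P))$ sits in a bounded $d_{WP}$-neighborhood of the stratum $\TT(S,\Pi(\widebar P))\subset\widebar{\TT(S)}$: by Wolpert's estimate (Theorem~\ref{r:Wol est}), $d_{\widebar{WP}}(X,\TT(S,\Pi(\widebar P)))\prec_\delta 1$ for any such $X$, since the relevant length sum is $\prec_\delta 1$. The stratum $\TT(S,\Pi(\widebar P))$ is itself a product $\prod_{Y\subset S\setminus\Pi(\widebar P)}\TT(Y)$ of lower-complexity Teichm\"uller spaces, one factor for each nonpants component $Y$. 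The key observation is that the \emph{orbifold} curves of $\widebar P$ chop $\OO$ only down to orbipants (one-, two-, three-cone-point spheres with punctures), so the complementary components $Y\subset S\setminus\Pi(\widebar P)$ are exactly the lifts of these orbipants; each such $Y$ has bounded complexity $\xi(Y)$, in fact bounded in terms of $|H|$ and the orbifold Euler characteristic, hence in terms of $S$. So the projection of $i(\widebar X)$ to the $\TT(Y)$-factor records the $H$-image of a point of $\TT(\text{orbipants})$, which is a \emph{single point} in each bounded-complexity factor because an orbipants has no interior curves to vary — its hyperbolic structure is rigidly determined by boundary lengths and cone orders. Thus all of $i(V_\delta(\widebar P))$ projects to a $\prec_\delta 1$-diameter subset of the stratum, and combined with the $\prec_\delta 1$ distance-to-stratum bound we get $\mathrm{diam}_{WP}\big(i(V_\delta(\widebar P))\big)\prec_\delta 1$ in $\widebar{\TT(S)}$, hence in $\TT(S)$ since these points are genuinely in $\TT(S)$.

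Finally, since $i:\TT(\OO)\hookrightarrow\TT(S)$ is an isometric embedding in the relevant metric — and $d_{WP}$ on $\TT(\OO)$ is, up to the uniformly bounded constants of Lemma~\ref{r:lifted coordinates}, comparable to the restriction of $d_{WP}$ on $\TT(S)$ to $\Fix(H)$ (the length functions, and hence the WP pairing built from them, agree up to the factors $N_{\widebar\alpha}\prec 1$) — the bound $\mathrm{diam}_{WP}(i(V_\delta(\widebar P)))\prec_\delta 1$ pulls back to $\mathrm{diam}_{WP}(V_\delta(\widebar P))\prec_\delta 1$, which gives the $\asymp_\delta 1$ of the statement (the lower bound being trivial). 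The main obstacle I expect is the bookkeeping in the previous paragraph: one must be careful that the complementary components of $\Pi(\widebar P)$ in $S$ really are \emph{all} lifts of orbipants — this uses that $\Pi(\widebar P)$ is $H$-symmetric and that $\widebar P$ is a \emph{maximal} simplex in $\CC(\OO)$, so no complementary piece in $\OO$ contains an essential curve, whence no lift does either — and that the resulting ``rigidity'' argument (orbipants have zero-dimensional Teichm\"uller space) is exactly what replaces the full Masur--Wolpert machinery that is unavailable for $\TT(\OO)$ directly. Once that structural point is pinned down, the distance estimates are routine applications of Theorem~\ref{r:Wol est} and Proposition~\ref{r:VP}.
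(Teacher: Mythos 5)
Your proposal is correct and takes essentially the same route as the paper's proof: both lift the orbipants decomposition, observe that the complementary pieces of $S\setminus\Pi(\widebar{P})$ quotient by $H$ to rigid orbipants so that the $H$-fixed locus in the stratum $\TT(S,\Pi(\widebar{P}))$ is a single point, bound the distance to that stratum via Wolpert's estimate (Theorem~\ref{r:Wol est}), and conclude by the triangle inequality. Two small points worth tightening in your write-up: the paper explicitly invokes CAT(0) uniqueness of closest-point projection to a convex stratum to argue that the projection of an $H$-fixed point is itself $H$-fixed (you use this implicitly in the sentence about ``the projection of $i(\widebar{X})$''), and your parenthetical ``no lift does either'' is slightly off --- a lift $Y$ of an orbipants can certainly carry essential curves, just not $H$-symmetric ones, which is what forces $\Fix(H|_Y)\subset\TT(Y)$ to be a single point.
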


\begin{proof}

Consider the lift $P \subset \CC(S)$ of $\widebar{P}$ to $S$.  While $\widebar{P}$ is an orbipants decomposition of $\OO$, $P$ need not be a pants decomposition of $S$.  Observe, however, that any curve $\alpha \subset S \setminus P$ is not $H$-symmetric, otherwise it would descend to a curve on $\OO$ disjoint from $\widebar{P}$.\\

By the above observation, the components of $S \setminus P$ are pairs of pants and subsurfaces, $Y \subset S\setminus P$, which are stabilized by $H$.  For any such $Y$, the action of $H$ restricts to an action on $Y$.  Since $Y$ supports no symmetric curves, we must have that the quotient of $Y$ by $H|_Y$, $Y/(H|_Y)$, is a pair of orbipants, which we note has a unique hyperbolic structure once the lengths of any pants curves are chosen.  In particular, this means that the fixed point set in each such $\TT(Y)$ is a single point.\\

Let $\widebar{X} \in V_{\delta}(\widebar{P}) \subset \TT(\OO)$ and consider its unique lift $X \in \Fix(H)$.  Consider the stratum $\TT(S, P) \subset \widebar{\TT(S)}$, where all curves in $P$ have been pinched to nodes.  Since $\TT(S,P)$ is convex and $(\widebar{\TT(S)}, d_{\widebar{WP}})$ is a complete CAT(0) space, it follows from \cite[Proposition II.2.4]{BH99} that there is a unique closest point $X_P \in \TT(S, P)$ to $X$ in $\TT(S,P)$.\\

Recall that the action of $\MCG(S)$ extends to $(\widebar{\TT(S)}, d_{\widebar{WP}})$ and observe that $H$ stabilizes $\TT(S,P)$ because its defining curves are $H$-symmetric.  Since $X \in \Fix(H)$ and $X_P$ is the closest point to $X$ in $\TT(S,P)$, it follows that $X_P$ must also be fixed by $H$.\\

We claim that $X_P$ is the only point in $\TT(S,P)$ fixed by $H$.  To see this, recall that $X_P$ is defined by a point  in $\TT(Y)$ for each nonpants component $Y \subset S \setminus P$.  Since $X_P$ is fixed by the action of $H$, it follows that the points in the $\TT(Y)$ which define $X_P$ must also be fixed by $H$.  As observed above, each such $\TT(Y)$ has a unique point fixed by $H$.  As such, $X_P$ is the unique point in $\TT(S,P)$ fixed by $H$.\\

Wolpert's Theorem \ref{r:Wol est} implies that 

\[d_{\widebar{WP}}(X, X_P) =d_{\widebar{WP}}(X, \TT(S,P)) \asymp_{\delta} 1\]
as $X_P$ was the closest point in $\TT(S,P)$ to $X$.\\

Let $X' \in \Fix(H) \cap V_{\delta}(P)$ be different from $X$.  Since our choice of $X$ was arbitrary, it follows that $X_P$ is also the closest point to $X'$ in $\TT(S,P)$ and so
\[d_{\widebar{WP}}(X', X_P) \asymp_{\delta} 1\]

Thus the triangle inequality implies that 
\[d_{WP}(X,X') = d_{\widebar{WP}}(X, X') \asymp_{\delta} 1\]

\end{proof}

\section{Almost-fixed points are close to fixed points} \label{r:afp section}

This section is devoted to proving the Main Theorem \ref{r:main}.\\

The outline of the proof of Theorem \ref{r:main} is as follows: Beginning with any almost-fixed point $\sigma \in \Fix_R(H) \subset \TT(S)$, we first use the nonpositive curvature of $\TT(S)$ with the Weil-Petersson metric and work of Wolpert to find a fixed point, $X\in \TT(S)$.  Applying results of Brock, Masur-Minsky, Rafi, and the author, we deduce that the Teichm\"uller distance of $X$ to $\sigma$ is coarsely determined by large projections to horoballs.  Using a characterization of the short curves for the barycenter developed in Lemma \ref{r:afp sym short}, we apply Proposition \ref{r:VP orb} and results of Minsky, Rafi, Wolpert, and the author to show in Proposition \ref{r:reduce short curves} that the large projections to horoballs can be reduced to large projections to annuli.  It follows from Tao's Lemma \ref{r:sym links} that these annular large links can be grouped into symmetric families which come with an ordering from the hierarchy machinery.  The proof of Theorem \ref{r:main} describes how to leap across the symmetric families one at a time by applying $H$-symmetric multitwists, while staying in $\Fix(H)$ at each step.  This process ends with new fixed point whose distance to $\sigma$ is bounded as a function of $R$ and the topology of $S$, thus completing the proof.

\subsection{The Teichm\"uller geometry of Weil-Petersson barycenters} \label{r:short bary}

In this subsection, we analyze the short curves of the Weil-Petersson barycenter of an $H$-orbit of an almost-fixed point.  First, we recall a basic result of CAT(0) geometry, as recorded in \cite[Proposition II.2.7]{BH99}:

\begin{lemma}\label{r:cat bary}
Let $X$ be a complete $\mathrm{CAT(0)}$ space.  If $Y \subset X$ is a bounded set of radius $R$, then there exists a unique point $C \in X$, the \emph{barycenter} of $Y$, such that $Y \subset \widebar{B}(C,R)$.
\end{lemma}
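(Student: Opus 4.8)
The plan is to run the classical circumcenter argument for complete $\mathrm{CAT}(0)$ spaces, exactly as in \cite[Proposition II.2.7]{BH99}; the only geometric input is the quadratic (Bruhat--Tits, or ``CN'') form of the $\mathrm{CAT}(0)$ inequality, which for points $p,q$ with geodesic midpoint $z$ and any point $y$ reads $d(z,y)^2 \le \tfrac12 d(p,y)^2 + \tfrac12 d(q,y)^2 - \tfrac14 d(p,q)^2$. I would first set up the radius function $r_Y\colon X \to \RR$, $r_Y(x) = \sup_{y\in Y} d(x,y)$, which is finite since $Y$ is bounded and is $1$-Lipschitz (hence continuous), because $|r_Y(x)-r_Y(x')|\le d(x,x')$ by the triangle inequality. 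By definition the radius of $Y$ is $R = \inf_{x\in X} r_Y(x)$, and a point $C$ with $Y \subset \widebar{B}(C,R)$ is precisely a point at which $r_Y$ attains this infimum, so the lemma reduces to showing that $r_Y$ has a unique minimizer.

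For existence, take a minimizing sequence $(x_n)$ with $r_Y(x_n)\to R$ and show it is Cauchy. For any $m,n$, let $z$ be the midpoint of the geodesic from $x_m$ to $x_n$; applying the $\mathrm{CAT}(0)$ inequality to each $y\in Y$, taking the supremum over $y$, and using $r_Y(z)\ge R$ gives
\[ 4R^2 \le 2\, r_Y(x_m)^2 + 2\, r_Y(x_n)^2 - d(x_m,x_n)^2, \]
so $d(x_m,x_n)^2 \le 2 r_Y(x_m)^2 + 2 r_Y(x_n)^2 - 4R^2 \to 0$ as $m,n\to\infty$. By completeness $x_n \to C$ for some $C\in X$, and continuity of $r_Y$ forces $r_Y(C)=R$, that is, $Y \subset \widebar{B}(C,R)$. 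For uniqueness, suppose $C$ and $C'$ both satisfy $r_Y(C)=r_Y(C')=R$; applying the same midpoint estimate with $C,C'$ in place of $x_m,x_n$ and noting that the midpoint $z'$ still has $r_Y(z')\ge R$, one gets $R^2 \le R^2 - \tfrac14 d(C,C')^2$, hence $d(C,C')=0$.

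There is essentially no obstacle here, since this is a standard lemma; the only points requiring care are invoking the $\mathrm{CAT}(0)$ inequality in its quadratic form (so that the $-\tfrac14 d(\cdot,\cdot)^2$ term appears and drives the Cauchy estimate), using completeness of $X$ to produce the limit point, and observing that the strict negativity of that same term is exactly what yields uniqueness.
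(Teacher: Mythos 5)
Your proof is correct and is exactly the standard circumcenter argument from Bridson--Haefliger, which is precisely the source the paper cites (\cite[Proposition II.2.7]{BH99}); the paper itself does not reprove the lemma but simply invokes that reference. Your write-up faithfully reproduces the key steps: the quadratic CN inequality applied at midpoints to drive the Cauchy estimate, completeness to produce the limit, and the strict concavity term $-\tfrac14 d(\cdot,\cdot)^2$ for uniqueness.
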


Fix $R_0>0$ and let $\tau \in \Fix^T_{R_0}(H)$.  It follows from Theorem \ref{r:aug qi} that there is an $\widetilde{R}>0$ depending only on $R_0$ and $S$ such that $\tilde{\mu}_{\tau} \in \widetilde{\Fix}_{\widetilde{R}}(H)$.  Since the Weil-Peterrson metric is coarsely dominated by the Teichm\"uller metric (\cite{Lin74}; see Remark \ref{r:wp<t}), it follows that there is an $R=R(R_0)>0$ for which $\tau \in \Fix^{WP}_R(H)$, where $R$ only differs from $R_0$ by a multiplicative constant.  Since the augmented Teichm\"uller space, $\widebar{\TT(S)}$, is a complete CAT(0) space, it follows from Lemma \ref{r:cat bary} that the $H$-orbit of $\sigma$ has a barycenter $X'_0 \in \widebar{\Fix(H)} \subset \widebar{\TT(S)}$ in the Weil-Petersson metric, where $\widebar{\Fix(H)}$ is the completion of $\Fix(H)$ to $\widebar{\TT(S)}$, namely marked noded surfaces which are preserved by the action of $H$.\\
In the case that $X'_0 \in \widebar{\TT(S)} \setminus \TT(S)$, the next lemma produces a new fixed point $X_0 \in \Fix(H)$ arbitrarily close to $X'_0 \in \Fix(H)$ in $d_{\widebar{WP}}$, the extension of the Weil-Petersson metric to $\widebar{\TT(S)}$:

\begin{lemma}
For any $\delta>0$, there is a point $X_0 \in \Fix(H)\subset \TT(S)$ with $d_{WP}(X_0, X'_0) \leq \delta$.
\end{lemma}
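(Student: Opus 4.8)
The plan is to exploit the fact that, by construction, $\widebar{\Fix(H)}$ is the metric completion of $\Fix(H)$ inside $\widebar{\TT(S)}$; since $\widebar{\TT(S)}$ is a complete metric space, this is the same as the closure of $\Fix(H)$ in the extended metric $d_{\widebar{WP}}$. In particular $\Fix(H)$ is dense in $\widebar{\Fix(H)}$. As $X'_0 \in \widebar{\Fix(H)}$, density then yields a point $X_0 \in \Fix(H) \subset \TT(S)$ with $d_{\widebar{WP}}(X_0, X'_0) < \delta$; and since $X_0 \in \TT(S)$ and $d_{\widebar{WP}}$ restricts to $d_{WP}$ on $\TT(S)$ (the two agreeing verbatim when $X'_0 \in \TT(S)$ as well), this is precisely the asserted bound $d_{WP}(X_0, X'_0) \le \delta$. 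The chief virtue of phrasing the argument this way is that it hands us a point of $\Fix(H)$ outright, so there is nothing further to check about $H$-invariance.

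To unpack this and extract an explicit rate of approximation, suppose $X'_0 \in \widebar{\TT(S)} \setminus \TT(S)$, so that $X'_0$ is a marked noded surface noded along a simplex $A \subset \CC(S)$ that is permuted by $H$. I would fix a pants decomposition $P \supseteq A$ and, working in the associated Fenchel-Nielsen coordinates, ``open the nodes'' of $X'_0$ symmetrically: assign each curve of $A$ the same small length $\eta > 0$ and leave all coordinates transverse to $A$ equal to those of $X'_0$ within the stratum $\TT(S,A)$. This produces a point $X_\eta \in \Fix(H) \subset \TT(S)$, and Wolpert's estimate (Theorem \ref{r:Wol est}, Corollary 21 of \cite{Wol05}), applied with $l = \sum_{\alpha \in A} l_{X_\eta}(\alpha) = |A|\,\eta$, bounds $d_{\widebar{WP}}(X_\eta, X'_0)$ by $\sqrt{2\pi l} + O(l^2)$, which tends to $0$ as $\eta \to 0$. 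Taking $\eta$ small enough that this is at most $\delta$ and setting $X_0 = X_\eta$ then gives the conclusion concretely.

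I expect the main obstacle to be reconciling the two viewpoints. Theorem \ref{r:Wol est} measures the distance from $X_\eta$ to the \emph{stratum} $\TT(S,A)$, whereas what is wanted is the distance to the \emph{particular point} $X'_0$; and the explicit unpinching must be arranged so that the twist parameters along the $A$-curves are chosen $H$-equivariantly. The first issue is handled by noting that $X_\eta$ and $X'_0$ share all Fenchel-Nielsen data transverse to $A$, so the straight path contracting the $A$-lengths from $\eta$ to $0$ realizes the distance of Theorem \ref{r:Wol est} up to the $O(l^2)$ term, and that the stratum $\TT(S,A)$ is $d_{\widebar{WP}}$-convex with $X'_0$ as its point closest to $X_\eta$ --- the same convexity-and-uniqueness reasoning used in the proof of Proposition \ref{r:VP orb}. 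The second issue simply does not arise for the abstract density argument, which is why I would take that as the actual proof and reserve the explicit unpinching, together with Wolpert's expansion, for a remark quantifying how $d_{WP}(X_0, X'_0)$ decays as $X_0 \to X'_0$.
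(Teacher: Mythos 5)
Your preferred argument---density of $\Fix(H)$ in $\widebar{\Fix(H)}$---is circular. At the point where the lemma is invoked, what is actually established is only that $X'_0$ is the barycenter of $H\cdot\sigma$ and hence, by uniqueness in the CAT(0) space $\widebar{\TT(S)}$, an $H$-fixed point of $\widebar{\TT(S)}$. Reading ``completion'' literally as ``metric closure'' would make the lemma vacuous, but that is not what the set-up gives: the paper's phrasing conflates the $H$-fixed locus of $\widebar{\TT(S)}$ with the closure of $\Fix(H)$, and the inclusion of the former into the latter is precisely what this lemma proves (the reverse inclusion is trivial, since an isometric action has closed fixed set). Treating $X'_0 \in \overline{\Fix(H)}$ as given therefore assumes the conclusion.

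Your second, explicit argument is a legitimate alternative and is essentially correct, but the issue you defer to a remark---that the twist parameters along the $A$-curves must be chosen $H$-equivariantly when opening the nodes---is exactly the step with content, and it cannot be absorbed into the density argument for the reason above. It is not hard to supply: since $A$ is $H$-symmetric it descends to a simplex $\widebar{A}\subset\CC(\OO)$, and one can open the nodes on the quotient orbifold using the Fenchel--Nielsen coordinates of Subsection~\ref{r:orb Teich} and then lift, so that both the length $\eta$ and the twist data are automatically $H$-equivariant. With that added, the unpinching plus Wolpert's estimate (Theorem~\ref{r:Wol est}) gives a valid, quantitative proof.

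For comparison, the paper's proof avoids coordinates entirely: pick any $Y\in\Fix(H)$ and take the unique Weil--Petersson geodesic $\mathcal{G}$ from $Y$ to $X'_0$ in $\widebar{\TT(S)}$. Since both endpoints are $H$-fixed and CAT(0) geodesics are unique, $H$ fixes $\mathcal{G}$ pointwise; interior points of $\mathcal{G}$ lie in $\TT(S)$ (geodesics from the interior do not touch the boundary strata before the terminal endpoint), hence in $\Fix(H)$, and the point at distance $<\delta$ from $X'_0$ is the desired $X_0$. This is cleaner than the coordinate computation and sidesteps the equivariance of twists altogether; your Wolpert-expansion approach recovers the rate of approach but is more than is needed here.
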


\begin{proof}
If $X'_0 \in \Fix(H)\subset \TT(S)$, then we may choose $X_0 = X'_0$.\\

If not, then $X'_0$ has some simplex of curves $\alpha \subset \CC(S)$, each of whose constituent curves has been pinched down to a node.  Since $X'_0 \in \widebar{\Fix(H)}$, it follows that $H$ preserves $\alpha$.  That is, $\alpha$ is $H$-symmetric.  Let $\widebar{\alpha} \subset \CC(\OO)$ be the simplex which lifts to $\alpha$.\\

Let $Y \in \Fix(H)$ be any other fixed point and consider the unique, finite Weil-Petersson geodesic ray emanating from $Y$ and terminating at $X'_0$, which we denote by $\mathcal{G}$.  Since the action of $\MCG(S)$ extends to the completion $\widebar{\TT(S)}$, it follows that $\mathcal{G}$ is fixed by $H$.   Since $\mathcal{G}$ has finite length, we can let $X_0 \in \mathcal{G}$ be any point satisfying $d_{\widebar{WP}}(X_0, X_0')<\delta$, completing the proof.

\end{proof}

For any $\epsilon'>0$, denote by $\Lambda_{\epsilon',\tau}$ the set of curves for which $l_{\tau}(\lambda)<\epsilon'$.  Recall that in Definition \ref{r:epsilon} we fixed $\epsilon_0>0$ so that Minsky's Product Regions Theorems \ref{r:product} holds.  The following lemma says that if $\tau$ has a really short curve, then each curve in the $H$-orbit of given curve must have $\tau$-length less than $\epsilon_0$.  In particular, the whole orbit must be in the base of $\tilde{\mu}_{\tau}$, a shortest augmented marking for $\tau$.

\begin{lemma}[Almost-fixed points have symmetric short curves]\label{r:afp sym short}
There exists  $\epsilon''>0$ sufficiently small, so that if $\lambda \in \Lambda_{\epsilon'',\tau}$, then $\lambda$ is $H$-symmetric and $H \cdot \lambda \subset \Lambda_{\epsilon_0,\tau}$.
\end{lemma}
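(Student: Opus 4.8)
The plan is to choose $\epsilon''$ small enough that really-short curves are forced to stay short under the $H$-action, using only the Teichm\"uller/Weil-Petersson comparison and the uniformly bounded covering degree from Lemma~\ref{r:lifted coordinates}. First I would observe that since $\tau \in \Fix^{WP}_R(H)$ (equivalently $\tilde{\mu}_\tau \in \widetilde{\Fix}_{\widetilde R}(H)$) with $R$ depending only on $R_0$ and $S$, the quantity $\mathrm{diam}_{WP}(H\cdot\tau)$ is bounded by a constant $R'=R'(R_0,S)$. Now suppose $\lambda$ has $l_\tau(\lambda) < \epsilon''$. For any $h \in H$, the curve $h\cdot\lambda$ has $l_{h\cdot\tau}(h\cdot\lambda) = l_\tau(\lambda) < \epsilon''$, so $h\cdot\lambda$ is short in the point $h\cdot\tau$, which is within $d_{WP}$-distance $R'$ of $\tau$. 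The first key step is to use the Wolpert-type estimate (Theorem~\ref{r:Wol est}): a curve of length $<\epsilon''$ in $h\cdot\tau$ witnesses that $h\cdot\tau$ is within $d_{\widebar{WP}}$-distance $\sqrt{2\pi\epsilon''} + O(\epsilon''^2)$ of the stratum where $h\cdot\lambda$ is pinched. By the triangle inequality, $\tau$ is within $d_{\widebar{WP}}$-distance $R' + \sqrt{2\pi\epsilon''} + O(\epsilon''^2)$ of that stratum; applying the lower bound direction of Theorem~\ref{r:Wol est} to $\tau$, this forces $l_\tau(h\cdot\lambda)$ to be small — concretely $l_\tau(h\cdot\lambda) \prec (R')^2$ up to uniform constants. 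Choosing $\epsilon''$ small enough makes the right-hand side smaller than $\epsilon_0$, so $h\cdot\lambda \in \Lambda_{\epsilon_0,\tau}$ for every $h \in H$.

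The second key step is to deduce $H$-symmetry of $\lambda$. By the third clause of Definition~\ref{r:epsilon} (the collar-type condition), if $l_\tau(\gamma) < \epsilon_0$ then $l_\tau(\delta) > L$ for any $\delta$ with $i(\delta,\gamma)\geq 1$; so the curves in $\Lambda_{\epsilon_0,\tau}$ are pairwise disjoint provided $\epsilon''$ (hence all their lengths) is taken $\leq\epsilon_0$. Since $\lambda$ and every $h\cdot\lambda$ lie in $\Lambda_{\epsilon_0,\tau}$, the orbit $H\cdot\lambda$ is a disjoint collection of curves, and each $h\cdot\lambda$ is either equal to $\lambda$ or disjoint from it — which is exactly the statement that $\lambda$ (as a one-curve subsurface, i.e.\ an annulus) is $H$-symmetric. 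Equivalently, $H\cdot\lambda$ is a multicurve invariant under $H$, so it descends to a curve $\widebar\lambda \in \CC(\OO)$ and $\lambda = \Pi(\widebar\lambda)$.

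The main obstacle is making the quantitative loop in the first step genuinely uniform: I need both directions of Wolpert's asymptotic (Theorem~\ref{r:Wol est}) to hold in the regime where the relevant lengths are $<\epsilon''$, and I need to be careful that the $O(\ell^2)$ error terms do not swamp the main term when I chain the estimate through the $d_{WP}$-bound $R'$. Concretely, the subtlety is that the upper estimate $d_{\widebar{WP}}(X,Z) \leq \sqrt{2\pi l} + O(l^2)$ is applied at $h\cdot\tau$ (small $l$, good control) but the lower estimate must be applied at $\tau$ where the total distance to the stratum is order $R'$, not small; so I should instead phrase it as: $d_{\widebar{WP}}(\tau,\TT(S,\{h\cdot\lambda\})) \leq R' + C_0\sqrt{\epsilon''}$, and then invoke that the $d_{\widebar{WP}}$-distance from $\tau$ to this stratum is $\asymp \sqrt{l_\tau(h\cdot\lambda)}$ for $l_\tau(h\cdot\lambda)$ below some threshold (and is $\gtrsim$ a fixed constant otherwise), so either $l_\tau(h\cdot\lambda)$ is already below threshold and then $l_\tau(h\cdot\lambda) \prec (R')^2 + \epsilon''$, or it is above the threshold and $R' + C_0\sqrt{\epsilon''}$ bounds a fixed constant, which is impossible once $\epsilon''$ is small relative to $R'$ — wait, that last alternative can actually occur if $R'$ is large, so the honest statement is $l_\tau(h\cdot\lambda) \prec_{R_0} 1$ with constants depending on $R_0$, and then I must separately verify this is $< \epsilon_0$; since $\epsilon_0$ is a fixed constant and the bound depends on $R_0$, I should instead run the argument with the sharper observation that $h\cdot\lambda$ is short in $h\cdot\tau$ and directly compare the two strata. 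The cleanest fix, which I would adopt, is: apply Theorem~\ref{r:Wol est} at $h\cdot\tau$ to get $d_{\widebar{WP}}(h\cdot\tau, \TT(S,\{h\cdot\lambda\})) \leq C_0\sqrt{\epsilon''}$, note $h\cdot\tau$ and $\tau$ are $d_{WP}$-close, and then instead of bounding $l_\tau(h\cdot\lambda)$ directly, project: the nearest point of that stratum to $h\cdot\tau$ is $d_{WP}$-close to $\tau$, and one then runs Brock's $V_L$ machinery (Proposition~\ref{r:VP}) together with the extremal/hyperbolic length comparison to conclude $l_\tau(h\cdot\lambda) < \epsilon_0$ once $\epsilon''$ is chosen small depending only on $R_0$ and $S$ — which is allowed, since $\epsilon''$ may depend on $R_0$. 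So the real content is just bookkeeping of these uniform constants, and no genuinely new idea is needed beyond Theorems~\ref{r:Wol est} and Definition~\ref{r:epsilon}.
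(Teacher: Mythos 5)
Your proposal takes a genuinely different route from the paper, and unfortunately the first step contains a gap that the route cannot close. The paper works entirely in $\AM(S)$: it uses the fact that the horoball-depth coordinate $D_{\lambda}(\tilde{\mu}_{\tau})$ grows like $\log(1/l_{\tau}(\lambda))$, so that $\lambda \in \Lambda_{\epsilon'',\tau}$ forces $D_{\lambda}(\tilde{\mu}_{\tau})$ to exceed $\widetilde{R}$ once $\epsilon''$ is small, and then the distance formula \eqref{r:distances 3} converts a drop in $D_{\lambda}$ between $\tilde{\mu}_{\tau}$ and $\tilde{\mu}_{h^{-1}\tau}$ into an $\AM(S)$-distance exceeding $\widetilde{R}$, contradicting $\tilde{\mu}_{\tau}\in\widetilde{\Fix}_{\widetilde R}(H)$. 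The crucial feature is that this penalty is \emph{unbounded} as $\epsilon''\to 0$.

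Your route replaces this with the Weil--Petersson metric and Wolpert's estimate, and that is where it breaks. By Theorem~\ref{r:Wol est}, the WP distance from any point to the stratum $\TT(S,\{h\cdot\lambda\})$ is $\sqrt{2\pi\, l_{\tau}(h\cdot\lambda)} + O(l^2)$, which is \emph{bounded}; the WP metric is incomplete precisely because these strata sit at finite distance. Your chain of inequalities gives $d_{\widebar{WP}}(\tau,\TT(S,\{h\cdot\lambda\})) \le R' + C_0\sqrt{\epsilon''}$, and this upper bound does \emph{not} shrink as $\epsilon''\to 0$; it is stuck near $R'$, where $R' = R'(R_0,S)$ comes from the almost-fixed hypothesis. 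Feeding that back through Wolpert yields only $l_{\tau}(h\cdot\lambda) \prec (R')^2$, a bound depending on $R_0$ but completely independent of $\epsilon''$. Since $\epsilon_0$ is a fixed constant determined only by $S$, there is no choice of $\epsilon''$ that rescues the argument when $R_0$ is large. You correctly flag this obstruction yourself near the end, but the proposed ``fix'' (projecting to the nearest stratum point and invoking Brock's $V_L$ machinery) does not actually supply the missing quantitative input: knowing the nearest stratum point is WP-close to $\tau$ tells you nothing more than the distance bound you already had, and Proposition~\ref{r:VP} bounds WP-diameters of $V_L$-sets, not hyperbolic lengths of arbitrary curves in $\tau$. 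The only way to make ``very short in $h^{-1}\tau$'' propagate to ``short in $\tau$'' is to use a metric in which shortness is recorded with unbounded penalty---that is, the Teichm\"uller metric via the $D_{\alpha}$ coordinates in $\AM(S)$, exactly as the paper does.

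Your second step (disjointness of $H\cdot\lambda$, hence $H$-symmetry, from the collar condition in Definition~\ref{r:epsilon} once all curves in $H\cdot\lambda$ are known to lie in $\Lambda_{\epsilon_0,\tau}$) is fine as stated and would work if the first step were repaired. The paper orders the two conclusions differently---it proves $H$-symmetry first, directly, by observing that two intersecting deep horoballs force a large $\AM(S)$-distance---but this is a cosmetic difference. The real divergence, and the real gap, is the WP-vs.-Teichm\"uller issue above.
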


\begin{proof}
Consider a shortest augmented marking $\tilde{\mu}_{\tau} \in \AM(S)$ (see Subsection \ref{r:interplay} for the definition of $\tilde{\mu}_{\tau}$).  Since $\tau \in \Fix^T_{R_0}(H)$, recall that Theorem \ref{r:aug qi} implies that there is an $\widetilde{R}>0$ depending only on $R_0$ and $S$ such that $\tilde{\mu}_{\tau} \in \widetilde{\Fix}_{\widetilde{R}}(H)$.\\

Recall from Subsection \ref{r:AMS section} that to each curve $\alpha \in \mathrm{base}(\tilde{\mu}_{\tau})$, we assign a length $D_{\alpha}(\tilde{\mu}_{\tau})$, the coordinate which coarsely represents how short $\alpha$ is in $\tilde{\mu}_{\tau}$.  See Subsection \ref{r:AMS section} for the definition of $D_{\alpha}$ and Subsection \ref{r:interplay} for how it is defined from a point in $\TT(S)$.\\

Let $\epsilon_1''>0$ be small enough so that if $\lambda \in \Lambda_{\epsilon_1'', \tau}$, then $D_{\lambda}(\tilde{\mu}_{\tau})>\widetilde{R} + M_1$, where $M_1$ is the constant from Remark \ref{r:coarsely natural} (see Subsection \ref{r:interplay} for why short curves have large length coordinates).  If $\lambda$ is not $H$-symmetric, then there is some $h \in H$ such that $i(\lambda, h\cdot \lambda) \geq 1$.  Since $D_{h\cdot\lambda} (\tilde{\mu}_{h\cdot \tau}) > \widetilde{R}$, it follows that
\[d_{\AM(S)}(\tilde{\mu}_{\tau},\tilde{\mu}_{h\cdot \tau}) \geq  d_{\AM(S)}(\tilde{\mu}_{\tau},h\cdot \tilde{\mu}_{\tau}) +M_1 \geq d_{\HHH_{\lambda}}(\tilde{\mu}_{\tau},h\cdot \tilde{\mu}_{\tau}) + d_{\HHH_{h\cdot \lambda}}(\tilde{\mu}_{\tau},h\cdot \tilde{\mu}_{\tau})+M_1 > 2\widetilde{R}+M_1\]
a contradiction of $\tilde{\mu}_{\tau} \in \widetilde{\Fix}_{\widetilde{R}}(H)$.  The first inequality follows from Remark \ref{r:coarsely natural}.  The second inequality follows from the fact that any path from any augmented marking with $D_{\gamma_1}>0$ to one with $D_{\gamma_2}>0$ for $i(\gamma_1, \gamma_2) >0$ must completely exit $\HHH_{\gamma_1}$ before entering $\HHH_{\gamma_2}$, at a cost of at least $D_{\gamma_1} + D_{\gamma_2}$.\\

Now suppose there is an $h \in H$ such that $h\cdot \lambda \notin \Lambda_{\epsilon_0,\tau}$.  It follows that $\lambda \notin \Lambda_{\epsilon_0,h^{-1}\cdot \tau}$ and $D_{\lambda}(\tilde{\mu}_{h^{-1}\tau})=0$.  For sufficiently small $\epsilon_2''>0$, we have $d_{\HHH_{\lambda}}(\tilde{\mu}_{\tau},\tilde{\mu}_{h^{-1}\tau})> A \cdot \widetilde{R} + B+M_1$, where $A, B$ are the constants depending only on $S$ from Theorem \ref{r:distances} and $M>0$ is again the constant from Remark \ref{r:coarsely natural}.  Theorem \ref{r:distances} implies that $d_{\AM(S)}(\tilde{\mu}_{\tau}, \tilde{\mu}_{h^{-1}\tau}) > \widetilde{R}$, a contradiction of the fact that $\tilde{\mu}_{\tau} \in \widetilde{\Fix}_{\widetilde{R}}(H)$.\\

Choosing $\epsilon''< \min\{\epsilon''_1, \epsilon''_2\}$ satisfies both of the above arguments, completing the proof.
\end{proof}

Consider the subset of $\TT(S)$ of metrics in which all curves in $\Lambda_{\epsilon'', \tau}$ are shorter than $\epsilon_0$:

\[V_{\epsilon_0}(\Lambda_{\epsilon'', \tau}) = \{Y \in \TT(S) | l_Y(\lambda) < \epsilon_0, \forall \lambda \in \Lambda_{\epsilon'', \tau}\}\]

Equivalently, $V_{\epsilon_0}(\Lambda_{\epsilon'', \tau})$ contains all points in $\TT(S)$ whose shortest augmented markings contain $\Lambda_{\epsilon'',\tau}$ in their bases.  By WP-convexity of length functions, $V_{\epsilon_0}(\Lambda_{\epsilon'', \tau})$ is WP-convex.  Lemma \ref{r:afp sym short} implies that $H \cdot \tau \in V_{\epsilon_0}(\Lambda_{\epsilon'', \tau})$.  Since $H \cdot \tau \subset \widebar{B}(X_0, R_0)$ (see Lemma \ref{r:cat bary}), it follows from the convexity of length functions that $X_0 \in V_{\epsilon_0}(\Lambda_{\epsilon'', \tau})$.  This implies that $\Lambda_{\epsilon'', \tau} \subset \Lambda_{\epsilon_0,X_0}$ and, in particular, that $\Lambda_{\epsilon'', \tau} \subset \mathrm{base}(\tilde{\mu}_{X_0})$.  As $X_0 \in \Fix(H)$, it follows that $H \cdot \Lambda_{\epsilon'', \tau} \subset \mathrm{base}(\tilde{\mu}_{X_0})$.  That is, the full $H$-orbits of all of $\tau$'s really short curves are also short in $X_0$.\\

Our goal in Proposition \ref{r:reduce short curves} below is to remove the combinatorial complexity between $X_0$ and $\tau$ coming from the short curves of $X_0$, which can come in the form of both the length of and twisting about these curves.\\

Let $\tilde{\mu}_1, \tilde{\mu}_2 \in \AM(S)$ be any two augmented markings.  For each $\alpha$, let $n_{\alpha} = d_{\alpha}(\tilde{\mu}_1, \tilde{\mu}_2)$, so that $d_{\alpha}(\tilde{\mu}_1, T_{\alpha}^{\pm n_{\alpha}}\cdot \tilde{\mu}_2) <C$, where $T_{\alpha}$ denote the right Dehn (half)twist about $\alpha$ and $C$ depends only on $S$.  Then 
\[d_{\widehat{\HHH}_{\alpha}}(\tilde{\mu}_1, T_{\alpha}^{\pm n_{\alpha}} \tilde{\mu}_2) <|D_{\alpha}(\tilde{\mu}_1) - D_{\alpha}(\tilde{\mu}_2)| + 2C\]

Now suppose there is a constant $D>0$ such that $|D_{\alpha}(\tilde{\mu}_1) - D_{\alpha}(\tilde{\mu}_2)| < D$, for $\alpha \in \CC(S)$.    Then there is a $D'$ which depends only on $D$ and $S$ such that
\[d_{\widehat{\HHH}_{\alpha}}(\tilde{\mu}_1, T_{\alpha}^{\pm n_{\alpha}} \tilde{\mu}_2) < D'\]

We are now ready to state and prove Proposition \ref{r:reduce short curves}, a key technical step on the way to the proof of the Main Theorem \ref{r:main}.  In it, we produce a new fixed point, $X\in \Fix(H)$, whose Weil-Petersson distance to $\tau$ is still uniformly bounded, but whose Teichm\"uller distance has decreased in two significant ways: $X$ and $\tau$ have uniformly bounded projections to horoballs coming from the short curves $X$ inherits from $\tau$, $H \cdot \Lambda_{\epsilon'', \tau}$, and $X$ and $\tau$ have uniformly bounded projections to horoballs coming from the short curves of $X$ which it does not inherit from $\tau$, $\Lambda_{\epsilon, X_0} \setminus \left(H \cdot \Lambda_{\epsilon'', \tau}\right)$.  In the proof, we create a new, preliminary fixed point $X' \in \Fix(H)$, whose coarse lengths for curves short in $X_0$ are coarsely equal.  Then we apply a carefully chosen combination of multitwists to $X'$ to obtain a new fixed point $X \in \Fix(H)$, whose twisting coordinates about the short curves of $X_0$ are coarsely equal to those of $\tau$.  As we show in Lemma \ref{r:unif bound} below, the end result is that the Teichm\"uller distance between $X$ and $\tau$ is coarsely determined by projections to a uniformly bounded number of annuli, which is a significant reduction of the combinatorial complexity between $\tau$ and $\Fix(H)$.

\begin{proposition}[Reducing short curves] \label{r:reduce short curves}
There is a fixed point $X \in \Fix(H)$ with shortest augmented marking $\tilde{\mu}_X \in \AM(S)$ which has the following properties:
\begin{enumerate}
\item For every $\alpha \in \CC(S)$, we have $D_{\alpha}(\tilde{\mu}_X) \overset{+}{\asymp}_R D_{\alpha}(\tilde{\mu}_{\tau})$ \label{r:rsc 1}
\item For any $\alpha \in \Lambda_{\epsilon_0, X_0}$, we have $d_{\widehat{\HHH}_{\alpha}}(\tilde{\mu}_X, \tilde{\mu}_{\tau}) \asymp_R 1$ \label{r:rsc 2}
\item For any nonannular $Y \subset S$, we have $d_Y(\tilde{\mu}_X,\tilde{\mu}_{\tau}) \asymp_R 1$, and so $d_{WP}(X, \tau) \prec \widetilde{R}$\label{r:rsc 3}
\end{enumerate}
\end{proposition}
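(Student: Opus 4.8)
The plan is to build $X$ by modifying the Weil--Petersson barycenter $X_0$ in two stages: first correcting the \emph{lengths} of $X_0$'s short curves, then correcting the \emph{twisting} about them, all while staying inside $\Fix(H)$. We use freely that $X_0$ descends to $\widebar X_0 \in \TT(\OO)$; that $d_{WP}(X_0,\tau) \prec_R 1$ (immediate from the barycenter property, since $\tau$ lies in the orbit $H\cdot\tau$ which sits in a $d_{\widebar{WP}}$-ball of radius $R$ about the barycenter $X_0'$, and $X_0$ is within bounded distance of $X_0'$); and that the short-curve set $\Lambda_{\epsilon_0,X_0}$ is an $H$-symmetric multicurve --- any two short curves of a fixed point are disjoint by the third bullet of Definition \ref{r:epsilon}, so $H$ permutes the short curves with disjoint orbits --- descending to a simplex $\widebar\Lambda \subset \CC(\OO)$; recall from Lemma \ref{r:afp sym short} that $H\cdot\Lambda_{\epsilon'',\tau} \subset \Lambda_{\epsilon_0,X_0}$.

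\emph{Step 1 (lengths).} Pick a Bers orbipants decomposition $\widebar P$ of $\OO$ for $\widebar X_0$ with $\widebar\Lambda \subset \widebar P$, and let $\widebar X' \in \TT(\OO)$ have the Fenchel--Nielsen coordinates of $\widebar X_0$ except that, for each $\widebar\alpha \in \widebar\Lambda$, we rescale $l_{\widebar X'}(\widebar\alpha)$ so that the resulting length coordinate $D_\beta(\tilde\mu_{X'})$ agrees with $D_\beta(\tilde\mu_\tau)$ up to an additive constant depending only on $R,S$, for every lift $\beta \in \Pi(\widebar\alpha)$; this is possible because all lifts of $\widebar\alpha$ have the same $X'$-length by Lemma \ref{r:lifted coordinates}(\ref{r:lifted coords 2}), and the $D_\beta(\tilde\mu_\tau)$ for $\beta\in\Pi(\widebar\alpha)$ are all within $\widetilde R$ of one another since $\tau \in \widetilde{\Fix}_{\widetilde R}(H)$. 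Let $X' \in \Fix(H)$ be the lift of $\widebar X'$. By the length-adjustment facts of this section --- Proposition \ref{r:VP orb} together with the Collar Lemma, which guarantees that shrinking or moderately growing curves of $\widebar P$ creates no new short curves --- we get $d_{WP}(X_0,X') \prec 1$, hence $d_{WP}(X',\tau) \prec_R 1$, and the short curves of $X'$ lie in $\Lambda_{\epsilon_0,X_0}$. Also $D_\alpha(\tilde\mu_{X'}) \overset{+}{\asymp}_R D_\alpha(\tilde\mu_\tau)$ for every $\alpha$: for $\alpha \in \Lambda_{\epsilon_0,X_0}$ this is the choice of lengths, and for $\alpha \notin \Lambda_{\epsilon_0,X_0}$ both coordinates are $\prec_R 1$ (a curve $\epsilon_0$-short for $\tau$ but not in $\Lambda_{\epsilon_0,X_0}$ has $l_\tau(\alpha) \geq \epsilon''$, so $D_\alpha(\tilde\mu_\tau) \leq \log(\epsilon_0/\epsilon'')$). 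This is (\ref{r:rsc 1}) for $X'$.

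\emph{Step 2 (twisting).} For $\alpha \in \Lambda_{\epsilon_0,X_0}$ let $n_\alpha \in \ZZ$ be the relative twisting with $d_\alpha(T_\alpha^{n_\alpha}\tilde\mu_{X'},\tilde\mu_\tau) \prec 1$. Since $\tilde\mu_{X'}$ is coarsely $H$-invariant (Remark \ref{r:coarsely natural}) and $d_\alpha(\tilde\mu_\tau,h\cdot\tilde\mu_\tau) \leq K_R$ for all $h \in H$, the cocycle property of relative twisting yields the \emph{additive} bound $|n_{h\cdot\alpha} - n_\alpha| \prec_R 1$, which is exactly what lets a single exponent serve an entire $H$-orbit. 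Partition $\Lambda_{\epsilon_0,X_0}$ into $H$-orbits, fix a representative $\alpha_i$ of the $i$th orbit, set $m_i = n_{\alpha_i}$, and put $\phi = \prod_i \prod_{\beta \in H\cdot\alpha_i} T_\beta^{m_i}$. As $\Lambda_{\epsilon_0,X_0}$ is $H$-symmetric and $\phi$ uses one exponent per orbit, $\phi$ commutes with $H$, so $X := \phi\cdot X' \in \Fix(H)$. Twisting about the pairwise disjoint curves of $\Lambda_{\epsilon_0,X_0}$ preserves their lengths, so $D_\alpha(\tilde\mu_X) = D_\alpha(\tilde\mu_{X'}) \overset{+}{\asymp}_R D_\alpha(\tilde\mu_\tau)$, giving (\ref{r:rsc 1}); and for $\alpha \in \Lambda_{\epsilon_0,X_0}$ we get $d_\alpha(\tilde\mu_X,\tilde\mu_\tau) = |m_i - n_\alpha| + O(1) \prec_R 1$, which together with $|D_\alpha(\tilde\mu_X) - D_\alpha(\tilde\mu_\tau)| \prec_R 1$ and the combinatorial-horoball estimate recorded just before this proposition gives $d_{\widehat{\HHH}_{\alpha}}(\tilde\mu_X,\tilde\mu_\tau) \prec_R 1$, i.e. (\ref{r:rsc 2}).

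\emph{Step 3 (Weil--Petersson bound).} Step 1 leaves the underlying pants decomposition, hence all nonannular subsurface projections, unchanged up to bounded error, and in Step 2 each orbit factor $\prod_{\beta \in H\cdot\alpha_i}T_\beta^{m_i}$ displaces $X'$ a $d_{WP}$-distance $\prec 1$: by Wolpert's estimate (Theorem \ref{r:Wol est}), $X'$ lies within bounded $d_{\widebar{WP}}$-distance of the stratum $\TT(S,H\cdot\alpha_i)$, on which that multitwist acts trivially, so the triangle inequality bounds the displacement, and summing over the boundedly many orbits gives $d_{WP}(X',X) \prec 1$. Hence $d_{WP}(X,\tau) \leq d_{WP}(X,X') + d_{WP}(X',X_0) + d_{WP}(X_0,\tau) \prec_R 1$, and Brock's formula \eqref{r:distances 2} forces $d_Y(\tilde\mu_X,\tilde\mu_\tau) = d_Y(P_X,P_\tau) \prec_R 1$ for every nonannular $Y \subset S$, which is (\ref{r:rsc 3}) (with $\widetilde R \asymp R$). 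The main obstacle is the coordination in Step 2: replacing $\tau$'s individual twisting amounts about a whole $H$-orbit of short curves by a single multitwist exponent is legitimate only because the discrepancies $n_{h\cdot\alpha}$ differ \emph{additively} --- not merely multiplicatively --- from $n_\alpha$, a fact resting on the coarse $H$-invariance of $\tilde\mu_{X'}$ and the uniform bound $d_\alpha(\tilde\mu_\tau,h\cdot\tilde\mu_\tau)\leq K_R$; one must then check that the twist disturbs neither the length data nor the nonannular projections arranged in Step 1, which is where disjointness of $\Lambda_{\epsilon_0,X_0}$ and the $d_{WP}$-cheapness of twisting about short curves enter.
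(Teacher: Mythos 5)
Your proof is correct and follows essentially the same strategy as the paper's: pass to the orbifold $\OO$, build a preliminary fixed point $X'$ by adjusting the Fenchel--Nielsen length coordinates over a Bers orbipants decomposition containing $\widebar{\Lambda}_{\epsilon_0,X_0}$ so that (1) holds, then apply an $H$-symmetric multitwist (one exponent per $H$-orbit) to realize (2), checking along the way that $X \in \Fix(H)$ and that the nonannular projections stay bounded. Two of your ingredients differ from the paper's in a minor way: you justify using a single exponent per orbit by a cocycle/additive-twist argument in place of the paper's appeal to Lemma \ref{r:sym sub proj}, which is in fact a cleaner way to see that the relevant bound is additive rather than merely coarse; and for (3) you bound $d_{WP}(X,X')$ via Wolpert's stratum estimate rather than via the observation that $\Lambda_{\epsilon_0,X_0}\subset\mathrm{base}(\tilde\mu_{X'})\cap\mathrm{base}(\tilde\mu_X)$ forces bounded nonannular projections and hence a Brock-formula bound, but both routes give the same conclusion.
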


\begin{proof}

Let $\widebar{\Lambda}_{\epsilon_0,X_0} \subset \CC(\OO)$ be the curves which lift to $\Lambda_{\epsilon_0, X_0} \subset \CC(S)$.  The comments following Lemma \ref{r:afp sym short} imply that $\widebar{\Lambda}_{\epsilon'',\tau} \subset \widebar{\Lambda}_{\epsilon_0,X_0}$.  The key initial observation, which follows from Lemmas \ref{r:lifted coordinates} and \ref{r:afp sym short} and the remarks which follow the latter, is that $\tau, X_0 \in V_{\epsilon_0}(H \cdot \Lambda_{\epsilon'',\tau}) \subset V_L(H \cdot \Lambda_{\epsilon'',\tau})$, with the latter inclusion following from our choice of $\epsilon_0$ in Definition \ref{r:epsilon}.\\

It follows from the proof of Lemma \ref{r:afp sym short} that $D_{\alpha}(\tilde{\mu}_{\tau}) \asymp_R 0$ for all curves $\alpha \subset S\setminus \left(H \cdot \Lambda_{\epsilon'',\tau}\right)$ (see Subsection \ref{r:AMS section} for the definition of $D_{\alpha}$).  Since $H \cdot \Lambda_{\epsilon'',\tau} \subset \Lambda_{\epsilon_0,X_0}$, in order to build a fixed point which satisfies conclusion (\ref{r:rsc 1}), it suffices to adjust the $D_{\lambda}(\tilde{\mu}_{X_0})$ to within bounded distance from $D_{\lambda}(\tilde{\mu}_{\tau})$ for $\lambda \in H \cdot \Lambda_{\epsilon'',\tau}$, and to adjust $D_{\lambda}(\tilde{\mu}_{X_0})$ to $0$ for $\lambda \in \Lambda_{\epsilon_0}(\epsilon_0, X_0)$.  This is done directly in Fenchel-Nielsen coordinates on $\TT(\OO)$.  Proposition \ref{r:VP orb} then will imply conclusion \ref{r:rsc 3}.  Finally, we arrive at conclusion \ref{r:rsc 2} by applying appropriate multitwists to the new point we build.\\

Complete $\widebar{\Lambda}_{\epsilon_0, X_0}$ to a Bers orbipants decomposition for $X_0$, $\widebar{P}_{\widebar{\Lambda}_{\epsilon_0, X_0}} \in \PP(\OO)$; that is, $l_{\widebar{X}_0}(\alpha) < L'$ for all $\widebar{\alpha} \in \widebar{P}_{\widebar{\Lambda}_{\epsilon_0, X_0}}$, where $L'>0$ is the constant from Corollary \ref{r:bers orb}.  Recall from Lemma \ref{r:lifted coordinates} that $\widebar{P}_{\widebar{\Lambda}_{\epsilon_0, X_0}}$ lifts to an $H$-symmetric partial pants decomposition on $S$, $P_{{\Lambda}_{\epsilon_0, X_0}}$, which we can extend to a full pants decomposition $P_0 \in \PP(S)$.  Fix Fenchel-Nielsen coordinates for $\TT(S)$ based on $P_0$.\\

For each orbit of curve in $\Lambda_{\epsilon_0,X_0}$, fix a representative $\lambda$ which lifts from $\widebar{\lambda} \in \CC(\OO)$.  Let $\widebar{X'} \in \TT(\OO)$ be any point whose length coordinates with respect to $\widebar{P}_{\widebar{\Lambda}_{\epsilon_0, X_0}}$ satisfy the following conditions:

\begin{enumerate}
\item $l_{\widebar{X}'}(\widebar{\lambda}) = l_{\tau}(\lambda) \cdot \frac{1}{N_{\widebar{\lambda}}} < \epsilon_0$ for each orbit representative $\lambda \in H\cdot  \Lambda_{\epsilon'', \tau}$, where $N_{\widebar{\lambda}}$ is the constant from Lemma \ref{r:lifted coordinates}\label{r:reduce 1}
\item $l_{\widebar{X}'}(\widebar{\gamma}) = \epsilon_0$ for each orbit representative $\gamma \in \Lambda_{\epsilon_0,X_0} \setminus (H \cdot \Lambda_{\epsilon'', \tau})$ \label{r:reduce 2}
\item $l_{\widebar{X}'}(\widebar{\alpha}) = l_{\widebar{X}_0}(\widebar{\alpha})$ for every other $\widebar{\alpha} \in \widebar{P}_{\widebar{\Lambda}_{\epsilon_0, X_0}} \setminus \widebar{\Lambda}_{\epsilon_0, X_0}$. \label{r:reduce 3}
\end{enumerate}

We claim the lift $X' \in \Fix(H)$ of any such $\widebar{X'} \in \TT(\OO)$ satisfies conclusion (\ref{r:rsc 1}).\\

To see this, first observe that condition (\ref{r:reduce 1}) implies that $D_{\alpha}(\tilde{\mu}_X') \asymp_R D_{\alpha}(\tilde{\mu}_{\tau})$ for any $\alpha \in H \cdot \Lambda_{\epsilon'', \tau}$, as the $N_{\widebar{\lambda}}$ are uniformly bounded by Lemma \ref{r:lifted coordinates}.  Next, since Lemma \ref{r:afp sym short} implies that $D_{\alpha}(\tilde{\mu}_{\tau}) \asymp 0$ for all curves $\alpha \subset S\setminus \left(H \cdot \Lambda_{\epsilon'',\tau}\right)$, conditions (\ref{r:reduce 2}) and (\ref{r:reduce 3}), and the fact that $X' \in \Fix(H)$ so that any $\alpha \in S \setminus P_{\widebar{\Lambda}_{\epsilon_0,X_0}}$ are necessarily not $H$-symmetric and thus cannot be short in $X'$, imply that $D_{\alpha}(\tilde{\mu}_{X'}) \asymp_R D_{\alpha}(\tilde{\mu}_{\tau})$ for all such $\alpha \subset S\setminus \left(H \cdot \Lambda_{\epsilon'',\tau}\right)$.  Finally, since $X', \tau \in V_{\epsilon''}(H \cdot \Lambda_{\epsilon'', \tau})$, we have $D_{\alpha}(\tilde{\mu}_{X'}) = D_{\alpha}(\tilde{\mu}_{\tau}) = 0$, for all $\alpha \pitchfork H \cdot \Lambda_{\epsilon'', \tau}$, by the Collar Lemma.  Thus conclusion (\ref{r:rsc 1}) holds for $X'$.\\

It follows from its definition that $\widebar{X}' \in V_{L'}(\widebar{P}_{\widebar{\Lambda}_{\epsilon_0, X_0}} )$.  As $\widebar{X}_0 \in V_{L'}(\widebar{P}_{\widebar{\Lambda}_{\epsilon_0, X_0}} )$, conclusion (\ref{r:rsc 3}) for $X'$ follows from Proposition \ref{r:VP orb} and the triangle inequality.\\

Generically, $X'$ does not satisfy conclusion (\ref{r:rsc 2}).  To build a point which does, we apply some carefully chosen $H$-symmetric multitwists to reduce the annular projections between $X'$ and $\tau$.  We then prove that the resulting point still satisfies conclusions (\ref{r:rsc 1}) and (\ref{r:rsc 3}).\\

Let $\widebar{\Lambda}_{\epsilon_0, X_0} \subset \CC(\OO)$ be the set of curves which lift to $H \cdot \Lambda_{\epsilon_0, X_0} \subset \CC(S)$.  Suppose that $\widebar{\Lambda}_{\epsilon_0, X_0} $ consists of $N_{\tau}$ different $H$-orbits of curves and decompose it into these orbits, 

\[\widebar{\Lambda}_{\epsilon_0, X_0} = \{\lambda_{1,1}, \dots, \lambda_{1, m_1}, \dots, \lambda_{N_{\tau},1}, \dots,\lambda_{N_{\tau}, m_{N_{\tau}}}\}\]

Note that both the $m_i$ and $N_{\tau}$ are uniformly bounded.\\

For each $i$, let $T_{\lambda_i} = \prod_{j=1}^{m_i} T_{\lambda_{i,j}}^{(-1)^{s_i} \cdot d_i}$, where $T_{\lambda_{i,j}}$ is the Dehn (half)twist around $\lambda_{i,j}$, $d_i = d_{\lambda_i,1}(\tilde{\mu}_{X'},\tilde{\mu}_{\tau})$, and the sign $s_i$ depends on whether $\pi_{\lambda_{i,1}}(\tilde{\mu}_{X'})$ differs from $\pi_{\lambda_{i,1}}(\tilde{\mu}_{\tau})$ by right or left Dehn (half)twists around $\lambda_{i,1}$.\\

Set $T_{\Lambda_{\epsilon_0,X_0}} = \prod_{i=1}^{N_{\tau}} T_{\lambda_i}$ and $X = T_{\Lambda_{\epsilon_0, X_0}} \cdot X'$.  We claim that $X'$ satisfies the conclusions of the proposition.\\

First, observe that since $\Lambda_{\epsilon_0, X_0}$ is an $H$-symmetric multicurve, $T_{\Lambda_{\epsilon_0,X_0}} \in C_{\MCG(S)}(H)$, the centralizer of $H$ in $\MCG(S)$, which is contained in the normalizer of $H$, which stabilizes $\Fix(H)$.  Thus $X \in \Fix(H)$.\\

Second, since $\Lambda_{\epsilon_0, X_0} \subset \mathrm{base}(\tilde{\mu}_{X'}) \cap \mathrm{base}(\tilde{\mu}_{X})$, it follows that $d_Y(\tilde{\mu}_{X}, \tilde{\mu}_{X'}) \asymp 1$ uniformly for any $Y \subset S$ not an annulus over a curve in $\Lambda_{\epsilon_0,X_0}$.   Because $T_{\Lambda_{\epsilon_0, X_0}}$ preserves the curves in $\Lambda_{\epsilon_0, X_0}$ and any curves disjoint from them, namely $P_0$, conclusions (\ref{r:rsc 1}) and (\ref{r:rsc 3}) hold for $X$.\\

Finally, observe that Lemma \ref{r:sym sub proj} implies that $d_{\lambda_{i,j}}(\tilde{\mu}_{X_0}, \tilde{\mu}_{\tau}) \asymp_R d_{\lambda_{i, k}}(\tilde{\mu}_{X_0}, \tilde{\mu}_{\tau})$ for any $j,k$.  Thus the choice of $T_{\Lambda_{\epsilon_0, X_0}}$ and the triangle inequality imply that $d_{\alpha}(\tilde{\mu}_X, \tilde{\mu}_{\tau}) \asymp_R 1$ for each $\alpha \in \Lambda_{\epsilon_0, X_0}$.  Since conclusion (\ref{r:rsc 1}) also holds for $X$ for each $\alpha \in \Lambda_{\epsilon_0,X_0}$, it follows that conclusion (\ref{r:rsc 2}) holds for $X$.  This completes the proof.

\end{proof}
\subsection{Proof of the main theorem}

Recall our main goal of this section, achieved in Theorem \ref{r:main} below, is to find a fixed point whose distance to $\tau \in \Fix_R^T(H)$ is bounded in terms of $R$ and $S$.  Proposition \ref{r:reduce short curves} produces a fixed point $X \in \Fix(H)$ which has the same very short curves as $\tau$, whose distance to $\tau$ in any horoball over any of these short curves is uniformly bounded, and whose distance in any other nonhoroball subsurface is uniformly bounded.  Before proceeding with the proof of Theorem \ref{r:main}, we analyze and organize the remaining large horoball projections.\\

Observe that $X$ and $\tau$ have  $H\cdot \Lambda_{\epsilon'', \tau}$ as short curves, so $X, \tau \in Thin_{\epsilon, S}(\Lambda_{\epsilon'', \tau}) \asymp Q(\Lambda_{\epsilon'', \tau})$.  By Corollary \ref{r:distance prod},
\[d_{\AM(S)}(\tilde{\mu}_X, \tilde{\mu}_{\tau}) \asymp \sum_{\alpha \in \CC(S\setminus H \cdot \Lambda_{\epsilon'', \tau})} \left[d_{\HHH_{\alpha}}(\tilde{\mu}_X, \tilde{\mu}_{\tau})\right]_K\]
Since $d_{\HHH_{\lambda}}(\tilde{\mu}_X, \tilde{\mu}_{\tau}) \prec \widetilde{R}$ for all $\lambda \in \Lambda_{\epsilon, X_0}$ by Proposition \ref{r:reduce short curves}, we have
\[
d_{\AM(S)}(\tilde{\mu}_X, \tilde{\mu}_{\tau}) \asymp \sum_{\alpha \in \CC(S\setminus \Lambda_{\epsilon, X_0})} \left[d_{\HHH_{\alpha}}(\tilde{\mu}_X, \tilde{\mu}_{\tau})\right]_K\]

Recall that the very short curves of $\tau$, $\Lambda_{\epsilon'', \tau}$, are a subset of the short curves of $X$, $\Lambda_{\epsilon, X} = \Lambda_{\epsilon, X_0}$.  Because there is a uniform bound on the distance between the projections of $\tau$ and $X$ to any horoball over a curve in $\Lambda_{\epsilon, X}$, it follows that there is a lower bound on the $\tau$- and $X$-lengths of any curve not in $\Lambda_{\epsilon, X}$.  Thus the projections of $\tilde{\mu}_{\tau}$ and $\tilde{\mu}_X$ to any other combinatorial horoball have uniformly bounded length coordinates and the sum becomes

\begin{equation}
d_{\AM(S)}(\tilde{\mu}_X, \tilde{\mu}_{\tau}) \asymp \sum_{\alpha \in \CC(S\setminus \Lambda_{\epsilon, X_0})} \left[\log d_{\alpha}(\tilde{\mu}_X, \tilde{\mu}_{\tau})\right]_K \label{r:simple eq}
\end{equation}

\begin{lemma} \label{r:unif bound}
The number of terms which can appear in the sum of (\ref{r:simple eq}) is uniformly bounded.
\end{lemma}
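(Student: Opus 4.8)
The statement to prove is Lemma \ref{r:unif bound}: the number of terms in the sum \eqref{r:simple eq} is uniformly bounded. The terms in question are the curves $\alpha \in \CC(S \setminus \Lambda_{\epsilon, X_0})$ for which $d_\alpha(\tilde{\mu}_X, \tilde{\mu}_\tau) > K$, i.e.\ the annular large links between $X$ and $\tau$ that lie outside the fixed short multicurve $\Lambda_{\epsilon, X_0}$. The key point, flagged already in Remark \ref{r:t v wp}, is that a bound on $d_{WP}(X,\tau)$ forces a bound on the number of \emph{nonannular} large links, and each annular large link outside $\Lambda_{\epsilon,X_0}$ can be charged to a nonannular one. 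So the plan is: first extract from Proposition \ref{r:reduce short curves}(\ref{r:rsc 3}) that $d_{WP}(X,\tau) \prec \widetilde{R}$, so by the Brock distance formula \eqref{r:distances 2} the number of nonannular subsurfaces $Y$ with $d_Y(X,\tau) > K$ is $\prec_R 1$; then argue that every annular large link $\alpha$ appearing in \eqref{r:simple eq} is ``seen'' by one of these bounded-in-number nonannular subsurfaces, in a finite-to-one fashion.

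First I would make the reduction precise. Fix an augmented hierarchy path $\Gamma$ between $\tilde{\mu}_X$ and $\tilde{\mu}_\tau$, based on a hierarchy $H_0$. By Lemma \ref{r:large link condition}, every $\alpha$ contributing a term to \eqref{r:simple eq} supports a geodesic $g_\alpha \in H_0$ in the annular complex $\CC(\alpha)$; since $\alpha \notin \Lambda_{\epsilon, X_0}$, neither $X$ nor $\tau$ has $\alpha$ short, so $\alpha$ is not in the base of $\tilde{\mu}_X$ or $\tilde{\mu}_\tau$. In a hierarchy, a geodesic $g_\alpha \subset \CC(\alpha)$ can only be present if $\alpha$ appears as a vertex of some geodesic $g_Y \in H_0$ supported in a nonannular subsurface $Y$ (its ``support''), and more to the point, $\alpha$ must enter and leave the base of the markings along $\Gamma$: there is a first augmented marking in $\Gamma$ with $\alpha$ in its base and a last one, and between them $\alpha$ is a base curve. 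Thus the twisting around $\alpha$ is accomplished during a sub-segment of $\Gamma$ beginning with a flip move that introduces $\alpha$ and ending with one that removes it (or $\alpha$ is an endpoint base curve, but here it is not, as just noted). So each annular large link in \eqref{r:simple eq} is associated to at least one flip move along $\Gamma$.

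Next I would bound the number of flip moves. This is exactly the mechanism described in Remark \ref{r:t v wp}: the total number of base curves that appear anywhere along $\Gamma$ is controlled by the number of nonannular large links. Concretely, by \eqref{r:distances 1}/\eqref{r:distances 3} and the structure of hierarchies from \cite{MM00}, each flip move along $\Gamma$ is accounted for by progress along the base geodesic $g_{H_0} \subset \CC(S)$ or along a geodesic in some proper nonannular subsurface that is a large link; the number of such geodesics, hence the number of distinct base curves that ever appear, is $\prec \sum_{Y \subset S \text{ nonannular}} [d_Y(\tilde{\mu}_X, \tilde{\mu}_\tau)]_K$, which by \eqref{r:distances 2} and Brock's theorem is $\prec d_{WP}(X,\tau) \prec_R 1$. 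Since any annular large link $\alpha$ outside $\Lambda_{\epsilon,X_0}$ must appear as a base curve along $\Gamma$, the number of terms in \eqref{r:simple eq} is at most this uniformly bounded quantity.

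\textbf{The main obstacle.} The delicate part is the clean bookkeeping in the previous paragraph: making rigorous the claim that ``each base curve appearing along $\Gamma$ is charged to a bounded number of nonannular large links,'' and in particular handling the short curves $\Lambda_{\epsilon,X_0}$ correctly — these \emph{are} base curves along all of $\Gamma$ but are explicitly excluded from the sum, and one must be sure that excluding them does not leave some annular large link uncharged. The cleanest route is probably to invoke \cite[Lemma 6.2]{MM00} together with the observation that the collection of subsurfaces supporting large links in $H_0$ has total size $\prec d_{WP}(X,\tau)$ on the nonannular part (via \eqref{r:distances 2}), and that an annular large link over $\alpha$ with $\alpha \notin \Lambda_{\epsilon,X_0}$ can only arise as a component curve of the boundary of, or a vertex of a geodesic in, one of these finitely many nonannular large links — a finite-to-one correspondence with a multiplicity bounded by $\xi(S)$. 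I would also remark that the length-coordinate terms were already dispatched before \eqref{r:simple eq}, so only the $\log d_\alpha$ terms remain, and these are precisely indexed by the annular large links, completing the count.
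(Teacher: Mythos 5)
Your proposal matches the paper's proof: both bound the annular large links in \eqref{r:simple eq} by bounding the number of flip moves along an augmented hierarchy path $\widetilde{\Gamma}$ between $\tilde{\mu}_X$ and $\tilde{\mu}_\tau$, each of which makes progress along a geodesic in some nonannular subsurface, and the nonannular projections are uniformly bounded by Proposition~\ref{r:reduce short curves}(\ref{r:rsc 3}). One small inaccuracy worth flagging: $\alpha\notin\Lambda_{\epsilon,X_0}$ does not imply $\alpha\notin\mathrm{base}(\tilde{\mu}_X)$ --- a Bers pants curve can have length between $\epsilon_0$ and $L$ and still be a base curve --- but this only adds at most $\xi(S)$ curves to the count and does not affect the conclusion.
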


\begin{proof}

Let $\widetilde{\Gamma}$ be an augmented hierarchy path between $\tilde{\mu}_X$ and $\tilde{\mu}_{\tau}$ based on a hierarchy $J$ (see Subsection \ref{r:hier sub}).  Observe that the number of curves appearing as base curves of augmented markings in $\widetilde{\Gamma}$ is determined by the number of flip moves in $\widetilde{\Gamma}$.  Since each such flip move makes progress along some $g_Y \in J$, for some nonannular $Y \subset S$, it follows that if there is not a bound on the number of base curves appearing in $\widetilde{\Gamma}$, then there is not a bound on either the length of geodesics in $J$ or the number of nonannular subsurfaces supporting geodesics in $J$.  Both imply that $d_Y(\tilde{\mu}_X, \tilde{\mu}_{\tau})$ is unbounded for some nonannular $Y \subset S$ (possibly $S$ itself), which contradicts the fact that $\tilde{\mu}_X$ and $\tilde{\mu}_{\tau}$ have bounded nonannular subsurface projections.  The bound on the number of curves appearing in the sum of (\ref{r:simple eq}) is uniform because the bound on the subsurface projections is uniform, depending only on $S$ and the almost-fixed constant $R$.\\
\end{proof}

We are now ready to prove the main theorem.

\begin{theorem} [Almost-fixed points are close to fixed points] \label{r:main}
For any $R>0$, there is an $R'=R'(R,S)>0$ such that the following holds.  Let $H \leq \MCG(S)$ be a finite subgroup and $\Fix(H)\subset \TT(S)$ its fixed point set.  For any $\tau \in \Fix^T_R(H)$, there is fixed point $ \sigma \in \Fix(H)$ such that $d_T(\tau,\sigma) <R'$.
\end{theorem}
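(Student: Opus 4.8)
The plan is to upgrade the fixed point $X\in\Fix(H)$ supplied by Proposition~\ref{r:reduce short curves} to the desired $\sigma$ by applying a finite sequence of $H$-symmetric multitwists, one per symmetric family of surviving large links, never leaving $\Fix(H)$. First I would recall the state of affairs after Subsection~\ref{r:short bary} and Proposition~\ref{r:reduce short curves}: fixing $\tau\in\Fix^T_R(H)$, passing through Theorem~\ref{r:aug qi} to the coarse almost-fixed constant $\widetilde{R}=\widetilde{R}(R,S)$, building the CAT$(0)$ barycenter $X_0$ and then $X$, we have $X\in\Fix(H)$ with the same really short curves as $\tau$, with $d_Y(\tilde\mu_X,\tilde\mu_\tau)\asymp_R 1$ for every nonannular $Y$, with $d_{\HHH_\lambda}(\tilde\mu_X,\tilde\mu_\tau)\asymp_R 1$ for every $\lambda\in\Lambda_{\epsilon,X_0}$, and — by Corollary~\ref{r:distance prod} in the product region $Q(\Lambda_{\epsilon,X_0})$ — with bounded projections to all subsurfaces interlocking $\Lambda_{\epsilon,X_0}$. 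Thus \eqref{r:simple eq} holds and, by Lemma~\ref{r:unif bound}, its sum has at most $N=N(R,S)$ nonzero terms, so it remains only to kill the finitely many annular large links $\alpha\in\CC(S\setminus\Lambda_{\epsilon,X_0})$ with $d_\alpha(\tilde\mu_X,\tilde\mu_\tau)>\widehat{K}$ without disturbing anything else.

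Next I would organize these links. By Tao's Lemma~\ref{r:sym links}(2) each such $\alpha$ is $H$-symmetric, so they break into finitely many $H$-orbits $\Delta_1,\dots,\Delta_m$, the \emph{symmetric families}, with $m\prec_R 1$. By Lemma~\ref{r:sym sub proj}, $d_\beta(\tilde\mu_X,\tilde\mu_\tau)$ — and, by $H$-equivariance, the direction in which $\pi_\beta(\tilde\mu_X)$ and $\pi_\beta(\tilde\mu_\tau)$ differ — is coarsely constant over $\beta$ in a fixed family, so a single signed exponent $n_i$ makes $T_i:=\prod_{\beta\in\Delta_i}T_\beta^{\pm n_i}$ an $H$-symmetric multitwist reducing $d_\beta(\,\cdot\,,\tilde\mu_\tau)$ to $\asymp_R 1$ for all $\beta\in\Delta_i$ simultaneously; since $\Delta_i$ is $H$-symmetric, $T_i\in C_{\MCG(S)}(H)$, which normalizes $H$ and hence preserves $\Fix(H)$. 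When $\Delta_i\pitchfork\Delta_j$, Lemmas~\ref{r:sym links} and \ref{r:sym fam time order} show the hierarchy time-order places one whole family before the other, so I would fix a linear extension $\Delta_1\prec_t\cdots\prec_t\Delta_m$ of this partial order (disjoint families ordered arbitrarily).

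Then I would set $X=X^{(0)}$ and $X^{(k)}:=T^{(k)}\cdot X^{(k-1)}\in\Fix(H)$, where $T^{(k)}$ is the $H$-symmetric multitwist around $\Delta_k$ with signed exponent $d_{\Delta_k}(\tilde\mu_{X^{(k-1)}},\tilde\mu_\tau)$ recomputed at each stage, and put $\sigma:=X^{(m)}$. Then $\sigma\in\Fix(H)$, and since each $\Delta_k$ is disjoint from $\Lambda_{\epsilon,X_0}$ the multitwists fix those curves, so $\Lambda_{\epsilon,X_0}$ remains the set of really short curves of $\sigma$ and \eqref{r:simple eq} still governs $d_T(\sigma,\tau)$. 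The inductive claim is that after stage $k$: (i) $d_\beta(\tilde\mu_{X^{(k)}},\tilde\mu_\tau)\asymp_R 1$ for every $\beta\in\Delta_1\cup\cdots\cup\Delta_k$, and (ii) $d_Y(\tilde\mu_{X^{(k)}},\tilde\mu_\tau)\asymp_R 1$ for every nonannular $Y$. For families disjoint from $\Delta_k$ this is automatic since $T^{(k)}$ fixes everything disjoint from $\Delta_k$; for an already-processed $\Delta_j\pitchfork\Delta_k$ I would use that a multitwist around $\Delta_k$ alters $d_\gamma(\,\cdot\,,\tilde\mu_\tau)$ by a uniformly bounded amount for any $\gamma\notin\Delta_k$, so (i) persists; for nonannular $Y$ I would combine $d_{\Delta_k}(\tilde\mu_X,\tilde\mu_\tau)>\widehat{K}$ (which via the hierarchy forces geodesics of $\CC(Y)$, for $Y$ containing a component of $\Delta_k$, to pass near $\partial\Delta_k$) with the active-segment estimates of Lemma~\ref{r:active interval}, the time-ordering, and the coarse commutation of closest-point projections in Proposition~\ref{r:phi add}, to conclude that $T^{(k)}$ only modifies the part of the hierarchy past the active segments of $\Delta_1,\dots,\Delta_{k-1}$ and inflates no $d_Y$. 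At $k=m$ every term of \eqref{r:simple eq} for $\sigma$ versus $\tau$ is $\asymp_R 1$ and there are $\prec_R 1$ of them, so $d_{\AM(S)}(\tilde\mu_\sigma,\tilde\mu_\tau)\prec_R 1$, and Theorem~\ref{r:aug qi} converts this to $d_T(\sigma,\tau)<R'$ with $R'=R'(R,S)$.

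The main obstacle is precisely point (ii) together with the interlocking case of (i): verifying that each $T^{(k)}$ neither undoes the reductions of stages $1,\dots,k-1$ nor manufactures a new large annular or nonannular projection. This is where the full apparatus of time-ordered symmetric families (Lemmas~\ref{r:sym links}, \ref{r:sym fam time order}), active segments (Lemma~\ref{r:active interval}), and coarse product regions (Section~\ref{r:coarse proj section}, especially Proposition~\ref{r:phi add}) must be used together, and performing the twists in time-order is exactly what makes the bookkeeping close.
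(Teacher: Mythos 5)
Your proposal is correct and follows essentially the same route as the paper: reduce to annular large links via Proposition~\ref{r:reduce short curves}, organize them into time-ordered $H$-symmetric families via Tao's Lemma~\ref{r:sym links} and Lemma~\ref{r:sym fam time order}, and kill the families one at a time by $H$-symmetric multitwists lying in $C_{\MCG(S)}(H)$, with the coarse product-region machinery of Section~\ref{r:coarse proj section} controlling the bookkeeping. One caution worth raising: your intermediate assertion that ``a multitwist around $\Delta_k$ alters $d_\gamma(\cdot,\tilde\mu_\tau)$ by a uniformly bounded amount for any $\gamma\notin\Delta_k$'' is false in general --- it holds only once one has established that $\tilde\mu_{X^{(k-1)}}$ is uniformly close to $\phi_{\Delta_k}(\tilde\mu_{X^{(k-1)}})\in Q(\Delta_k)$, so that the twist is effectively being applied inside the product region; this is exactly the role of Claim $(k)$ in the paper's proof, which you gesture at via Proposition~\ref{r:phi add} and Lemma~\ref{r:active interval} but should state and prove explicitly before invoking the bounded-alteration property.
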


\begin{proof}
Let $X\in \TT(S)$ be as in Proposition \ref{r:reduce short curves}.  As the constant $\widetilde{R}$ in Proposition \ref{r:reduce short curves} was a constant depending on $R$, we have shown that 
\begin{equation}
d_{T}(X, \tau) \asymp_{R} \sum_{\alpha \in \CC(S\setminus \Lambda_{\epsilon, X_0})} \left[ \log d_{\alpha}(\tilde{\mu}_X, \tilde{\mu}_{\tau})\right]_K \label{r:ann sum 1}
\end{equation}

More precisely, Proposition \ref{r:reduce short curves} states that $d_Y(\tilde{\mu}_X, \tilde{\mu}_{\tau})<K$ for any nonannular subsurface $Y \subset S \setminus \Lambda_{\epsilon, X_0}$, where $K$ is a constant depending only on $R$ and $S$.\\

We now organize the $\alpha$ that have nonzero terms in equation (\ref{r:ann sum 1}).  By Tao's Lemma \ref{r:sym links}, if we increase the large link threshold to $\widehat{K} = \widehat{K}(R,S)>0$, then these annuli are $H$-symmetric and we can group them into their $H$-orbits, $\mathcal{A}= \{\mathcal{A}_1, \dots,\mathcal{A}_N\}$, where $\mathcal{A}_i$ is the $H$-orbit of $\alpha_i$.\\

We note that $N$ is uniformly bounded because the number of annuli appearing in the sum is uniformly bounded, by Lemma \ref{r:unif bound}.\\

Let $\Gamma_{X,\tau}$ be any augmented hierarchy path from $\tilde{\mu}_X$ to $\tilde{\mu}_{\tau}$.  By rearranging, we may assume that the order of the indices of the $\alpha_i$ coincides with the order of appearance of the $\alpha_i$ along $\Gamma_{X,\tau}$.  Note that Lemma \ref{r:sym links} implies that the curves within each symmetric family, $\mathcal{A}_i$, are not time-ordered.\\

We now apply the tools developed in Subsection \ref{r:coarse proj section}.  Recall that for a simplex $\Delta \subset \CC(S)$, $Q(\Delta) = \{\tilde{\mu} \in \AM(S)| \Delta \subset \mathrm{base}(\tilde{\mu})\}$ and $\phi_{\Delta}: \AM(S) \rightarrow Q(\Delta)$ was the closest point projection.\\

In what follows, we explain how to create a sequence of fixed points $X_1, \dots, X_N \in \Fix(H)$, with $d_{\TT(S)}(X_N, \tau) \asymp_R 1$, where $N$ is again the number of symmetric families of annuli in $\mathcal{A}$.  The $(i+1)$-step begins with projecting $\tilde{\mu}_{X_i}$, a shortest augmented marking for $X_i$, to $Q(\mathcal{A}_{i+1})$  and showing that this projection is uniformly close to $\tilde{\mu}_{X_i}$.  We then apply a large $H$-symmetric multitwist around the curves in $\mathcal{A}_{i+1}$ to both $X_i$ and its projection to $Q(\mathcal{A}_i)$, the latter of which we show has made the progress toward $\tau$ that we want, with the former coming along for the ride and whose image we call $X_{i+1}$.  This multitwisting process is identical to the process at the end of the proof of Proposition \ref{r:reduce short curves}, but now the $X_i$ need not be in a obviously good place to apply the $(i+1)^{\mathrm{th}}$-group of multitwists.  The key observation is that $d_{\AM(S)}(\tilde{\mu}_{X_i}, \phi_{\mathcal{A}_{i+1}}(\tilde{\mu}_{X_i})) \asymp_R 1$ for all $i$, a fact which requires understanding the subsurfaces through which $\Gamma_{X,\tau}$ passes.  Showing that $d_{\AM(S)}(X_N, \tilde{\mu}_{\tau}) \asymp_R 1$ then involves comparing subsurface projections and showing that projections to horoballs over curves in $\mathcal{A}$ have changed a significant amount, in particular moving them close to those for $\tilde{\mu}_{\tau}$.\\

Let $\tilde{\mu}_X \in \AM(S)$ be a shortest augmented marking for $X$.  We begin by projecting $\tilde{\mu}_X$ to $Q(\mathcal{A}_1)$.  Set $\tilde{\mu}_{\alpha_1} = \phi_{\mathcal{A}_1}(\tilde{\mu}_X)$.\\

\underline{Claim 1}: $d_{\AM(S)}(\tilde{\mu}_X, \tilde{\mu}_{\alpha_1}) \asymp 1$.\\

Before we prove the claim, we introduce some notation to simplify our calculations.  For each $i$, label the curves in $\mathcal{A}_1= \{\alpha_{i,1}, \dots, \alpha_{i,n_i}\}$.  We note that each $n_i$ satisfies $n_i \leq |H|$.\\

First, we prove that for all $j$, $d_{\AM(S)}\left(\tilde{\mu}_X,\phi_{\alpha_{1,j}}(\tilde{\mu}_{X})\right) \asymp 1$.  To see this, note that Lemma \ref{r:cpj} implies that $\phi_{\alpha_{1,j}}$ is coarsely a closest point projection to $Q(\alpha_{1,j})$, so that 
\[d_{\AM(S)}(\tilde{\mu}_X, \phi_{\alpha_{1,j}}(\tilde{\mu}_X)) \asymp \sum_{Y \pitchfork \alpha_{1,j}} \left[d_Y(\tilde{\mu}_X, \phi_{\alpha_{1,j}}(\tilde{\mu}_X))\right]_{L_1}\]

and 

\[\sum_{Y \subset S \setminus \alpha_{1,j}} \left[d_Y(\tilde{\mu}_X, \phi_{\alpha_{1,j}}(\tilde{\mu}_X))\right]_{L_1} =0\]
where $L_1$ is the uniform constant from Lemma \ref{r:cpj}.\\

In order to show that $d_{\AM(S)}(\tilde{\mu}_X, \phi_{\alpha_{1,j}}(\tilde{\mu}_X))$ is bounded, it suffices to exhibit a path from $\tilde{\mu}_X$ to a point in $Q(\alpha_{1,j})$ which makes only bounded progress in subsurfaces which interlock $\alpha_{1,j}$.  The augmented hierarchy path $\Gamma_{X, \tau}$ is precisely such a path.  Recall that $\mathcal{A}$ consists of all the $\widehat{K}$-large links between $\tilde{\mu}_X$ and $\tilde{\mu}_{\tau}$, which we have ordered by their appearance along $\Gamma_{X, \tau}$, and that $\alpha_1$ is the first curve in $\mathcal{A}$ to appear as a base curve along $\Gamma_{X,\tau}$.  Since Lemma \ref{r:sym fam time order} implies that the orbits in $\mathcal{A}$ are time-ordered together, it follows that any other curve $\beta \in \mathcal{A}$ which intersects $\alpha_{1,j}$ can only appear as a base curve along $\Gamma_{X,\tau}$ \emph{after} all progress through $\alpha_{1,j}$ has already been made.  By Lemma \ref{r:active interval}, $\Gamma_{X,\tau}$ makes a bounded amount of progress in subsurfaces which interlock $\alpha_{1,j}$ between $\tilde{\mu}_X$ and the first point along $\Gamma_{X,\tau}$ at which $\alpha_{1,j}$ appears in its base.\\ 

Thus $d_{\AM(S)}(\tilde{\mu}_X, \phi_{\alpha_{1,j}}(\tilde{\mu}_X)) \asymp 1$ for all $j$.\\

Since the $\phi_{\alpha_{1,j}}$ are Lipschitz (Lemma \ref{r:phi lipschitz}), it follows that
\[d_{\AM(S)}\left(\tilde{\mu}_X, \phi_{\alpha_{1,1}}(\tilde{\mu}_X)\right) \asymp d_{\AM(S)}\left(\phi_{\alpha_{1,2}}(\tilde{\mu}_X),\phi_{\alpha_{1,2}}(\phi_{\alpha_{1,1}}(\tilde{\mu}_X))\right) \asymp d_{\AM(S)}\left(\phi_{\alpha_{1,2}}(\tilde{\mu}_X),\phi_{\alpha_{1,1}\cup \alpha_{1,2}}(\tilde{\mu}_X)\right)\]
with the second coarse equality following from Proposition \ref{r:phi add}.\\

Since $d_{\AM(S)}\left(\tilde{\mu}_X, \phi_{\alpha_{1,2}}(\tilde{\mu}_X)\right)\asymp 1$, it follows from applying the triangle inequality that 
\[d_{\AM(S)}\left(\tilde{\mu}_X, \phi_{\alpha_{1,1}\cup {\alpha_{1,2}}}(\tilde{\mu}_X)\right) \asymp 1\]

Applying this observation a uniformly bounded number of times (for $n_1 \leq |H|$), we obtain $d_{\AM(S)}(\tilde{\mu}_X, \tilde{\mu}_{\alpha_1}) \asymp 1$, proving Claim 1.\\

Let $T_{\alpha_1} = \prod_{j=1}^{n_1}T_{\alpha_{1,j}}^{(-1)^{s_1} \cdot d_1}$, where $T_{\alpha_{1,j}}$ is the Dehn (half)twist around $\alpha_{1,j}$, $d_1 = d_{\alpha_1,1}(\tilde{\mu}_X,\tilde{\mu}_{\tau})$, and the sign  $s_1$ depends on whether $\pi_{\alpha_{1,1}}(\tilde{\mu}_X)$ differs from $\pi_{\alpha_{1,1}}(\tilde{\mu}_{\tau})$ by right or left Dehn (half) twists around $\alpha_{1,1}$.  Set $X_1 = T_{\alpha_1}(X)$ and let $\tilde{\mu}_{X_1}$ be its shortest augmented marking.\\

First, note that since $T_{\alpha_1} \in C_{\MCG(S)}(H)$ centralizes $H$ in $\MCG(S)$ and is thus contained in the normalizer, which stabilizes $\Fix(H)$, we have $X_1 \in \Fix(H)$.  Moreover, we claim that the distance between $X$ and $X_1$ is coarsely determined by the distance traveled in $\mathcal{A}_1$:
\begin{equation}
d_{\TT(S)}(X,X_1) \asymp \sum_{\alpha \in \mathcal{A}_1} \left[ \log d_{\alpha}(\tilde{\mu}_X, \tilde{\mu}_{X_1})\right]_{K_1} \label{r:Dehn close 1}
\end{equation}
and
\begin{equation}
\sum_{Y \subset S \setminus \mathcal{A}_1} \left[ d_{Y}(\tilde{\mu}_X, \tilde{\mu}_{X_1})\right]_{K_1}=0 \label{r:Dehn close 2}
\end{equation}

where $K_1$ is a constant depending only on $R$ and $S$.\\

Recall that Lemma \ref{r:sym sub proj} implies that $d_{\alpha_{1,i}}(\tilde{\mu}_X,\tilde{\mu}_{\tau}) \asymp_R d_{\alpha_{1,j}}(\tilde{\mu}_X,\tilde{\mu}_{\tau})$ for any $i,j$ and since $X$ is fixed and $\tau$ is has a bounded diameter orbit, it follows that $\pi_{\alpha_{1,i}}(\tilde{\mu}_X)$ differs from $\pi_{\alpha_{1,i}}(\tilde{\mu}_{\tau})$ by coarsely the same number of right or left Dehn (half)twists for all $i$, where the handedness is independent of $i$.  We immediately obtain $d_{\alpha_{1,i}}(\tilde{\mu}_{X_1},\tilde{\mu}_{\tau}) \asymp_R 1$ for all $i$.  Thus once we prove that (\ref{r:Dehn close 1}) and (\ref{r:Dehn close 2}) are true, it will follow from the triangle inequality that 
\begin{equation}
d_T(X_1, \tau) \asymp \sum_{\alpha \in \mathcal{A} \setminus \mathcal{A}_1} \left[\log d_{\alpha}(\tilde{\mu}_{X},\tilde{\mu}_{\tau})\right]_{K_1}\label{r:elim 1}
\end{equation}
and
\begin{equation}
\sum_{Y \subset S \setminus( \mathcal{A} \setminus \mathcal{A}_1)} \left[d_{Y}(\tilde{\mu}_{X_1},\tilde{\mu}_{\tau})\right]_{K_1} = 0 \label{r:elim 2}
\end{equation}

By establishing (\ref{r:elim 1}) and (\ref{r:elim 2}), we will have shown that $X_1$ has removed the curves in $\mathcal{A}_1$ as combinatorial obstacles between $X$ and $\tau$, while all other projections remain coarsely unchanged.  These equations are rephrased as the inductive hypothesis in (\ref{r:eq d to X}) and (\ref{r:eq d to tau}) below.\\

To see (\ref{r:Dehn close 1}) and (\ref{r:Dehn close 2}), observe that $\phi_{\mathcal{A}_1}(\tilde{\mu}_X), \tilde{\mu}_{X_1} \in Q(\mathcal{A}_1)$.  By Lemma \ref{r:distance prod}, the distance between $\phi_{\mathcal{A}_1}(\tilde{\mu}_X)$ and $\tilde{\mu}_{X_1}$ is coarsely determined by projections to subsurfaces $Y \subset \sigma(\mathcal{A}_1)$, with all other subsurface projections being uniformly bounded.  However, note that since $\phi_{\mathcal{A}_1}(\tilde{\mu}_X), \tilde{\mu}_{X_1} \in Q(\mathcal{A}_1)$, all base and transverse curves in $\phi_{\mathcal{A}_1}(\tilde{\mu}_X)$ and $\tilde{\mu}_{X_1}$ are disjoint from $\mathcal{A}_1$, and so $T_{\alpha_1}$ only acts nontrivially on the $\mathcal{A}_1$ coordinates of $\phi_{\mathcal{A}_1}(\tilde{\mu}_X)$ and $\tilde{\mu}_{X_1}$.  Lemma \ref{r:lip for mark proj} implies that $d_Y(\phi_{\mathcal{A}_1}(\tilde{\mu}_X), \tilde{\mu}_{X_1}) \asymp 1$ for all $Y \subset \sigma(\mathcal{A}_1)\setminus \mathcal{A}_1$, from which (\ref{r:Dehn close 1}) and (\ref{r:Dehn close 2}), and thus (\ref{r:elim 1}) and (\ref{r:elim 2}), follow for some choice of $K_1$ depending only on $R$ and $S$.\\

In summary, we have produced a point $X_1\in \Fix(H)$ whose distance to $\tau$ is determined by one less set of annuli, while the distances of projections to all other subsurfaces are coarsely unchanged.\\

We remark that in the above calculations, we repeatedly made coarse estimates to determine that the distance in (\ref{r:Dehn close 2}) is bounded.  Since we did so only finitely many times, where the number of times depended only on the topology of $S$ and the almost-fixed constant $R$, it follows that the coarseness of our estimates is still uniformly bounded as a function of $R$ and $S$.\\

In what follows, we make an inductive argument in which we perform a similar series of computations to create the sequence of fixed points $X_1, \dots, X_N$.  With the last point, $X_N$, we will have moved past each of the families in $\mathcal{A}$, at each step leaving all complementary subsurface projections coarsely fixed.  Since $N$ was a number which depended only on $R$ and $S$, we find a bound for $d_T(X_N, \tau)$ that depends only on $R$ and $S$.  Since $R$ was a fixed constant independent of $\tau$, it follows that $d_T(X_N, \tau)$ and thus $d_T(\tau, \Fix(H))$ are uniformly bounded in terms of $R$ and $S$, completing the proof.\\

We proceed by induction on the $A_i$.  Suppose we have created a sequence of fixed points, $X_1, \dots, X_i \in \Fix(H)$ with shortest augmented markings $\tilde{\mu}_{X_1}, \dots, \tilde{\mu}_{X_i}$ and a sequence of constants, $K_i$ depending only on $R$ and $S$, such that for each $j \leq i$ the following properties hold:

\begin{enumerate}
\item For every subsurface $Y \subset S$ which is not an annulus with core curve $\alpha_{l,m} \in \mathcal{A}$ for $l \leq j$, we have $d_Y(\tilde{\mu}_X,\tilde{\mu}_{X_j}) < K_j$ \label{r:eq d to X}
\item For every subsurface $Y \subset S$ which is not an annulus with core curve $\alpha_{l,m} \in \mathcal{A}$ for $l \geq j$, we have $d_Y(\tilde{\mu}_{X_j}, \tilde{\mu}_{\tau}) < K_j$ \label{r:eq d to tau}
\end{enumerate}

We have already shown that the base case of $i=1$ holds above in (\ref{r:elim 1}) and (\ref{r:elim 2}). \\

Note that (\ref{r:eq d to X}) and the triangle inequality imply that $d_{\alpha_{l,m}}\left(\tilde{\mu}_{X_j},\tilde{\mu}_{X}\right)\asymp_R 1$ for all $j\geq i$, $l \geq j$, and $m \leq n_j$.  Similarly, (\ref{r:eq d to tau}) and the triangle inequality imply that $d_{\alpha_{l,m}}\left(\tilde{\mu}_{X_j},\tilde{\mu}_{\tau}\right)\asymp_R 1$ for all $j\leq i$, $l \leq j$, and $m \leq n_j$.\\

Since $\mathcal{A}$ consisted of $N$ orbits of curves with $N=N(R,S)>0$, once the inductive step is proven, we will have constructed a fixed point $X_N \in \Fix(H)$ which satisfies the inequality in (\ref{r:eq d to tau}).  Since $j$ in (\ref{r:eq d to tau}) is bounded by $N$, it will follow that $d_T(X_N, \tau) < K_N$, where $K_N$ depends only on $R$ and $S$, completing the proof.\\

We now proceed to prove the inductive step.  The construction of $X_{i+1}$ from $X_i$ is similar to the construction of $X_1$ from $X$, but there are now are more quantities to manage.  Let $\tilde{\mu}_{i+1} = \phi_{\mathcal{A}_{i+1}}(\tilde{\mu}_{X_i})$.  As before, we begin with the following claim:\\

\underline{Claim $(i+1)$}: $d_{\AM(S)}\left(\tilde{\mu}_{X_i}, \tilde{\mu}_{i+1}\right) \asymp 1$.\\

As with Claim 1, the proof of Claim $(i+1)$ involves showing that $d_{\AM(S)}\left(\tilde{\mu}_{X_i}, \tilde{\mu}_{i+1}\right) \asymp 1$ for $1\leq j \leq n_{i+1}$ and then repeatedly applying Lemma \ref{r:phi lipschitz} and Proposition \ref{r:phi add} and the triangle inequality.\\

Let $1\leq j\leq n_{i+1}$.  By Lemma \ref{r:cpj}, $\phi_{\alpha_{i+1,j}}$ is coarsely the closest point projection to $Q(\alpha_{i+1,j})$, so Lemma \ref{r:distance to q} implies that
\[d_{\AM(S)}(\tilde{\mu}_{X_i}, \tilde{\mu}_{i+1}) \asymp \sum_{Y \pitchfork \alpha_{i+1,j}} \left[d_Y(\tilde{\mu}_{X_i}, \alpha_{i+1,j})\right]_{L_1}\]
and
\[\sum_{Y \subset S \setminus \alpha_{i+1,j}} \left[d_Y(\tilde{\mu}_{X_i}, \alpha_{i+1,j})\right]_{L_1}= 0\]

where $L_1$ is the uniform constant from Lemma \ref{r:cpj}.\\

Let $\tilde{\mu}_{\alpha_{i+1,j}} \in \Gamma_{X,\tau}$ be the first point along $\Gamma_{X, \tau}$ in which $\alpha_{i+1,j}$ appears as a base curve.  If $\alpha_{l,m} \in \mathcal{A}$ is such that $\alpha_{l,m} \pitchfork \alpha_{i+1,j}$ and $l \leq i$ and $m \leq n_l$, then Lemma \ref{r:sym fam time order} implies that $\alpha_{l,m} \prec_t \alpha_{i+1,j}$.  Lemma \ref{r:active interval} implies that the active segment of $\alpha_{l,m}$ entirely precedes the active segment of $\alpha_{i+1,j}$, of which $\tilde{\mu}_{\alpha_{i+1,j}}$ is the first point.  Thus $d_{\alpha_{l,m}}(\tilde{\mu}_{\alpha_{i+1,j}}, \tilde{\mu}_{\tau}) \asymp 1$ by Lemma \ref{r:active interval}.  Since $d_{\alpha_{l,m}}\left(\tilde{\mu}_{X_i},\tilde{\mu}_{\tau}\right)\asymp 1$ by inductive assumption (\ref{r:eq d to tau}), the triangle inequality implies that $d_{\alpha_{l,m}}\left(\tilde{\mu}_{X_i},\tilde{\mu}_{\alpha_{i+1,j}}\right)\asymp 1$ for all $l \leq i, m \leq n_l$ for which $\alpha_{l,m} \pitchfork \alpha_{i+1,j}$.  Since $\alpha_{i+1,j} \in \mathrm{base}(\tilde{\mu}_{\alpha_{i+1,j}})$, it follows that $d_Y(\alpha_{i+1,j} ,\tilde{\mu}_{\alpha_{i+1,j}}) \asymp 1$ for any $Y \pitchfork \alpha_{i+1,j}$ and thus $d_{\alpha_{l,m}}\left(\tilde{\mu}_{X_i},\alpha_{i+1,j}\right)\asymp 1$. \\

On the other hand, for any $\beta \in \mathcal{A}$ with $\alpha_{i+1,j} \prec_t \beta$, Lemma \ref{r:active interval} implies that $d_{\beta}(\tilde{\mu}_{X}, \tilde{\mu}_{\alpha_{i+1,j}}) \asymp 1$.  Thus inductive assumption (\ref{r:eq d to X}) and the triangle inequality imply that $d_{\beta}(\tilde{\mu}_{X_i}, \tilde{\mu}_{\alpha_{i+1,j}}) \asymp 1$ for any such $\beta$.\\

To summarize, we have shown:

\begin{align*}
d_{\AM(S)}\left(\tilde{\mu}_{X_i},\phi_{\alpha_{i+1, j}}(\tilde{\mu}_{X_i}))\right) &\asymp \sum_{Y \pitchfork \alpha_{i+1,j}} \left[d_Y(\tilde{\mu}_{X_i}, \phi_{\alpha_{i+1, j}}(\tilde{\mu}_{X_i}))\right]_{K'}\\
&\asymp \sum_{Y \pitchfork \alpha_{i+1,j}} \left[d_Y(\tilde{\mu}_{X_i}, \tilde{\mu}_{\alpha_{i+1,j}})\right]_{K'}\\
&\asymp \sum_{\overset{\alpha_{l,m} \pitchfork \alpha_{i+1,j}}{l\leq i}} \left[d_{\alpha_{l,m}}(\tilde{\mu}_{X_i}, \tilde{\mu}_{\alpha_{i+1,j}})\right]_{K'}\\
&\asymp 1
\end{align*}
where $K' = \max\{K_i,L_1\}$, which we note depends only on $R$ and $S$.\\ 

Claim $(i+1)$ follows by applying Lemma \ref{r:phi lipschitz} and Proposition \ref{r:phi add} a uniformly bounded number of times, as in the proof of Claim 1.\\

We now proceed to create $X_{i+1}$ from $X_i$ as we did $X_1$ from $X_0$.  Let $T_{\alpha_{i+1}} = \prod_{j=1}^{n_{i+1}}T_{\alpha_{i+1,j}}^{(-1)^{s_{i+1}}d_{i+1}}$, where $T_{\alpha_{i+1,j}}$ is the Dehn (half)twist around $\alpha_{i+1,j}$, $d_{i+1} = d_{\alpha_{i+1,1}}(\tilde{\mu}_X,\tilde{\mu}_{\tau})$, and the sign  $s_{i+1}$ depends on whether $\pi_{\alpha_{i+1,1}}(\tilde{\mu}_X)$ differs from $\pi_{\alpha_{i+1,1}}(\tilde{\mu}_{\tau})$ by right or left Dehn (half) twists around $\alpha_{i+1,1}$.  Set $X_{i+1} = T_{\alpha_{i+1}}(X_i)$ and let $\tilde{\mu}_{X_{1+1}}$ be its shortest augmented marking.\\

Once again $T_{\alpha_{i+1}} \in C_{\MCG(S)}(H)$ centralizes $H$, so it stabilizes $\Fix(H)$ and $X_{i+1}\in \Fix(H)$.  We claim that $X_{i+1}$ satisfies the properties in the inductive assumptions (\ref{r:eq d to X}) and (\ref{r:eq d to tau}) above.\\

Lemma \ref{r:sym sub proj} implies that $d_{\alpha_{i+1,j}}(\tilde{\mu}_X,\tilde{\mu}_{\tau}) \asymp_R d_{\alpha_{i+1,l}}(\tilde{\mu}_X,\tilde{\mu}_{\tau})$ for any $j,l$, and since $X$ is fixed and $\tau$ is has a bounded diameter orbit, it follows that $\pi_{\alpha_{i+1,j}}(\tilde{\mu}_X)$ differs from $\pi_{\alpha_{i+1,j}}(\tilde{\mu}_{\tau})$ by coarsely the same number of right or left Dehn (half)twists for all $j$, where the handedness is independent of $i$.  It follows immediately that $d_{\alpha_{i+1,j}}(\tilde{\mu}_{X_{i+1}},\tilde{\mu}_{\tau}) \asymp_R 1$ for all $j$.\\

Observe that $\phi_{\mathcal{A}_{i+1}}(\tilde{\mu}_{X_i}), \tilde{\mu}_{X_{i+1}} \in Q(\mathcal{A}_{i+1})$.  By Lemma \ref{r:distance prod}, the distance between $\phi_{\mathcal{A}_{i+1}}(\tilde{\mu}_{X_i})$ and $\tilde{\mu}_{X_{i+1}}$ is coarsely determined by projections to subsurfaces $Y \subset \sigma(\mathcal{A}_{i+1})$, with all other subsurface projections being uniformly bounded.  However, note that since $\phi_{\mathcal{A}_{i+1}}(\tilde{\mu}_{X_i}), \tilde{\mu}_{X_{i+1}} \in Q(\mathcal{A}_{i+1})$, all base and transverse curves in $\phi_{\mathcal{A}_{i+1}}(\tilde{\mu}_{X_i})$ and $\tilde{\mu}_{X_{i+1}}$ are disjoint from $\mathcal{A}_{i+1}$, and so $T_{\alpha_{i+1}}$ only acts nontrivially on the $\mathcal{A}_{i+1}$ coordinates of $\phi_{\mathcal{A}_{i+1}}(\tilde{\mu}_{X_i})$ and $\tilde{\mu}_{X_{i+1}}$.  Thus $d_Y(\phi_{\mathcal{A}_{i+1}}(\tilde{\mu}_{X_i}), \tilde{\mu}_{X_{i+1}}) \asymp_R 1$ for all $Y \subset \sigma(\mathcal{A}_{i+1})\setminus \mathcal{A}_{i+1}$.  Equations (\ref{r:eq d to X}) and (\ref{r:eq d to tau}) for $j=i+1$ follow immediately from the triangle inequality and the inductive assumptions that (\ref{r:eq d to X}) and (\ref{r:eq d to tau}) hold for $X_i$.\\

This completes the inductive step and thus the proof.

\end{proof}

\section{Coarse barycenters for the Teichm\"uller metric}

The goal of this section is to prove the following theorem:

\begin{theorem}[Coarse fixed barycenters for $(\TT(S), d_T)$] \label{r:bary teich}
There are $ \widetilde{K}, \widetilde{C}>0$ such that for any $\sigma \in \TT(S)$ and any finite order $f \in \MCG(S)$, there is a fixed point $X \in Fix(\langle f \rangle)$ such that 
\[d_T(\sigma, X) <  \widetilde{K} \cdot d_{\TT(S)}(\sigma, f\cdot \sigma) + \widetilde{C}\] 
\end{theorem}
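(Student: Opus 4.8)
The plan is to deduce Theorem~\ref{r:bary teich} from the Main Theorem~\ref{r:main} together with the work of Tao~\cite{Tao13}, essentially by showing that $\sigma$ itself is an almost-fixed point of $\langle f\rangle$ whose orbit diameter is controlled by $d_{\TT(S)}(\sigma, f\cdot\sigma)$. First I would set $H = \langle f\rangle$, which is a finite cyclic subgroup of $\MCG(S)$ since $f$ has finite order. The key elementary observation is that for a group generated by a single element $f$ of order $n$, the diameter of the orbit $H\cdot\sigma = \{\sigma, f\sigma, \dots, f^{n-1}\sigma\}$ is comparable to $d_{\TT(S)}(\sigma, f\sigma)$: indeed $d_T(f^i\sigma, f^j\sigma) = d_T(\sigma, f^{j-i}\sigma) \le |j-i|\cdot d_T(\sigma, f\sigma)$ by the triangle inequality and the fact that $f$ acts by isometries, and $|j-i| \le n$ is bounded in terms of $S$ alone (the order of any finite-order mapping class is bounded by a constant depending only on $S$, e.g. $84(g-1)$ for closed surfaces, as noted in the excerpt via \cite{FM12}). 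Hence $\mathrm{diam}_T(H\cdot\sigma) \le n\cdot d_T(\sigma, f\sigma) \le N_H \cdot d_T(\sigma,f\sigma)$, so $\sigma \in \Fix^T_{R}(H)$ with $R = N_H\cdot d_T(\sigma,f\sigma)$.

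Next I would feed $\sigma$ into the Main Theorem~\ref{r:main}. A subtlety: Theorem~\ref{r:main} as stated gives, for a \emph{fixed} $R>0$, a constant $R' = R'(R,S)$ and a fixed point within $R'$ of any $R$-almost-fixed point. To get the affine bound of Theorem~\ref{r:bary teich} I need to track how $R'$ depends on $R$. Inspecting the proof of Theorem~\ref{r:main} (via Proposition~\ref{r:reduce short curves} and Tao's Lemma~\ref{r:sym links}), all the coarse inequalities are of the form $\asymp_R$, meaning $A \le K'\cdot B + C'$ with $K', C'$ depending on $R$ and $S$; moreover the number of symmetric families $N$ and the number of iterations in the inductive construction are bounded purely in terms of $S$ (Lemma~\ref{r:unif bound}), and the large-link threshold $\widehat K$ depends only on $R$ and $S$ (as noted in the Remark on the dependence of $\widehat K$). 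Tracing through, the final bound $d_T(X_N,\tau) < K_N$ is linear in $R$: each multitwisting step contributes a cost comparable to $\log d_\alpha(\tilde\mu_X,\tilde\mu_\tau) \prec_R 1$, and since $d_{WP}(X,\tau)\prec \widetilde R$ where $\widetilde R$ is itself linear in $R$, one concludes $d_T(X,\sigma) \le \widehat K\cdot R + \widehat C$ for constants $\widehat K, \widehat C$ depending only on $S$. Then setting $\widetilde K = \widehat K\cdot N_H$ and $\widetilde C = \widehat C$ gives $d_T(\sigma,X) < \widetilde K\cdot d_{\TT(S)}(\sigma,f\cdot\sigma) + \widetilde C$ with $X\in\Fix(\langle f\rangle)$, as desired.

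The main obstacle I anticipate is precisely making rigorous the claim that the constant $R'$ in Theorem~\ref{r:main} is affine in $R$ rather than merely a function of $R$. This requires either re-examining every invocation of the $\asymp_R$ notation in the proof of Proposition~\ref{r:reduce short curves} and Theorem~\ref{r:main} to confirm that the multiplicative constants are in fact uniform (depending only on $S$) and only the additive constants grow with $R$ --- which is plausible because the $R$-dependence enters through (i) the almost-fixed bound $K_R$ on subsurface projections $d_Y(\mu, h\cdot\mu)\le K_R$, which is itself affine in $R$ by Rafi's formula, and (ii) the large-link threshold $\widehat K$, which thresholds a bounded number of terms --- or else invoking a version of the argument of \cite{Tao13} directly. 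I would organize the writeup so that this linearity is extracted as a short lemma ("the constant $R'$ in Theorem~\ref{r:main} satisfies $R' \le K\cdot R + C$ with $K, C$ depending only on $S$"), proved by a careful but routine audit of the coarse constants, and then the proof of Theorem~\ref{r:bary teich} becomes the two-line deduction above. A secondary, minor point to address is the passage between $d_{\TT(S)}$ and $d_T$ in the statement --- presumably $d_{\TT(S)}$ denotes the Teichm\"uller metric $d_T$, so this is just notational, but if $d_{\TT(S)}$ were meant to allow the Weil--Petersson metric one would additionally invoke Remark~\ref{r:wp<t} that $d_{WP}\prec d_T$ in the appropriate direction.
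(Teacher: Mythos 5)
Your reduction begins correctly: since $f$ has uniformly bounded order $N_H = N_H(S)$, the triangle inequality gives $\mathrm{diam}_T(\langle f\rangle\cdot\sigma)\le N_H\cdot d_T(\sigma,f\sigma)$, so $\sigma\in\Fix^T_R(\langle f\rangle)$ with $R = N_H\cdot d_T(\sigma,f\sigma)$. However, the deduction then hinges entirely on the claim that the output constant $R' = R'(R,S)$ of Theorem~\ref{r:main} is affine in $R$, and this claim is not merely unverified --- it is contradicted by the paper itself. Remark~\ref{r:indep} states explicitly that Theorem~\ref{r:main} ``does not imply [Theorem~\ref{r:jing's theorem}], since the bound in Theorem~\ref{r:main} from the starting point to $\Fix(H)$ is not linear in terms of the diameter of the orbit of the starting point.'' Your audit of the constants goes astray at a specific point: you assert that the number $N$ of symmetric families is ``bounded purely in terms of $S$ (Lemma~\ref{r:unif bound}),'' but that lemma concludes the bound is uniform ``depending only on $S$ \emph{and the almost-fixed constant} $R$.'' As $R$ grows, the number of symmetric families can grow, the threshold $\widehat{K}$ grows, and the number of compositions of $\asymp_R$ estimates in the inductive construction of $X_1,\dots,X_N$ grows; a bounded number of $\asymp_R$ compositions is harmless, but an $R$-dependent number of them is exactly what destroys linearity.

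The paper's actual proof sidesteps this by not feeding $\sigma$ into Theorem~\ref{r:main} directly. It first invokes Tao's Theorem~\ref{r:jing's theorem} --- which already produces a \emph{linear} bound in the marking complex --- together with a hand-built adjustment of the length and twisting data over the $f$-symmetric short curves of $\sigma$. The combined output is a point $X''\in\TT(S)$ that is (i) within $K'''\cdot d_T(\sigma,f\sigma)+C'''$ of $\sigma$ and (ii) an $R$-almost-fixed point for a constant $R$ depending \emph{only on $S$}, independent of $\sigma$. Only at this final stage is Theorem~\ref{r:main} applied, with that uniform $R$, so the extra distance $R'(R,S)$ it contributes is an honest constant rather than a function of the input. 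If you want to repair your writeup, the missing step is precisely this intermediate use of Theorem~\ref{r:jing's theorem} (you even flag it as an alternate route at the end, but it is not optional --- it is the load-bearing ingredient). You would also need the length-coordinate bookkeeping in the paper's proof that upgrades Tao's $\MM(S)$-statement to an $\AM(S)$-statement, since Tao's theorem controls subsurface projections of markings but not the horoball coordinates that the Teichm\"uller distance formula (Theorem~\ref{r:distances}) also sees.
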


The proof relies in an essential way on Tao's main technical result \cite[Theorem 4.0.2]{Tao13}, from which the linearly bounded conjugator property for $\MCG(S)$ for finite order elements follows almost immediately.  She proves that there are coarse barycenters in $\MM(S)$ for finite order elements of $\MCG(S)$:

\begin{theorem} [Coarse barycenters for $\MM(S)$; Theorem 4.0.2 in \cite{Tao13}]\label{r:jing's theorem}
There are $R, K,C>0$ depending only on $S$, so that for any marking $\mu \in \MM(S)$ and finite order $f \in \MCG(S)$, there is a $\mu_0 \in \MM(S)$ with $\mathrm{diam}_{\MM(S)}(\langle f \rangle \cdot \mu_0) < R$, such that
\[d_{\MM(S)}(\mu,\mu_0) < K \cdot d_{\MM(S)}(\mu, f\cdot \mu) + C\]
\end{theorem}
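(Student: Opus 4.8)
The plan is to deduce the theorem from the Main Theorem \ref{r:main} together with an $\AM(S)$-version of Tao's coarse-barycenter theorem \ref{r:jing's theorem}. Write $H=\langle f\rangle$, finite of order $m\le N_H(S)$. The first, easy reduction: if $d_T(\sigma,f\sigma)$ is below a fixed threshold $R_0=R_0(S)$, then $\mathrm{diam}_T(H\cdot\sigma)\le m\,d_T(\sigma,f\sigma)\prec1$, so $\sigma\in\Fix^T_{R_0'}(H)$ for some $R_0'(S)$ and Theorem \ref{r:main} already gives $X\in\Fix(H)$ with $d_T(\sigma,X)\prec1$, which we absorb into $\widetilde C$. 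So assume $d_T(\sigma,f\sigma)\ge R_0$. It then suffices to build a point $\tau\in\TT(S)$ with $\mathrm{diam}_T(H\cdot\tau)\le R_1$ for a universal $R_1=R_1(S)$ and with $d_T(\sigma,\tau)\prec d_T(\sigma,f\sigma)$: Theorem \ref{r:main} applied with $R=R_1$ produces $X\in\Fix(H)$ with $d_T(\tau,X)<R'(R_1,S)$, and the triangle inequality finishes, with $\widetilde K,\widetilde C$ depending only on $S$.

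Constructing $\tau$ is where I would use ``the work of Tao''. Via the quasi-isometries $F,G$ of Theorem \ref{r:aug qi} and their coarse naturality (Remark \ref{r:coarsely natural}), it is enough to prove the $\AM(S)$-analogue of Theorem \ref{r:jing's theorem}: there are universal $R_1,K,C$ so that every $\tilde\mu\in\AM(S)$ admits $\tilde\nu\in\AM(S)$ with $\mathrm{diam}_{\AM(S)}(H\cdot\tilde\nu)<R_1$ and $d_{\AM(S)}(\tilde\mu,\tilde\nu)<K\,d_{\AM(S)}(\tilde\mu,f\tilde\mu)+C$; then $\tilde\mu=\tilde\mu_\sigma=F(\sigma)$, $\tau=G(\tilde\nu)$. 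I would prove this by running Tao's induction inside $\AM(S)$, using Sections \ref{r:coarse proj section} and \ref{r:fix and afix} in place of her $\MM(S)$-machinery. First list the \emph{bad domains}: subsurfaces $Y$ (nonannular, or annular/horoball) with $d_Y(\tilde\mu,f\tilde\mu)$ above a threshold $\widehat K=\widehat K(S)$ --- here \cite[Lemma 3.3.4]{Tao13}, applied to the pair $(\tilde\mu,f\tilde\mu)$, for which the threshold depends only on the order of $f$, and extended to horoballs exactly as in the proof of Lemma \ref{r:sym links}, shows these group into $H$-symmetric families; Lemma \ref{r:sym fam time order} shows distinct families are consistently time-ordered along any augmented hierarchy path $\Gamma$ from $\tilde\mu$ to $f\tilde\mu$ while members of one family are not; Lemma \ref{r:sym sub proj} shows the projections of $\tilde\mu$ and $f\tilde\mu$ to members of a given family are coarsely equal.

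Now process the families in time order, as in the inductive step of Theorem \ref{r:main}: at family $\mathcal A_{i+1}$, project the current marking to $Q(\mathcal A_{i+1})$ --- the bound $d_{\AM(S)}(\tilde\nu_i,\phi_{\mathcal A_{i+1}}(\tilde\nu_i))\prec1$ is the Claim~$(i+1)$ argument, valid because $f$ permutes the family and the earlier families precede $\mathcal A_{i+1}$ in time order (Lemmas \ref{r:active interval}, \ref{r:sym fam time order}) --- then apply the $H$-symmetric multitwist about $\mathcal A_{i+1}$ that makes the twisting coordinates on that family coarsely $f$-invariant, and, if the family's large links come from the length coordinates $D_\alpha$, replace the common $D_\alpha$ by the orbit-average of $D_{f^j\alpha}(\tilde\mu)$. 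The multitwist fixes everything disjoint from $\mathcal A_{i+1}$, and Proposition \ref{r:phi add}, Corollary \ref{r:distance prod}, and Lemma \ref{r:lip for mark proj} keep its effect on subsurfaces disjoint from $\mathcal A_{i+1}$ bounded; each step costs $\asymp d_{\mathcal A_{i+1}}(\tilde\mu,f\tilde\mu)$ in that family and $\asymp1$ elsewhere. When all families are done, $d_Y(\tilde\nu,f\tilde\nu)$ is below threshold for every $Y$, so $d_{\AM(S)}(\tilde\nu,f^j\tilde\nu)\prec1$ for all $j$ by the distance formula \eqref{r:distances 3}, whence $\mathrm{diam}_{\AM(S)}(H\cdot\tilde\nu)\prec1$; and summing the step costs via \eqref{r:distances 3} gives $d_{\AM(S)}(\tilde\mu,\tilde\nu)\prec d_{\AM(S)}(\tilde\mu,f\tilde\mu)$.

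The main obstacle is this last step --- the $\AM(S)$ incarnation of Tao's argument --- in two respects. The horoball directions do not occur in Tao's setting, so I must check that averaging $D_\alpha$ over an $H$-orbit of annuli has cost comparable to the horoball displacement it removes (via Lemma \ref{r:horoball qi} and the structure of $\HHH(\ZZ)$) and neither creates nor destroys large links in subsurfaces meeting $\alpha$ (via Remark \ref{r:short base} and Corollary \ref{r:distance prod}). More seriously, unlike in Theorem \ref{r:main} the number of symmetric families is not bounded (Lemma \ref{r:unif bound} used a $d_{WP}$-bound, absent here), so I cannot let each step add a fixed additive error to the already-processed coordinates; one must verify, as Tao does in $\MM(S)$, that the composite of the successive closest-point projections $\phi_{\mathcal A_{i+1}}$ is coarsely a genuine barycentric map, so the errors telescope rather than accumulate. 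Granting Tao's $\MM(S)$ analysis, both issues are handled by the product-region decoupling already established in Lemmas \ref{r:prod qi}, \ref{r:cpj}--\ref{r:phi lipschitz} and Proposition \ref{r:phi add}, so no essentially new tools are required.
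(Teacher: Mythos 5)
The statement you were asked to prove is Tao's theorem itself: a coarse barycenter statement for the marking complex $\MM(S)$ (equivalently, for the word metric on $\MCG(S)$), which this paper does not prove but imports from \cite[Theorem 4.0.2]{Tao13}; its proof lives in Tao's paper, via her Propositions 4.2.1 and 4.2.2, by inductively reducing the large subsurface projections between $\mu$ and a bounded-orbit marking. Your proposal proves a different statement: what you construct is a Teichm\"uller-metric coarse barycenter, i.e.\ essentially the paper's Theorem \ref{r:bary teich}, deduced from the Main Theorem \ref{r:main} together with an $\AM(S)$-version of Tao's induction. This inverts the paper's logical order --- the paper uses Theorem \ref{r:jing's theorem} as an input to prove Theorem \ref{r:bary teich} --- and, more importantly, it is circular as a proof of the statement at hand: at the one genuinely hard point you yourself identify (the number of symmetric families between $\tilde\mu$ and $f\cdot\tilde\mu$ is not uniformly bounded, so the per-family errors must telescope rather than accumulate), you resolve it by ``granting Tao's $\MM(S)$ analysis,'' which is precisely the content of the theorem you were asked to establish.

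Even setting the circularity aside, the route cannot be run in reverse to recover the stated $\MM(S)$ bound. The distance formulas \eqref{r:distances 1} and \eqref{r:distances 3} weight annular data differently: twisting enters $d_{\MM(S)}$ linearly but enters $d_{\AM(S)}\asymp d_T$ only logarithmically through the horoballs, so a bound of the form $d_{\AM(S)}(\tilde\mu,\tilde\nu)\prec d_{\AM(S)}(\tilde\mu,f\cdot\tilde\mu)$ does not yield $d_{\MM(S)}(\mu,\mu_0)\prec d_{\MM(S)}(\mu,f\cdot\mu)$ (a single enormous twist is cheap in $\AM(S)$ and expensive in $\MM(S)$). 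There is also a constants issue at your first step: the symmetric-large-link threshold in Lemma \ref{r:sym links} (Tao's Lemma 3.3.4) depends on the orbit diameter of the reference marking, and in your setting neither $\tilde\mu$ nor $f\cdot\tilde\mu$ is almost-fixed with a constant depending only on $S$, so your assertion that the threshold ``depends only on the order of $f$'' for the pair $(\tilde\mu,f\cdot\tilde\mu)$ needs justification. A proof of the statement as posed must reproduce (or replace) Tao's $\MM(S)$ argument itself; quoting it and building the Teichm\"uller statement on top of it is exactly what the paper already does in its proof of Theorem \ref{r:bary teich}.
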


The proof of Theorem \ref{r:jing's theorem} proceeds by choosing a marking in $\MM(S)$ with uniformly bounded $f$-orbit and then step by step reducing the complexity of the large subsurface projections between $\mu$ and the chosen marking, each step resulting in a new marking with uniformly bounded $f$-orbit, whose combinatorial relationship with $\mu$ is simpler.  At the core of the proof are two technical Propositions 4.2.1 and 4.2.2, which construct the new markings.  We need some observations about the proof these propositions.\\

Let $\mu_0\in \MM(S)$ have a uniformly bounded $f$-orbit and suppose that $Y \in \mathcal{L}_{\widehat{K}}(\mu,\mu_0)$ is a $\widehat{K}$-large link with $Y \varsubsetneqq S$ a proper subsurface, where $\widehat{K}$ is the constant from Tao's Lemma \ref{r:sym links} and $\mathcal{L}_{\widehat{K}}(\mu,\mu_0)$ is the set of $\widehat{K}$ large links between $\mu$ and $\mu_0$.  Let $L_Y \in \mathbb{N}$ be the smallest natural number such that $f^{L_Y+1}$ is the first return map of $f$ to $Y$ and set $\Lambda_Y = \partial Y \cup f\cdot \partial Y \cup \cdots \cup f^{L_Y} \partial Y$.  We note that Lemma \ref{r:sym links} implies that $f^i \cdot Y \in \mathcal{L}_{\widehat{K}}(\mu,\mu_0)$ for each $i$.\\

Proposition 4.2.2 of \cite{Tao13} produces a new marking $\mu_1 \in \MM(S)$ with uniformly bounded $f$-orbit with $\Lambda_Y \subset \mathrm{base}(\mu_1)$, such that $f^i \cdot Y \notin \mathcal{L}_{\widehat{K}}(\mu,\mu_1)$ for each $i$ and if moreover $Z \in \mathcal{L}_{\widehat{K}}(\mu,\mu_1)$ and $Z \notin \mathcal{L}_{\widehat{K}}(\mu,\mu_0)$, then $Z \subset Y$ is a proper subsurface and thus has lower complexity.  The marking $\mu_1$ is first constructed via marking projections.  Namely, one chooses correct transversals for the curves in $\Lambda_Y$, then builds markings in $\MM(f^i\cdot Y)$ for each $i$.  To complete these pieces to a marking on all of $S$, one induces the structure of $\mu$ on $S \setminus \left( \coprod_{1 \leq i \leq L_Y} f^i\cdot Y\right)$ by projecting $\mu_0$ to a marking on each component thereof.  In particular, this means that $\mu_0$ and $\mu_1$ have uniformly bounded projections to any subsurface of $S \setminus \left(\coprod_{1 \leq i \leq L_Y} f^i\cdot Y\right)$.\\

The proof of Theorem \ref{r:bary teich} proceeds by analyzing the short $f$-symmetric curves of the arbitrary point $\sigma \in \TT(S)$ and choosing an initial point, $X'$, whose length and twisting coordinates in these short curves are sufficiently close to those of $\sigma$.  We then apply Tao's Theorem \ref{r:jing's theorem} to the marking, $\mu_{X'}$, underlying a shortest augmented marking for $X'$.  By the above observations, the result is a new almost-fixed marking, $\mu_X$, whose base curves contain the short $f$-symmetric curves of $\sigma$ and whose transversals to these curves have changed a uniformly bounded amount compared to those of $\mu_{X'}$.  We may then build an almost-fixed augmented marking, $\tilde{\mu}_{X''}$, whose projections to the horoballs over the $f$-symmetric short curves of $\sigma$ are the same as those of $\tilde{\mu}_{X'}$.  After performing similar calculations to the proof of the Main Theorem \ref{r:main}, we find that any point $X'' \in \TT(S)$ whose shortest augmented marking is $\tilde{\mu}_{X''}$ is an $R$-almost-fixed barycenter for $\sigma$ in $\TT(S)$, for some $R$ depending only on $S$.  An application of the Main Theorem \ref{r:main} produces the desired fixed point, $X \in Fix(\langle f \rangle)$.

\begin{proof}[Proof of Theorem \ref{r:bary teich}]

Let $\sigma \in \TT(S)$ be arbitrary and $f\in \MCG(S)$ finite order.  Let $\epsilon_0>0$ be as in Definition \ref{r:epsilon} with $H = \langle f \rangle$.\\

Let $\Lambda_{\epsilon_0, sym}(\sigma) = \{\lambda | l_{\sigma}(f^k\cdot \lambda) < \epsilon_0, \forall k\}$, the set of $f$-symmetric short curves of $\sigma$.  We note that it is possible that other $f$-symmetric curves will be short in $\sigma$, but we are only interested in those whose entire $f$-orbit is short in $\sigma$.\\

Let $\tilde{\mu}_{\sigma} \in \AM(S)$ be a shortest augmented marking for $\sigma$.  Remark \ref{r:short base} implies that $\Lambda_{\epsilon_0, sym}(\sigma) \in \mathrm{base}(\tilde{\mu}_{\sigma})$.  Decompose the curves in $\Lambda_{\epsilon_0, sym}(\sigma)$ into their $f$-orbits, $\Lambda_1, \dots, \Lambda_k$.  For each $i$, there is an $N_i>0$ so that $\Lambda_i = \{\lambda_i, f \cdot \lambda_i, \cdots, f^{N_i} \cdot \lambda_i\}$.  For each $1 \leq j \leq N_i$, let $b_{i,j}\in \HHH_{\Lambda_i}$ be a coarse barycenter of $\pi_{\HHH_{f^j \cdot \lambda_i}}(\langle f \rangle \cdot \sigma)$, the projection of the $f$-orbit of $\tilde{\mu}_{\sigma}$ to $\HHH_{f^j \cdot \lambda_i}$; that is, $d_{\HHH_{f^j \cdot \lambda_i}}(f^k \cdot \tilde{\mu}_{\sigma}, b_{i,j}) \prec d_{\TT(S)} (\sigma, f \cdot \sigma)$ for each $1 \leq k \leq N_i$.\\

Let $X' \in \TT(S)$ be any point whose shortest augmented marking $\tilde{\mu}_{X'} \in \AM(S)$ satisfies the following conditions:

\begin{enumerate}
\item $\mathrm{base}(\tilde{\mu}_{X'}) = \mathrm{base}(\tilde{\mu}_{\sigma})$
\item For all $\alpha \in \mathrm{base}(\tilde{\mu}_{\sigma}) \setminus \Lambda_{\epsilon_0, sym}(\sigma)$, $d_{\HHH_{\alpha}}(\tilde{\mu}_{X'}, \tilde{\mu}_{\sigma}) \asymp 1$
\item For each $i$ and $1 \leq j \leq N_i$, $d_{\HHH_{f^j \cdot \lambda_i}}(\tilde{\mu}_{X'}, b_{i,j}) \asymp 1$
\end{enumerate}

Observe that both $\tilde{\mu}_{\sigma}, \tilde{\mu}_{X'} \in Q(\Lambda_{\epsilon_0, sym}(\sigma))$.  By the choice of $X'$, all projections of $\tilde{\mu}_{\sigma}$ and $\tilde{\mu}_{X'}$ to horoballs over curves in $\Lambda_{\epsilon, sym}(\sigma)$ are linearly bounded in terms of $d_{\TT(S)}(\sigma, f\cdot \sigma)$: that is, there exist $K',C'>0$ depending only on $S$ such that $d_{\HHH_{\lambda}}(\tilde{\mu}_{X'}, \tilde{\mu}_{\sigma}) \leq K' \cdot d_{\TT(S)}(\sigma, f\cdot \sigma) + C'$ for all $\lambda \in \Lambda_{\epsilon_0, sym}(\sigma)$.  Moreover, since $\tilde{\mu}_{\sigma}, \tilde{\mu}_{X'} \in Q(\Lambda_{\epsilon_0, sym}(\sigma))$, Lemma \ref{r:distance to q}  implies that for any other subsurface $Y$ for which $d_Y(\tilde{\mu}_{\sigma}, \tilde{\mu}_{X'})$ is sufficiently large, we must have $Y \subset S \setminus \Lambda_{\epsilon, sym}(\sigma)$.\\

Let $\widehat{K}>0$ be the constant from Tao's Lemma \ref{r:sym links} with $H = \langle f \rangle$ and let $\mathcal{L}_{\widehat{K}}(\tilde{\mu}_{\sigma}, \tilde{\mu}_{X'})$ be the collection of $\widehat{K}$-large links between $\tilde{\mu}_{\sigma}$ and $\tilde{\mu}_{X'}$.  As noted at the end of the previous paragraph, each $Y \in \mathcal{L}_{\widehat{K}}(\tilde{\mu}_{\sigma}, \tilde{\mu}_{X'})$ satisfies $Y \subset S \setminus \Lambda_{\epsilon_0, sym}(\sigma)$.\\

Let $\mu_{X'} \in \MM(S)$ be the marking underlying $\tilde{\mu}_{X'}$.  We now apply Tao's Theorem \ref{r:jing's theorem} to $\mu_{X'}$.  By the discussion of the proof of \cite{Tao13}[Proposition 4.2.2], Tao's Theorem \ref{r:jing's theorem} produces an $R$-almost fixed marking $\mu_{X''} \in \MM(S)$, which has the property that, for each $Y \subset S$, $d_Y(\mu_{X''}, \mu_{\sigma}) < K' \cdot d_Y(\mu_{\sigma}, f\cdot \mu_{\sigma}) + C'$, where $K', C'>0$ depend only on $S$.  Moreover, we have that $\Lambda_{\epsilon_0, sym}(\sigma) \subset \mathrm{base}(\mu_{X''})$, so we may build an augmented marking $\tilde{\mu}_{X''} \in \AM(S)$ whose length coordinates for the curves in $\Lambda_{\epsilon_0, sym}(\sigma)$ are those of $\tilde{\mu}_{X'}$.\\

We have already shown that $d_Y(\tilde{\mu}_{X''}, \tilde{\mu}_{\sigma}) < K'' \cdot d_Y(\tilde{\mu}_{\sigma}, f\cdot \tilde{\mu}_{\sigma})+C''$ for any subsurface $Y \subset S$ (including annuli), where $K'', C''>0$ depend only on $S$.  It remains to show that we have a similar bound on projections to all horoballs.\\

By construction, we have such a bound on any projection to a horoball over one of the curves in $\Lambda_{\epsilon_0, sym}(\sigma)$.  If $\lambda \in \CC(S)$ and $\lambda \notin \Lambda_{\epsilon_0, sym}(\sigma)$, then it follows that at least one curve $f^i \cdot \lambda$ in the $f$-orbit of $\lambda$ satisfies $l_{\sigma}(f^i \cdot \lambda) > \epsilon_0$.  In particular, for such a curve $f^i \cdot \lambda$, the projection $\pi_{\HHH_{f^i \cdot \lambda}}(\tilde{\mu}_{\sigma})$ must bounded coarse length coordinate equal to $0$; the coarse length coordinate of $\pi_{\HHH_{f^i \cdot \lambda}}(\tilde{\mu}_{X''})$ is $0$ by construction.  As the twisting coordinate of $\pi_{\HHH_{f^i \cdot \lambda}}(\tilde{\mu}_{\sigma})$ and $\pi_{\HHH_{f^i \cdot \lambda}}(\tilde{\mu}_{X''})$ satisfy the above desired bound, it follows that $d_{\HHH_{f^i \cdot \lambda}}(\tilde{\mu}_{X''}, \tilde{\mu}_{\sigma}) < K' \cdot d_{\HHH_{f^i \cdot \lambda}}(\tilde{\mu}_{\sigma}, f\cdot \tilde{\mu}_{\sigma})+C'$.\\

Let $X'' \in \TT(S)$ be any point whose shortest augmented marking is $\tilde{\mu}_{X''}$.  Then there are $K''', C'''>0$ depending only on $S$ such that

\[d_{\TT(S)}(X'', \sigma) < K''' \cdot d_{\TT(S)}(\sigma, f\cdot \sigma) + C'''\]

Applying the Main Theorem \ref{r:main}, it follows that there are $\widetilde{K}, \widetilde{C}>0$ depending only on $S$ and a fixed point $X \in \Fix(\langle f\rangle)\subset \TT(S)$ such that

\[d_{\TT(S)}(X, \sigma) < \widetilde{K} \cdot d_{\TT(S)}(\sigma, f\cdot \sigma) + \widetilde{C}\]

as desired.
\end{proof}

\begin{remark}[Theorem \ref{r:bary teich} for arbitrary finite subgroups]
We expect that Theorem \ref{r:bary teich} can be generalized to hold for any finite subgroup $H \leq \MCG(S)$.  This might be accomplished by generalizing Tao's Theorem \ref{r:jing's theorem}, but this would require a nearly complete reworking of her proof.  
\end{remark}

\begin{remark}[Independence of Theorem \ref{r:main} and Theorem \ref{r:jing's theorem}]\label{r:indep}
At first glance, it may seem that one might derive Theorem \ref{r:main} from Theorem \ref{r:jing's theorem} or vice versa.  The former does not imply the latter, since the bound in Theorem \ref{r:main} from the starting point to $\Fix(H)$ is not linear in terms of the diameter of the orbit of the starting point.  On the other hand, the latter does not imply the former, for it can at best produce an almost fixed point, when a genuine fixed point is needed.  What is more, Theorem \ref{r:main} holds for any finite subgroup of $\MCG(S)$ and Theorem \ref{r:jing's theorem} is only known for finite order elements.
\end{remark}

\section{Non-quasiconvexity of $\Fix^T_R(H)$}

This purpose of this section is to prove the following theorem:

\begin{theorem}\label{r:non qc}
There exist an $R>0$, a surface $S$, and a finite subgroup $H \subset \MCG(S)$ such that $\Fix^T_R(H)$ is not quasiconvex.
\end{theorem}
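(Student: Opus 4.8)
The plan is to build the example on the lowest-complexity surface that admits interesting product regions in conjunction with a finite symmetry, using Rafi's construction \cite{Raf14} of Teichm\"uller geodesics that spend a controlled but long time deep inside thin parts. The starting idea: take $\OO$ a small orbifold whose orbifold curve graph $\CC(\OO)$ contains a pair of curves $\widebar{\alpha},\widebar{\beta}$ that fill $\OO$, and a finite cover $\pi:S\to\OO$ with deck group $H$ so that $\widebar{\alpha},\widebar{\beta}$ lift to $H$-symmetric multicurves $\alpha=\Pi(\widebar{\alpha})$, $\beta=\Pi(\widebar{\beta})$ in $S$. Since $i:\TT(\OO)\hookrightarrow\TT(S)$ is an isometric embedding onto $\Fix(H)$, points of $\Fix^T_R(H)$ that lie within $R$ of $\Fix(H)$ (Theorem \ref{r:main}) are, up to bounded error, lifts of points in $\TT(\OO)$ together with bounded ``transverse'' data. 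Concretely, I would fix two fixed points $X_0,X_1\in\Fix(H)$ that are lifts of $\widebar{X}_0,\widebar{X}_1\in\TT(\OO)$ chosen so that the shortest markings $\tilde\mu_{X_0},\tilde\mu_{X_1}$ differ by an enormous twist $T_\alpha^{N}$ about the symmetric multicurve $\alpha$; then $X_0,X_1\in\Fix^T_R(H)$ for a uniform $R$ (in fact $R$ can be taken to be the bound in Proposition \ref{r:reduce short curves}-type estimates, since both are genuine fixed points, $R=0$ suffices), yet $d_T(X_0,X_1)\asymp \log N$ is large.

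Next I would examine a Teichm\"uller geodesic $\GG_{X_0,X_1}$, or equivalently (via Theorem \ref{r:aug qi}) an augmented hierarchy path $\Gamma$ in $\AM(S)$ between $\tilde\mu_{X_0}$ and $\tilde\mu_{X_1}$. Since the only large link between these two points is the annular family over $\alpha$ (the base markings agree, only the transversal/length data about $\alpha$ changes), $\Gamma$ must enter deep into the horoball $\HHH_\alpha$: there is a midpoint $Y\in\Gamma$ with $D_\alpha(Y)\asymp \log N$, i.e. $l_Y(\alpha)$ is extremely small. The point $Y$ is, coarsely, a product of a point in $\TT(S\setminus\alpha)$ agreeing with the $X_0$/$X_1$ data and a point very deep in $\HH_\alpha$. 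The key claim is then: $\mathrm{diam}_T(H\cdot Y)$ is \emph{large}, so $Y\notin\Fix^T_R(H)$, while $Y$ is forced to lie within bounded distance of every geodesic between $X_0$ and $X_1$. For this I would choose $H$ and $\widebar\alpha$ so that the $H$-action permutes the components of $\alpha=\{\alpha_1,\dots,\alpha_m\}$ \emph{and} acts on the transverse twisting coordinates in a way that a short curve forces a long orbit — for instance arrange $\widebar\alpha$ to pass once through a cone point of order $q$, so that going deep in $\HH_{\widebar\alpha}$ in $\TT(\OO)$ does \emph{not} correspond to the $H$-invariant direction in $\prod_j\HH_{\alpha_j}$, and a generic deep point $Y$ of $\GG_{X_0,X_1}$ will have $\pi_{\HHH_{\alpha_j}}(Y)$ and $\pi_{\HHH_{h\cdot\alpha_j}}(Y)$ at horoball-distance $\asymp\log N$ from each other for some $h\in H$. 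Then $d_T(Y,h\cdot Y)\succ \log N$ by the distance formula \eqref{r:distances 3}, so $Y$ is arbitrarily far from $\Fix^T_R(H)$ for any fixed $R$, even though $X_0,X_1\in\Fix^T_R(H)$ and $Y$ is within a uniform constant of $\GG_{X_0,X_1}$.

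The main obstacle I anticipate is the last claim: producing an orbifold and cover where going deep into the relevant product region along a \emph{geodesic between two genuine fixed points} is genuinely not $H$-symmetric in the twisting coordinates. This is exactly the tension the introduction advertises (covering-theoretic rigidity of $\Fix(H)$ versus flexibility of product regions), and it is where Rafi's analysis in \cite{Raf14} of how geodesics enter and exit product regions is needed: one must verify that the geodesic $\GG_{X_0,X_1}$, which is ``straight'' in $\TT(\OO)\cong\Fix(H)$, when viewed in $\TT(S)$ really does develop asymmetric short-curve data rather than staying $H$-symmetric. I would handle this by making $\alpha$ a \emph{single} $H$-symmetric curve in $S$ (so $H$ fixes each component, $m=1$) and instead exploiting an \emph{auxiliary} symmetric curve $\beta$ disjoint from $\alpha$ whose transversal gets dragged: as $l(\alpha)\to 0$ along the geodesic, the $X_0$-to-$X_1$ path also must readjust the marking on $S\setminus\alpha$ in an $H$-symmetric way, but a \emph{nearby} path realizing small $\mathrm{diam}_T(H\cdot Y)$ would be forced to also shorten an $H$-conjugate curve, costing extra length; quantifying this extra cost via the distance formula and Minsky's Product Regions Theorem \ref{r:product} gives the separation between the geodesic and $\Fix^T_R(H)$. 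After establishing this separation grows with $N$ while $X_0,X_1$ stay in $\Fix^T_R(H)$ for fixed $R$, non-quasiconvexity follows: no constant $L$ can work, since for large $N$ the geodesic between the two $R$-almost-fixed points $X_0,X_1$ escapes every $L$-neighborhood of $\Fix^T_R(H)$.
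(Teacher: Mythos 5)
There is a fatal gap at the heart of the proposal: you choose your endpoints $X_0,X_1$ to be genuine points of $\Fix(H)$, and then argue that a midpoint $Y$ of the Teichm\"uller geodesic between them has large $H$-orbit. This cannot happen. As the paper notes in the introduction, $\Fix(H)=i(\TT(\OO))$ is a \emph{convex} submanifold of $(\TT(S),d_T)$; equivalently, by uniqueness of Teichm\"uller geodesics, if $h\in H$ and $\GG$ is the geodesic from $X_0$ to $X_1$, then $h\cdot\GG$ is again a geodesic from $h\cdot X_0=X_0$ to $h\cdot X_1=X_1$, so $h\cdot\GG=\GG$ and every point of $\GG$ is fixed. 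Hence your $Y$ satisfies $\mathrm{diam}_T(H\cdot Y)=0$, not $\succ\log N$, and the claimed separation from $\Fix^T_R(H)$ never appears. The fallback variant you sketch (``exploit an auxiliary symmetric curve $\beta$ whose transversal gets dragged'') does not escape this: as long as both endpoints are honestly $H$-fixed, the entire geodesic is pointwise fixed, so no amount of deep twisting or auxiliary short curves produces an almost-fixed point with large orbit on that geodesic.

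The paper's actual construction avoids this by making the endpoints $R$-almost-fixed but genuinely \emph{not} fixed: it starts from Rafi's example of two geodesic segments $\GG_1,\GG_2$ in $\TT(S_0)$ with close endpoints that fail to fellow-travel, passes to a degree-$4$ cover $S\to S_0$ with an involution $h$ swapping the two lifts $Y_1,Y_2$ of a complementary subsurface $Y$, and then glues the quadratic differential so that the new geodesic $\GG$ runs $\GG_1$'s behavior on $Y_1$ during $[0,d]$ and $\GG_2$'s (time-shifted) behavior on $Y_2$ during $[d,2d]$. This deliberate asymmetry between the two $h$-conjugate subsurfaces is exactly what forces $\GG(0),\GG(2d)$ to be within bounded distance of fixed lifts while $d_T(\GG(d),h\cdot\GG(d))\asymp d$. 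Your intuition that the obstruction should come from some lack of symmetry in ``dual'' subsurfaces is on the right track, but it has to be realized by taking almost-fixed endpoints whose \emph{Bers data} on the two lifts are genuinely different (not just shifted deep in a single $H$-symmetric horoball), otherwise convexity of $\Fix(H)$ kills the example.
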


The example built in Theorem \ref{r:non qc} is based on Rafi's example in \cite{Raf14}[Theorem 7.3] of two Teichm\"uller geodesic segments which start and end at a bounded distance from each other and yet do not fellow travel.  These two geodesics segments necessarily live in a thin part of $\TT(S)$, as \cite{Raf14}[Theorem 7.1] proves that this phenomenon does not occur when the endpoints are thick.  Our construction requires the techniques from Rafi's example, so we present Rafi's construction at the beginning of our proof.  After the proof of the theorem, we remark on how this theorem could be generalized.  We expect that $\Fix^T_R(H)$ is typically not quasiconvex.

\begin{proof}[Proof of Theorem \ref{r:non qc}]

Fix $d>0$.  Let $S_0$ be the closed genus 2 surface and let $\gamma\in \CC(S_0)$ be a separating curve on $S_0$.  Let $Y, Z \subset S_0$ be the two once-punctured tori which are the complements of $\gamma$.  In his construction, Rafi builds two Teichm\"uller geodesics $\mathcal{G}_1, \mathcal{G}_2: [0,2d]\rightarrow \TT(S_0)$ such that $ d_T(\mathcal{G}_1(0),\mathcal{G}_2(0)) \asymp 1$ and $d_T(\mathcal{G}_1(2d), \mathcal{G}_2(2d)) \asymp 1$, but $d_T(\GG_1(d), \GG_2(d)) \overset{\cdot}{\succ} d$, where $d>0$ can be chosen to be as large as necessary.  We now sketch how this works.\\

First, choose an Anosov map $\phi$ on a torus and choose a flat torus $T$ on the axis of $\phi$ so that the vertical direction in $T$ coincides with the unstable foliation of $\phi$.  Cut open a slit in $T$ of size $\rho= c \cdot e^{-\frac{d}{2}}$ and angle $\frac{\pi}{4}$ ($0<c<1$ is determined later).  Now fix a marking homeomorphism from $Y$ to the slit torus and label this marked surface $T_0$.  Set 
\begin{equation*}
T_{t} = \begin{bmatrix}
e^t & 0 \\
0&e^{-t} \end{bmatrix} T_{0}
\end{equation*}

Each $T_t$ is still a marked surface.  The length of the slit on $T_t$ is minimized at $t=0$ and grows exponentially as $t \rightarrow \pm \infty$.  The idea is that for $-\frac{d}{2} \leq t \leq \frac{d}{2}$ and $c$ small enough, the slit is short and every curve on $T_t$ has length comparable with $1$.\\

One can then scale down a copy of $T_{-\frac{d}{2}}$ by a factor of $\delta << \epsilon$; call this copy $\delta T_{-\frac{d}{2}}$.  Now cut a new slit in a different copy of $T$ with the same length as $\delta T_{-\frac{d}{2}}$ and fix a homeomorphism from $Z$ to this new slitted torus $T'$.  Gluing $T'$ to $\delta T_{-\frac{d}{2}}$ along their slits, one can build a quadratic differential and define a Teichm\"uller geodesic $\mathcal{G}_1: [0,2d] \rightarrow \TT(S_0)$.\\

One then builds a new Teichmuller geodesic by splicing together $T'$ with $\delta T_{-\frac{3d}{2}}$, called $\mathcal{G}_2: [0,2d] \rightarrow \TT(S_0)$.  The main idea of the construction is that while the endpoints of $\mathcal{G}_1$ and $\mathcal{G}_2$ are close (this follows quickly from the details of the construction), $\mathcal{G}_1$ moves through $Y$ during $[0,d]$ and $\mathcal{G}_2$ moves through $Y$ during $[d,2d]$, both at a linear rate, so that at time $t=d$, their projections to $Y$ are approximately $d$ apart, $d_{Y}(\mathcal{G}_1(d), \mathcal{G}_2(d)) \asymp d$.\\

We now build our examples.  We first lift $\GG_1$ and $\GG_2$ to $\TT(S)$, where $S$ is an appropriate finite cover with deck group isomorphic to $H \leq \MCG(S)$, where the lifts of $Y$ and $Z$ fill $S$, and the lifts $\GG'_1$ and $\GG'_2$ are now geodesics between points in $\Fix(H)$.  Then, using Rafi's construction, we build a new geodesic $\GG: [0,2d]\rightarrow  \TT(S)$ which starts and ends at almost-fixed points (with the almost-fixed constant to be determined below), and which performs the restriction of $\GG_1$ to $Y$ on one of the lifts of $Y$ and the restriction of $\GG_2$ to $Y$ on the other lifts of $Y$.  Consequently, the projections of $\tilde{\mu}_{\GG(d)}$ to the various lifts of $Y$ disagree by a factor of at least $d$, and so it follows that $\GG(d)$ cannot be close to $\Fix(H)$.\\

Let $S$ be the degree 4 cover of $S_0$ pictured in Figure \ref{fig:cexcover}.  Note that each of $Y$ and $Z$ lifts to two disjoint subsurfaces of $S$, say $Y_1, Y_2$ and $Z_1,Z_2$, with $S = Y_1 \cup Y_2 \cup Z_1 \cup Z_2$.  Let $QD(S_0)$ denote the space of holomorphic quadratic differentials on $S_0$.  Let $q_0, \widebar{q}_0 \in QD(S_0)$ be the quadratic differentials which define the geodesic segments $\GG_1$ and $\GG_2$ in $\TT(S_0)$, which were glued together from quadratic differentials on $Y$ and $Z$, $q_Y\in QD(Y)$ and $q_Z \in QD(Z)$.  The quadratic differentials $q_0$ and $\widebar{q}_0$ lift to pairs of quadratic differentials $q'_0, \widebar{q}'_0 \in QD(S)$.  Similarly, $q_Y$ and $q_Z$ lift to quadratic differentials $q'_{Y_1} \in QD(Y_1), q'_{Y_2} \in QD(Y_2)$ and $q'_{Z_1} \in QD(Z_1), q'_{Z_2} \in QD(Z_2)$, respectively.  These are the building blocks of our desired geodesic in $\TT(S)$.\\

\begin{figure} 
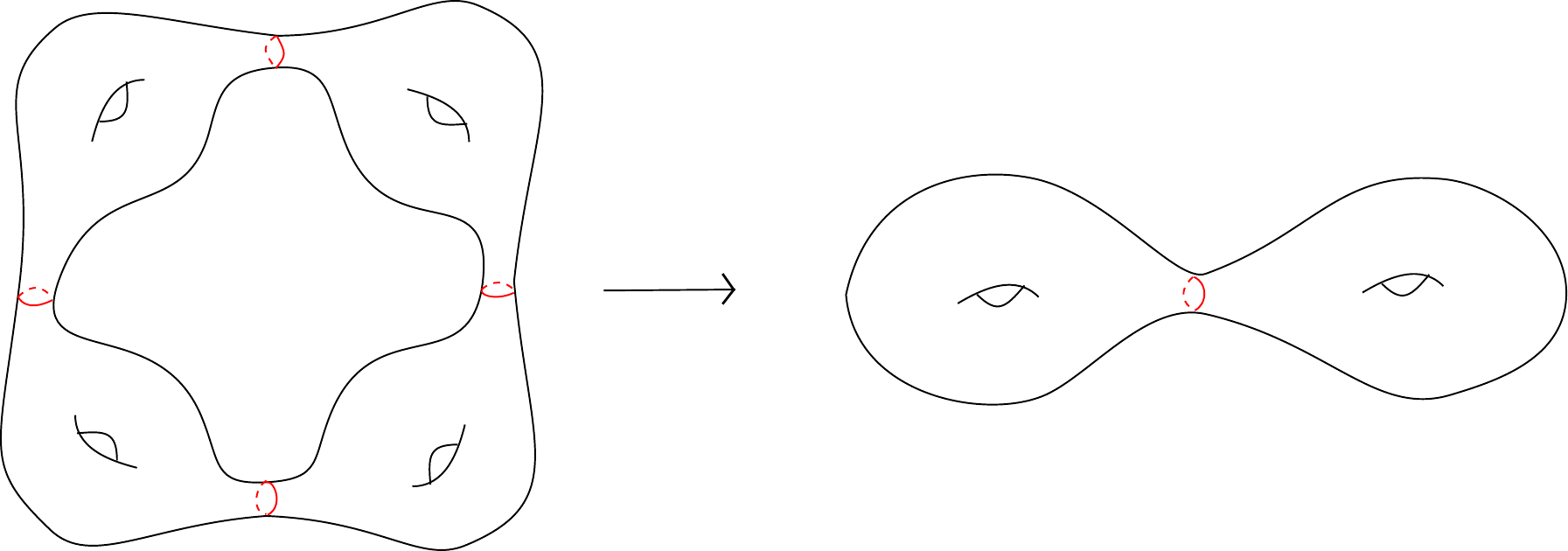
\caption{A genus five, degree-4 cover $S$ of the genus two surface $S_0$.  In $S_0$, the \textcolor{red}{red} curve $\gamma$ separates $Y$ and $Z$, each of which lift to two subsurfaces (clockwise from the upper-left) $Y_1, Y_2, Z_1,$ and $Z_2$, which are separated by the lifts of $\gamma$ and comprise $S$.}
\label{fig:cexcover}
\end{figure}

We now closely follow Rafi's construction.  Let $\phi$ be the same Anosov map on a torus and let $T$ be the same flat structure thereon used to create $\GG_1$ and $\GG_2$.  Recall that $T$ was chosen so that the vertical direction on $T$ matches the unstable foliation of $\phi$.  Instead of cutting one slit in $T$, cut open two parallel but not colinear slits in $T$ of size $\rho = c e^{-\frac{d}{2}}$ and of angle $\frac{\pi}{4}$, where the constant $0<c<1$ is specified shortly.  Fix a homeomorphism from $Y_1$ to this double-slit torus and called this marked flat surface $T_{Y_1,0}$.  Set
\begin{equation*}
T_{Y_1,t} = \begin{bmatrix}
e^t & 0 \\
0&e^{-t} \end{bmatrix} T_{Y_1,0}
\end{equation*}

For any $t$, $T_{Y_1,t}$ is still a marked surface and the slits have minimum length at $t=0$, growing exponentially as $t\rightarrow \pm \infty$.  For $-\frac{d}{2} \leq t\leq \frac{d}{2}$, the length of the slits is smaller than $c$, but since the stable and unstable foliations of $q_{Y_1}$ are cobounded, the length of any curve in $Y_{1,t}$ is comparable with 1.  As with Rafi's example, when $c$ is sufficiently small, $T_{Y_1,t}$ is an isolated subsurface when we glue it with the other slit tori to form $\GG$.  Choose $\delta \ll \rho$ as does Rafi, and let $q$ be the quadratic differential defined by gluing $T$ to $\delta T_{Y_1,-\frac{d}{2}}$ to another copy of $T$ to $\delta T_{Y_2, -\frac{3d}{2}}$ back onto the first copy of $T'$.\\

The details of this gluing are as follows.  We first scale down the given slitted tori by a factor of $\delta$.  Then we cut two slits in each of the two copies of $T$: in the first, we cut two slits, one each the same sizes and angles as the sizes and angles of the slits in $\delta T_{Y_1,-\frac{d}{2}}$ and $\delta T_{Y_2, -\frac{3d}{2}}$ and glue the appropriate pairings along these slits; then we similarly cut two slits in the second torus, one of each size and angle as before, and then attach them to the remaining slits on $\delta T_{Y_1,-\frac{d}{2}}$ and $\delta T_{Y_2, -\frac{3d}{2}}$.  Importantly, we glue them so that the twisting around each of the newly formed curves which bound these subsurfaces (and are the lifts of $\gamma$) is equal to that of the twisting around these curves in $\GG'_1(0)$.  In particular, the twisting around each of the curves lifted from $\gamma$ is coarsely equal.\\

Fix homeomorphisms from $Z_1$ to each of the above double-slitted tori.  This allows us to define a quadratic differential $q \in Q(S)$.  Let $\GG: [0,2d] \rightarrow \TT(S)$ be the Teichm\"uller geodesic segment defined by $q$.  Let $h \in \MCG(S)$ be the involution which rotates $S$ to switch $Y_1$ with $Y_2$ and $Z_1$ with $Z_2$.  We claim the following hold:

\begin{enumerate}
\item $d_T(\GG(0), \GG'_1(0)) \asymp 1$ \label{r:af 1}
\item $d_T(\GG(2d),\GG'_1(2d)) \asymp 1$ \label{r:af 2}
\item $d_T(\GG(d), h\cdot \GG(d)) \asymp d$ \label{r:af 3}
\end{enumerate}

where the constants subsumed by the symbol $\asymp$ depend only on $S$.  We remark that claims (\ref{r:af 1}) and (\ref{r:af 2}) imply that $\GG(0)$ and $\GG(2d)$ are $R$-almost-fixed for some constant $R$, as $\GG'_1(0)$ and $\GG'(2d)$ are fixed.  The content of (\ref{r:af 3}) is that $\GG(d)$ is not $d$-almost-fixed.  The constant $d$ is of our choosing, while $R$ depends only on the topology of $S$.  Thus, verification of (\ref{r:af 1}), (\ref{r:af 2}), and (\ref{r:af 3}) completes the proof of the theorem.\\

The remainder of the proof follows Rafi's closely.  We first show claims (\ref{r:af 1}) and (\ref{r:af 2}) by satisfying the conditions of \cite{Raf14}[Corollary 2.6].  Then we apply \cite{Raf14}[Theorem 4.2] to conclude claim (\ref{r:af 3}) holds.\\

First, note that, by construction, relative twisting around the lifts of $\gamma$ to $S$ with $\GG'_1(0)$ is uniformly bounded.  Second, we note that since the vertical and horizontal foliations of $Y_1,Y_2,Z_1,$ and $Z_2$ are cobounded, no curve in any of them is ever short along $\GG$, so the set of short curves of both $\GG(0)$ and $\GG'_1(0)$ are precisely the lifts of $\gamma$.\\

As for the aforementioned subsurfaces, the restrictions of $q$ to each of  $Y_1,Z_1,$ and $Z_2$ are identical to $q_{Y_1}$, $q_{Z_1}$, and $q_{Z_2}$, which are the projections of $q'_0$ to $Y_1, Z_1,$ and $Z_2$, respectively; similarly, the projection of $q$ to $Y_2$ is identical to $q_{Y_2}$, which is the projection of $\widebar{q}'_0$ to $Y_2$.  By construction, the active intervals along $\GG$ of $Y_1$ and $Y_2$, which we denote $I_{Y_1}, I_{Y_2}$, are $[0,d]$ and $[d,2d]$ respectively.  By Theorem 4.2 of \cite{Raf14}, the projections of $\GG$ to $\TT(Y_1)$ during $I_{Y_1}$ and  to $\TT(Y_2)$ during $I_{Y_2}$ fellow-travel the geodesics defined by the restriction of $q$ to $Y_1$ and $Y_2$, respectively, and outside of these intervals have uniformly bounded projections to $\CC(Y_1)$ and $\CC(Y_2)$.  In particular: 

\begin{enumerate}

\item For any $t \in [0,d]$, we have $d_{\TT(Y_1)}(\GG(t)\big|_{Y_1}, q_{Y_1}) \asymp 1$
\item For $t \in [d,2d]$ we have $d_{\TT(Y_2)}(\GG(t)\big|_{Y_2}, q_{Y_2}) \asymp 1$
\item $d_{Y_1}(\GG(0), \GG'_1(0)) \asymp 1$ and $d_{Y_2}(\GG(2d), \GG'_1(2d)) \asymp 1$ \label{r:af 4}
\end{enumerate}

To finish the proof of claim (\ref{r:af 1}), it remains to show that $\text{Ext}_{\GG(0)}(\gamma') \asymp \text{Ext}_{\GG'_1(0)}(\gamma')$ for each lift $\gamma'$ of $\gamma$.  Of the four lifts of $\gamma$, the two bounding $Y_1$ have coarsely the same extremal length in $\GG(0)$ as they do in $\GG'_1(0)$ for we have scaled them in the same fashion, whereas the two bounding  $Y_2$ have coarsely the same extremal lengths in $\GG(0)$ as they do in $\GG'_2(0)$.  Thus, by the construction of $\GG_1$ and $\GG_2$ in \cite{Raf14}[Theorem 7.3], they have coarsely the same extremal length.\\

It remains to show that claim (\ref{r:af 3}) holds.  Since $Y_1$ is an isolated subsurface along $\GG$ during $[0,d]$ and no curve in $Y_1$ becomes short, it follows from \cite{Raf14}[Theorem 6.1] and \cite{RS09}[Lemma 4.4] that the shadow of $\GG$ in $\CC(Y_1)$ during $[0,d]$ is a parametrized quasigeodesic.  Thus it follows from (\ref{r:af 4}) that
\[d_{\CC(Y_1)}(\GG(0), \GG(d)) \overset{\cdot}{\asymp} d \indent \text{and} \indent d_{\CC(Y_2)}((\GG(0), \GG(d)) \asymp 1\]

Since $Y_1$ and $Y_2$ are homeomorphic, $\CC(Y_1)$ and $\CC(Y_2)$ are isometric.  Let $\Phi: \CC(Y_1) \rightarrow \CC(Y_2)$ be such an identification.  Since $d_{\CC(Y_2)}(\Phi(\GG(d)), \GG(d)) \asymp d$, claim (\ref{r:af 3}) follows from the distance formula Theorem \ref{r:distances}, completing the proof of the theorem.

\end{proof}

\begin{remark}[Generalizations of the counter-example]
We expect that the counter-example constructed in Theorem \ref{r:non qc} should be a common phenomenon.  The construction takes advantage of a surface lifting to disjoint subsurfaces in the covering surface, after which a geodesic is made to move at different times through the subsurfaces.  We expect that nonquasiconvexity should hold any time this phenomenon occurs.  More generally, it would not be surprising if nonquasiconvexity holds any time $\Fix(H)$ has infinite diameter, that is when $\OO$ is not an orbifold with three cone points.
\end{remark}


\begin{thebibliography}{9999}
\bibitem[Ahl61]{Ahl61} L.V. Ahlfors. Some remarks on Teichm\"uller space of Riemann surfaces. Ann. of Math (2), 74: 171-191, 1961.
%\bibitem[Aou12]{Aou12} T. Aougab. Uniform hyperbolicity of the graphs of curves. To appear in Geometry and Topology.
\bibitem[Ber03]{Ber03} J. Behrstock. Asymptotic Geometry of the Mapping Class Group and Teichmuller Space. Geometry and Topology 10: 1523-1578, 2006.
\bibitem[BG71]{BG71} L. Bers, L. Greenberg. Isomorphisms between Teichm\"uller spaces.  \emph{Advances in Riemann surfaces}, Ann. of Math. Studies, No. 66, p. 53-79. (1971)
\bibitem[BBFS09]{BBFS09} M. Bestvina, K. Bromberg, K. Fujiwara, J. Souto. Shearing coordinates and convexity of length functions on Teichm\"uller space. Amer. J. Math. 135 (2013), no. 6, 1449-1476.
\bibitem[BH99]{BH99} M. Bridson, A. Haefliger. Metric spaces of non-positive curvature, Grundlehren der Mathematischen Wissenschaften, vol. 319, Springer-Verlag 1999.
\bibitem[BM08]{BM08} J. Behrstock, Y. Minsky.  Dimension and rank for mapping class groups.  Annals of Mathematics, 167: 1055-1077, 2008.
%\bibitem[Bow12]{Bow12} B. Bowditch. Uniform hyperbolicity of the curve graphs. To appear in Pacific Journal of Mathematics. (2013)
\bibitem[Br03]{Br03} J. Brock. The Weil-Petersson metric and volumes of 3-dimensional hyperbolic convex cores. \emph{J. Amer. Math. Soc.}, 16(3): 495-535 (electronic), 2003. arXiv:math.GT/0109048. 
\bibitem[Br05]{Br05} J. Brock. The Weil-Petersson Visual Sphere. Geometriae Dedicata 115(2005),
1-18.
\bibitem[CJ94]{CJ94} A. Casson, and D. Jungreis. Convergence groups and Seifert fibered 3-manifolds, Invent. Math. 118 (1994).
%\bibitem[CRS13]{CRS13} M. Clay, K. Rafi, S. Schleimer. Uniform hyperbolicity of the curve graph via surgery sequences. arXiv:1302.5519
\bibitem[Dur13]{Dur13} M. Durham. Augmented marking complex. arXiv: 1309.4065
\bibitem[EMR13]{EMR13} A. Eskin, H. Masur, K. Rafi. Large scale rank of Teichm\"uller space. arXiv:1307.3733v1
\bibitem[FM02]{FM02} B. Farb, L. Mosher. Convex cocompact subgroups of mapping class groups.  Geometry \& Topology, vol 6 (2002) 91-152
\bibitem[FM12]{FM12} B. Farb, D. Margalit, A primer on mapping class groups, Princeton Mathematical Series, vol. 49, Princeton University Press, Princeton, NJ, 2012.
\bibitem[Gab92]{Gab92} D. Gabai, Convergence groups are Fuchsian groups, Ann. of Math. (2) 136 (1992).
\bibitem[GM08]{GM08} D. Groves and J. F. Manning. Dehn Filling in Relatively Hyperbolic Groups. \emph{Israel Journal of Mathematics}, 168: 317-429, 2008.
%\bibitem[Ha81]{Ha81} W. J. Harvey. Boundary structure of the modular group. In Riemann surfaces and related topics: Proceedings of the 1978 Stony Brook Conference (State Univ. New York, Stony Brook, N.Y., 1978), volume 97 of Ann. of Math. Stud., pages 245-251, Princeton, N.J., 1981. Princeton Univ. Press.
\bibitem[HOP12]{HOP12} Realisation and dismantlability.  Geom. Topol. 18 (2014), no. 4, 2079-2126.
%\bibitem[HPW13]{HPW13} S. Hensel, P. Przytycki, R. Webb. Slim unicorns and uniform hyperbolicity for arc graphs and curve graphs. arXiv:1301.5577
\bibitem[Hub]{Hub} J. Hubbard. Teichm\"uller theory and applications to geometry, topology, and dynamics.  Matrix Editions.
\bibitem[Ker83]{Ker83} S. Kerckhoff, The Nielsen realization problem, Ann. of Math. 117 (1983), 235-265.
\bibitem[Kra59]{Kra59} S. Kravetz.  On the geometry of Teichm\"uller spaces and the structure of their modular groups.  Ann. Acad. Sci. Fenn. 278 (1959), 1-35.
\bibitem[LR11]{LR11} A. Lenzhen, K. Rafi.  Length of a curve is quasi-convex along a TeichmŸller geodesic. J. Differential Geom. 88 (2011), no. 2, 267-295
\bibitem[Lin74]{Lin74} M. Linch. Comparison of metrics on Teichm\"uller space. Proceedings of the American Mathematical Society, Vol. 43, No. 2 (1974), 349-352.
\bibitem[Mas75]{Mas75} H. Masur. On a class of geodesics in Teichm\"uller space. Annals of Mathematics, 2nd Ser., Vol. 102, No. 2: 205-221.  (1975) 
\bibitem[Mas76]{Mas76} H. Masur. Extension of the Weil-Petersson metric to the boundary of Teichmuller space. Duke Math. J. 43 (1976), no. 3, 623-635
\bibitem[Mas10]{Mas10} H. Masur. Geometry of Teichm\"uller space with the Teichm\"uller metric. (2010)
\bibitem[Min93]{Min93} Y. N. Minsky. Teichm\"uller geodesics and ends of hyperbolic 3-manifolds. Topology (1993) Vol. 32, No. 3, 625-647.
\bibitem[Min96]{Min96} Y. N. Minsky. Extremal length estimates and product regions in Teichm\"uller space. \emph{Duke Math. J.}, 83(2):249-286, 1996.
\bibitem[Min03]{Min03} Y.N. Minsky. The classification of kleinian surface groups, i: Models and bounds.  Ann. of
Math. (2) 171 (2010), 1-107.
\bibitem[MM99]{MM99} H. Masur and Y. N. Minsky. Geometry of the complex of curves. I. Hyperbolicity. Invent. Math., 138(1):1030149, 1999.
\bibitem[MM00]{MM00} H. Masur and Y. N. Minsky. Geometry of the complex of curves. II. Hierarchical structure. \emph{Geom. Funct. Anal.}, 10(4):902-974, 2000.
\bibitem[MW02]{MW02} H. Masur and M. Wolf. The Weil-Petersson isometry group. Geom. Dedicata, 93:177-190, 2002
\bibitem[Pap07]{Pap07} A. Papadopoulos Ed. Handbook of Teichm\"uller theory I. European Mathematical Society, Zurich, 2007. 
%\bibitem[Raf05]{Raf05} K. Rafi. A characterization of short curves of a Teichm\"uller geodesic. Geometry and Topology, 9:179-202, 2005.
\bibitem[Raf07]{Raf07} K. Rafi. A combinatorial model for the Teichm\"uller metric. Geom. Funct. Anal., 17(3):936-959, 2007.
%\bibitem[Raf07b]{Raf07b} K. Rafi. Thick-thin decomposition for quadratic differentials. Math. Res. Lett., 14(2):333-341, 2007.
\bibitem[Raf14]{Raf14} K. Rafi.  Hyperbolicity in Teichm\"uller space. Geom. Topol. 18 (2014), no. 5, 3025-3053.
\bibitem[RS09]{RS09} K. Rafi and S. Schleimer. Covers and the curve complex. Geom. Topol. 13 (2009), no. 4, 2141-2162.
%\bibitem[Rat94]{Rat94} J. Ratcliffe. Foundations of Hyperbolic Manifolds, Graduate Texts in Math. 149, Springer-Verlag, New York, (1994).
\bibitem[Roy71]{Roy71} H. L. Royden. Automorphisms and isometries of Teichm\"uller space. Ann. Math. Stud. 66(1971), 369-84.
\bibitem[RS09]{RS09} K. Rafi and S. Schleimer. Covers and the curve complex, Preprint, 2007, arXiv:math/0701719v2 [math.GT]
\bibitem[Schleim]{Schleim} S. Schleimer.  Notes on the complex of curves.
\bibitem[Tao13]{Tao13} J. Tao.  Linearly bounded conjugator property for mapping class groups, Geom. Funct. Anal. 23 (2013), no. 1, 415-466.
\bibitem[Tei40]{Tei40} O. Teichm\"uller. Extremale quaskonforme Abbildungen und quadratische Differentiale. Abh. Preuss. Akad. Wiss. Math.-Nat. Kl., 1939(22): 197, 1940.
\bibitem[Tro96]{Tro96} A. Tromba, DirichletÕs energy on TeichmllerÕs moduli space and the Nielsen realization problem, Math. Z. 222 (1996).
%\bibitem[Thu86]{Thu86} W. Thurston. Geometry and topology of 3-manifolds. Princeton University Lecture Notes, 1986: http://www.msri.org/publications/books/gt3m.
\bibitem[Wol86]{Wol86} S. Wolpert. Chern forms of the Riemann tensor for the moduli space of curves. Invent. math. 85, 119-145 (1986)
\bibitem[Wol87]{Wol87} S. Wolpert. Geodesic length functions and the Nielsen problem. J. Diff. Geo. 25 (1987) 275-296.
\bibitem[Wol05]{Wol05} S. Wolpert. The geometry of the Weil-Petersson completion of Teichm\"uller space. ÒSurveys in Differential Geometry VIII (Boston, MA, 2002)Ó, Int. Press, Somerville, MA (2003), 357-393.
\bibitem[Wol07]{Wol07} S. Wolpert.  The Weil-Petersson metric geometry. arXiv:0801.1075v1 (2008).
\end{thebibliography}
\end{document}